\crefname{subsection}{Subsection}{Subsections}
\crefname{subsubsection}{Subsubsection}{Subsubsections}
\theoremstyle{definition}
\newtheorem{theorem}{Theorem}[subsection]
\newtheorem{defn}[theorem]{Definition}
\newtheorem{example}[theorem]{Example}
\newtheorem{ex}[theorem]{Example}
\newtheorem{cor}[theorem]{Corollary}
\newtheorem{lemma}[theorem]{Lemma}
\newtheorem{prop}[theorem]{Proposition}
\newtheorem{rmk}[theorem]{Remark}
\newtheorem*{rmk*}{Remark}
\newtheorem*{ex*}{Example}
\newtheorem*{theorem*}{Theorem}
\newtheorem*{defn*}{Definition}
\newcommand{\bbZ}{\mathbb{Z}}
\newcommand{\bbG}{\mathbb{G}}
\newcommand{\bbH}{\mathbb{H}}
\newcommand{\bbF}{\mathbb{F}}
\newcommand{\bbP}{\mathbb{P}}
\newcommand{\bbE}{\mathbb{E}}
\newcommand{\bbQ}{\mathbb{Q}}
\newcommand{\bbR}{\mathbb{R}}
\newcommand{\bbA}{\mathbb{A}}
\newcommand{\bbC}{\mathbb{C}}
\newcommand{\bbM}{\mathbb{M}}
\newcommand{\calM}{\mathcal{M}}
\newcommand{\calN}{\mathcal{N}}
\newcommand{\calO}{\mathcal{O}}
\newcommand{\calC}{\mathcal{C}}
\newcommand{\calD}{\mathcal{D}}
\newcommand{\calF}{\mathcal{F}}
\newcommand{\calJ}{\mathcal{J}}
\newcommand{\calL}{\mathcal{L}}
\newcommand{\calA}{\mathcal{A}}
\newcommand{\calP}{\mathcal{P}}
\newcommand{\calH}{\mathcal{H}}
\newcommand{\Sp}{\mathcal{S}\mathrm{p}}
\newcommand{\Gpd}{\mathcal{G}\mathrm{pd}}
\newcommand{\Ab}{\mathcal{A}\mathrm{b}}
\newcommand{\Fun}{\mathrm{Fun}}
\newcommand{\Ind}{\operatorname{Ind}}
\newcommand{\Comm}{\mathcal{C}\mathrm{omm}}
\newcommand{\Glob}{\mathcal{G}\mathrm{lob}}
\newcommand{\Tow}{\mathcal{T}\mathrm{ow}}
\newcommand{\Thick}{\operatorname{Thick}}
\newcommand{\Map}{\operatorname{Map}}
\newcommand{\Aut}{\operatorname{Aut}}
\newcommand{\Fib}{\operatorname{Fib}}
\newcommand{\Ext}{\operatorname{Ext}}
\newcommand{\Cof}{\operatorname{Cof}}
\newcommand{\coker}{\operatorname{Coker}}
\newcommand{\colim}{\operatorname*{colim}}
\newcommand{\im}{\operatorname{Im}}
\renewcommand{\ker}{\operatorname{Ker}}
\newcommand{\gr}{\operatorname{gr}}
\newcommand{\Sq}{\mathrm{Sq}}
\newcommand{\Spf}{\operatorname{Spf}}
\newcommand{\res}{\operatorname{res}}
\newcommand{\tr}{\operatorname{tr}}
\newcommand{\Th}{\operatorname{Th}}
\newcommand{\Cl}{\operatorname{Cl}}
\newcommand{\gl}{\mathrm{gl}}
\newcommand{\rk}{\operatorname{rk}}
\newcommand{\epsilontilde}{\tilde{\epsilon}}
\newcommand{\h}{\mathrm{h}}
\newcommand{\cl}{\mathrm{cl}}
\newcommand{\fast}{\mathrm{fast}}
\newcommand{\nil}{\mathrm{nil}}
\newcommand{\bs}{{-}}
\newcommand{\ul}{\underline}
\newcommand{\ol}{\overline}
\def\hyp{{\hbox{-}}}
\newcommand\xqed[1]{%
  \leavevmode\unskip\penalty9999 \hbox{}\nobreak\hfill
  \quad\hbox{#1}}
\newcommand\tqed{\xqed{$\triangleleft$}}
\DeclareRobustCommand{\tvdots}{%
  \vbox{\baselineskip4\p@\lineskiplimit\z@\kern0\p@\hbox{.}\hbox{.}\hbox{.}}}
\begin{document}

\title{Total power operations in spectral sequences}

\author{William Balderrama}

\subjclass[2020]{
19L47, 
19L20, 
55P43, 
55P60, 
55T05. 
}

\begin{abstract}
We describe how power operations descend through homotopy limit spectral sequences. We apply this to describe how norms appear in the $C_2$-equivariant Adams spectral sequence, to compute norms on $\pi_0$ of the equivariant $KU$-local sphere, and to compute power operations for the $K(1)$-local sphere. An appendix contains material on equivariant Bousfield localizations which may be of independent interest. 
\end{abstract}

\maketitle

\tableofcontents

\section{Introduction}\label{sec:intro}

Let $G$ be a finite group. The best analogue of a commutative ring in the context of $G$-equivariant stable homotopy theory is that of a \textit{$G\hyp\bbE_\infty$ ring}, or $G$-equivariant commutative ring spectrum in the sense of \cite{hillhopkinsravenel2016nonexistence}. If $R$ is a $G\hyp\bbE_\infty$ ring, then not only is $R$ equipped with the usual ring operations of addition and multiplication, but also with \textit{multiplicative norms}
\[
N_K^G R \rightarrow R
\]
for all subgroups $K\subset G$, reflecting a higher form of commutativity present on $R$. Here, $N_K^G$ is the Hill--Hopkins--Ravenel norm \cite{hillhopkinsravenel2016nonexistence}; informally, $N_K^G R = R^{\otimes G/K}$, with equivariance intertwining the action of $K$ on $R$ and the action of $G$ on $G/K$.

This additional structure is reflected in algebra. If $R$ is a $G\hyp\bbE_\infty$ ring, then the collection $\ul{\pi}_0 R = \{\pi_0^K R : K\subset G \}$ carries the rich algebraic structure of a \textit{$G$-Tambara functor} \cite{tambara1993multiplicative} \cite{brun2007witt}. This means that, in addition to the linear structure of restrictions, transfers, and products, one has multiplicative but generally \textit{nonadditive} norm maps
\[
N_K^G\colon \pi_0^K R\rightarrow\pi_0^G R,
\]
interacting with the linear structure in rich ways. More generally, norms exist outside degree $0$ as maps
\[
P_\alpha\colon \pi_\alpha^K R\rightarrow \pi_{\Ind_K^G\alpha}^G R
\]
defined for all subgroups $K\subset G$ and virtual orthogonal representations $\alpha\in RO(K)$. This rich algebraic structure has seen extensive study over the past decade, e.g.\ \cite{strickland2012tambara, nakaoka2012ideals, ullman2013tambara, hill2017andre, blumberghill2018incomplete, angeltveitbohmann2018graded, hillmazur2019equivariant}. However, despite this wealth of theoretical work, relatively few specific computations are available, outside certain well-behaved cases. One need not go all the way to equivariant homotopy theory to see this: ordinary $\bbE_\infty$ rings already carry power operations, but relatively few computations are available, outside the most well behaved examples of $\bbE_\infty$ rings in positive characteristic and certain complex-oriented theories.

Consider the problem of computing just the groups $\ul{\pi}_\star R$. The homotopy theorist's tools of choice for such computations are a wide array of \textit{spectral sequences}, which arise whenever one has a way of building $R$ out of simpler pieces. In some cases, these simpler pieces may even be simple enough that one can understand their norms. This leads to the question: how can we take this information and descend it through the spectral sequence?

Norms in spectral sequences have been considered previously, such as in the context of the slice spectral sequence in \cite[Section I.5]{ullman2013regular} and \cite[Section 4]{hillhopkinsravenel2017slice}. Our own interest is in situations that are orthogonal to this; in short, in spectral sequences where Euler classes are detected on the $0$-line. Moreover, we care not just about norms of $G\hyp\bbE_\infty$ spectra, but also other operations of a similar nature, such as power operations for ordinary $\bbE_\infty$ rings \cite{brunermaymccluresteinberger1986hinfinity} and power operations in the global equivariant context \cite{schwede2018global,stahlhauer2021ginfty}.

This paper describes how such operations may be computed in \textit{homotopy limit spectral sequences} (HLSSs), such as generalized Adams spectral sequences and homotopy fixed point spectral sequences. We then give applications which demonstrate how this plays out in practice. In fact these applications, described below in \cref{ssec:applications}, might be considered the core of the paper, although it is the tools used which seem more widely applicable. Let us describe these in the context of equivariant norms as above.

Let $R\colon\calJ\rightarrow\Comm_G$ be a diagram of $G\hyp\bbE_\infty$ ring spectra. From the underlying diagram of $G$-spectra, one may produce for all $K\subset G$ and $\alpha\in RO(K)$ an HLSS which we shall index as
\[
{}^K E_2^{s+\alpha,t+\alpha} = H^{t-s}(\calJ;\pi_{t+\alpha}^K R)\Rightarrow \pi_{s+\alpha}^K \lim_{j\in\calJ}R(j).
\]
Here, we have written $H^n(\calJ;\bs)$ for the $n$th right derived functor of $\lim_{j\in\calJ}\colon\Fun(\calJ,\Ab)\rightarrow\Ab$. For example, when $\calJ = \Delta$, this is the usual spectral sequence of a cosimplicial object.

Write $Z_r^{s,t}$ and $B_r^{s,t}$ for the $r$-cycles and $r$-boundaries of this spectral sequence, and write $a_K^G \in \pi_{1-\bbR[G/K]}^G S_G$ for the class represented by the inclusion of fixed points $\smash{S^1 \rightarrow S^{\bbR[G/K]}}$.

\begin{theorem}[\cref{ssec:powhlss}]\label{thm:main}
${}$
\begin{enumerate}
\item The composite $a_K^G P_\alpha$ is additive. In particular, for $t\geq s\geq 0$ there are induced maps
\[
Q_\alpha = (a_K^G)^tP_{t+\alpha}\colon {}^K E_2^{s+\alpha,t+\alpha}\rightarrow {}^G E_2^{s+\Ind_K^G\alpha,t+\Ind_K^G\alpha}.
\]
\item $Q_\alpha({}^K Z_r^{s+\alpha,t+\alpha})\subset {}^G Z_r^{s+\Ind_K^G\alpha,t+\Ind_K^G\alpha}$ for $t\geq s\geq 0$.
\item $Q_\alpha({}^K B_r^{s+\alpha,t+\alpha})\subset {}^G B_r^{s+\Ind_K^G\alpha,t+\Ind_K^G\alpha}$ for $t\geq s\geq -1$.
\item For $x\in {}^K E_r^{s+\alpha,t+\alpha}$ with $s\geq 0$, we have
\[
d_r(Q_\alpha(x)) = \begin{cases}
Q_\alpha(d_r(x)), & t \geq 1; \\
Q_\alpha(d_r(x)) + c(d_r(x),x), & t=s=0;
\end{cases}
\]
where $c$ is related to the addition formula for $P_\alpha$. For example, when $K = e$ and $G = C_2$, we have $c(d_r(x),x) = \tr(d_r(x) \cdot \ol{x})$ with $\ol{x}$ the involution applied to $x$.
\item If $x\in E_2^{\alpha,t+\alpha}$ is a permanent cycle detecting $f\in \pi_{\alpha}^K \lim_{j\in\calJ}R(j)$, then the permanent cycle $Q_\alpha(x)$ detects $P_\alpha(f)$ modulo classes in higher filtration.
\tqed
\end{enumerate}
\end{theorem}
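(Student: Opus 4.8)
The plan is to realise the HLSS as the spectral sequence of a tower of $G\hyp\bbE_\infty$ rings and to run the norm construction through that tower, using multiplication by $a_K^G$ both to repair additivity and to absorb the internal-degree shift $\bbR\leadsto\bbR[G/K]$ that the norm introduces. \textbf{Setup and part (1).} First replace $R$ by its Bousfield--Kan cosimplicial replacement $C^\bullet$, a cosimplicial object of $\Comm_G$ with $\lim_j R(j)\simeq\mathrm{Tot}\,C^\bullet$; the HLSS is the associated $\mathrm{Tot}$ spectral sequence, and its pages (for $r\geq 2$) are computed from the tower $T_n:=\mathrm{Tot}_n C^\bullet$ of $G\hyp\bbE_\infty$ rings, with $T_{-1}\simeq*$, $\lim_n T_n\simeq\lim_j R(j)$, and $\Fib(T_n\to T_{n-1})$ carrying the $E_1$-term. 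For part (1), use that the norm distributes over sums, $N_K^G(X\oplus Y)\simeq N_K^G X\oplus N_K^G Y\oplus\bigoplus_{[S]}\Ind_{G_S}^G W_S$, where $S$ ranges over $G$-orbits of proper nonempty subsets of $G/K$ and $G_S:=\operatorname{Stab}_G(S)$. Since $G$ is transitive on $G/K$, each $G_S$ is a proper subgroup; and since $S$ and its complement are nonempty $G_S$-sets, $G_S$ acts non-transitively on $G/K$, so $\tilde\bbR[G/K]^{G_S}\neq 0$ with $\tilde\bbR[G/K]:=\bbR[G/K]-1$. Hence $\res_{G_S}^G a_K^G=a_{\res_{G_S}\tilde\bbR[G/K]}=0$, as this restriction contains a trivial summand, and the projection formula gives $a_K^G\cdot\tr_{G_S}^G(-)=\tr_{G_S}^G(\res_{G_S}^G a_K^G\cdot-)=0$. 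Thus $a_K^G$ annihilates the cross-terms in the addition formula for $P_\alpha$, so $a_K^G P_\alpha$, and a fortiori $(a_K^G)^tP_{t+\alpha}$ for $t\geq 1$, is additive; being natural in the $G\hyp\bbE_\infty$ ring it is a map of $\Ab$-valued functors on $\calJ$, and since $P_{t+\alpha}$ lands in $\pi^G_{\Ind_K^G(t+\alpha)}R=\pi^G_{t\bbR[G/K]+\Ind_K^G\alpha}R$ while $(a_K^G)^t$ shifts by $-t\tilde\bbR[G/K]$, the composite lands in $\pi^G_{t+\Ind_K^G\alpha}R$, inducing $Q_\alpha$ on $H^{t-s}(\calJ;-)$. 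When $t=s=0$ one instead uses only naturality of the (non-additive) set map $P_\alpha$ on $H^0(\calJ;-)=\lim_j$.

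\textbf{The norm through the tower; parts (2), (3).} Write $P_\alpha=\mathrm{str}_*\circ N_K^G$ with $\mathrm{str}\colon N_K^G\res_K^G R\to R$ the norm structure map. Applying $N_K^G$ to the restricted tower $\{\res_K^G T_n\}$ produces a tower $\{N_K^G\res_K^G T_n\}$ of $G\hyp\bbE_\infty$ rings together with structure maps $N_K^G\res_K^G T_n\to T_n$, through which $P_\alpha$ factors. The key structural input is that $(a_K^G)^t\cdot(-)$, for $t\geq 1$, annihilates every term induced from a proper subgroup $G_S$ that appears when one norms a cofibre sequence of $K$-spectra --- equivalently, inverting $a_K^G$, i.e.\ smashing with $\widetilde{E\calF}$ for $\calF=\{H:H\text{ acts non-transitively on }G/K\}$, makes $X\mapsto N_K^G X$ exact (cf.\ $\Phi^G N_K^G\simeq\Phi^K$), and multiplication by $(a_K^G)^t$ is the corresponding comparison in internal degree $t$ --- and that this simultaneously converts the $\bbR[G/K]$-fold suspensions appearing after norming into $\bbR$-fold suspensions, matching the internal degrees of the target page. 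Parts (2) and (3) then both run the same procedure: represent $x$ by a map into the relevant fibre that lifts far enough up the tower to represent a class on $E_r$, apply $N_K^G$, multiply by $(a_K^G)^t$, and apply $\mathrm{str}$; tracking the filtration through the (near-)exact normed tower shows the outcome is an $r$-cycle when $x\in Z_r$, and lands one filtration higher when $x\in B_r$, which is why the boundary statement extends to $s\geq-1$.

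\textbf{Part (4).} To compute $d_r(Q_\alpha x)$, lift $x$ partway up the tower, apply $\mathrm{str}_*(a_K^G)^t N_K^G(-)$, and read $d_r$ of the outcome as the obstruction to lifting one further stage. If $t\geq 1$: replacing a partial lift $\hat x$ by a corrected lift $\hat x+z$, with $z$ of strictly higher filtration and leading term controlled by $d_r x$, changes $N_K^G\hat x$ by $N_K^G z$ (of even higher, negligible, filtration) together with cross-terms; since $t\geq 1$ these cross-terms are killed by $(a_K^G)^t$ as in part (1), so the obstruction is exactly $Q_\alpha(d_r x)$. If $t=s=0$: there is no $(a_K^G)^t$ to do this, and the surviving leading cross-term between $\hat x$ and its correction is precisely the transfer term $c(d_r x,x)$ prescribed by the addition formula for $P_\alpha$; in the case $K=e$, $G=C_2$, the $C_2$-norm formula $N(a+b)=N(a)+N(b)+\tr(a\ol b)$ gives $c(d_r x,x)=\tr(d_r x\cdot\ol x)$.

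\textbf{Part (5) and the main obstacle.} If $x\in E_2^{\alpha,t+\alpha}$ detects $f\in\pi_\alpha^K\lim_j R(j)$ in filtration $t$, then $f$ dies in $\res_K^G T_{t-1}$, hence $N_K^G f$ dies in $N_K^G\res_K^G T_{t-1}$ and $P_\alpha(f)=\mathrm{str}_*N_K^G f$ has filtration $\geq t$; running the computation above on the permanent cycle $x$ identifies its leading term in ${}^G E_2^{\Ind_K^G\alpha,t+\Ind_K^G\alpha}$ with $(a_K^G)^tP_{t+\alpha}$ of the leading term of $f$, i.e.\ with $Q_\alpha(x)$. The hard part is making the ``norm of a cofibre sequence'' picture precise, with enough control on the attaching maps of the mixed terms, to both justify the near-exactness used throughout parts (2)--(4) and to isolate and identify the leading cross-term $c(d_r x,x)$ in the $t=s=0$ case; the remaining work --- the filtration and internal-degree bookkeeping that pins down every range in the statement --- is comparatively routine. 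I expect the $K=e$, $G=C_2$ case to be the clean model, with the general finite group following the same pattern, with $G$-orbits of subsets of $G/K$ in place of the single nontrivial orbit.
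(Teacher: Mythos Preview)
Your approach differs substantially from the paper's, and the divergence begins already in the setup. You replace $R$ by its Bousfield--Kan cosimplicial resolution so as to obtain a tower $\{T_n\}$ of $G\hyp\bbE_\infty$ rings, then norm the tower itself via the structure maps $N_K^G\res_K^G T_n\to T_n$, and repair the failure of exactness of $N_K^G$ by multiplying by $(a_K^G)^t$. The paper never needs the tower to consist of ring objects: it works directly with the space-level norm
\[
\calP_\alpha\colon \Map_{\Sp^K}(S^\alpha,\res_K^G R)\longrightarrow\Map_{\Sp^G}(S^{\Ind_K^G\alpha},R)
\]
as a natural transformation of diagrams of pointed spaces, proves a general naturality statement (\cref{thm:unstablenaturality}) describing how any such pointed map interacts with the HLSS, and then computes that the effect of $\calP_\alpha$ on $\pi_t$ at the basepoint is exactly $(a_K^G)^tP_{t+\alpha}$ (\cref{thm:poweroff}). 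The Euler class enters via the colimit-comparison map $S^t\cdot\bbP^m(S^\alpha)\to\bbP^m(S^t\cdot S^\alpha)$, not through any exactness analysis of $N_K^G$ on the tower.

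The payoff is that the paper's route completely avoids the difficulty you flag at the end: it never norms a cofiber sequence and never needs control over the attaching maps of mixed terms, because the unstable naturality theorem reduces everything to knowing $\pi_t\calP_\alpha$ at each basepoint---a concrete, local computation. Your approach is plausible and the key mechanism (that $a_K^G$ kills transfers from subgroups not acting transitively on $G/K$) is the same one the paper exploits, but making the ``near-exact normed tower'' precise---especially isolating rather than killing the leading cross-term in the $t=s=0$ case---is real work that you have identified but not carried out, and that the mapping-space argument sidesteps entirely. Your argument for Part~(1) is, however, essentially the paper's addition-formula analysis (\cref{ssec:addition}).
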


Informally, $P_\alpha$ is modeled in filtration $t$ by $(a_K^G)^t P_{t+\alpha}$. This is immediately applicable to computations, and we give applications below. As usual, the development was the other way around: we found ourselves with various computations we realized we could carry out, and questions we could answer, if only we had some theorem along these lines. It was clear from the start that such a theorem should follow by a consideration of the space-level norm
\begin{equation}\label{eq:spacenorm}
\calP_\alpha\colon \Map_{\Sp^K}(S^\alpha,\res^G_K R)\rightarrow \Map_{\Sp^G}(S^{\Ind_K^G\alpha},R),
\end{equation}
and in fact most of \cref{thm:main} does follow quickly from an inspection of \cref{eq:spacenorm}, the main observation being that $\pi_t\calP_\alpha = (a_K^G)^t P_{t+\alpha}$. The bulk of the work in the proof of \cref{thm:main} stems from the additional care needed to handle what happens on the fringe; for example, to describe $d_2(P_\alpha(x))$ for $x\in H^0(\calJ;\pi_\alpha^K R)$. Although the applications we give below do not need this more refined information, we expect it will be useful in future work.

\subsection{Applications}\label{ssec:applications}

Let us now describe applications. We begin with an application to the $C_2$-equivariant Adams spectral sequence. Let 
\[
\calA^\cl = \pi_\ast (H\bbF_2\otimes H\bbF_2),\qquad \calA^{C_2} = \pi_\star^{C_2} (H\bbF_2^{C_2} \otimes H\bbF_2^{C_2})
\]
denote the classical and $C_2$-equivariant dual Steenrod algebras, and write \[
\Ext_\cl = H^\ast(\calA^\cl),\qquad \Ext_{C_2} = H^\ast(\calA^{C_2})
\]
for their cohomology, serving as the $E_2$-pages of the classical and $C_2$-equivariant Adams spectral sequences \cite[Section 6]{hukriz2001real} \cite{guillouhillisaksenravenel2020cohomology}. Algebraically, the latter is of the form
\[
\Ext_{C_2} \cong \Ext_\bbR \oplus \Ext_{NC}.
\]
Here, $\Ext_\bbR\subset\Ext_{C_2}$ is the cohomology of the $\bbR$-motivic Steenrod algebra, the inclusion of which is compatible with Adams differentials, and $\Ext_{NC}$ is some other summand. Let $\rho = a_e^{C_2} \in \pi_{-\sigma}S_{C_2}$ denote the Euler class of the sign representation. By \cite[Theorem 4.1]{duggerisaksen2017low}, there is an isomorphism
$
\Ext_{\bbR}[\rho^{-1}] \cong \Ext_\cl[\rho^{\pm 1}],
$
with a suitable shift in degrees. In fact, the proof gives rise to an explicit splitting
\begin{equation}\label{eq:c2split}
\Ext_{C_2} \cong \Ext_\cl[\rho]\oplus\Ext_{\bbR}^{\rho\hyp\text{torsion}} \oplus \Ext_{NC},
\end{equation}
where the copy of $\Ext_\cl$ is given as follows. Write
\[
\calA^\cl = \bbF_2[\xi_1,\xi_2,\ldots],\quad \calA^{\bbR} = \bbF_2[\tau,\rho][\xi_1,\xi_2,\ldots,\tau_0,\tau_1,\ldots]/(\tau_i^2 + \tau \xi_{i+1} + \rho(\tau_0 \xi_{i+1} + \tau_{i+1})).
\]
Then the map
\begin{equation}\label{eq:pa}
P\colon \calA^\cl\rightarrow \calA^{\bbR}\subset\calA^{C_2},\qquad P(\xi_i) = \xi_i
\end{equation}
is a map of Hopf algebroids, and the induced map $P\colon \Ext_\cl\rightarrow\Ext_{C_2}$ picks out the copy of $\Ext_\cl$ in \cref{eq:c2split}.

The isomorphism $\Ext_\bbR[\rho^{-1}]\cong \Ext_\cl[\rho^{\pm 1}]$ extends to an isomorphism $\Ext_{C_2}[\rho^{-1}]\cong \Ext_\cl[\rho^{\pm 1}]$, and both of these isomorphisms have a direct geometric interpretation: the first models real Betti realization, and the second models taking geometric fixed points. On the other hand, the map $P$ appears at first glance to be purely algebraic: for example, it does not preserve permanent cycles. It turns out to have the following geometric significance.

\begin{theorem}[\cref{sec:c2}]\label{thm:c2}
Let $x\in \Ext_\cl$ be a class in filtration $f$. If $x$ survives to the $E_{r}$-page, then $\rho^f P(x)\in\Ext_{C_2}$ survives to the $E_{r}$-page, and
\[
d_{r}(\rho^f P(x)) = \rho^{f+r-1}P(d_{r}(x)).
\]
Moreover, if $x$ is a permanent cycle detecting $\alpha\in \pi_n S$, then the permanent cycle $\rho^fP(x) \in\Ext_{C_2}$ detects the symmetric square $\Sq(\alpha) \in \pi_{n(1+\sigma)}S_{C_2}$.
\qed
\end{theorem}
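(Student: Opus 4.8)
The plan is to present both the classical and the $C_2$-equivariant Adams spectral sequences as homotopy limit spectral sequences of a single cobar diagram of $C_2\hyp\bbE_\infty$-rings and its restriction to the trivial group, apply \cref{thm:main} to this situation, and then identify the resulting operation with the algebraic map $P$ of \cref{eq:pa}.

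First I would set up the input. Let $R = H\ul\bbF_2^{\otimes\bullet+1}\colon\Delta_{\mathrm{inj}}\to\Comm_{C_2}$ be the semicosimplicial cobar object of the $C_2\hyp\bbE_\infty$-ring $H\ul\bbF_2$; only the coface maps, built from the unit $S_{C_2}\to H\ul\bbF_2$, are used, and these are maps of $C_2\hyp\bbE_\infty$-rings, so $R$ genuinely lands in $\Comm_{C_2}$, while $H^\ast(\Delta_{\mathrm{inj}};\pi_\star^{C_2}R)$ is the unnormalized cobar cohomology $\Ext_{C_2}$ and $\lim R$ is the $H\ul\bbF_2$-nilpotent completion of $S_{C_2}$. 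Thus the HLSS of \cref{thm:main} attached to $R$ is the $C_2$-equivariant Adams spectral sequence. Since $\res^G_K$ is symmetric monoidal and $\res^{C_2}_e H\ul\bbF_2\simeq H\bbF_2$, one has $\res^{C_2}_e R\simeq H\bbF_2^{\otimes\bullet+1}$, the classical Adams cobar object, so the ``${}^e E$'' spectral sequence in \cref{thm:main} is the classical Adams spectral sequence, with $E_2$-page $\Ext_\cl$ and abutment the completion of $S$.

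Next I would apply \cref{thm:main} with $K=e\subset G=C_2$ and $\alpha=n$ for each $n\in\bbZ=RO(e)$. A class $x\in\Ext_\cl$ in stem $n$ and filtration $f$ is an element of ${}^e E_2^{n,\,f+n}$, and \cref{thm:main}(1) yields $(a_e^{C_2})^f P_{f+n}(x)=\rho^f P_{f+n}(x)\in\Ext_{C_2}$. Parts (2), (4), (5) of \cref{thm:main} then give respectively: that $\rho^f P_{f+n}(x)$ survives to $E_r$ whenever $x$ does; the differential formula, where the exponent $f+r-1$ (rather than $f+r$) appears because $d_r x$ lies in stem $n-1$, so relative to the $\alpha=n$ grading it sits at internal coordinate $s=-1$, costing one power of $\rho$; and that $\rho^f P_{f+n}(x)$ detects $P_n(\alpha)=\Sq(\alpha)\in\pi_{n(1+\sigma)}^{C_2}S_{C_2}$ whenever $x$ detects $\alpha$. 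The fringe correction in \cref{thm:main}(4) never enters: the case $t=s=0$ occurs only for the unit class, which supports no differential (for the sphere $\Ext_\cl^{0,n}=0$ for $n>0$).

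It remains --- and this is the crux --- to identify the $E_2$-level operation $P_{f+n}$ above with the algebraic Hopf-algebroid map $P$ of \cref{eq:pa}, i.e.\ to show $\rho^f P_{f+n}(x)=\rho^f P(x)$. Since the power operations are multiplicative, $P_{\beta+\gamma}(yz)=P_\beta(y)P_\gamma(z)$, and since \cref{thm:main}(1) makes the relevant $\rho$-twisted operation additive, the operation on cobar cochains agrees with the (additive) extension of its values on the monomial basis of $\calA^\cl=\bbF_2[\xi_1,\xi_2,\dots]$, which is in turn determined by the generators; combined with compatibility with the cobar structure maps, this reduces the whole comparison to the single identity $P_{2^i-1}(\xi_i)=\xi_i$ in $\calA^{C_2}$ (the total power operation on the dual Steenrod algebra carries the Milnor generator to the $\bbR$-motivic one). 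I would prove this either (a) directly, computing the restriction $\res^{C_2}_e P_{2^i-1}(\xi_i)$, which equals the underlying square $\xi_i^2$ because the Weyl action on the underlying dual Steenrod algebra is trivial, and comparing against the structure of $\calA^{C_2}$ from \cite{hukriz2001real} after checking that $\res^{C_2}_e$ is injective in the relevant bidegrees; or (b) via geometric fixed points, using $\Phi^{C_2}\circ N_e^{C_2}\simeq\operatorname{id}$, so that after inverting $\rho$ the operation $\rho^f P_{f+n}(-)$ becomes the identity on $\Ext_\cl$ and hence agrees after localization with $P$, which by \cite{duggerisaksen2017low} is an isomorphism onto $\Ext_\cl[\rho^{\pm1}]$, and then lifting this agreement across the splitting \cref{eq:c2split} using that both $\rho^f P(x)$ and $\rho^f P_{f+n}(x)$ lie in the $\rho$-torsion-free summand $\Ext_\cl[\rho]$ and have the same image after inverting $\rho$. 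This comparison of the homotopy-theoretic total power operation with the combinatorially defined $P$ is the principal obstacle; everything else is bookkeeping on top of \cref{thm:main}.
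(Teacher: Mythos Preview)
Your framework is correct and matches the paper: both Adams spectral sequences arise as the HLSS of the cobar resolution $[n]\mapsto (H\bbF_2^{C_2})^{\otimes n+1}$, \cref{thm:main} applies, and the fringe correction is vacuous because $\Ext_\cl^{0,n}=0$ for $n>0$. You also correctly isolate the crux: one must identify the homotopy-theoretic norm $N$ on $\calA^\cl\to\calA^{C_2}$ (more precisely, $\rho N$) with the algebraic map $P$ of \cref{eq:pa}.

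The gap is in your two proposed methods for this identification. Approach (a) fails because $\res^{C_2}_e$ is \emph{not} injective in the degrees $|\xi_n|=(2^n-1)(1+\sigma)$: anything divisible by $\rho$ lies in the kernel, and such classes exist. Approach (b) only shows that $N(\xi_n)-\xi_n$ is $\rho$-\emph{power} torsion, since geometric fixed points corresponds to inverting $\rho$; but to conclude $\rho N(\xi_n)=\rho\xi_n$ you need the difference to be annihilated by $\rho$ itself, i.e.\ to lie in the transfer ideal $I_{\tr}=\ker(\rho\cdot{-})$, which is strictly smaller than the full $\rho$-torsion in $\calA^{C_2}$ (the $NC$ summand contains classes $\gamma/(\rho^j\tau^k)$ with $j\geq 1$ that are $\rho$-torsion but not in $I_{\tr}$). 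Your claim that $\rho^f N(x)$ automatically lands in the torsion-free summand $\Ext_\cl[\rho]$ is precisely what is at stake and is not justified.

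The paper's argument is different and more delicate. One first observes that $N$ is additive modulo transfers and that $\calA^{C_2}/I_{\tr}$ retains the structure of a Hopf algebroid, so that $N\colon\calA^\cl_\ast\to\calA^{C_2}_{\ast(1+\sigma)}/I_{\tr}$ is a genuine map of Hopf algebroids. One then proves $N(\xi_n)\equiv\xi_n\pmod{I_{\tr}}$ by induction: the base case $n=1$ is a direct degree check, and for $n\geq 2$ the coproduct computation forces $N(\xi_n)-\xi_n$ to be \emph{primitive} in $\calA^{C_2}/I_{\tr}$. The substantive step, which your sketch does not anticipate, is a separate calculation (via the $\bbR$-motivic lambda algebra, computing $\Ext^1_{\calA^\bbR}(\bbM^\bbR,\bbM^{C_2}/I_{\tr})$ in the relevant degrees) showing that there are no nonzero primitives in degree $|\xi_n|$ for $n\geq 2$. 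Since $\rho$ annihilates $I_{\tr}$, this gives $\rho N=\rho P$ on the nose, and the theorem follows.
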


\cref{thm:c2} is not surprising, given the general shape of $\Ext_{C_2}$. If $x\in\Ext_\cl$ detects $\alpha\in\pi_n S$, then as the geometric fixed points of $\Sq(\alpha)$ are $\alpha$, one finds that $\Sq(\alpha)$ is detected by some preimage of $\alpha$ under the localization $\Ext_{C_2}\rightarrow\Ext_{C_2}[\rho^{-1}]\cong\Ext_\cl[\rho^{\pm 1}]$. If $x$ is in filtration $f$, then this indicates that $\Sq(\alpha)$ is detected by $\rho^f P(x)$ plus possible $\rho$-torsion error terms. \cref{thm:c2} says that in fact $\Sq(\alpha)$ is detected by $\rho^f P(x)$ on the nose, and describes what happens when $x$ is not a permanent cycle.
The proof amounts to relating \cref{eq:pa} to the norms on $\ul{\pi}_\star (H\bbF_2^{C_2}\otimes H\bbF_2^{C_2})$, and then applying \cref{thm:main}.

\begin{example}\label{ex:sqeta}
We have $\rho P(h_1) = \rho h_2$, and thus $\Sq(\eta_{\cl})$ is detected by the same class detecting $\rho \nu_{C_2}$, where $\eta_\cl$ is the nonequivariant complex Hopf fibration and $\nu_{C_2}$ is the $C_2$-equivariant quaternionic Hopf fibration. As $\Sq(\eta_{\cl})$ must also lift $\eta_\cl^2$, by consulting the tables in \cite{duggerisaksen2017low} and using the fact that $\pi_\star S_{C_2}$ agrees with $\pi_{\ast,\ast}S_\bbR$ in this range, we find that the only possibility is 
\[
\Sq(\eta_{\cl}) = \eta_{\cl}\eta_{C_2} + \rho \nu_{C_2}.
\]
This was originally computed by Araki--Iriye \cite[Theorem 10.12]{arakiiriye1982equivariant}, and its computation via \cref{thm:c2} can be considered overkill: as soon as one knows $\pi_\star^{C_2}S_{C_2}$ in these degrees, $\Sq(\eta_\cl)$ is determined by the fact that it lifts $\eta_\cl^2$ and has geometric fixed points $\eta_\cl$.
\tqed
\end{example}

\begin{ex}
Consider $\rho P(h_3) = \rho h_4$. As $h_3$ is a permanent cycle, it follows that $\rho h_4$ is a permanent cycle, as was first shown by Belmont--Isaksen \cite{belmontisaksen2020rmotivic}. Moreover, $\rho h_4$ detects $\Sq(\sigma)$, a fact closely related to the Mahowald invariant $R(\sigma) = \sigma^2$. This was observed in \cite[Theorem 7.4.7]{balderramaculverquigley2021motivic}, which was one of the inspirations for \cref{thm:c2}. This example illustrates that the additional $\rho$'s in \cref{thm:c2} are necessary: $h_4 = P(h_3)$ itself supports the differential $d_2(h_4) = h_0 h_3^2$, and $\rho h_4 = \rho P(h_3)$ is not divisible by $\rho$ on the $E_3$-page.
\tqed
\end{ex}

Our next applications are to power operations in the context of chromatic homotopy theory, at chromatic height $1$. We begin with the following. In recent work, Bonventre--Guillou--Stapleton have shown that if $G$ is an odd $p$-group, then there is an isomorphism
\[
\ul{\pi}_0 L_{KU_G}S_G\cong \ul{R}\bbQ \otimes \pi_0 L_{KU}S = \ul{R}\bbQ[\epsilon]/(2\epsilon,\epsilon^2)
\]
of Green functors \cite[Theorem 1.1, Proposition 6.7]{bonventreguilloustapleton2022kug}. Here, $L_{KU_G}S_G$ is the localization of the $G$-equivariant sphere spectrum with respect to $G$-equivariant $K$-theory, and $\ul{R}\bbQ$ is the Green functor whose value at $K\subset G$ is the rational representation ring of $K$. They also verify that $KU_G$-localization preserves $G\hyp\bbE_\infty$ structures for $G$ an odd $p$-group. This gives $\ul{\pi}_0 L_{KU_G}S_G$ the structure of a Tambara functor, but they are only able to determine its norms in the case where $G = C_{p^n}$ is cyclic \cite[Proposition 10.6]{bonventreguilloustapleton2022kug}. \cref{thm:main} allows us to directly compute norms in contexts like this, and in the end we find the following.

\begin{theorem}[\cref{ssec:ku}]\label{thm:ku}
Fix an odd $p$-group $G$ and subgroup $K \subset G$. Let $\widetilde{\bbQ}[G/K] = \coker(\bbQ\rightarrow\bbQ[G/K])$ be the reduced permutation representation of the $G$-set $G/K$, and define
\[
e(G/K) = \sum_n (-1)^n \Lambda^n(\widetilde{\bbQ}[G/K]) \in R\bbQ(G).
\]
Then the norm
\[
N_K^G\colon R\bbQ(K)[\epsilon]/(2\epsilon,\epsilon^2) \rightarrow R\bbQ(G)[\epsilon]/(2\epsilon,\epsilon^2)
\]
arising from the $G\hyp\bbE_\infty$ structure of $L_{KU_G}S_G$ satisfies
\[
N_K^G(\epsilon) = e(G/K)\cdot\epsilon.
\]
In particular, if $K\subset G$ is normal, then $N_K^G(\epsilon)\neq 0$ if and only if $G/K$ is cyclic, in which case $N_K^G(\epsilon) = \widetilde{\bbQ}[G/N]\cdot \epsilon$ where $N\subset G$ is the unique subgroup of index $p$ containing $K$.\tqed
\end{theorem}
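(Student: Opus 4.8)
The plan is to mirror the proof of \cref{thm:c2}: realize $L_{KU_G}S_G$ as a homotopy limit of a diagram of $G\hyp\bbE_\infty$ rings whose homotopy is representation-theoretic, and feed this into \cref{thm:main}. Concretely I would use the descent spectral sequence attached to the unit $S_G\rightarrow KU_G$ (the HLSS of the Amitsur complex $n\mapsto KU_G^{\otimes(n+1)}$, equivalently a homotopy-fixed-point presentation of $L_{KU_G}S_G$ along Adams operations). Since $KU_G$-localization preserves $G\hyp\bbE_\infty$ structures for $G$ an odd $p$-group \cite[\S6]{bonventreguilloustapleton2022kug}, this genuinely is a limit in $\Comm_G$, and by loc.\ cit.\ it converges with abutment $\ul\pi_0 L_{KU_G}S_G = \ul R\bbQ[\epsilon]/(2\epsilon,\epsilon^2)$. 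On the $E_2$-page, $\pi_\star^H$ of the levels is assembled from the representation rings $R(H)$, on which the operations $P_\alpha$ of \cref{thm:main} — and in particular $N_K^G = P_0$ — are the classical tensor-induction and symmetric-power operations in equivariant $K$-theory; the appearance of $\ul R\bbQ$ rather than $\ul R\bbC$ at filtration $0$ is where Roquette's theorem (Schur index $1$ for odd $p$-groups) enters.

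Next I would reduce to the single formula $N_K^G(\epsilon) = e(G/K)\epsilon$, and identify $e(G/K)$ geometrically. The map of Tambara functors $\ul\pi_0 L_{KU_G}S_G\rightarrow\ul\pi_0 KU_G = \ul R\bbC$ is the split inclusion $R\bbQ(H)\hookrightarrow R\bbC(H)$ on the $R\bbQ$-summand and is zero on $\epsilon$; being injective on $R\bbQ$ and compatible with norms, it forces $N_K^G|_{R\bbQ(K)}$ to be the restriction of tensor induction on $R\bbC$ (which preserves the $\bbQ$-forms), and then multiplicativity together with the addition formula for $N_K^G$ on a Tambara functor determine all of $N_K^G$ from the value on $\epsilon$. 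For the identification of $e(G/K)$: under $S_G\rightarrow KU_G$ the Euler class $a_K^G$ of the reduced real permutation representation $\widetilde\bbR[G/K]$ maps to the $K$-theory Euler class $\lambda_{-1}(\widetilde\bbR[G/K]\otimes\bbC) = \sum_n(-1)^n\Lambda^n(\widetilde\bbC[G/K])$, which is exactly the image of $e(G/K)\in R\bbQ(G)$ under $R\bbQ(G)\rightarrow R\bbC(G)$ (using that $\widetilde\bbC[G/K]$ is the complexification of $\widetilde\bbQ[G/K]$, and Bott periodicity to place this in $\pi_0$). Hence $e(G/K)$ is precisely the lift of the image of $a_K^G$ in $\pi_0^G L_{KU_G}S_G$.

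Now apply \cref{thm:main}\,(5). The class $\epsilon$, being $2$-torsion and nilpotent, is invisible in $\pi_\star KU_G$ itself and first appears in some positive filtration $f$ of the descent spectral sequence, detected by a specific torsion permanent cycle $x$ coming from $\eta$ at the prime $2$; the value of $f$ and the class $x$ are read off from the nonequivariant computation of $\pi_0 L_{KU}S$ via the representation-theoretic identification of the $E_2$-page. By \cref{thm:main}\,(5), $N_K^G(\epsilon) = P_0(\epsilon)$ is detected, modulo higher filtration, by the permanent cycle $Q_0(x) = (a_K^G)^f P_f(x)$. The task is then to evaluate this: $P_f(x)$ is computed from the tensor-induction/symmetric-power formulas for power operations in equivariant $K$-theory applied to the explicit class $x$, and one tracks how the $(a_K^G)^f$ factor interacts with the Bott-periodicity (Euler-class/Thom-class) relations in $KU_G$ so as to leave exactly one power of $e(G/K)$ times the class detecting the $G$-equivariant $\epsilon$. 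This yields $N_K^G(\epsilon) = e(G/K)\epsilon$.

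The ``in particular'' clause is then pure representation theory. For $K\trianglelefteq G$, $e(G/K)$ is the image of $\lambda_{-1}(\widetilde\bbQ[G/K])$, and $\lambda_{-1}$ of a reduced permutation module is nonzero in $R\bbQ$ precisely when the acting quotient is cyclic: for non-cyclic $G/K$ every nonidentity element lies in a proper subgroup, so some nontrivial character of $G/K$ vanishes on it and the virtual character $\sum_n(-1)^n\Lambda^n$ vanishes identically; for cyclic $G/K$ a direct computation with the faithful $\bbQ$-irreducibles identifies the nonzero value with $\widetilde\bbQ[G/N]$, $N$ the index-$p$ subgroup containing $K$. The main obstacle I expect is the third step: pinning down the filtration $f$ and the explicit detecting class $x$, computing the $K$-theoretic power operation $P_f$ on it, and above all carrying out the bookkeeping by which the $a_K^G$-powers supplied by \cref{thm:main}\,(5) collapse — via Bott periodicity in $KU_G$ — to the single factor $e(G/K)$ rather than $e(G/K)^f$. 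Erecting the descent tower in $\Comm_G$ and checking convergence and multiplicativity is a subsidiary technical point, essentially available from \cite{bonventreguilloustapleton2022kug}.
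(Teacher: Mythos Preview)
Your approach is the same as the paper's: run the $KU$-based Adams spectral sequence (the Amitsur resolution of $S_G$ by $KU_G^{\otimes\bullet+1}$), detect $\epsilon$ in positive filtration, apply \cref{thm:main}(5), and compute the resulting operation on the $E_2$-page in terms of the $K$-theory Euler class. The ``in particular'' clause is then a character computation. So the strategy is correct. Two points deserve sharpening.

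First, the step you flag as the main obstacle is more concrete than you make it. The filtration is exactly $f=2$: nonequivariantly $\epsilon$ is detected by a class $\epsilontilde\in E_2^{0,2}=H^2(\Delta;\pi_2 KU^{\otimes\bullet+1})$, and $\pi_2 KU=\bbZ\{\beta\}$. The operation $Q=a_m^2 P^m_2$ on coefficients is determined by its effect on $\beta$, and the single fact you need is $P^m(\beta)=\beta^{\rho_m^\bbC}$ (this is the content of the paper's \cref{lem:powbott}; it amounts to the classical identification of Atiyah's $K$-theory power operations with the ultracommutative ones). Then $a_m^2\cdot\beta^{\rho_m^\bbC}=a_{\ol{\rho}_m^\bbC}\cdot\beta^{\ol{\rho}_m^\bbC}\cdot\beta=e(\ol{\rho}_m^\bbC)\cdot\beta$, so $Q$ is literally multiplication by the single Euler class $e(\ol{\rho}_m^\bbC)$. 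There is no risk of $e(G/K)^f$: the two copies of $a_m$ and the single Bott class $\beta^{\rho_m^\bbC}$ combine to one Euler class, not two. Your second paragraph, identifying $e(G/K)$ with the image of $a_K^G$ under the unit, is essentially this same observation seen from the other side, but the detour through the Tambara map $\ul\pi_0 L_{KU_G}S_G\to\ul R\bbC$ is not needed.

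Second, and more importantly, you never say how to pass from ``$N_K^G(\epsilon)$ is detected by $e(G/K)\cdot\epsilon$ modulo higher filtration'' to ``$N_K^G(\epsilon)=e(G/K)\cdot\epsilon$''. \cref{thm:main}(5) only gives the former. The paper closes this gap by invoking the Bonventre--Guillou--Stapleton computation: every element of $\pi_0^G L_{KU_G}S_G\cong R\bbQ(G)[\epsilon]/(2\epsilon,\epsilon^2)$ is detected either on the $0$-line (the $R\bbQ(G)$ summand) or on the $2$-line (the $R\bbQ(G)\cdot\epsilon$ summand), so there is nothing in filtration $>2$ to contribute a correction. You should make this explicit.
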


The proof of \cref{thm:ku} amounts to using \cref{thm:main} to show that $N_K^G(\epsilon)$ is detected in the $KU_G$-based Adams spectral sequence by $e(G/K)\cdot \epsilon$. In fact this is true for an arbitrary finite group $G$ and subgroup $K\subset G$, not just for odd $p$-groups. For this and other reasons, the correct context for our computation is not $G$-equivariant homotopy theory for any particular group $G$, but rather \textit{global} equivariant homotopy theory.

Let $\Glob$ be the category of global equivariant spectra with respect to the family of finite groups, and let $\textbf{KU}$ the global spectrum of equivariant $K$-theory, both as developed by Schwede in \cite{schwede2018global}. There are forgetful functors $U_G\colon\Glob\rightarrow\Sp^G$ satisfying $U_G\textbf{KU} = KU_G$; as far as we are concerned, this can be treated as the definition of $KU_G$. In order to ensure compatibility between statements made in the global context and statements made in the context of \cite{bonventreguilloustapleton2022kug}, we prove the following.

\begin{prop}[\cref{prop:ku1proof}]\label{prop:kul}
Let $\Glob_{\nil}$ be the category of global equivariant spectra with respect to the family of finite nilpotent groups
\begin{enumerate}
\item Bousfield localization in $\Glob_{\nil}$ with respect to $\textbf{KU}$ is smashing, agrees with $\textbf{KU}$-nilpotent completion, and preserves ultracommutative ring spectra;
\item If $G$ is nilpotent, then Bousfield localization in $\Sp^G$ with respect to $KU_G$ is smashing, agrees with $KU_G$-nilpotent completion, and preserves $G\hyp\bbE_\infty$ ring spectra;
\item In particular, $U_G L_{\textbf{KU}}S\simeq L_{KU_G}U_G S_G$ for $G$ nilpotent.
\tqed
\end{enumerate}
\end{prop}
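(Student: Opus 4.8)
The plan is to reduce the proposition to standard nonequivariant facts about $KU$, by way of the material on equivariant Bousfield localizations in the appendix. Nonequivariantly, $KU$-localization of spectra is smashing --- the height-one case of the smashing conjecture, due to Bousfield --- and agrees with $KU$-nilpotent completion, so that $L_{KU}X\simeq\operatorname{Tot}(KU^{\wedge\bullet+1}\wedge X)$ with the totalization tower carrying a uniform horizontal vanishing line; the same is true for any localization of a finite flat $KU$-algebra, e.g.\ $KU$ with roots of unity adjoined. The role of the appendix is to reduce localization questions in $\Glob_{\nil}$, and in $\Sp^G$ for $G$ nilpotent, to the corresponding nonequivariant questions for the geometric fixed point spectra $\Phi^G\mathbf{KU}$, resp.\ $\Phi^H KU_G$ for $H\subset G$; feeding in the facts just recalled should then give the smashing and nilpotent-completion assertions of (1) and (2).

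The main obstacle, I expect, is verifying the hypotheses under which this reduction applies. This rests on the known description of the geometric fixed points of equivariant $K$-theory --- for instance $\Phi^{C_p}KU_{C_p}\simeq KU[\zeta_p,1/p]$, and in general $\Phi^H KU_G$ is a localization of a finite flat $KU$-algebra, so that nonequivariantly $L_{\Phi^H KU_G}$ is smashing and coincides with nilpotent completion --- together with the compatibility of these localizations across isotropy, which holds precisely because the family of nilpotent groups is closed under subgroups and quotients. Granting this input, the appendix machinery yields that $\mathbf{KU}$-localization in $\Glob_{\nil}$, and $KU_G$-localization in $\Sp^G$ for $G$ nilpotent, is smashing and coincides with $\mathbf{KU}$-nilpotent (resp.\ $KU_G$-nilpotent) completion.

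For the ring-theoretic assertions, observe that $\mathbf{KU}$ is ultracommutative, so its cobar construction $\mathbf{KU}^{\wedge\bullet+1}$ is a cosimplicial ultracommutative ring; since limits of ultracommutative rings are created by the forgetful functor, $L_{\mathbf{KU}}S\simeq\operatorname{Tot}(\mathbf{KU}^{\wedge\bullet+1})$ is ultracommutative. As the localization is smashing, $L_{\mathbf{KU}}R\simeq R\wedge L_{\mathbf{KU}}S$ is the coproduct of $R$ with $L_{\mathbf{KU}}S$ in ultracommutative rings, with the localization map a map of such; this is the desired preservation statement (and also an instance of the criterion in the appendix), and the same argument gives the $G\hyp\bbE_\infty$ version in $\Sp^G$. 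Finally, for part (3): the forgetful functor $U_G\colon\Glob_{\nil}\to\Sp^G$ is symmetric monoidal, preserves limits, and satisfies $U_G\mathbf{KU}\simeq KU_G$, so applying it to $L_{\mathbf{KU}}S\simeq\operatorname{Tot}(\mathbf{KU}^{\wedge\bullet+1})$ and comparing with (2) in $\Sp^G$ gives $U_G L_{\mathbf{KU}}S\simeq\operatorname{Tot}(KU_G^{\wedge\bullet+1})\simeq L_{KU_G}S_G$, and smashing on both sides upgrades this to $U_G L_{\mathbf{KU}}X\simeq L_{KU_G}U_G X$ for all $X$.
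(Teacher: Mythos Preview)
Your overall architecture matches the paper's: reduce to geometric fixed points via isotropy separation, feed in the nonequivariant smashing/convergence of $KU$-localization, and deduce the equivariant and global statements from the appendix machinery. However, you have misidentified where the nilpotence hypothesis enters, and this is a genuine gap.

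The reduction theorem in the appendix (\cref{thm:desc}) does not ask that each $\Phi^K KU_G$ be locally descendable as an ordinary spectrum; it asks that $\Phi^K KU_G$ be locally descendable \emph{in $\Fun(BW_GK,\Sp)$}, i.e.\ with its Weyl group action. For $K=C_n$ cyclic this action is nontrivial: $W_GC_n$ acts on $\Phi^{C_n}KU_G\simeq KU[\tfrac{1}{n}](\zeta_n)$ through Galois automorphisms of $\zeta_n$, via the image $V_GC_n=\operatorname{im}(N_GC_n\to\Aut(C_n))$. Knowing that $KU[\tfrac{1}{n}](\zeta_n)$ is locally descendable nonequivariantly does \emph{not} imply it is locally descendable in $\Fun(BW_GC_n,\Sp)$; indeed, the paper proves that $KU_G$ is locally descendable if and only if $|V_GC|$ is invertible in $\bbZ[\tfrac{1}{|C|}]$ for every cyclic $C\subset G$ (the ``$KU$-allowable'' condition, \cref{prop:kuall}), and exhibits groups for which this fails. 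The role of nilpotence is exactly to guarantee $KU$-allowability (\cref{lem:nilallow}): in a nilpotent group, the Sylow decomposition forces every prime dividing $|V_GC|$ to already divide $|C|$. Your sentence ``the family of nilpotent groups is closed under subgroups and quotients'' is a property needed only so that $\Glob_{\nil}$ is defined at all; it is not what makes the localization smashing.

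A smaller point: your argument that smashing localizations preserve ultracommutative (or $G\hyp\bbE_\infty$) structures via ``$L_{\textbf{KU}}R\simeq R\wedge L_{\textbf{KU}}S$ is a coproduct in ultracommutative rings'' is delicate, since coproducts of ultracommutative rings are not transparently given by the underlying smash product. The paper instead uses the identification of nilpotent completion with $\lim_{n\in\Delta}(R\otimes\textbf{KU}^{\otimes n+1})$, a limit of a cosimplicial diagram of ultracommutative rings, which is unambiguously ultracommutative (\cref{prop:locpresg}); you already have this argument for $R=S$, and it works verbatim for general $R$.
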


The proof of \cref{prop:kul} requires some general theory regarding equivariant Bousfield localizations. This theory is interesting in its own right, and also applies to other examples of interest in chromatic equivariant homotopy theory. For this reason, we have separated out our discussion of equivariant Bousfield localizations into \cref{app:bous}, which may be read independently of the rest of the paper. 

Now let us return to considering power operations. Observe that \cref{thm:ku} is a genuinely integral result, mixing $2$-primary homotopy with odd-primary equivariance. If instead of working integrally we work $K(1)$-locally, then equivariant norms amount to ordinary $K(1)$-local power operations (see \cref{rmk:k1norm}). In \cite{hopkins2014k1}, Hopkins explains how one may use $K(1)$-local splittings
\begin{equation}\label{eq:k1split}
L_{K(1)}B\Sigma_{p+}\simeq S_{K(1)}\oplus S_{K(1)}
\end{equation}
to define the structure of a \textit{$\theta$-ring} on $\pi_0$ of an arbitrary $K(1)$-local $\bbE_\infty$ ring spectrum (see \cref{rmk:theta}). At $p=2$, the $\theta$-ring structure of $\pi_0 S_{K(1)} = \bbZ_2[\epsilon]/(2\epsilon,\epsilon^2)$ has been clarified only recently by Carmeli--Yuan \cite[Theorem 5.4.8]{carmeliyuan2021higher}, who prove that $\theta(\epsilon) = \epsilon$.

The story should not stop with $\pi_0$. However, the picture quickly becomes less clear, as there is no analogue of \cref{eq:k1split} for $L_{K(1)}(S^{2n})^{\otimes p}_{\h \Sigma_p}$ when $n \neq 0$. One of the original motivations for this paper was a desire to be able to compute with these more complicated examples, where power operations cannot be described as some clean algebraic object, such as a $\theta$-ring. Using a suitable variant of \cref{thm:main}, we carry out the following computation.

\begin{theorem}\label{thm:sk1}
Let $S_{K(1)} = L_{KU/(p)}S$. Then the $p$th total power operation
\[
P\colon [S^n,S_{K(1)}]\rightarrow [(S^n)^{\otimes p}_{\h \Sigma_p},S_{K(1)}]
\]
is as given in \cref{thm:ppow} for $p$ odd and \cref{thm:2pow} for $p$ even, modulo a certain indeterminacy at $p=2$ when $n \equiv 1 \pmod{8}$ and $n\neq 1$.
\tqed
\end{theorem}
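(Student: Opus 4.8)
The plan is to apply the $\bbE_\infty$-ring variant of \cref{thm:main} to a homotopy limit presentation of $S_{K(1)}$. The natural choice is the $KU/p$-based Adams spectral sequence, equivalently the $K(1)$-local descent (homotopy fixed point) spectral sequence for the action of the Morava stabilizer group on height-one Morava $E$-theory: concretely $S_{K(1)}\simeq KU_p^{h\bbZ_p^\times}$ for $p$ odd, with $\bbZ_p^\times$ acting through Adams operations, and $S_{K(1)}\simeq KO_2^{h\bbZ_2}$ at $p=2$, with $\bbZ_2$ acting through $\psi^3$. This is an HLSS in the sense of \cref{thm:main}, and its $E_2$-page is concentrated in cohomological filtrations $0$ and $1$ (for $p$ odd; at $p=2$ passage through $KO_2$ introduces the familiar $\eta$--$\beta$ pattern, but the stabilizer still has cohomological dimension one). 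The source $[S^n,S_{K(1)}]$ carries the associated HLSS, with essentially classical $E_2$-page. For the target, one first uses Greenlees--Sadofsky Tate vanishing to identify $L_{K(1)}(S^n)^{\otimes p}_{\h\Sigma_p}\simeq L_{K(1)}((S^n)^{\otimes p})^{h\Sigma_p}$, so that $[(S^n)^{\otimes p}_{\h\Sigma_p},S_{K(1)}]$ also receives such an HLSS; its $E_2$-page is governed by the $K(1)$-local homology of the extended powers $D_p(S^n)$, i.e.\ by the classical $K$-theory of the Thom spectra $(B\Sigma_p)^{n\bar\rho}$, which splits off the bottom cell $S^{np}$.

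With the two spectral sequences in hand, the second step is to feed in the power operations on the coefficient theory. Power operations for $K$-theory --- equivalently the $\theta$-ring structure encoded by $\psi^p$ --- are completely understood, and give explicit formulas for $P$ on $\pi_* KU_p$, $\pi_* KU/p$, and $\pi_* KO_2$. By \cref{thm:main}, a permanent cycle $x$ in filtration $s$ of the source HLSS detecting $f\in[S^n,S_{K(1)}]$ is carried by the induced operation $Q$ to a permanent cycle in filtration $s$ of the target HLSS detecting $P(f)$ modulo higher filtration, while \cref{thm:main}(4), with the additive correction term $c$ supplied by the $K(1)$-local addition formula for $P$, controls classes that are not permanent cycles. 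Since both HLSSs are concentrated in filtrations $0$ and $1$ they collapse after at most one differential, so this pins down $P$ up to hidden extensions. These I would resolve using naturality of $P$ under the unit and under multiplication by $\eta$ and the image-of-$J$ classes, reduction modulo the transfer ideal (where $P$ becomes the rigid $p$th power map), and comparison with the $\pi_0$ computation, where Hopkins's $\theta$-ring structure and Carmeli--Yuan's identity $\theta(\epsilon)=\epsilon$ serve as a check. Organizing the calculation by the residue of $n$ modulo $2(p-1)$ (resp.\ modulo $8$ at $p=2$) then produces the formulas of \cref{thm:ppow} and \cref{thm:2pow}.

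The main obstacle will be the $p=2$ case. There the homotopy fixed point spectral sequence runs over $KO_2$ rather than $KU_2$, so its $E_2$-page carries the intricate $\eta$-tower pattern; the extended power $D_2(S^n)$ has $2$-torsion coming both from $B\bbZ/2$ and from the interaction of the permutation representation with $\eta$; and the power operation is genuinely nonadditive, so the correction term in \cref{thm:main}(4) is not negligible. When $n\equiv 1\pmod 8$ and $n\neq 1$ these effects conspire: the ``$\theta$''-component of $P(f)$ lands in a degree where distinct classes of the target differ by an $\eta$-multiple that is invisible to the homotopy fixed point filtration and is not constrained by the available naturality relations, which is exactly the source of the stated indeterminacy. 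Showing that this is the \emph{only} ambiguity --- and that it genuinely cannot be removed by the present method --- is the delicate point; for $p$ odd the stabilizer group has cohomological dimension one, the coefficient power operations are torsion-free in the relevant degrees, and no such problem arises, so that case goes through cleanly.
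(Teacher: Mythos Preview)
Your plan is essentially the same as the paper's: both run the $\bbE_\infty$-variant of \cref{thm:main} (i.e.\ \cref{thm:powhfpss}) through the $\bbZ\{\psi^k\}$-homotopy fixed point spectral sequence for $KU_p$ (odd primes) or $KO_2$ ($p=2$), using the explicit $K$-theory power operations $P(\beta) = \beta^p\tau^{-2}$ etc.\ as input. The paper packages the target side as the bigraded ring $\pi_{\ast,\ast}b(R)$ of \cref{ssec:powprelim} and computes it directly from the fiber sequences \cref{eq:j} and \cref{eq:jo}; your detour through Greenlees--Sadofsky and the $K$-theory of $D_p(S^n)$ lands in the same place but is not actually needed. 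Note also that the correction term $c$ of \cref{thm:main}(4) plays no role here: all the interesting source classes at odd primes, and $\rho_n,\xi_n$ at $p=2$, live in filtration $1$, where $aP$ is already additive.

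The one genuine gap is your mechanism for pinning down $P(\eta_\cl)$, i.e.\ the case $n=1$ at $p=2$. Comparison with $KO_2$ only gives $P(\mu_n)\in \mu_{2n,2n}\eta_{C_2}+\bbZ/2\{\eta_\cl^2\eta_{C_2}\rho_{2n,2n}\}$, and none of the tools you list (naturality under $\eta$-multiplication, reduction modulo the transfer ideal, Carmeli--Yuan's $\theta(\epsilon)=\epsilon$) resolves this ambiguity for $n=0$. In fact the paper \emph{derives} $\theta(\epsilon)=\epsilon$ as a corollary (\cref{rmk:theta}), so you cannot use it as input without circularity. The paper instead imports the Araki--Iriye identity $\Sq(\eta_\cl)=\eta_\cl\eta_{C_2}+\rho\nu_{C_2}$ from the genuine $C_2$-equivariant stable stems (\cref{ex:sqeta}) along the Hurewicz map $\pi_{\ast,\ast}S_{C_2}\to\pi_{\ast,\ast}b(S_{K(1)})$, and then checks that $a\nu_{C_2}$ hits $\eta_\cl^2\eta_{C_2}\rho_{0,0}$. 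This external equivariant input is what distinguishes $n=1$ from the remaining $n\equiv 1\pmod 8$, where no such identity is available and the indeterminacy genuinely persists.
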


\subsection{Organization}

This paper is organized as follows. In \cref{sec:unstablenaturality}, we study the naturality of the HLSS of a diagram of spectra with respect to its underlying diagram of pointed spaces. This analysis is well-suited for any homotopy operations obtained from pointed functors between stable categories, and in \cref{sec:pow} we make this explicit in the case of the $m$-fold smash power functors $\bbP^m\colon \Sp^G\rightarrow\Sp^{\Sigma_m\wr G}$.

Both sections culminate in \cref{ssec:powhlss}, which puts everything together into a form suitable for applications, including \cref{thm:main} and variants. The reader interested in the applications may wish to start here.

We then give the promised applications. In \cref{sec:c2}, we prove \cref{thm:c2}; in \cref{sec:kug}, we prove \cref{thm:ku}; and in \cref{sec:k1}, we carry out the computation of \cref{thm:sk1}. In \cref{app:bous}, we give some material on equivariant localizations, including \cref{prop:kul}.

\subsection{Acknowledgements}

It is a pleasure to thank Charles Rezk, Nick Kuhn, Christian Carrick, Jeremy Hahn, and Tomer Schlank for enlightening discussions about power operations, character theory, Bousfield localizations, chromatic convergence, and $G$-spectra. Moreover, we thank Mike Hill for pointing out an error in our original proof of \cref{thm:c2}.

\section{Unstable naturality of the homotopy limit spectral sequence}\label{sec:unstablenaturality}

This section studies the naturality of stable HLSSs with respect to unstable maps. We begin by recalling the construction of the HLSS in the form most convenient to us in \cref{ssec:tower} and \cref{ssec:shfpss}. In \cref{ssec:unstable}, we consider the analogous unstable construction, and in \cref{ssec:stableunstable} we compare the two. We state and prove the main naturality theorem, \cref{thm:unstablenaturality}, in \cref{ssec:unstablenaturality}.

In some sense, this material should be known to those who have worked with extended homotopy spectral sequences in the sense of Bousfield--Kan \cite[Ch.\ IX, \S 4]{bousfieldkan1972homotopy}. The main naturality theorem holds by construction, and most of our work in this section is to recall enough of the construction that we may be sure of this. Moreover, we package this unstable information entirely into the context of ordinary spectral sequences, thereby removing any need to contend with the extended spectral sequences lurking in the background.

\subsection{The spectral sequence of a tower}\label{ssec:tower}

Let
\begin{center}\begin{tikzcd}
&F(t+1)\ar[d]&F(t)\ar[d]&F(t-1)\ar[d]\\
\cdots\ar[r]&X(t+1)\ar[r]&X(t)\ar[r]&X(t-1)\ar[r]&\cdots
\end{tikzcd}\end{center}
be a tower of spectra, where $F(t) = \Fib(X(t)\rightarrow X(t-1))$. Then there is a spectral sequence
\begin{equation}\label{eq:sseq}
E_2^{s,t} = \pi_s F(t)\Rightarrow \pi_s \lim_{n\rightarrow\infty} X(n),\qquad d_r^{s,t}\colon E_r^{s,t}\rightarrow E_r^{s-1,t+r-1}.
\end{equation}
Write $Z_r^{s,t}$ and $B_r^{s,t}$ for the $r$-cycles and $r$-boundaries of this spectral sequence, so that
\[
0 = B_1^{s,t}\subset B_2^{s,t}\subset\cdots\subset Z_2^{s,t}\subset Z_1^{s,t} = \pi_s F(t),\qquad E_{r}^{s,t} = Z_{r-1}^{s,t}/B_{r-1}^{s,t}.
\]
We will find it convenient to interpret the differentials in this spectral sequence as relations, just as in \cite{bousfield1989homotopy}, so we begin by recalling the construction in this form. Define
\begin{equation}\label{eq:Dst}
D_r^{s,t} = \pi_s F(t)\times_{\pi_s X(t)}\im\left(\pi_s X(t+r-2)\rightarrow \pi_s X(t)\times \pi_{s-1}F(t+r-1)\right),
\end{equation}
where $\pi_s X(t+r-2)\rightarrow \pi_{s-1}F(t+r-1)$ is obtained from the boundary map $X(t+r-2)\rightarrow\Sigma F(t+r-1)$. Note that
\[
D_r^{s,t}\subset \pi_s F(t)\times \pi_{s-1}F(t+r-1) = Z_1^{s,t}\times Z_1^{s-1,t+r-1}.
\]
Recall the following basic fact about additive relations.

\begin{lemma}\label{lem:relation}
Let $M$ and $N$ be abelian groups and $R\subset M\times N$ a subgroup. Define
\[
Z = \im(R\rightarrow M) \qquad
K = \ker(R\rightarrow M) \qquad
B = \im(K\rightarrow N) \qquad
C = \coker(B\rightarrow N).
\]
Then the relation $\im(R\rightarrow Z\times C)$ gives a well defined function $Z\rightarrow C$.
\qed
\end{lemma}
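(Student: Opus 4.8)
The statement to prove is Lemma~\ref{lem:relation}, a purely algebraic fact about additive relations: given a subgroup $R \subset M \times N$, with $Z = \im(R \to M)$, $K = \ker(R \to M)$, $B = \im(K \to N)$, and $C = \coker(B \to N)$, the relation $\im(R \to Z \times C)$ is the graph of a well-defined function $Z \to C$.

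\begin{proof}[Proof proposal]
The plan is to verify the two conditions that characterize when a relation (subgroup of a product) is the graph of a function: totality on the source and single-valuedness. Write $\overline{R}$ for $\im(R \to Z \times C)$, the image of $R$ under the product of the projection $R \to M$ (which by definition of $Z$ has image exactly $Z$) and the composite $R \to N \to C$.

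First I would check \emph{totality}: every $z \in Z$ appears as the first coordinate of some element of $\overline{R}$. This is immediate, since $Z = \im(R \to M)$ by definition, so any $z \in Z$ is $\pi_M(r)$ for some $r \in R$, and then $(z, \bar{c}) \in \overline{R}$ where $\bar{c}$ is the image of $\pi_N(r)$ in $C$.

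Second, and this is the only step with any content, I would check \emph{single-valuedness}: if $(z, \bar{c})$ and $(z, \bar{c}')$ both lie in $\overline{R}$, then $\bar{c} = \bar{c}'$. By additivity it suffices to show that $(0, \bar{c}) \in \overline{R}$ implies $\bar{c} = 0$ in $C$. So suppose $r \in R$ has $\pi_M(r) = 0$; then by definition $r \in K = \ker(R \to M)$, hence $\pi_N(r) \in \im(K \to N) = B$, hence the image of $\pi_N(r)$ in $C = \coker(B \to N)$ is zero. Combining the two conditions, $\overline{R}$ is the graph of a function $Z \to C$, which moreover is a group homomorphism since $\overline{R}$ is a subgroup. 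There is essentially no obstacle here; the lemma is a bookkeeping statement recording the standard ``relations as differentials'' formalism (as in \cite{bousfield1989homotopy}), and the only thing to be careful about is matching the defined maps $R \to M$ and $R \to N$ to the projections and tracking that $K$ is computed inside $R$ rather than inside $M \times N$.
\end{proof}
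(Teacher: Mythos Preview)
Your proof is correct. The paper itself omits the proof entirely (the lemma is stated with a bare \qed), treating it as a standard fact about additive relations, so your argument supplies precisely the routine verification of totality and single-valuedness that the paper leaves implicit.
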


The spectral sequence of \cref{eq:sseq} is now given as follows.

\begin{lemma}[Definition]\label{lem:towerss}
The following hold, where $Z_1^{s,t} = \pi_s F(t) = E_2^{s,t}$ as above.
\begin{enumerate}
\item $Z_{r-1}^{s,t} = \im(D_r^{s,t}\rightarrow Z_1^{s,t})$;
\item $B_{r-1}^{s-1,t+r-1} = \im(\ker(D_r^{s,t}\rightarrow Z_1^{s,t})\rightarrow Z_1^{s-1,t+r-1})$;
\item $d_r^{s,t}\colon Z_{r-1}^{s,t}\rightarrow Z_1^{s-1,t+r-1}/B_{r-1}^{s-1,t+r-1}$ is the function associated the relation $D_r^{s,t}$;
\item $Z_r^{s,t} = \ker(D_r^{s,t} \rightarrow Z_1^{s-1,t+r-1})$;
\item $B_r^{s-1,t+r-1} = \im(D_r^{s,t}\rightarrow Z_1^{s-1,t+r-1})$.
\qed
\end{enumerate}
\end{lemma}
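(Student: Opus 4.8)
The plan is to show that formulas (1)--(5) are nothing but a repackaging of the spectral sequence attached to the exact couple of the tower; once this identification is in place, the mutual consistency of the clauses and the stated convergence follow for free. (Alternatively one could verify directly, in the style of \cite{bousfield1989homotopy}, that (1)--(5) assemble into a spectral sequence, but comparison with the exact couple is more economical.)

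First I would record the staircase: the fiber sequences $F(t)\to X(t)\to X(t-1)$ give long exact sequences
\[
\cdots\to\pi_s F(t)\xrightarrow{j}\pi_s X(t)\xrightarrow{i}\pi_s X(t-1)\xrightarrow{\partial}\pi_{s-1}F(t)\to\cdots,
\]
which together form an exact couple whose spectral sequence is \cref{eq:sseq}. Unwinding the derived couple, an element $x\in\pi_s F(t)$ survives to $E_r$ exactly when $j(x)$ lifts to some $\widetilde x\in\pi_s X(t+r-2)$, and for any such lift the class $\partial\widetilde x\in\pi_{s-1}F(t+r-1)$ represents $d_r(x)$, the indeterminacy in the choice of $\widetilde x$ accounting for precisely the subgroup $B_{r-1}$ of the target. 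Thus $D_r^{s,t}$ — consisting of pairs $(x,y)$ admitting a witness $\widetilde x\in\pi_s X(t+r-2)$ with image $j(x)$ in $\pi_s X(t)$ and $\partial\widetilde x=y$ — is exactly the graph of the relation ``$y$ is a legitimate value of $d_r$ on $x$''.

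With this dictionary, clauses (1) and (2) are read off the two exact sequences: $\im(D_r^{s,t}\to\pi_s F(t))$ is the set of $x$ admitting a lift $r-2$ stages up, namely $Z_{r-1}^{s,t}$; and the image in $\pi_{s-1}F(t+r-1)$ of $\ker(D_r^{s,t}\to\pi_s F(t))$ consists of the classes $\partial\widetilde x$ for which the image of $\widetilde x$ in $\pi_s X(t)$ vanishes, which a short chase with exactness identifies with $B_{r-1}^{s-1,t+r-1}$. Clause (3) is then immediate from \cref{lem:relation} applied with $M=Z_1^{s,t}$, $N=Z_1^{s-1,t+r-1}$, $R=D_r^{s,t}$: it produces exactly $Z=Z_{r-1}^{s,t}$, $B=B_{r-1}^{s-1,t+r-1}$, and the associated function $Z\to C$ is $d_r^{s,t}$.

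The step I expect to need the most care is clause (4). Vanishing of $d_r(x)$ only says that \emph{some} witnessed value $y$ of $d_r$ on $x$ lies in $B_{r-1}^{s-1,t+r-1}$, whereas (4) demands that $0$ be a witnessed value, i.e.\ $(x,0)\in D_r^{s,t}$; the resolution is that $D_r^{s,t}$ is a \emph{subgroup}, so given $(x,y)\in D_r^{s,t}$ with $y\in B_{r-1}^{s-1,t+r-1}$, clause (2) supplies $(0,y)\in D_r^{s,t}$ and subtraction gives $(x,0)\in D_r^{s,t}$. This yields $Z_r^{s,t}=\ker(d_r^{s,t})=\ker(D_r^{s,t}\to Z_1^{s-1,t+r-1})$, and clause (5) is just the identification $B_r=\im(d_r)=\im(D_r^{s,t}\to Z_1^{s-1,t+r-1})$; the remaining relations $B_{r-1}\subseteq B_r\subseteq Z_r\subseteq Z_{r-1}$ and $d_r\circ d_r=0$, together with convergence to $\pi_\ast\lim_n X(n)$, are inherited from the exact couple. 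The persistent hazard throughout is index bookkeeping — the shifts between $r$ and $r-1$ and the $(r-2)$-stage lifts must line up exactly with the hypotheses of \cref{lem:relation}, and an off-by-one there would break the comparison.
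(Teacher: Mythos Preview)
Your argument is correct. Note, however, that the paper itself gives no proof: the statement is labelled ``Lemma (Definition)'' and is followed immediately by \qed, so the author is treating (1)--(5) as the \emph{definition} of the spectral sequence of the tower (equivalently, as a standard repackaging of the exact couple that the reader is presumed to know). Your write-up therefore supplies strictly more than the paper does: you actually carry out the comparison with the exact couple and check the one nonobvious point, namely that the relation-theoretic kernel in (4) really agrees with the usual $Z_r$ because $D_r^{s,t}$ is a subgroup. That is exactly the right thing to highlight, and your subtraction argument $(x,y)-(0,y)=(x,0)$ using (2) is the clean way to do it.
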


\subsection{The homotopy limit spectral sequence}\label{ssec:shfpss}

Given a diagram $M\colon \calJ\rightarrow \Ab$ of abelian groups, let $\calH(\calJ;M)$ denote the limit of the composite
\begin{center}\begin{tikzcd}
\calJ\ar[r,"M"]&\Ab\ar[r,"H"]&\Sp
\end{tikzcd},\end{center}
and let 
\[
\calH^n(\calJ;M) = \Omega^{\infty-n}\calH(\calJ;M),\qquad H^n(\calJ;M) = \pi_{-n}\calH(\calJ;M) = \pi_0 \calH^n(\calJ;M).
\]
We may identify $H^n(\calJ;\bs)$ as the $n$th right derived functor of $\lim_{j\in\calJ}\colon\Fun(\calJ,\Ab)\rightarrow\Ab$. Given a diagram $X\colon \calJ\rightarrow\Sp$, each $\pi_t X$ is a diagram $\calJ\rightarrow\Ab$. The HLSS
\[
E_2^{s,t} = H^{t-s}(\calJ;\pi_t X)\Rightarrow \pi_s \lim_{j\in\calJ} X(j)
\]
is then the spectral sequence associated to the tower
\begin{center}\begin{tikzcd}
&\Sigma^{t+1}\calH(\calJ;\pi_{t+1} X)\ar[d]&\Sigma^t\calH(\calJ;\pi_t X)\ar[d]&\Sigma^{t-1}\calH(\calJ;\pi_{t-1}X)\ar[d]\\
\cdots\ar[r]&\lim_{j\in\calJ} (X(j)_{\leq t+1})\ar[r]&\lim_{j\in\calJ} (X(j)_{\leq t})\ar[r]&\lim_{j\in\calJ}(X(j)_{\leq t-1})\ar[r]&\cdots
\end{tikzcd}.\end{center}
Note in particular 
\[
D_r^{s,t} \subset H^{t-s}(\calJ;\pi_t X)\times H^{t+r-s}(\calJ;\pi_{t+r-1}X)
\]
and
\[
E_r^{s,t}=0\qquad\text{for}\qquad t<s.
\]

\subsection{Unstable homotopy limits}\label{ssec:unstable}

The preceding construction is not quite sufficient for our purposes, as it lacks the naturality properties we require. If $X,Y\colon \calJ\rightarrow\Sp$ are two diagrams of spectra, then a natural transformation $X\rightarrow Y$ does induce a map of HLSSs in the usual way; however, we are interested in the more exotic situation where we are given a natural transformation $\Omega^\infty X \rightarrow \Omega^\infty Y$ of diagrams of spaces, not necessarily stable. Here, one may suppose without loss of generality that $X$ and $Y$ are valued in connective spectra. 

All of our applications described in \cref{ssec:applications} are of this form, requiring an space-level analysis of unstable natural transformations. For example, \cref{thm:main} will follow from a consideration of the natural norm map
\[
\calP_\alpha\colon \Omega^\infty\Sp^K(S^\alpha,\res^G_KR)\rightarrow\Omega^\infty\Sp^G(S^{\Ind_K^G\alpha},R)
\]
of \cref{eq:spacenorm}, where $R$ is a diagram of $G\hyp\bbE_\infty$ rings. This map is pointed, but is essentially never stable. To access naturality with respect to this sort of map, we need a construction of the HLSS which depends on only the underlying diagram of spaces.

Let $T$ be a space. Then $T$ has a Postnikov tower:
\[
\cdots\rightarrow T_{\leq t+1}\rightarrow T_{\leq t}\rightarrow T_{\leq t-1}\rightarrow\cdots \rightarrow T_{\leq 1}\rightarrow T_{\leq 0} = \pi_0 T.
\]
The layers of this tower are determined by suitable $k$-invariants. If $T$ is simply connected then these are of the form $T_{\leq t-1}\rightarrow K(\pi_t T,t+1)$, but the situation is more subtle in general: if $T$ is merely pointed and connected then the target must take into account the natural action of $\pi_1 T$ on $\pi_t T$, and in the most general case one must instead consider a variant of $K(\pi_tT,t+1)$ which regards ``$\pi_t T$'' as a bundle of groups over the fundamental groupoid of $T$.

We are in a certain reasonably pleasant middle ground where $T$ need not be connected, but for all points $x\in T$ and $t\geq 1$ the natural action of $\pi_1(T,x)$ on $\pi_t(T,x)$ is trivial. Call such a space \textit{simple}. Informally, a simple space is a disjoint union of connected spaces for which the theory of Postnikov towers is at its simplest. For $n,m\geq 1$, let $B_{\pi_0 T}^m\Pi_n T = \coprod_{t\in \pi_0 T} K(\pi_n(T,t),m)$. Then we have the following standard fact.

\begin{lemma}\label{lem:simplepostnikov}
If $T$ is a simple space, then there are Cartesian squares
\begin{center}\begin{tikzcd}
K(\pi_t(T,x),t)\ar[r]\ar[d]&T_{\leq t}\ar[r]\ar[d,"p_t"]&\pi_0 T\ar[d]\\
\{p_t(x)\}\ar[r]&T_{\leq t-1}\ar[r]&B^{t+1}_{\pi_0 T}\Pi_t T
\end{tikzcd},\end{center}
where the right square always exists naturally in $T$, and the left square exists naturally in $T$ and the choice of a point $x\in T_{\leq t}$, provided such a point exists.
\qed
\end{lemma}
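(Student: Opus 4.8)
The plan is to reduce to the case of a connected space, construct the $k$-invariant as a lift of the classifying map of a Postnikov fibration, and then deduce the left-hand square formally by base change. First, write $T=\coprod_{c\in\pi_0 T}T_c$ as the disjoint union of its connected components. Since $T_{\leq t}=\coprod_c(T_c)_{\leq t}$, since $B^{t+1}_{\pi_0 T}\Pi_t T=\coprod_c K(\pi_t(T,c),t+1)$ by definition, and since every map in the displayed diagram is a map over the discrete set $\pi_0 T$ (with $T_{\leq t}\to\pi_0 T$ and $B^{t+1}_{\pi_0 T}\Pi_t T\to\pi_0 T$ the evident projections and $\pi_0 T\to B^{t+1}_{\pi_0 T}\Pi_t T$ the basepoint section), it suffices to treat a single connected, pointed, simple $T$. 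In that case $B^{t+1}_{\pi_0 T}\Pi_t T=K(\pi_t T,t+1)$, and the right-hand square reduces to the assertion that $T_{\leq t}\simeq\Fib(\kappa_t)$ over $T_{\leq t-1}$ for a natural $k$-invariant $\kappa_t\colon T_{\leq t-1}\to K(\pi_t T,t+1)$. Note that $\pi_t T$ is abelian for all $t\geq 1$: automatically for $t\geq 2$, and for $t=1$ because simplicity forces $\pi_1 T$ to act trivially on itself by conjugation, which is exactly what makes $K(\pi_t T,t+1)$ meaningful.

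To construct $\kappa_t$: by the long exact sequence the homotopy fiber of $p_t\colon T_{\leq t}\to T_{\leq t-1}$ has homotopy concentrated in degree $t$, where it is $\pi_t T$, so $p_t$ is a fibration with fiber $K(\pi_t T,t)$, classified by a map $T_{\leq t-1}\to B\Aut(K(\pi_t T,t))$, with $\Aut(-)$ the grouplike space of self-homotopy-equivalences. The standard computation of $\Map(K(A,t),K(A,t))$---whose identity component has $\pi_i$ equal to $H^{t-i}(K(A,t);A)$, hence vanishing for $0<i<t$ and equal to $A$ for $i=t$, so is itself a $K(A,t)$---exhibits $\Aut(K(A,t))$ as an extension $K(A,t)\to\Aut(K(A,t))\to\Aut(A)$ with $\Aut(A)$ discrete, which deloops to a fiber sequence $K(A,t+1)\to B\Aut(K(A,t))\to B\Aut(A)$. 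Postcomposing the classifying map of $p_t$ with $B\Aut(K(A,t))\to B\Aut(A)$ classifies the monodromy of $p_t$ on $\pi_t$ of the fiber, i.e.\ the action of $\pi_1 T$ on $\pi_t T$; simplicity makes this nullhomotopic, so the classifying map lifts to $\kappa_t\colon T_{\leq t-1}\to K(\pi_t T,t+1)$, realizing $p_t$ as a principal $K(\pi_t T,t)$-fibration. Such a fibration is the pullback along $\kappa_t$ of the path fibration $PK(\pi_t T,t+1)\to K(\pi_t T,t+1)$, and since $PK(\pi_t T,t+1)$ is contractible this exhibits $T_{\leq t}\simeq\Fib(\kappa_t)$ over $T_{\leq t-1}$---the right-hand Cartesian square for connected $T$. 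Reassembling over $\pi_0 T$ gives it in general.

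The left-hand square is then the base change of $p_t\colon T_{\leq t}\to T_{\leq t-1}$ along the inclusion $\{p_t(x)\}\hookrightarrow T_{\leq t-1}$: its top-left corner is the homotopy fiber of $p_t$ over $p_t(x)$, which the same long exact sequence identifies with $K(\pi_t(T,x),t)$, so the square is Cartesian. Pasting it onto the right-hand square recovers the full diagram. This also makes clear why the left square needs a chosen point: both the identification of the fiber as $K(\pi_t(T,x),t)$ and even its nonemptiness are relative to the component of $x$.

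The homotopy-theoretic content above is routine; the real work is pinning down the naturality asserted in the statement, in particular that the right square is natural in $T$ with no reference to a basepoint. This is handled by running the argument in a framework with functorial Postnikov towers and functorial Eilenberg--MacLane objects---Kan complexes, with their coskeletal Postnikov towers and explicit obstruction cocycles, or the $\infty$-category of spaces, where truncations are functorial and the fiber-sequence manipulations above are coherent. The one point that genuinely uses the hypothesis, rather than merely that ``$\pi_1$ acts trivially up to isomorphism'', is that one must produce the nullhomotopy of the monodromy map $T_{\leq t-1}\to B\Aut(\pi_t T)$ \emph{functorially}; for simple spaces this is supplied by the tautological trivialization of the $\pi_1$-action, and it is this choice that rigidifies $\kappa_t$. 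I expect this bookkeeping, rather than any individual homotopy-theoretic step, to be the main obstacle.
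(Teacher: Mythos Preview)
Your argument is correct and is essentially the standard proof of this fact. The paper itself offers no proof at all: the lemma is introduced as ``the following standard fact'' and closed with a bare \qed, so there is nothing to compare against beyond noting that your write-up supplies exactly the routine obstruction-theoretic justification the paper takes for granted, including the reduction over $\pi_0 T$ and the naturality bookkeeping.
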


In other words, $B^{t+1}_{\pi_0 T}\Pi_t T$, treated as a bundle of abelian groups over the discrete space $\pi_0 T$, is the correct replacement for $K(\pi_tT,t+1)$ in the theory of Postnikov towers for non-connected simple spaces. It is, in particular, natural in $T$.  This bit of maneuvering would not be necessary if we restricted ourselves to considering only the case where $T$ is connected. In the context of the main theorem of this section, \cref{thm:unstablenaturality}, it is needed only to account for what happens with the path components living at the very fringe of the spectral sequence. 

Now say that $T\colon \calJ\rightarrow\Gpd_\infty$ is a diagram of simple spaces. Let
\[
H^0(\calJ;\pi_0 T) = \lim_{j\in\calJ}\pi_0 T(j).
\]
Observe that as $T$ is simple, if $x\in H^0(\calJ;\pi_0 T)$ and $t\geq 1$, then $\pi_t(T,x)$ is naturally a $\calJ$-shaped diagram of abelian groups. Let
\[
\calH^{t+1}_{\pi_0 T}(\calJ;\Pi_t T) = \lim_{j\in\calJ}B^{t+1}_{\pi_0 T(j)}\Pi_t T(j) \simeq \coprod_{x\in H^0(\calJ;\pi_0 T)}\calH^{t+1}(\calJ;\pi_t(T,x)).
\]

\begin{lemma}
There are Cartesian squares
\begin{center}\begin{tikzcd}
\calH^t(\calJ;\pi_t(T,x))\ar[r]\ar[d] & \lim_{j\in\calJ} (T(j)_{\leq t})\ar[r]\ar[d,"p_t"]&H^0(\calJ;\pi_0 T)\ar[d]\\
\{p_t(x)\}\ar[r]&\lim_{j\in\calJ} (T(j)_{\leq t-1})\ar[r]&\calH^{t+1}_{\pi_0 T}(\calJ;\Pi_t T)
\end{tikzcd},\end{center}
where the right square always exists naturally in $T$, and the left square exists naturally in $T$ and the choice of a point $x\in \lim_{j\in\calJ} (T(j)_{\leq t})$, provided such a point exists.
\end{lemma}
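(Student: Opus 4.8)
The plan is to deduce the statement from \cref{lem:simplepostnikov} by applying $\lim_{j\in\calJ}$ levelwise. Two formal facts do all the work. First, $\lim_{j\in\calJ}\colon\Fun(\calJ,\Gpd_\infty)\rightarrow\Gpd_\infty$ is a right adjoint, hence preserves limits; in particular it carries a $\calJ$-indexed family of Cartesian squares (a diagram $\calJ\times\square\rightarrow\Gpd_\infty$ that is a pullback in the $\square$-direction objectwise) to a Cartesian square. Second, $\Omega^{\infty-t}\colon\Sp\rightarrow\Gpd_\infty$ preserves limits, being $\Omega^\infty$ composed with the equivalence $\Sigma^t$ of $\Sp$.

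I would first treat the right-hand square. Since the right-hand square of \cref{lem:simplepostnikov} is natural in $T$, evaluating it on a diagram $T\colon\calJ\rightarrow\Gpd_\infty$ of simple spaces yields a $\calJ$-indexed family of Cartesian squares. Applying $\lim_{j\in\calJ}$ produces a Cartesian square with corners $\lim_{j\in\calJ}(T(j)_{\leq t})$, $\lim_{j\in\calJ}(T(j)_{\leq t-1})$, $\lim_{j\in\calJ}\pi_0 T(j) = H^0(\calJ;\pi_0 T)$, and $\lim_{j\in\calJ}B^{t+1}_{\pi_0 T(j)}\Pi_t T(j) = \calH^{t+1}_{\pi_0 T}(\calJ;\Pi_t T)$, the last two being the definitions. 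Naturality in $T$ is inherited from that of the squares being assembled.

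Next I would treat the left-hand square. A point $x\in\lim_{j\in\calJ}(T(j)_{\leq t})$ is the same datum as a compatible family $(x_j)$ with $x_j\in T(j)_{\leq t}$, so by naturality of the left-hand square of \cref{lem:simplepostnikov} in the pair $(T,x)$ one again obtains a $\calJ$-indexed family of Cartesian squares. Applying $\lim_{j\in\calJ}$, the lower-left corner $\lim_{j\in\calJ}\{p_t(x_j)\}$ is the point $p_t(x)\in\lim_{j\in\calJ}(T(j)_{\leq t-1})$, while the upper-left corner is
\[
\lim_{j\in\calJ}K(\pi_t(T(j),x),t) = \Omega^{\infty-t}\lim_{j\in\calJ}H\pi_t(T(j),x) = \Omega^{\infty-t}\calH(\calJ;\pi_t(T,x)) = \calH^t(\calJ;\pi_t(T,x)),
\]
using that $\Omega^{\infty-t}$ commutes with limits together with the definition of $\calH^t(\calJ;\bs)$. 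This is the left-hand square, natural in $(T,x)$; pasting the two squares gives the displayed diagram.

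I do not expect a genuine obstacle, since the delicate setup was already absorbed into \cref{lem:simplepostnikov} (this is exactly the payoff of working with simple, rather than connected, spaces and with the bundle $B^{t+1}_{\pi_0 T}\Pi_t T$). The one point meriting care is the identification of the objects at the fringe: one must check that under the decomposition $\calH^{t+1}_{\pi_0 T}(\calJ;\Pi_t T)\simeq\coprod_{x}\calH^{t+1}(\calJ;\pi_t(T,x))$ the map $p_t$ on the new square restricts to the $p_t$ at each $j$, so that $\{p_t(x)\}$ lands in the component indexed by $x\in H^0(\calJ;\pi_0 T)$. This is once more an instance of $\lim_{j\in\calJ}$ preserving pullbacks, taking the fibre of the right-hand square over $x$ and using that $H^0(\calJ;\pi_0 T)$ is discrete.
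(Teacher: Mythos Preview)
Your proposal is correct and is exactly the paper's approach: the paper's proof is the single sentence ``This follows by taking limits over \cref{lem:simplepostnikov},'' and you have simply unwound what that entails. The extra care you take in identifying the corners and in tracking the fringe component is fine but not strictly necessary for the paper's purposes.
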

\begin{proof}
This follows by taking limits over \cref{lem:simplepostnikov}.
\end{proof}

Fix $s \geq 0$, $t\geq 0$, $r\geq 2$ and $x\in \pi_0\lim_{j\in\calJ}(T(j)_{\leq t+r-2})$, and write the same for the image of $x$ in $\pi_0\lim_{j\in\calJ}(T(j)_{\leq n})$ for $n\leq t+r-2$. Define
\begin{equation}\label{eq:Dun}
D_{r,x}^{s,t} = \lim\left(\pi_s p_t^{-1}(x)\rightarrow\pi_s \left(\lim_{j\in\calJ}(T(j)_{\leq t}),x\right)\leftarrow I^{s,t}_{r,x}\right),
\end{equation}
where
\[
I_{r,x}^{s,t} = \im\left(\pi_s\left(\lim_{j\in\calJ}(T(j)_{\leq t+r-2}),x\right)\rightarrow \pi_s \left(\lim_{j\in\calJ}(T(j)_{\leq t}),x\right)\times \pi_s\left(\calH^{t+r}_{\pi_0 T}(\calJ;\Pi_{t+r-1}X),x\right)\right).
\]
When $s = 0$, we extend this notation to be defined having fixed just $x\in \pi_0\lim_{j\in\calJ}(T(j)_{\leq t-1})$. We will only make use of the simplest case, where $T$ is pointed and either $s=t=0$ or $x$ is the basepoint, but make no such restriction for the moment. Observe that $D_{r,x}^{s,t}\subset J_{r,x}^{s,t}$ where
\begin{equation}\label{eq:J}
J_{r,x}^{s,t} = \begin{cases} H^{t-s}(\calJ;\pi_t(T,x))\times H^{t+r-s}(\calJ;\pi_{t+r-1}(T,x)), & s\geq 1; \\
H^t(\calJ;\pi_t(T,x))\times \coprod_{y\in H^0(\calJ;\pi_0 T)} H^{t+r}(\calJ;\pi_{t+r-1}(T,y)),&s=0.
\end{cases}
\end{equation}

Let $S\colon\calJ\rightarrow\Gpd_\infty$ be another diagram of simple spaces, and $f\colon T\rightarrow S$ a map of diagrams. This induces maps
\[
f\colon \lim_{j\in\calJ}(T(j)_{\leq t})\rightarrow\lim_{j\in\calJ}(S(j)_{\leq t})
\]
of spaces, and for $x\in H^0(\calJ;\pi_0 T)$ and $t\geq 1$, a map
\[
f\colon \pi_t(T,x)\rightarrow \pi_t(S,f(x))
\]
of diagrams of abelian groups. Combined, these yield
\[
f\colon J_{r,x}^{s,t}(T)\rightarrow J_{r,f(x)}^{s,t}(S).
\]

\begin{lemma}\label{lem:naturaldiff}
The map $f\colon J_{r,x}^{s,t}(T)\rightarrow J_{r,f(x)}^{s,t}(S)$ satisfies $f(D_{r,x}^{s,t}(T))\subset D_{r,f(x)}^{s,t}(S)$.
\end{lemma}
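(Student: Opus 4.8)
The plan is to observe that every object entering the definition of $D_{r,x}^{s,t}$ in \cref{eq:Dun} is functorial in the diagram of simple spaces, so that the lemma holds essentially by construction. The inputs I would invoke are \cref{lem:simplepostnikov} and its limit version: the Postnikov stages $\lim_{j\in\calJ}(T(j)_{\leq n})$, the $k$-invariant maps $p_n$, and their targets $\calH^{n+1}_{\pi_0 T}(\calJ;\Pi_n T)$ are all natural in $T$, and since $T$ is simple each $\pi_t(T,x)$ is a $\calJ$-shaped diagram of abelian groups on which $f$ acts. First I would record the maps $f$ induces on the constituent objects: compatible maps $\lim_{j\in\calJ}(T(j)_{\leq n})\to\lim_{j\in\calJ}(S(j)_{\leq n})$ sending (the image of) $x$ to (the image of) $f(x)$ at every stage, maps $p_t^{-1}(x)\to p_t^{-1}(f(x))$, and maps $\calH^{t+r}_{\pi_0 T}(\calJ;\Pi_{t+r-1}T)\to\calH^{t+r}_{\pi_0 S}(\calJ;\Pi_{t+r-1}S)$ taking the summand indexed by $y\in H^0(\calJ;\pi_0 T)$ into the one indexed by $f(y)$. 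Passing to $\pi_s$, these assemble into the map $f\colon J_{r,x}^{s,t}(T)\to J_{r,f(x)}^{s,t}(S)$ of \cref{eq:J} and fit into commuting squares over the corresponding data for $S$.

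Granting this, the argument is formal and I would run it in three steps. First, $f(I_{r,x}^{s,t}(T))\subset I_{r,f(x)}^{s,t}(S)$, since the square exhibiting $I_{r,x}^{s,t}$ as the image of $\pi_s(\lim_{j\in\calJ}(T(j)_{\leq t+r-2}),x)$ maps compatibly to the analogous square for $S$, and the image of a subset lands in the image of its image. Second, $f$ carries $\pi_s p_t^{-1}(x)(T)$ into $\pi_s p_t^{-1}(f(x))(S)$ by naturality of the Cartesian square producing $p_t^{-1}(x)$. Third, $D_{r,x}^{s,t}$ is the fibre product of $\pi_s p_t^{-1}(x)$ and $I_{r,x}^{s,t}$ over $\pi_s(\lim_{j\in\calJ}(T(j)_{\leq t}),x)$, and $f$ is compatible with all three of these maps, so it carries the fibre product for $T$ into that for $S$ — which is exactly the assertion $f(D_{r,x}^{s,t}(T))\subset D_{r,f(x)}^{s,t}(S)$.

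The main thing to be careful about — the closest thing to an obstacle — is the non-connected, fringe case $s=0$: one must make sure the ``bundle of abelian groups over $\pi_0$'' object $\calH^{t+r}_{\pi_0 T}(\calJ;\Pi_{t+r-1}T)$ is genuinely natural in $T$ even when $f$ is not surjective on $\pi_0$, and that the basepoint $x$, which at the $s=0$ level is fixed only in $\pi_0\lim_{j\in\calJ}(T(j)_{\leq t-1})$, is tracked correctly along $f$. Both are built into the formulation of \cref{lem:simplepostnikov}, so I expect no genuinely new input is needed; the work is entirely in checking that the naturality assembled there descends to the pieces of \cref{eq:Dun}.
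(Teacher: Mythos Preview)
Your proposal is correct and takes essentially the same approach as the paper, which simply asserts that the claim ``is clear from the construction.'' You have spelled out in detail the naturality argument that the paper leaves implicit, and your careful attention to the $s=0$ fringe case is appropriate but, as you anticipated, requires no new ideas beyond what is already built into \cref{lem:simplepostnikov}.
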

\begin{proof}
This is clear from the construction.
\end{proof}

\subsection{Comparing the stable and unstable constructions}\label{ssec:stableunstable}

Let $X$ be a spectrum, and consider the underlying simple space $\Omega^\infty X$. For $x\in \pi_0 X$, write $\Omega^\infty_x X$ for the path component of $\Omega^\infty X$ corresponding to $x$. As $\Omega^\infty X$ is a group, there are equivalences
\[
\gamma_x\colon \Omega^\infty_x X\rightarrow\Omega^\infty_0 X,\qquad \gamma_x(a) = a-x
\]

\begin{lemma}\label{lem:groupsplit}
The above patch together into an equivalence
\[
\Omega^\infty X\simeq\pi_0 X\times \Omega^\infty_0 X,
\]
compatible on Postnikov towers with equivalences
\[
B^{t+1}_{\pi_0 X}\Pi_t X \cong \pi_0 X \times K(\pi_t X,t+1)
\]
for $t\geq 1$. These equivalences are natural with respect to $\Omega^\infty X$ as a group object.
\qed
\end{lemma}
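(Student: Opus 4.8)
The plan is to observe that $\Omega^\infty X$ is naturally a group object in $\Gpd_\infty$ — a grouplike $\bbE_\infty$-space, via the recognition principle, or simply because it is the infinite loop space of a spectrum — and then to literally patch the translation equivalences $\gamma_x\colon\Omega^\infty_x X\rightarrow\Omega^\infty_0 X$ together into the asserted splitting, repeating the recipe stage by stage up the Postnikov tower. The structural facts I would isolate first are: (i) the projection to components $q\colon\Omega^\infty X\rightarrow\pi_0 X$ is a map of group objects onto the discrete abelian group $\pi_0 X$, with fibre $\Omega^\infty_0 X$ over the basepoint; (ii) each component $\Omega^\infty_x X$ is connected, so left translation by any two of its points agree up to homotopy, and in particular the isomorphism $\pi_t(\Omega^\infty_x X,x)\cong\pi_t X$ induced by any translation $\Omega^\infty_x X\rightarrow\Omega^\infty_0 X$ for $t\geq 1$ is canonical; and (iii) consequently $B^{t+1}_{\pi_0 X}\Pi_t X=\coprod_{x\in\pi_0 X}K(\pi_t(\Omega^\infty_x X,x),t+1)$ is canonically identified with $\pi_0 X\times K(\pi_t X,t+1)$.

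With a section $\sigma$ of $q$ fixed — this is the choice implicit in writing $\gamma_x(a)=a-x$, conflating $x\in\pi_0 X$ with a point of $\Omega^\infty_x X$ (with $0$ the basepoint) — I would define the comparison map $\Omega^\infty X\rightarrow\pi_0 X\times\Omega^\infty_0 X$ by $a\mapsto(q(a),\,a-\sigma(q(a)))$, with evident inverse $(x,z)\mapsto z+\sigma(x)$. This is an equivalence because over the component of each $x$ it is exactly $\gamma_x$, and by (ii) it is independent of $\sigma$ up to homotopy. Running the same recipe on the Postnikov stages $(\Omega^\infty X)_{\leq t}$ — each again an infinite loop space, with $\sigma$ pushed down the tower — produces splittings $(\Omega^\infty X)_{\leq t}\simeq\pi_0 X\times(\Omega^\infty_0 X)_{\leq t}$ compatible with the tower maps $p_t$, and feeding (iii) into the Cartesian squares of \cref{lem:simplepostnikov} shows these are compatible with the $k$-invariant fibrations as well. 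Naturality then falls out of the construction: every map in sight ($q$, the translations, the truncations, the identification in (iii)) is built from the group-object structure of $\Omega^\infty X$, so a morphism of group objects intertwines the whole package.

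The part I expect to take genuine care is exactly the ``fringe'' bookkeeping alluded to before the lemma: ensuring that the one splitting of $\Omega^\infty X$ is simultaneously compatible with the entire Postnikov tower and with the multi-component $k$-invariant targets $B^{t+1}_{\pi_0 X}\Pi_t X$, so that one obtains a natural equivalence of towers of group objects rather than merely a levelwise equivalence — and, relatedly, pinning down the precise sense in which the result is natural, given that the section $\sigma$ is not itself canonical (indeed no natural section of $q$ compatible with the group structure can exist, as one sees already for $X$ the connective sphere). None of this would arise if $X$ were connective with $\Omega^\infty X$ connected; the entire content of the lemma is handling the extra components uniformly.
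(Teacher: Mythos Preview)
The paper gives no proof of this lemma: it is stated with an immediate \qed, treating the result as a standard fact about grouplike spaces. Your proposal is correct and supplies exactly the argument the author leaves implicit --- the translations $\gamma_x$ are precisely the ``above'' being patched together --- and your worry about the section $\sigma$ is harmless, since any two choices differ by a self-homotopy of the connected space $\Omega^\infty_0 X$, making the resulting equivalence canonical up to homotopy and hence natural in maps of group objects.
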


Now say that $X$ is a diagram of spectra, and consider the underlying diagram $\Omega^\infty X$ of simple spaces. For $s\geq 0$ and $r\geq 2$, define 
\begin{equation}\label{eq:Js}
J^{s,t}_{r} = \begin{cases}
H^{t-s}(\calJ;\pi_t X) \times H^{t+r-s}(\calJ;\pi_{t+r-1}X),&s\geq 1 \\
H^t(\calJ;\pi_t X)\times H^0(\calJ;\pi_0 X)\times H^{t+r}(\calJ;\pi_{t+r-1}X),&s=0.\\
\end{cases}
\end{equation}

\begin{lemma}\label{lem:J}
Let $D_{r,x}^{s,t}$ and $J_{r,x}^{s,t}$ and be defined as in \cref{eq:Dun} and \cref{eq:J} for the diagram $\Omega^\infty X$. Then there are isomorphisms
\[
J_{r,x}^{s,t}\cong J_r^{s,t},
\]
and 
\[
D_{r,x}^{s,t}=\begin{cases}
D_{r,0}^{s,t} & s \geq 1,\\
D_{r,0}^{0,t} & s = 0 \text{ and we have a lift of $x$ to }\pi_0 \lim_{j\in\calJ}(X_{\leq t+r-2}),\\
\emptyset & \text{otherwise,}
\end{cases}
\]
as subsets of $J_r^{s,t}$. These identifications are natural in $\Omega^\infty X$ as a diagram of group objects.
\end{lemma}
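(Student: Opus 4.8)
The plan is to deduce the whole statement formally from the group splitting \cref{lem:groupsplit}; recall that we may assume $X$ is valued in connective spectra. The first step is to upgrade \cref{lem:groupsplit} to diagrams: since every structure map of $X\colon\calJ\to\Sp$ is an infinite loop map and hence a map of group objects, and the equivalences of \cref{lem:groupsplit} are natural in the group object, they assemble into an equivalence of diagrams of simple spaces $\Omega^\infty X\simeq\pi_0 X\times\Omega^\infty_0 X$ — here $\pi_0 X$ is regarded as a diagram of discrete spaces — compatible with Postnikov towers and with the layer splittings $B^{t+1}_{\pi_0 X(j)}\Pi_t X(j)\cong\pi_0 X(j)\times K(\pi_t X(j),t+1)$. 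Since $X$ is connective, the $k$-invariant maps of $\Omega^\infty X(j)$ are themselves maps of group objects (being $\Omega^\infty$ of the $k$-invariants of the spectrum $X(j)$), as are the corresponding maps of each factor; in particular they are translation-equivariant. Applying $\lim_{j\in\calJ}$, which commutes with these product factorizations and with the truncations of \cref{lem:simplepostnikov}, yields
\[
\lim_{j\in\calJ}(T(j)_{\leq n})\simeq H^0(\calJ;\pi_0 X)\times\lim_{j\in\calJ}(\Omega^\infty_0 X(j))_{\leq n}
\]
and
\[
\calH^{t+1}_{\pi_0 T}(\calJ;\Pi_t T)\simeq H^0(\calJ;\pi_0 X)\times\calH^{t+1}(\calJ;\pi_t X),
\]
writing $T=\Omega^\infty X$. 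Any base point $x$ occurring in \cref{eq:Dun} maps, under the truncation maps, to a point of $\lim_j(T(j)_{\leq 0})=H^0(\calJ;\pi_0 X)$, so its $\pi_0 X$-coordinate is a global section; hence $x$ trivializes the diagrams of abelian groups $\pi_t(T,x)\cong\pi_t X$ for $t\geq1$, compatibly with translation.

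The second step is to read off $J^{s,t}_{r,x}\cong J^{s,t}_r$ from \cref{eq:J}. For $s\geq1$ the discrete factor contributes nothing to the homotopy groups involved and $\pi_t(T,x)\cong\pi_t X$, which reproduces \cref{eq:Js}. For $s=0$, the term $H^t(\calJ;\pi_t(T,x))$ becomes $H^t(\calJ;\pi_t X)$ (by the trivialization when $t\geq1$, tautologically when $t=0$), and the disjoint union $\coprod_{y\in H^0(\calJ;\pi_0 T)}H^{t+r}(\calJ;\pi_{t+r-1}(T,y))$ becomes $H^0(\calJ;\pi_0 X)\times H^{t+r}(\calJ;\pi_{t+r-1}X)$ since $t+r-1\geq1$ and each $y$ is again a global section; this is again \cref{eq:Js}. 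Naturality of these identifications in $\Omega^\infty X$ as a diagram of group objects is immediate from \cref{lem:groupsplit}.

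The third step handles $D^{s,t}_{r,x}$. The pullback defining it in \cref{eq:Dun} is obtained by applying $\pi_s(-,x)$ to homotopy fibers of maps among precisely the spaces above, all of which split compatibly, so it too decomposes over the two factors. Along the constant tower $\pi_0 X$ every point lifts through every truncation map, and the construction only reproduces the diagonal copy of $H^0(\calJ;\pi_0 X)$ already present in $J^{s,t}_r$ — nothing when $s\geq1$, and the stated $H^0(\calJ;\pi_0 X)$-coordinate when $s=0$ — independently of $x$. Along $\Omega^\infty_0 X$: when $s\geq1$, $x$ is by hypothesis a point at level $t+r-2$, and translating the entire truncated cospan by (the images of) $x$ is legitimate, since every map in sight is translation-equivariant; this carries the $x$-version isomorphically onto the $0$-version, and on the abelian homotopy groups constituting $J^{0,t}_r$ it agrees with the canonical base-point change (group objects being simple), giving $D^{s,t}_{r,x}=D^{s,t}_{r,0}$ as subsets of $J^{s,t}_r$. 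When $s=0$ the same argument works provided $x$ admits a lift to $\pi_0\lim_j(T(j)_{\leq t+r-2})$; if no such lift exists, then $\lim_j(T(j)_{\leq t+r-2})$ has no component over $x$, so $I^{0,t}_{r,x}=\emptyset$ and therefore $D^{0,t}_{r,x}=\emptyset$. Naturality is inherited from \cref{lem:groupsplit} and \cref{lem:naturaldiff}.

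The step I expect to be the main obstacle is the third one, and within it the $s=0$ fringe: keeping straight which base points lift through the tower, and checking that the translation argument really identifies $D^{0,t}_{r,x}$ with $D^{0,t}_{r,0}$ as \emph{honest subsets} of $J^{0,t}_r$, not merely abstractly. Everything else amounts to bookkeeping with the product splitting.
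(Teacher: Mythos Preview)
Your proposal is correct and follows essentially the same approach as the paper: the paper's own proof is a one-liner stating that, since $\Omega^\infty X$ is a diagram of group objects, the lemma follows by applying \cref{lem:groupsplit} to the constructions of the sets involved. Your argument is a careful unpacking of exactly that --- upgrading the splitting to diagrams, reading off $J^{s,t}_{r,x}\cong J^{s,t}_r$ from the product decomposition, and using translation equivariance to identify $D^{s,t}_{r,x}$ with $D^{s,t}_{r,0}$ (or with $\emptyset$ at the fringe) --- so there is no genuine difference in strategy, only in level of detail.
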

\begin{proof}
As $X$ is a diagram of spectra, $\Omega^\infty X$ is a diagram of group objects. The lemma then follows by applying \cref{lem:groupsplit} to the constructions of the sets involved.
\end{proof}

There are obvious maps
\begin{equation}\label{eq:q}
q\colon J^{s,t}_r\rightarrow H^{t-s}(\calJ;\pi_t X)\times H^{t+r-s}(\calJ;\pi_{t+r-1}) = Z_1^{s,t}\times Z_1^{s-1,t+r-1},
\end{equation}
given by the identity for $s\geq 1$ and the projection $q(w,x,y) = (w,y)$ for $s=0$.

\begin{lemma}\label{lem:unstablediff}
Recall $D_r^{s,t}\subset H^{t-s}(\calJ;\pi_t X)\times H^{t+r-s}(\calJ;\pi_{t+r-1}X)$ and $D_{r,0}^{s,t}\subset J_{r,0}^{s,t}\cong J_r^{s,t}$ from \cref{eq:Dst} and \cref{eq:Dun}. We have
\[
D_{r}^{s,t} = \im\left(q\colon D_{r,0}^{s,t}\rightarrow H^{t-s}(\calJ;\pi_t X)\times H^{t+r-s}(\calJ;\pi_{t+r-1} X)\right)
\]
for $s\geq 0$. Moreover,
\[
q^{-1}(D_r^{s,t}) = \begin{cases}
D_{r,0}^{s,t}, & s\geq 1; \\
\{(x,0,y):(x,y)\in D_r^{0,t}\},&s=0,\,t\geq 1; \\
\{(x,x,y):(x,y)\in D_r^{0,0}\},&s=t=0.
\end{cases}
\]
\end{lemma}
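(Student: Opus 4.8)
The plan is to compare the definitions \cref{eq:Dst} and \cref{eq:Dun} head on, using that $\Omega^\infty$ commutes both with limits and with $n$-truncation of connective spectra. First I would record that applying $\Omega^\infty$ to the tower of \cref{ssec:shfpss} yields, for $T=\Omega^\infty X$, precisely the tower of fibrations underlying the unstable construction: one has $\Omega^\infty\lim_{j\in\calJ}(X(j)_{\leq n})\simeq\lim_{j\in\calJ}(T(j)_{\leq n})$, the fiber of $p_t$ over the basepoint is $\Omega^\infty F(t)\simeq\calH^t(\calJ;\pi_t X)$, and, via \cref{lem:groupsplit} just as in \cref{lem:J}, the $k$-invariant target splits naturally as $\calH^{t+r}_{\pi_0 T}(\calJ;\Pi_{t+r-1}X)\simeq H^0(\calJ;\pi_0 X)\times\calH^{t+r}(\calJ;\pi_{t+r-1}X)$, with identity-component factor $\Omega^\infty\Sigma F(t+r-1)$ and with the unstable $k$-invariant map given by the pair of $\Omega^\infty$ of the stable boundary map $X(t+r-2)\to\Sigma F(t+r-1)$ and the path-component label, i.e.\ the image of a class in $\pi_0 X(0)=H^0(\calJ;\pi_0 X)$.

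Granting this, the case $s\geq 1$ is immediate: since $X$ is a diagram of spectra, every space appearing in \cref{eq:Dun} is an infinite loop space, so its $\pi_s$ is a group computed at the basepoint, and under the identifications of \cref{lem:J} both the groups and the structure maps of \cref{eq:Dun} coincide with those of \cref{eq:Dst}. Thus $D_{r,0}^{s,t}=D_r^{s,t}$, and since $q$ is the identity on $J_r^{s,t}=Z_1^{s,t}\times Z_1^{s-1,t+r-1}$ in this range, both assertions follow at once.

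For $s=0$ I would unwind \cref{eq:Dun} explicitly: a point of $D_{r,0}^{0,t}$ is a pair $(\alpha,(\beta_1,\beta_2))$ with $\alpha\in\pi_0 p_t^{-1}(0)$, with $(\beta_1,\beta_2)$ the image of some $c\in\pi_0 X(t+r-2)$ under $\pi_0 X(t+r-2)\to\pi_0 X(t)\times\pi_0\calH^{t+r}_{\pi_0 T}(\calJ;\Pi_{t+r-1}X)$, with $\beta_1$ the common image of $\alpha$ and $c$ in $\pi_0 X(t)$, and with $\beta_2$ lying, by the splitting above, in the summand of $\coprod_{w}H^{t+r}(\calJ;\pi_{t+r-1}X)$ indexed by the image $w$ of $c$ in $\pi_0 X(0)$, where on that summand $\beta_2$ is the image of $c$ under the stable boundary map $\pi_0 X(t+r-2)\to\pi_{-1}F(t+r-1)$. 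The key observation is that $\beta_1$ lies in $\ker(\pi_0 X(t)\to\pi_0 X(t-1))$, since $\alpha$ comes from $\pi_0$ of a fiber of $p_t$; for $t\geq 1$ the map $\pi_0 X(t)\to\pi_0 X(0)$ factors through $\pi_0 X(t-1)$, which forces $w=0$, whereas for $t=0$ the fiber $p_0^{-1}(0)$ is all of $\lim_{j\in\calJ}(T(j)_{\leq 0})$, there is no constraint, and $w=\beta_1=\alpha$. Feeding this back in identifies $D_{r,0}^{0,t}$ with $\{(x,0,y):(x,y)\in D_r^{0,t}\}$ when $t\geq 1$ and with $\{(x,x,y):(x,y)\in D_r^{0,0}\}$ when $t=0$, the middle coordinate recording $w$; comparing the residual pullback condition with \cref{eq:Dst} then shows that $q$ carries $D_{r,0}^{0,t}$ bijectively onto $D_r^{0,t}$, which gives the image statement, while injectivity of $q$ on $D_{r,0}^{0,t}$ (it only forgets the now-determined middle coordinate) gives the preimage statement.

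The step I expect to be the main obstacle is the bookkeeping in this last paragraph: one must pin down the unstable Postnikov $k$-invariant as the stable boundary map plus component label \emph{on the nose}, not merely up to a choice of basepoint in each path component, and verify that the identifications of \cref{lem:J} carry the pullback defining $D_{r,0}^{0,t}$ to the pullback defining $D_r^{0,t}$. Given the preparation in \cref{lem:simplepostnikov}, \cref{lem:groupsplit}, and \cref{lem:J}, everything else is a direct comparison of definitions.
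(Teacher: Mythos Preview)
Your proposal is correct and is precisely the unpacking of what the paper records as ``Immediate from the definitions.'' You have spelled out the comparison of \cref{eq:Dst} and \cref{eq:Dun} in full, correctly isolating the role of the path-component label $w$ in the $s=0$ case and using \cref{lem:groupsplit} and \cref{lem:J} exactly as intended; the paper simply asserts that this bookkeeping goes through without writing it down.
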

\begin{proof}
Immediate from the definitions.
\end{proof}

The following now relates the stable construction of \cref{lem:towerss} with the above unstable constructions.

\begin{lemma}\label{lem:towerssun}
The HLSS for $X$ satisfies the following for $s \geq 0$.
\begin{enumerate}
\item $Z_{r-1}^{s,t} = \im(D_{r,0}^{s,t}\rightarrow H^{t-s}(\calJ;\pi_t X))$;
\item $Z_r^{s,t} = \im(D_{r,0}^{s,t}\times_{H^{t+r}(\calJ;\pi_{t+r-1}X)}\{0\}\rightarrow H^t(\calJ;\pi_t X))$;
\item $B_{r-1}^{s-1,t+r-1} = \im(\{0\}\times_{H^{t-s}(\calJ;\pi_t X)}D_{r,0}^{s,t}\rightarrow H^{t+r-s}(\calJ;\pi_{t+r-1}X))$;
\item $B_r^{s-1,t+r-1} = \im(D_{r,0}^{s,t}\rightarrow H^{t+r-s}(\calJ;\pi_{t+r-1}X))$;
\item For $x\in E_r^{s,t}$ and $y\in E_r^{s-1,t+r-1}$, we have $d_r(x) = y$ if and only if $x$ and $y$ lift to elements of $H^{t-s}(\calJ;\pi_t X)$ and $H^{t+r-s}(\calJ;\pi_{t+r-1}X)$ respectively with the property that
\begin{enumerate}
\item If $s\geq 1$, then $(x,y)\in D_{r,0}^{s,t}$;
\item If $s=0$ and $t\geq 1$, then $(x,0,y)\in D_{r,0}^{0,t}$;
\item If $s=t=0$, then $(x,x,y)\in D_{r,0}^{0,0}$.
\end{enumerate}
\end{enumerate}
\end{lemma}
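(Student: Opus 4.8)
The plan is to translate each of the five assertions of \cref{lem:towerssun} directly through the dictionary set up in \cref{lem:towerss}, \cref{lem:J}, and \cref{lem:unstablediff}. The point is that \cref{lem:towerss} already describes the cycles, boundaries, and differentials of the HLSS of $X$ purely in terms of the stable sets $D_r^{s,t}\subset Z_1^{s,t}\times Z_1^{s-1,t+r-1}$, so it suffices to show that each such statement about $D_r^{s,t}$ becomes the claimed statement about $D_{r,0}^{s,t}$ after applying the map $q$ of \cref{eq:q}. For $s\geq 1$ this is essentially trivial: \cref{lem:J} identifies $D_{r,0}^{s,t}$ with $D_r^{s,t}$ on the nose (and $q$ is the identity), so parts (1)--(4) are immediate rewrites of the corresponding parts of \cref{lem:towerss}, and part (5)(a) is the defining statement that $d_r$ is the function associated to the relation $D_r^{s,t}$. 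The only content is in the fringe cases $s=0$.

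For $s=0$, I would use the second clause of \cref{lem:unstablediff}: $q\colon D_{r,0}^{0,t}\to Z_1^{0,t}\times Z_1^{-1,t+r-1}$ has image exactly $D_r^{0,t}$, and $q^{-1}(D_r^{0,t})$ is $\{(x,0,y):(x,y)\in D_r^{0,t}\}$ when $t\geq 1$ and $\{(x,x,y):(x,y)\in D_r^{0,0}\}$ when $t=0$. Since $q$ is just the projection forgetting the middle coordinate, $\im(D_{r,0}^{0,t}\to H^t(\calJ;\pi_t X))$ equals $\im(D_r^{0,t}\to Z_1^{0,t})$, which is $Z_{r-1}^{0,t}$ by \cref{lem:towerss}(1); this gives part (1). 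For part (2), the fiber product $D_{r,0}^{0,t}\times_{H^{t+r}(\calJ;\pi_{t+r-1}X)}\{0\}$ maps onto $\{(x,y)\in D_r^{0,t} : y = 0\} = \ker(D_r^{0,t}\to Z_1^{-1,t+r-1})$, whose image in $Z_1^{0,t}$ is $Z_r^{0,t}$ by \cref{lem:towerss}(4) and (5) combined with the definition $Z_r = \ker(D_r\to Z_1)$; here I must be slightly careful that the vanishing of the middle coordinate in the $t=0$ case does not cause trouble, but since for $t\geq 1$ the middle coordinate of any element of $D_{r,0}^{0,t}$ is forced to be $0$ anyway, and the case $t=0$ has the middle coordinate equal to $x$ which is unconstrained, the fiber product over $\{0\}$ in the last spot still surjects onto the correct set. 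Parts (3) and (4) are handled the same way, reading off $\im(\ker(D_r^{s,t}\to Z_1^{s,t})\to Z_1^{s-1,t+r-1})$ and $\im(D_r^{s,t}\to Z_1^{s-1,t+r-1})$ from \cref{lem:towerss}(2) and (5).

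For part (5), the stable statement \cref{lem:towerss}(3) says $d_r^{s,t}$ is the function associated to the relation $D_r^{s,t}$, i.e.\ $d_r(x)=y$ in the subquotient iff some lifts $\tilde x,\tilde y$ satisfy $(\tilde x,\tilde y)\in D_r^{s,t}$. Pulling this back through $q$ using the second clause of \cref{lem:unstablediff} converts ``$(\tilde x,\tilde y)\in D_r^{s,t}$'' into ``$(\tilde x,0,\tilde y)\in D_{r,0}^{0,t}$'' when $t\geq 1$ and ``$(\tilde x,\tilde x,\tilde y)\in D_{r,0}^{0,0}$'' when $s=t=0$, which is exactly (5)(b) and (5)(c); the $s\geq 1$ case (5)(a) is unchanged since $q$ is the identity there. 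I expect the main obstacle to be purely bookkeeping: making sure the various indices $(s,t)$ versus $(s-1,t+r-1)$ line up correctly across the three lemmas, and checking in the $s=0$ fringe that ``$D_{r,x}^{0,t}=\emptyset$ unless $x$ lifts far enough up the tower'' (the third clause of \cref{lem:J}) does not create spurious elements or omit genuine ones — but all of this is forced by the definitions, so there is no real mathematical difficulty. I would simply say: the lemma follows by combining \cref{lem:towerss} with \cref{lem:J} and \cref{lem:unstablediff}.
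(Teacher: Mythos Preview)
Your proposal is correct and takes essentially the same approach as the paper, which simply writes ``These follow from \cref{lem:towerss} and \cref{lem:unstablediff}.'' Your expanded version spells out the bookkeeping that the paper leaves implicit, but the underlying argument---translating each clause of \cref{lem:towerss} through the identification of $D_r^{s,t}$ with (the $q$-image of) $D_{r,0}^{s,t}$ from \cref{lem:unstablediff}---is identical.
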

\begin{proof}
These follow from \cref{lem:towerss} and \cref{lem:unstablediff}.
\end{proof}

\subsection{Naturality with respect to pointed maps}\label{ssec:unstablenaturality}

Let $X$ and $Y$ be spectra, and let
\[
F\colon \Omega^\infty X\rightarrow\Omega^\infty Y
\]
be a map of pointed spaces. For $n\geq 0$, let
\begin{equation}\label{eq:Q}
Q\colon \pi_n X\rightarrow \pi_n Y
\end{equation}
be the map induced by $\pi_n(\bs,0)$. For $x\in \pi_0 X$ and $n\geq 1$, write
\begin{equation}\label{eq:Qx}
Q_x = \gamma_{Q(x)}\circ \pi_n(\bs,x)\circ \gamma_x^{-1}\colon \pi_n X\cong \pi_n(X,x)\rightarrow \pi_n(Y,Q(x))\cong \pi_n Y.
\end{equation}
In particular, $Q_0 = Q$.

\begin{lemma}\label{lem:shiftnat}
Define
\[
Q_\bs\colon \pi_0 X \times K(\pi_t X,t+1)\rightarrow \pi_0 Y\times K(\pi_t Y,t+1),\qquad Q_\bs(x,y) = (Q(x),Q_x(y)).
\]
Then the diagram
\begin{center}\begin{tikzcd}
B^{t+1}_{\pi_0 X}\Pi_t X\ar[r,"\simeq"]\ar[d,"F"]&\pi_0 X\times K(\pi_t X,t+1)\ar[d,"Q_\bs"]\\
B^{t+1}_{\pi_0 Y}\Pi_t Y\ar[r,"\simeq"]&\pi_0 Y\times K(\pi_t Y,t+1)
\end{tikzcd}\end{center}
commutes, where the left vertical map is that naturally induced from the map $F\colon \Omega^\infty X\rightarrow\Omega^\infty Y$ of simple spaces.
\end{lemma}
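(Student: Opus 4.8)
The plan is to unwind both vertical maps in terms of the splitting $\Omega^\infty X \simeq \pi_0 X \times \Omega^\infty_0 X$ from \cref{lem:groupsplit} and check commutativity componentwise on $\pi_0 X$. First I would recall that, by \cref{lem:simplepostnikov}, the map $B^{t+1}_{\pi_0 X}\Pi_t X = \coprod_{x\in\pi_0 X} K(\pi_t(X,x),t+1)$ is the receptacle for the $k$-invariant, and the map $F\colon \Omega^\infty X\to\Omega^\infty Y$ induces on it the disjoint union over $x$ of the maps $K(\pi_t(X,x),t+1)\to K(\pi_t(Y,F(x)),t+1)$ coming from $\pi_t(F,x)$ together with the map $\pi_0 F\colon \pi_0 X\to \pi_0 Y$ on indexing sets; this is exactly the naturality clause in \cref{lem:simplepostnikov}. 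So on the component indexed by $x$, the left vertical map is $\pi_t(F,x)$ followed by reindexing by $\pi_0 F(x) = Q(x)$ (using $F(0)=0$, i.e.\ $\pi_0 F(0) = 0$, since $F$ is pointed).

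Next I would compare this to $Q_\bs$. On the component indexed by $x$, the horizontal equivalences of \cref{lem:groupsplit} are built from the translation equivalences $\gamma_x\colon \Omega^\infty_x X\to\Omega^\infty_0 X$, $\gamma_x(a) = a - x$, applied on Postnikov towers; thus the top horizontal map restricted to the $x$-component is $\pi_t(\gamma_x)\colon K(\pi_t(X,x),t+1)\to K(\pi_t(X,0),t+1) = K(\pi_t X,t+1)$, and similarly on the bottom the $Q(x)$-component is hit via $\pi_t(\gamma_{Q(x)})$. By definition \cref{eq:Qx}, $Q_x = \gamma_{Q(x)}\circ\pi_t(F,x)\circ\gamma_x^{-1}$ on $\pi_t$, which is precisely the composite going right-then-down-then-right in the diagram restricted to the $x$-component: apply $\gamma_x^{-1}$ to get back into the $x$-component of $\Omega^\infty X$, apply $F$ (landing in the $Q(x)$-component of $\Omega^\infty Y$ since $F$ is pointed and $F$ is a map of spaces, so $F$ sends the $x$-component to the $\pi_0 F(x)$-component), then apply $\gamma_{Q(x)}$. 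On $\pi_0$ both ways give $x\mapsto Q(x)$. Assembling over all $x\in\pi_0 X$ and matching indexing sets yields exactly the map $(x,y)\mapsto (Q(x), Q_x(y))$, i.e.\ $Q_\bs$, proving commutativity.

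The only real point requiring care --- and the step I expect to be the main obstacle --- is bookkeeping the identification of indexing sets: one must be careful that the ``$K(\pi_t Y,t+1)$'' factor reached by going down-then-right is genuinely the $Q(x)$-component of $B^{t+1}_{\pi_0 Y}\Pi_t Y$ identified with $K(\pi_t Y, t+1) = K(\pi_t(Y,0),t+1)$ via $\gamma_{Q(x)}$, and not via some other translation; this is where pointedness of $F$ (so that the basepoint component maps to the basepoint component, making $Q_0 = Q$ consistent) and the group-object naturality in \cref{lem:groupsplit} are used. Once the indexing is pinned down, commutativity is a formal consequence of the definition \cref{eq:Qx} of $Q_x$ and the naturality of Postnikov truncation, so I would phrase the proof as: reduce to a fixed component by \cref{lem:groupsplit} and \cref{lem:simplepostnikov}, then observe the resulting square is the definition of $Q_x$ unwound.

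\begin{proof}
By \cref{lem:groupsplit}, the horizontal equivalences decompose over the components of $\pi_0 X$ and $\pi_0 Y$: on the component of $B^{t+1}_{\pi_0 X}\Pi_t X$ indexed by $x\in\pi_0 X$, namely $K(\pi_t(X,x),t+1)$, the top equivalence is $\pi_{t+1}(\gamma_x)$, landing in $\{x\}\times K(\pi_t X,t+1)$, and similarly for $Y$. By the naturality clause of \cref{lem:simplepostnikov}, the left vertical map $F$ sends the $x$-component to the $\pi_0 F(x)$-component via the map $K(\pi_t(X,x),t+1)\to K(\pi_t(Y,\pi_0 F(x)),t+1)$ induced by $\pi_t(F,x)$; since $F$ is pointed, $\pi_0 F(0)=0$, so $\pi_0 F(x) = Q(x)$ in the notation of \cref{eq:Q} (and on $\pi_0$, $F$ induces $x\mapsto Q(x)$). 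Therefore, restricting the diagram to the $x$-component and composing, going down-then-right sends $y\in K(\pi_t(X,x),t+1)$ to $\pi_{t+1}(\gamma_{Q(x)})\circ\pi_{t+1}(F,x)(y)\in\{Q(x)\}\times K(\pi_t Y,t+1)$, while going right-then-$Q_\bs$ sends $y$ to $(Q(x),Q_x(\pi_{t+1}(\gamma_x)(y)))$. Since $Q_x = \gamma_{Q(x)}\circ\pi_t(F,x)\circ\gamma_x^{-1}$ by \cref{eq:Qx}, these agree; and on the $\pi_0$-factor both give $Q(x)$. Ranging over all $x\in\pi_0 X$ gives the claimed commutativity.
\end{proof}
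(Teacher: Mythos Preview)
Your proof is correct and takes the same approach as the paper, which simply records ``This holds by construction.'' You have unpacked what that sentence means: the left vertical map is, componentwise, $\pi_t(F,x)$ reindexed by $\pi_0 F$, and after conjugating by the translations $\gamma_x$, $\gamma_{Q(x)}$ this is exactly the definition \cref{eq:Qx} of $Q_x$.
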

\begin{proof}
This holds by construction.
\end{proof}

Now suppose that $X,Y\colon\calJ\rightarrow\Sp$ are diagrams of spectra, and fix a map
\[
F\colon \Omega^\infty X\rightarrow\Omega^\infty Y
\]
of diagrams of pointed spaces. As before, write
\[
Q\colon \pi_0 X\rightarrow \pi_0 Y
\]
for the map on path components. Following \cref{lem:naturaldiff} and discussion it succeeds, there are maps
\[
F\colon J^{s,t}_{r,x}(\Omega^\infty X)\rightarrow J^{s,t}_{r,Q(x)}(\Omega^\infty Y),
\]
and these satisfy
\[
F(D_{r,x}^{s,t})\subset D_{r,Q(x)}^{s,t}
\]
for $s \geq 0$. On the other hand, because $F$ is a map of diagrams of pointed simple spaces, there are maps
\[
Q_x\colon \pi_t X \rightarrow \pi_t Y
\]
of diagrams of groups for $x\in H^0(\calJ;\pi_0 X)$ and $t\geq 1$, induced by \cref{eq:Qx}. We abbreviate $Q_0$ to $Q$. These induce maps on $H^\ast(\calJ;\bs)$.

\begin{lemma}\label{lem:shiftdiff}
Recall the sets $J_r^{s,t}$ from \cref{eq:Js}. Define
\[
Q_+\colon J_r^{s,t}(X)\rightarrow J_r^{s,t}(Y)
\]
for $s \geq 0$ by
\[
\begin{cases}
Q_+(w,y) = (Q(w),Q(y)), & s\geq 1; \\
Q_+(w,x,y) = (Q(w),Q(x),Q(y)), & s = 0,\, t\geq 1; \\
Q_+(w,x,y) = (Q(w),Q(x),Q_x(y)), & s=t=0.
\end{cases}
\]
Then $Q_+ = F$, in the sense that the diagram
\begin{center}\begin{tikzcd}
D_{r,0}^{s,t}(\Omega^\infty X)\ar[r,"F"]\ar[d,"\subset"]&D_{r,0}^{s,t}(\Omega^\infty Y)\ar[d,"\subset"]\\
J_r^{s,t}(X)\ar[r,"Q_+"]&J_r^{s,t}(Y)
\end{tikzcd}\end{center}
commutes, where the top horizontal map is as in \cref{lem:naturaldiff}.
\end{lemma}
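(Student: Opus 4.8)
The plan is to unwind the construction of $D_{r,0}^{s,t}(\Omega^\infty X)$ from \cref{eq:Dun} together with the identification $J_{r,0}^{s,t}(\Omega^\infty X)\cong J_r^{s,t}(X)$ of \cref{lem:J}, and to read off the map induced by $F$ coordinate by coordinate, checking it against the three clauses defining $Q_+$. All the spaces in sight are (diagrams of) simple spaces, so their Postnikov towers are governed by \cref{lem:simplepostnikov}; its Cartesian squares assemble under $\lim_{j\in\calJ}$ to the squares displayed just before \cref{eq:Dun}, and since $F$ is a map of diagrams of pointed simple spaces it is compatible with all of them. The containment $F(D_{r,0}^{s,t}(\Omega^\infty X))\subset D_{r,0}^{s,t}(\Omega^\infty Y)$ that makes the top arrow of the square well-defined is \cref{lem:naturaldiff}, so all that remains is the commutativity.

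First I would identify the coordinates of $J_r^{s,t}(X)$ and the map $F$ induces on each. Writing $p_t\colon\lim_{j\in\calJ}(\Omega^\infty X(j))_{\leq t}\to\lim_{j\in\calJ}(\Omega^\infty X(j))_{\leq t-1}$, the factor $H^{t-s}(\calJ;\pi_t X)$ is $\pi_s$ of the homotopy fiber of $p_t$ over the basepoint, which by \cref{lem:simplepostnikov} (taken inside $\lim_{j\in\calJ}$) is $\calH^t(\calJ;\pi_t(X,0))$; the factor $H^{t+r-s}(\calJ;\pi_{t+r-1}X)$, and when $s=0$ also the factor $H^0(\calJ;\pi_0 X)$ (recording a path component), come from $\pi_s$ of $\calH^{t+r}_{\pi_0 X}(\calJ;\Pi_{t+r-1}X)$ at the basepoint, using that this space splits as $\coprod_{x}\calH^{t+r}(\calJ;\pi_{t+r-1}(X,x))$ and, for $s\ge 1$, that only the basepoint summand is seen. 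Naturality of \cref{lem:simplepostnikov} in the pointed space — which $F$ respects — identifies the map induced by $F$ on the fiber of $p_t$ over the basepoint with $\calH^t(\calJ;Q_0)$, hence with $H^{t-s}(\calJ;Q)$ on $\pi_s$; on the set of components with $Q\colon\pi_0 X\to\pi_0 Y$; and, by \cref{lem:shiftnat} applied inside $\lim_{j\in\calJ}$, on the summand indexed by a component $x$ with $\calH^{t+r}(\calJ;Q_x)$, hence with $H^{t+r-s}(\calJ;Q_x)$ on $\pi_s$. Up to determining which component $x$ the last coordinate of an element of $D_{r,0}^{s,t}$ can see, this is exactly $Q_+$.

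That last point is the crux, and I would settle it from the fiber-product description of $D_{r,0}^{s,t}$ in \cref{eq:Dun}. For $s\ge 1$ the last coordinate lies in $\pi_s$ of a connected space, so only the basepoint component $x=0$ occurs and $Q_x=Q$, giving the first clause. For $s=0$, an element of $D_{r,0}^{0,t}$ arises from some $\xi\in\pi_0\lim_{j\in\calJ}(\Omega^\infty X(j))_{\leq t+r-2}$, whose $H^0(\calJ;\pi_0 X)$-coordinate is the image of $\xi$; the fiber-product condition forces the image of $\xi$ in $\pi_0\lim_{j\in\calJ}(\Omega^\infty X(j))_{\leq t-1}$ to be the basepoint, and when $t\ge 1$ this image then dies further down in $\pi_0\lim_{j\in\calJ}(\Omega^\infty X(j))_{\leq 0}=H^0(\calJ;\pi_0 X)$, so again $x=0$ and $Q_x=Q$, giving the second clause. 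When $s=t=0$ there is no lower truncation to impose such a constraint; the first and second coordinates of $D_{r,0}^{0,0}$ then record the same element of $H^0(\calJ;\pi_0 X)$, and the last coordinate genuinely sees $Q_x$ for that $x$, giving the third clause. Matching these three against the definition of $Q_+$ yields the commuting square. I expect this final paragraph to be the only real obstacle: everything else is formal naturality of Postnikov towers, the group splitting of \cref{lem:groupsplit}, and \cref{lem:shiftnat}, all pushed through $\lim_{j\in\calJ}$, whereas the care is entirely in confirming that the basepoint-dependent $Q_x$ of \cref{lem:shiftnat} collapses to $Q$ in precisely the two ranges where the definition of $Q_+$ uses $Q$.
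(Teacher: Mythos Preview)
Your proposal is correct and follows essentially the same route as the paper: invoke \cref{lem:naturaldiff} for the top map, identify $J_{r,0}^{s,t}\cong J_r^{s,t}$ via \cref{lem:J}/\cref{lem:groupsplit}, and then read off $F$ coordinatewise using \cref{lem:shiftnat}, reducing the question to which component $x$ governs the last factor. The paper is terser in the $s=0,\,t\ge 1$ case, simply asserting that $F$ acts via homotopy groups at the basepoint, whereas you spell out why the fiber-product constraint in \cref{eq:Dun} forces the $H^0(\calJ;\pi_0 X)$-coordinate to be $0$ there (equivalently, the content of \cref{lem:unstablediff}); this extra care is welcome and fills in exactly the point the paper leaves implicit.
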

\begin{proof}
By \cref{lem:naturaldiff}, this diagram commutes should we replace the bottom map with $F\colon J_{r,0}^{s,t}(X)\rightarrow J_{r,0}^{s,t}(Y)$. By \cref{lem:J}, $Q_+$ and $F$ have isomorphic domain and codomain, and we must only check that $Q_+ = F$ under this isomorphism. For $t\geq 1$, the map $F$ is just that induced by the pointed map $F\colon \Omega^\infty X\rightarrow\Omega^\infty Y$ on homotopy groups at the basepoint, which is exactly as described by $Q_+$. Now consider $s=t=0$. Define
\[
Q_\bs\colon H^0(\calJ;\pi_0 X)\times \calH^{r+1}(\calJ;\pi_r X)\rightarrow H^0(\calJ;\pi_0 Y)\times \calH^{r+1}(\calJ;\pi_r Y),~~ Q_\bs(x,y) = (Q(x),Q_x(y)).
\]
Taking limits over \cref{lem:shiftnat}, we find that the diagram
\begin{center}\begin{tikzcd}
H^0(\calJ;\pi_0 X)\times \calH^{r+1}(\calJ;\pi_r X)\ar[r,"Q_\bs"]&H^0(\calJ;\pi_0 Y)\times \calH^{r+1}(\calJ;\pi_r Y) \\
\lim_{j\in\calJ}\Omega^\infty(X_{\leq r-1})\ar[r]\ar[d]\ar[u]&\lim_{j\in\calJ}\Omega^\infty(Y_{\leq r-1})\ar[d]\ar[u]\\
H^0(\calJ;\pi_0 X)\ar[r,"Q"]&H^0(\calJ;\pi_0 Y)
\end{tikzcd}\end{center}
commutes. By definition, $F = Q\times Q_\bs$ as maps $J_r^{0,0}(X)\rightarrow J_r^{0,0}(Y)$, and this is exactly $Q_+$ as described.
\end{proof}

We can now give the main naturality theorem.

\begin{theorem}\label{thm:unstablenaturality}
Given diagrams $X,Y\colon \calJ\rightarrow\Sp$ of spectra and map $F\colon \Omega^\infty X\rightarrow\Omega^\infty Y$ of diagrams of pointed spaces, the maps $Q$ and $Q_x$ of \cref{eq:Q} and \cref{eq:Qx} interact with the HLSSs for $\lim_{j\in\calJ}X(j)$ and $\lim_{j\in\calJ}Y(j)$ as follows.
\begin{enumerate}
\item $Q(Z_r^{s,t}(X))\subset Z_r^{s,t}(Y)$ for $s \geq 0$;
\item $Q(B_r^{s,t}(X))\subset B_r^{s,t}(Y)$ for $s \geq -1$;
\item If $x\in Z_r^{0,0}(X)$ then $Q_x(B_{r-1}^{-1,r-1}(X))\subset B_{r-1}^{-1,r-1}(Y)$;
\item For $x\in E_r^{s,t}(X)$ with $s\geq 0$, we have
\[
d_r(Q(x)) = \begin{cases} Q(d_r(x)),& t\geq 1,\\
Q_x(d_r(x)),& t=s=0;
\end{cases}
\]
\item For $s \geq 0$, if $x\in E_2^{s,t}(X)$ is a permanent cycle detecting $f\in \pi_s \lim_{j\in\calJ}X(j)$, then the permanent cycle $Q(x)\in E_2^{s,t}(Y)$ detects $Q(f)$ modulo classes in higher filtration.
\end{enumerate}
\end{theorem}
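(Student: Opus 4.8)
The plan is to deduce everything from the two structural inputs already assembled in this section: the description of the cycles, boundaries, and differentials via the sets $D_{r,0}^{s,t}$ in \cref{lem:towerssun}, and the compatibility $Q_+ = F$ of \cref{lem:shiftdiff} together with the containment $F(D_{r,0}^{s,t}(\Omega^\infty X))\subset D_{r,0}^{s,t}(\Omega^\infty Y)$ from \cref{lem:naturaldiff}. Each of the five clauses is then a matter of chasing these containments through the explicit formulas.

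First I would prove (1) and (2) simultaneously by downward induction on $r$ (or simply by unwinding the case-free descriptions). For (1): given $\xi\in Z_r^{s,t}(X)$, lift it to $w\in H^{t-s}(\calJ;\pi_t X)$ with $(w,0)$ (or $(w,x,0)$, for $s=t=0$ the relevant piece is the second coordinate) in $D_{r,0}^{s,t}(\Omega^\infty X)$ hitting $0$ in the $H^{t+r}$-factor, using clause (2) of \cref{lem:towerssun}. Apply $F = Q_+$: since $Q_+$ sends the last coordinate $0$ to $0$ (this is where we use that $F$ is \emph{pointed}, so $Q(0)=0$ and $Q_x(0)=0$), the image lies in $D_{r,0}^{s,t}(\Omega^\infty Y)\times_{H^{t+r}}\{0\}$, and its first coordinate is $Q(w)$, whose class is $Q(\xi)\in Z_r^{s,t}(Y)$. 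For (2), one runs the same argument using clause (4) of \cref{lem:towerssun}, noting $B_r^{s-1,t+r-1}(X) = \im(D_{r,0}^{s,t}(\Omega^\infty X)\to H^{t+r-s})$ and that $Q_+$ commutes with the projection to this factor; here one must take $s\geq 0$ so that the indexing shift gives $s-1\geq -1$, which is exactly the stated range.

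For (3), I would specialize $s=0$, $t=0$, and use clause (3) of \cref{lem:towerssun}: $B_{r-1}^{-1,r-1}(X) = \im(\{0\}\times_{H^0(\calJ;\pi_0 X)}D_{r,0}^{0,0}(\Omega^\infty X)\to H^r(\calJ;\pi_{r-1}X))$, where the first $H^0$-factor is the one carrying $x$. Given such an $x\in Z_r^{0,0}(X)$ and $y\in B_{r-1}^{-1,r-1}(X)$ with $(0,x,y)\in D_{r,0}^{0,0}(\Omega^\infty X)$, applying $F = Q_+$ in the $s=t=0$ case sends this to $(Q(0),Q(x),Q_x(y)) = (0,Q(x),Q_x(y))\in D_{r,0}^{0,0}(\Omega^\infty Y)$, whose last coordinate witnesses $Q_x(y)\in B_{r-1}^{-1,r-1}(Y)$. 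This is the one place where the $x$-twisted map $Q_x$ genuinely appears rather than $Q$, and it is forced by the $s=t=0$ clause of \cref{lem:shiftdiff}.

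For (4), I would invoke clause (5) of \cref{lem:towerssun}, which characterizes $d_r(x) = y$ by the existence of lifts with $(x,y)\in D_{r,0}^{s,t}(\Omega^\infty X)$ for $s\geq 1$, $(x,0,y)\in D_{r,0}^{0,t}$ for $s=0,t\geq 1$, and $(x,x,y)\in D_{r,0}^{0,0}$ for $s=t=0$. Applying $F = Q_+$ and using $F(D_{r,0}^{s,t}(\Omega^\infty X))\subset D_{r,0}^{s,t}(\Omega^\infty Y)$ then produces lifts witnessing $d_r(Q(x))$: in the first two cases $Q_+$ is $Q$ on every coordinate, giving $d_r(Q(x)) = Q(d_r(x))$, while in the $s=t=0$ case $Q_+$ sends $(x,x,y)$ to $(Q(x),Q(x),Q_x(y))$, giving $d_r(Q(x)) = Q_x(d_r(x))$. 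Finally, (5) follows from (1)--(4): a permanent cycle $x$ detecting $f$ means $f$ lifts through the tower, and the space-level map $F$ carries a compatible system of lifts of $f$ in $\Omega^\infty X(j)$ to one of $Q(f)$ in $\Omega^\infty Y(j)$, since $F$ is pointed and natural in $\calJ$; the class of the resulting lift in the associated graded is precisely $Q(x)$ by the same limit-of-\cref{lem:shiftnat} computation used in \cref{lem:shiftdiff}, and the usual indeterminacy is ``higher filtration.'' I expect the main obstacle to be purely bookkeeping: keeping straight which copy of $H^0(\calJ;\pi_0 X)$ in the definition of $J_r^{0,0}$ carries the basepoint data versus the twisting data, so that $Q$ versus $Q_x$ is applied to the correct coordinate in the $s=t=0$ fringe — everything else is a direct consequence of the machinery already in place.
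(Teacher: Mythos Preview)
Your overall strategy matches the paper's exactly: everything is reduced to \cref{lem:towerssun} and \cref{lem:shiftdiff}, and clauses (1), (2), (4), (5) go through as you describe. The one place where your argument breaks is precisely the bookkeeping you flag at the end.

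In your proof of (3) you take $y\in B_{r-1}^{-1,r-1}(X)$ and $x\in Z_r^{0,0}(X)$ and assert that $(0,x,y)\in D_{r,0}^{0,0}(\Omega^\infty X)$. But by \cref{lem:unstablediff}, every element of $D_{r,0}^{0,0}$ lies in $q^{-1}(D_r^{0,0}) = \{(w,w,z):(w,z)\in D_r^{0,0}\}$; the first two coordinates are always equal. So $(0,x,y)$ is simply not in $D_{r,0}^{0,0}$ when $x\neq 0$, and you cannot apply $Q_+$ to it and stay inside $D_{r,0}^{0,0}(\Omega^\infty Y)$. The twisting coordinate and the coordinate set to zero in the description of $B_{r-1}^{-1,r-1}$ are the \emph{same} coordinate, not different ones.

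The fix, which is what the paper does, uses additivity of $D_r^{0,0}$ on both sides. From $x\in Z_r^{0,0}(X)$ one has $(x,x,0)\in D_{r,0}^{0,0}(\Omega^\infty X)$; from $y\in B_{r-1}^{-1,r-1}(X)$ one has $(0,0,y)\in D_{r,0}^{0,0}(\Omega^\infty X)$; adding gives $(x,x,y)\in D_{r,0}^{0,0}(\Omega^\infty X)$. Now $Q_+(x,x,y)=(Q(x),Q(x),Q_x(y))\in D_{r,0}^{0,0}(\Omega^\infty Y)$. Since also $(Q(x),Q(x),0)\in D_{r,0}^{0,0}(\Omega^\infty Y)$ (this is $Q(x)\in Z_r^{0,0}(Y)$, already established), subtracting yields $(0,0,Q_x(y))\in D_{r,0}^{0,0}(\Omega^\infty Y)$, which witnesses $Q_x(y)\in B_{r-1}^{-1,r-1}(Y)$. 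The key step your version skips is this add-then-subtract maneuver, which is what lets the twist by $x$ enter.
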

\begin{proof}
(1)--(4) follow from \cref{lem:shiftdiff}, which describes the map $D_{r,0}^{s,t}(\Omega^\infty X)\rightarrow D_{r,0}^{s,t}(\Omega^\infty Y)$ induced by $F$, and \cref{lem:towerssun}, which explains how cycles, boundaries, and differentials are naturally defined in terms of $D_{r,0}^{s,t}$. Let us just illustrate this with a proof of (3).

Let $x\in Z_r^{0,0}(X)$ and $y\in B_{r-1}^{-1,r-1}(X)$. This implies $(x,x,y)\in D_{r,0}^{0,0}(\Omega^\infty X)$, and thus $(Q(x),Q(x),Q_x(y)) \in D_{r,0}^{0,0}(\Omega^\infty Y)$. Taking $y = 0$ shows $Q(x) \in Z_r^{0,0}(Y)$. As $Q(x) \in Z_r^{0,0}(Y)$ and $(Q(x),Q(x),Q_x(y)) \in D_r^{0,0}(Y)$, it follows that $Q_x(y)\in B_{r-1}^{-1,r-1}(Y)$ as claimed.

(5) holds as $Q$ is compatible with the maps $\lim_{j\in\calJ}\Omega^\infty(X_{\leq t})\rightarrow \lim_{j\in\calJ}\Omega^\infty(Y_{\leq t})$.
\end{proof}

\section{Looping power operations}\label{sec:pow}

If $\calC$ is a stable category, then for any $X,Y\in\calC$ one may form the mapping spectrum $\calC(X,Y)$. This construction preserves limits in $Y$, allowing one to form HLSSs for diagrams in arbitrary stable categories. If $N\colon \calC\rightarrow\calD$ is a pointed functor between stable categories, then for any $X,Y\in\calC$ one obtains a map
\[
 \Omega^\infty\calC(X,Y) = \Map_\calC(X,Y)\rightarrow\Map_\calD(NX,NY) = \Omega^\infty\calD(NX,NY)
\]
of pointed spaces. \cref{thm:unstablenaturality} describes how these maps appear in HLSSs, at least once one understands how they behave on higher homotopy groups. This section describes explicitly what happens in the main example of interest, eventually leading to \cref{thm:poweroff}. In \cref{ssec:powhlss}, we put everything together, yielding \cref{thm:main} and variations thereon.

\subsection{Looping power operations}

Fix a compact Lie group $G$, let $\Sp^G$ be the category of $G$-spectra, and for $m\geq 0$ write
\[
\bbP^m\colon \Sp^G\rightarrow\Sp^{\Sigma_m\wr G}
\]
for the $m$-fold smash power functor. These are the functors denoted $\wedge^m$ in \cite{bohmann2014comparison}. Note that the group $G$ will not play a real role in the following. Write $\rho_m$ for the permutation representation of $\Sigma_m$ on $\bbR^m$, and observe that
\[
\bbP^m(S^\alpha) \simeq S^{\rho_m\otimes\alpha}
\]
for $\alpha\in RO(G)$. Thus, external power operations in this context take the form
\[
P^m_\alpha\colon \pi_\alpha^G X \rightarrow \pi_{\rho_m\otimes\alpha}^{\Sigma_m\wr G}\bbP^m X
\]
for $X\in\Sp^G$. Given $x\in \pi_\alpha X$ and $n\geq 1$, write
\[
P^{m,(n)}_{\alpha,x}\colon \pi_{n+\alpha}^G X\rightarrow \pi_{n+\rho_m\otimes \alpha}^{\Sigma_m\wr G}\bbP^m X
\]
for the composite
\[
\pi_{n+\alpha} X \cong \pi_n (\Map_{\Sp^G}(S^\alpha,X),x)\rightarrow \pi_n\Map_{\Sp^{\Sigma_m\wr G}}(S^{\rho_m\otimes\alpha},\bbP^m X),P^m_\alpha(x))\cong \pi_{n+\rho_m\otimes\alpha}^{\Sigma_m\wr G} \bbP^m X,
\]
the inner map being induced by functoriality of $\bbP^m$. The goal of this subsection is to describe the operations $P^{m,(n)}_{\alpha,x}$ explicitly. This description is given in \cref{thm:poweroff}, the proof of which amounts to a collection of standard observations about the behavior of the functors $\bbP^m$, which we now make.

\subsubsection{Euler classes}

Write $\ol{\rho}_m$ for the reduced permutation representation of $\Sigma_m$. This may be regarded as a representation of $\Sigma_m\wr G$ by restriction along the projection $\Sigma_m\wr G\rightarrow\Sigma_m$. Write $a_m\in \pi_{-\ol{\rho}_m}^{\Sigma_m\wr G} S_{\Sigma_m\wr G}$ for the Euler class of $\ol{\rho}_m$, i.e.\ the class represented by the inclusion of poles $S^0\rightarrow S^{\ol{\rho}_m}$, or what is equivalent, the inclusion of fixed points $S^1 \rightarrow S^{\rho_m}$.

\subsubsection{The addition formula}\label{ssec:addition}

The functors $\bbP^m$ satisfy
\[
\bbP^m(A\oplus B)\simeq \bigoplus_{i+j=m}\Ind^{\Sigma_m\wr G}_{\Sigma_{i,j}\wr G}\left(\bbP^i(A)\boxtimes \bbP^j(B)\right),
\]
where $\Sigma_{i,j} = \Sigma_i\times \Sigma_j\subset \Sigma_{i+j}$  and $\bbP^i(A)\boxtimes\bbP^j(B)$ is $\bbP^i(A)\otimes\bbP^j(B)$ considered with its natural $\Sigma_{i,j}\wr G$-action. This allows us to identify the operation
\[
P_{(\alpha,\beta)}^m\colon \pi_\alpha^G X \times \pi_\beta^G X = [S^{\alpha}\oplus S^{\beta},X]\rightarrow [\bbP^m(S^\alpha\oplus S^\beta),\bbP^m X]
\]
as
\[
P_{(\alpha,\beta)}^m(x,y) = \sum_{i+j=m}\tr_{\Sigma_{i,j}\wr G}^{\Sigma_m\wr G}(P_\alpha^i(x)\cdot P_\beta^j(y)).
\]
Here, the products appearing on the right are external products of signature
\[
\pi_{\rho_i\otimes\alpha}^{\Sigma_i\wr G}\bbP^i X \otimes \pi_{\rho_j\otimes\beta}^{\Sigma_j\wr G}\bbP^j X \rightarrow
 \pi_{\rho_i\otimes\alpha+\rho_j\otimes\beta}^{\Sigma_i\wr G \times \Sigma_j\wr G}\left(\bbP^i X \boxtimes \bbP^j X\right)
  = \pi_{\rho_i\otimes\alpha+\rho_j\otimes\beta}^{\Sigma_{i,j}\wr G} \res^{\Sigma_m\wr G}_{\Sigma_{i,j}\wr G}\bbP^m X.
\]

\subsubsection{Colimit comparison maps}\label{ssec:looping}

For a space $F$ and object $A$, write $F\cdot A = \colim_{x\in F}A$ for the unbased tensor with $F$. The basepoint of $S^n$ yields a natural retraction
\[
A\rightarrow S^n\cdot A\rightarrow A,
\]
and this gives rise to a splitting
\begin{equation}\label{eq:snsplit}
S^n\cdot A\simeq \Sigma^nA \oplus A.
\end{equation}
Observe that there are natural colimit comparison maps
\[
S^n\cdot \bbP^m(A)\rightarrow \bbP^m(S^n\cdot A).
\]
As $\bbP^m$ is compatible with the monoidal structure, these are determined by their effect when $A = S^0$, i.e.\ by the map
\begin{equation}\label{eq:sncolimt}
S^n\oplus S^0 \simeq S^n\cdot \bbP^m(S^0) \rightarrow \bbP^m(S^n\cdot S^0) \simeq\bigoplus_{i+j=m}\Ind_{\Sigma_{i,j}\wr G}^{\Sigma_m\wr G}S^{\rho_i\otimes n}.
\end{equation}
This is the map given by the unreduced suspension spectrum of the diagonal
\[
S^n \rightarrow (S^n)^{\times m}
\]
map of spaces. The splitting of the target in \cref{eq:sncolimt} as a direct sum amounts to the standard splitting $\Sigma(X_1\times\cdots\times X_m)\simeq \Sigma \bigvee_{I\subset\{1,\ldots,m\}}\bigwedge_{i\in I}X_i$, valid for pointed spaces $X_1,\ldots,X_m$. The restriction of \cref{eq:sncolimt} to $S^0$ is just the inclusion into the $i = 0$ summand. On $S^n$, one has maps
\[
S^n\rightarrow \Ind_{\Sigma_{i,j}\wr G}^{\Sigma_m\wr G}S^{\rho_i \otimes n},
\]
which are seen to be adjoint to the inclusion of fixed points $a_i^n\colon S^n\rightarrow S^{\rho_i\otimes n}$.

\subsubsection{Looping operations}

Abbreviate $\Map_G = \Map_{\Sp^G}$, and consider the diagram
\begin{center}\begin{tikzcd}[column sep=1mm]
\Map_{G}(S^{n+\alpha},X)\ar[r]\ar[d]&\Map_{\Sigma_m\wr G}(\bbP^m(S^n\cdot S^\alpha)/\bbP^m S^\alpha,\bbP^m X)\ar[r]\ar[d]&\Map_{\Sigma_m\wr G}(\Sigma^n \bbP^m(S^\alpha),\bbP^m X)\ar[d]\\
\Map_{G}(S^n\cdot S^\alpha,X)\ar[r]\ar[d,"q"]&\Map_{\Sigma_m\wr G}(\bbP^m(S^n\cdot S^\alpha),\bbP^m X)\ar[r]\ar[d]&\Map_{\Sigma_m\wr G}(S^n\cdot\bbP^m(S^\alpha),\bbP^m X)\ar[d,"p"]\\
\Map_{G}(S^\alpha,X)\ar[r]&\Map_{\Sigma_m\wr G}(\bbP^m(S^\alpha),\bbP^m X)\ar[r]&\Map_{\Sigma_m\wr G}(\bbP^m(S^\alpha),\bbP^m X)
\end{tikzcd}\end{center}
of spaces. Here, the bottom vertical maps are induced by the basepoint of $S^n$, and the columns are fiber sequences. By definition, $P_{\alpha,x}^{m,(n)}$ is the induced map
\[
\pi_0 q^{-1}(x)\rightarrow \pi_0 p^{-1}(P^m_\alpha(x)).
\]
Here, under the splitting of \cref{eq:snsplit}, we may write the inner row as
\begin{align}\label{eq:comp}
\begin{split}
\Map_G(S^{n+\alpha},X)&\times\Map_G(S^,X)\simeq\Map_G(S^{n+\alpha}\oplus S^\alpha,X)\\
&\rightarrow \Map_{\Sigma_m\wr G}(\bbP^m(S^{n+\alpha}\oplus S^\alpha),\bbP^m X)\\
&\rightarrow \Map_{\Sigma_m\wr G}(S^{n+\rho_m\otimes\alpha}\oplus S^{\rho_m\otimes\alpha},\bbP^m X)\\
&\simeq \Map_{\Sigma_m\wr G}(S^{n+\rho_m\otimes\alpha},\bbP^mX)\times\Map_{\Sigma_m\wr G}(S^{\rho_m\otimes\alpha},\bbP^mX),
\end{split}
\end{align}
and identify
\[
q^{-1}(x) = \Map_G(S^{n+\alpha},X)\times\{x\},\qquad p^{-1}(P^m_\alpha(x)) = \Map_{\Sigma_m\wr G}(S^{n+\rho_m\otimes\alpha},X)\times\{P^m_\alpha(x)\}.
\]
Putting this together for all $x$, on path components the composite \cref{eq:comp} yields the map
\begin{gather*}
P_{\alpha,\bullet}^{m,(n)}\colon \pi_{n+\alpha}^GX\times\pi_\alpha^G X\rightarrow \pi_{n+\rho_m\otimes\alpha}^{\Sigma_m\wr G} \bbP^m X \times \pi_{\rho_m\otimes\alpha}^{\Sigma_m\wr G}\bbP^m X,\\
 P_{\alpha,\bullet}^{m,(n)}(f,x) = (P_{\alpha,x}^{m,(n)}(f),P_\alpha^m(x)).
\end{gather*}

\subsubsection{Putting everything together}
\begin{theorem}\label{thm:poweroff}
Fix $X\in\Sp^G$, $\alpha\in RO(G)$, and $x\in \pi_\alpha X$. Then the operation
\[
P^{m,(n)}_{\alpha,x}\colon \pi_{n+\alpha}^G X\rightarrow \pi_{n+\rho_m\otimes \alpha}^{\Sigma_m\wr G}\bbP^m X
\]
is given by
\[
P^{m,(n)}_{\alpha,x}(f) = \sum_{0 < i \leq m} \tr_{\Sigma_{i,m-i}\wr G}^{\Sigma_m\wr G}\left(a_i^nP^i_{n+\alpha}(f)\cdot P^{m-i}_\alpha(x)\right)
\]
\end{theorem}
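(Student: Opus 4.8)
The plan is to feed the ingredients of \cref{ssec:addition} and \cref{ssec:looping} into the composite \eqref{eq:comp}, which by the discussion preceding the statement computes $P^{m,(n)}_{\alpha,x}$. Unwinding \eqref{eq:comp} together with the splittings \eqref{eq:snsplit}, on path components \eqref{eq:comp} sends a pair $(f,x)\in\pi^G_{n+\alpha}X\times\pi^G_\alpha X$ first, via the functor $\bbP^m$, to $P^m_{(n+\alpha,\alpha)}(f,x)\in[\bbP^m(S^{n+\alpha}\oplus S^\alpha),\bbP^m X]$, then restricts along the colimit comparison map
\[
\iota\colon S^{n+\rho_m\otimes\alpha}\oplus S^{\rho_m\otimes\alpha}\simeq S^n\cdot\bbP^m(S^\alpha)\longrightarrow\bbP^m(S^n\cdot S^\alpha)\simeq\bbP^m(S^{n+\alpha}\oplus S^\alpha),
\]
and finally projects onto the $[S^{n+\rho_m\otimes\alpha},\bbP^m X]=\pi^{\Sigma_m\wr G}_{n+\rho_m\otimes\alpha}\bbP^m X$ factor, the first coordinate of the result being $P^{m,(n)}_{\alpha,x}(f)$. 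So the whole task is to make this composite explicit, which I would do in two steps and then combine.

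For the first map I would invoke the addition formula of \cref{ssec:addition}: along the decomposition $\bbP^m(S^{n+\alpha}\oplus S^\alpha)\simeq\bigoplus_{i+j=m}\Ind_{\Sigma_{i,j}\wr G}^{\Sigma_m\wr G}\bigl(\bbP^i(S^{n+\alpha})\boxtimes\bbP^j(S^\alpha)\bigr)$, the restriction of $P^m_{(n+\alpha,\alpha)}(f,x)$ to the $i$-th summand is $\tr_{\Sigma_{i,j}\wr G}^{\Sigma_m\wr G}(P^i_{n+\alpha}(f)\cdot P^j_\alpha(x))$, i.e.\ the transfer, in the sense recalled there, of the external product $P^i_{n+\alpha}(f)\cdot P^j_\alpha(x)\colon S^{\rho_i\otimes(n+\alpha)+\rho_j\otimes\alpha}\to\res^{\Sigma_m\wr G}_{\Sigma_{i,j}\wr G}\bbP^m X$.

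For $\iota$ I would use the monoidal compatibility of $\bbP^m$ from \cref{ssec:looping}: $\iota$ is obtained from the $A=S^0$ comparison map \eqref{eq:sncolimt} by tensoring with $\bbP^m(S^\alpha)=S^{\rho_m\otimes\alpha}$ and applying the shuffle equivalence $\bbP^m(S^n\cdot S^0)\otimes\bbP^m(S^\alpha)\xrightarrow{\sim}\bbP^m(S^n\cdot S^\alpha)$; using the projection formula and $\res^{\Sigma_m\wr G}_{\Sigma_{i,j}\wr G}\rho_m=\rho_i\oplus\rho_j$, this shuffle carries the decomposition of $\bbP^m(S^n\cdot S^0)\otimes\bbP^m(S^\alpha)$ inherited from \eqref{eq:sncolimt} onto the addition-formula decomposition above. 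By the explicit description of \eqref{eq:sncolimt} in \cref{ssec:looping}, on the reduced sphere summand the component of $\iota$ into the $i$-th summand is, for $i\geq 1$, adjoint to the Euler class $a_i^n\colon S^{n+\rho_i\otimes\alpha+\rho_j\otimes\alpha}\to S^{\rho_i\otimes(n+\alpha)+\rho_j\otimes\alpha}$ (the inclusion of fixed points $S^n\to S^{\rho_i\otimes n}$ smashed with the identity), and is null for $i=0$ (the $i=0$ summand is $S^0$ and the corresponding component of the suspended $m$-fold diagonal is an augmentation, which vanishes on the reduced summand by \eqref{eq:snsplit}; this is why the sum runs over $0<i\leq m$). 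Combining the two steps, the composite equals $\sum_i\tr_{\Sigma_{i,j}\wr G}^{\Sigma_m\wr G}(P^i_{n+\alpha}(f)\cdot P^j_\alpha(x))\circ\iota_i$; the $i=0$ term drops out, and for $i\geq 1$, using the standard description of the transfer ($\res\dashv\mathrm{CoInd}$-adjunction followed by the counit) and naturality of the $\Ind\cong\mathrm{CoInd}$ adjunction, precomposing the transfer of $P^i_{n+\alpha}(f)\cdot P^j_\alpha(x)$ with the adjoint of $a_i^n$ yields $\tr_{\Sigma_{i,j}\wr G}^{\Sigma_m\wr G}(a_i^n P^i_{n+\alpha}(f)\cdot P^j_\alpha(x))$ with $j=m-i$; summing over $0<i\leq m$ gives the stated formula.

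I expect the difficulty to be organizational rather than conceptual: every substantive input---the splitting of $\bbP^m$ on direct sums, the identification of the colimit comparison map with the $m$-fold diagonal, the standard stable splitting of $\Sigma(X_1\times\cdots\times X_m)$---is already established, so the work is to keep the projection-formula identifications, the shuffle equivalence, and the various adjunctions straight simultaneously, to confirm that the shuffle introduces no unexpected twist relative to the conventions of \cref{ssec:addition}, and to verify cleanly both that the adjoint of the $i$-th component of $\iota$ is exactly $a_i^n$ and that the $i=0$ contribution vanishes.
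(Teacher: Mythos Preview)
Your proposal is correct and follows essentially the same approach as the paper: the paper's proof is a two-sentence sketch instructing the reader to trace the composite \eqref{eq:comp} through the addition formula of \cref{ssec:addition} and the colimit comparison description of \cref{ssec:looping}, and you have carried out precisely that trace in detail.
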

\begin{proof}
The first map in \cref{eq:comp} is described in \cref{ssec:addition}, and the second map is described in \cref{ssec:looping}. Tracing through these descriptions and identifying $P^{m,(n)}_{\alpha,x}(f)$ as the first coordinate of $P_{\alpha,\bullet}^{m,(n)}(f,x)$ yields the theorem.
\end{proof}

\subsection{Power operations in the HLSS}\label{ssec:powhlss}

Let $R\colon\calJ\rightarrow\Sp^G$ be a diagram of $G$-spectra. For each $\alpha\in RO(G)$, one may take mapping spectra levelwise to obtain a diagram $\Sp^G(S^\alpha,R)$ of spectra, with $\lim_{j\in\calJ}\Sp^G(S^\alpha,R(j))\simeq\Sp^G(S^\alpha,\lim_{j\in\calJ}R(j))$. Thus there is an HLSS
\[
E_2^{s+\alpha,t+\alpha} = H^{t-s}(\calJ;\pi_{t+\alpha}^G R)\Rightarrow\pi_{s+\alpha}^G\lim_{j\in\calJ}R(j).
\]
The composite $\bbP^mR\colon \calJ\rightarrow\Sp^{\Sigma_m\wr G}$ is likewise a diagram of $\Sigma_m\wr G$-spectra for each $m$, with its own HLSS $E_{\ast,m}^{\ast,\ast}$.
For each $\alpha\in RO(G)$ and $m\geq 1$, there are maps
\[
\calP^m_\alpha\colon\Map_{\Sp^G}(S^\alpha,R)\rightarrow \Map_{\Sp^{\Sigma_m\wr G}}(S^{\rho_m\otimes\alpha},\bbP^m R)
\]
of diagrams of pointed spaces. The extent to which this induces a map $E_\ast^{\ast,\ast}\rightarrow E_{\ast,m}^{\ast,\ast}$ of spectral sequences is exactly as described in \cref{thm:unstablenaturality}, once one understands how $\calP^m_\alpha$ behaves on higher homotopy groups, which is then as described in \cref{thm:poweroff}. Putting everything together, we learn the following.

As before, write $a_m$ for the class induced by the inclusion $S^1\rightarrow S^{\rho_m}$ of fixed points, where $\rho_m$ is the permutation representation of $\Sigma_m$.

\begin{theorem}\label{thm:powhfpss}
With notation as above,
\begin{enumerate}
\item The composite $a_m P_\alpha^m$ is additive. In particular, for $s\geq 0$ there are maps
\[
Q_\alpha^m = a_m^tP_{t+\alpha}^m\colon E_2^{s+\alpha,t+\alpha} \rightarrow E_{2,m}^{s+\rho_m\otimes\alpha,t+\rho_m\otimes\alpha}.
\]
\item $Q_\alpha^m(Z_r^{s+\alpha,t+\alpha})\subset Z_{r,m}^{s+\rho_m\otimes\alpha,t+\rho_m\otimes\alpha}$ for $s\geq 0$;
\item $Q_\alpha^m(B_r^{s+\alpha,t+\alpha})\subset B_{r,m}^{s+\rho_m\otimes\alpha,t+\rho_m\otimes\alpha}$ for $s\geq -1$;
\item For $x\in E_r^{s+\alpha,t+\alpha}$ with $s\geq 0$, we have
\[
d_r(Q_\alpha^m(x)) = \begin{cases}
Q_\alpha^m(d_r(x)),&t\geq 1;\\
Q_\alpha^m(d_r(x)) + \sum_{0 < i < m} \tr_{\Sigma_{i,j}\wr G}^{\Sigma_m\wr G}\left(Q_\alpha^i(d_r(x))\cdot Q_\alpha^{m-i}(x)\right),&s=t=0.
\end{cases}
\]
\item If $x\in E_2^{\alpha,t+\alpha}$ is a permanent cycle detecting $f\in \pi_\alpha^G \lim_{j\in\calJ}R(j)$, then $Q_\alpha^m(x)$ detects $P_\alpha^m(f) \in \pi_{\rho_m\otimes\alpha}^{\Sigma_m\wr G}\lim_{j\in\calJ}\bbP^m R(j)$ modulo classes in higher filtration.
\qed
\end{enumerate}
\end{theorem}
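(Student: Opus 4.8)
\emph{Proof plan.} The plan is to deduce this by feeding the map of $\calJ$-diagrams of pointed spaces
\[
\calP^m_\alpha\colon \Omega^\infty\Sp^G(S^\alpha,R)\rightarrow\Omega^\infty\Sp^{\Sigma_m\wr G}(S^{\rho_m\otimes\alpha},\bbP^m R)
\]
into the naturality theorem \cref{thm:unstablenaturality}, and translating its abstract output through the explicit formula of \cref{thm:poweroff} for the effect of $\calP^m_\alpha$ on homotopy groups. This is a map of diagrams because $\bbP^m$ is a functor and mapping spectra preserve limits; at the level of homotopy limits one also invokes the comparison map $\bbP^m\lim_{j\in\calJ}R(j)\to\lim_{j\in\calJ}\bbP^m R(j)$ to match targets. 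The first step is to identify the maps $Q$ and $Q_x$ of \cref{eq:Q} and \cref{eq:Qx} associated to $F=\calP^m_\alpha$. Since $\bbP^k(0)=0$ for $k\geq1$ this map is pointed, and evaluating \cref{thm:poweroff} at the basepoint $x=0$ kills every summand with $i<m$ (because $P^k_\alpha(0)=0$ for $k\geq 1$, while $\bbP^0=S^0$ and $\Sigma_{m,0}\wr G=\Sigma_m\wr G$), leaving $\pi_t(\calP^m_\alpha,0)=a_m^t P^m_{t+\alpha}$ for $t\geq 1$; thus the map $Q$ of \cref{thm:unstablenaturality} is exactly $Q^m_\alpha$. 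More generally, for $x\in H^0(\calJ;\pi_\alpha^G R)$ the twisted map $Q_x$ of \cref{eq:Qx} is the operation $P^{m,(n)}_{\alpha,x}$ of \cref{thm:poweroff}.

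With this dictionary, part (1) reduces to additivity of $a_m^t P^m_{t+\alpha}$ for $t\geq 1$, which is automatic since $\pi_t$ of a pointed map of spaces is a homomorphism; equivalently, it follows from the addition formula of \cref{ssec:addition} together with the vanishing of $\res^{\Sigma_m\wr G}_{\Sigma_{i,j}\wr G}a_m$ for $0<i<m$ (the restricted representation is $\ol\rho_i\oplus\ol\rho_j$ plus a trivial summand, so its Euler class is null) via Frobenius reciprocity---and it is this same computation that generates the correction term in (4). Functoriality of $\bbP^m$ makes $a_m^t P^m_{t+\alpha}$ a natural transformation of $\calJ$-diagrams of abelian groups, so it descends to $E_2$-pages; at $t=0$ only $E_2^{\alpha,\alpha}=H^0(\calJ;\pi_\alpha^G R)$ is relevant, where $P^m_\alpha$ acts by functoriality alone. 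Parts (2), (3), and (5) then follow immediately from parts (1), (2), and (5) of \cref{thm:unstablenaturality} after reindexing bidegrees by replacing $\alpha$ with $\rho_m\otimes\alpha$; for (5) one also uses that $Q$ on $\pi_0$ sends $f$ to $P^m_\alpha(f)$, pushed through the comparison map above.

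Part (4) carries the remaining work. For $t\geq1$, \cref{thm:unstablenaturality}(4) gives $d_r(Q^m_\alpha(x))=Q^m_\alpha(d_r(x))$ directly. For $s=t=0$ it gives instead $d_r(Q^m_\alpha(x))=Q_x(d_r(x))$; here $d_r(x)$ sits in degree index $r-1\geq1$, so by the first step $Q_x(d_r(x))=P^{m,(r-1)}_{\alpha,x}(\widetilde{d})$ for a chosen lift $\widetilde{d}$ of $d_r(x)$, and \cref{thm:poweroff} expands this as $\sum_{0<i\leq m}\tr^{\Sigma_m\wr G}_{\Sigma_{i,m-i}\wr G}\bigl(a_i^{r-1}P^i_{r-1+\alpha}(\widetilde{d})\cdot P^{m-i}_\alpha(x)\bigr)$. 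The $i=m$ summand is $a_m^{r-1}P^m_{r-1+\alpha}(\widetilde{d})=Q^m_\alpha(d_r(x))$, and since $x$ has degree index $0$ we have $P^{m-i}_\alpha(x)=Q^{m-i}_\alpha(x)$, so the remaining summands are precisely $\sum_{0<i<m}\tr^{\Sigma_m\wr G}_{\Sigma_{i,j}\wr G}\bigl(Q^i_\alpha(d_r(x))\cdot Q^{m-i}_\alpha(x)\bigr)$. I expect the main obstacle to be the bookkeeping in this last part: confirming that the identifications of the first step match the abstract $Q$ and $Q_x$ with no stray twists---this is exactly what the path-component analysis at the fringe of \cref{sec:unstablenaturality} was built to control---keeping straight the representation-graded degrees of every class in play, and interpreting the external products and transfers in the correction term consistently at the relevant page of the target spectral sequence.
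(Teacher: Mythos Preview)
Your proposal is correct and follows essentially the same approach as the paper, which presents the theorem as an immediate consequence of feeding $\calP^m_\alpha$ into \cref{thm:unstablenaturality} and identifying the resulting $Q$ and $Q_x$ via \cref{thm:poweroff}. You have simply made explicit the bookkeeping the paper leaves to the reader---in particular the verification that $\pi_t(\calP^m_\alpha,0)=a_m^tP^m_{t+\alpha}$, the Frobenius reciprocity argument for additivity, and the expansion of $Q_x(d_r(x))$ via \cref{thm:poweroff} in the $s=t=0$ case.
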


We now describe three specializations of \cref{thm:powhfpss}. The first is \cref{thm:main}. Let $G$ be a finite group and $R$ a $G\hyp\bbE_\infty$ ring. Let $K\subset G$ be a subgroup of index $m$ and $\alpha\in RO(K)$, and consider the norm
\[
P_\alpha\colon \pi^K_\alpha R \rightarrow \pi_{\Ind_K^G\alpha}^G R.
\]
This factors as
\begin{center}\begin{tikzcd}[column sep=small]
\pi^K_\alpha R \ar[r,"P^m_\alpha"]&\pi_{\rho_m\otimes\alpha}^{\Sigma_m\wr K}\bbP^m R\ar[r,"\res"]&\pi_{\res^{\Sigma_m\wr K}_G(\rho_m\otimes\alpha)}^G \res^{\Sigma_m\wr K}_G \bbP^m R\ar[r,"\cong"]&\pi_{\Ind_K^G\alpha}^G N_K^G R\ar[r,"N_K^G"]&\pi_{\Ind_K^G\alpha}^GR
\end{tikzcd}.\end{center}
Here, the restriction is along a suitable embedding $G\subset \Sigma_m\wr K$, and the final map is induced by the $G\hyp\bbE_\infty$ ring structure on $R$. The behavior of the first map with respect to HLSSs is what was described in \cref{thm:powhfpss}. The remaining maps are stable, and entirely compatible with HLSSs. Thus we may regard \cref{thm:main} as a specialization of \cref{thm:powhfpss}.

For the second, let $R$ be an ordinary $\bbE_\infty$ ring. Then for $n\in\bbZ$ and $m\geq 1$, the $m$th total power operation
\[
P^m\colon \pi_n R = [S^n,R]\rightarrow [(S^n)^{\otimes m}_{\h \Sigma_m},R]
\]
is the map induced on path components by the composite
\begin{center}\begin{tikzcd}[row sep=0mm]
\Map_{\Sp}(S^n,R)\ar[r,"P^m"]&\Map_{\Sp^{\Sigma_m}}(S^{\rho_m\otimes n},\bbP^m R)\\
\ar[r,"\colim"]&\Map_{\Sp}((S^n)^{\otimes m}_{\h \Sigma_m},R^{\otimes m}_{\h \Sigma_m})\ar[r,"\mu"]&\Map_{\Sp}((S^n)^{\otimes m}_{\h \Sigma_m},R)
\end{tikzcd}.\end{center}
The situation is analogous to the $G\hyp\bbE_\infty$ case.

For the third, let $R$ be an ultracommutative ring spectrum in the sense of \cite[Definition 5.1.1]{schwede2018global}. Then $R$ is equipped with strictly $\Sigma_m$-equivariant maps $R^{\otimes m}\rightarrow R$ within the category of orthogonal spectra. Following \cite[Theorem 4.5.25]{schwede2018global}, we may produce from $R$ the $G$-spectrum $U_G R$ by considering the orthogonal spectrum $R$ as an orthogonal $G$-spectrum with trivial $G$-action. The $\Sigma_m$-equivariant maps $R^{\otimes m}\rightarrow R$ may then be considered as maps $\bbP^m U_G R \rightarrow U_{\Sigma_m\wr G} R$, and this yields norms
\[
P^m_\alpha\colon \pi_\alpha^G R\rightarrow \pi_{\rho_m\otimes\alpha}^{\Sigma_m\wr G} R
\]
for $\alpha\in RO(G)$. The situation is now analogous to the previous examples. See \cite[Chapter 5]{schwede2018global} \cite{stahlhauer2021ginfty} for more on power operations in the global equivariant context, as well as \cite{greenleesmay1997localization} for earlier related material.

\section{Norms in the \texorpdfstring{$C_2$}{C\_2}-equivariant Adams spectral sequence}\label{sec:c2}

We now prove \cref{thm:c2}.
As $H\bbF_2^{C_2}$ is a $C_2\hyp\bbE_\infty$ ring, so too is $H\bbF_2^{C_2}\otimes H\bbF_2^{C_2}$, and thus there are norms
\begin{equation}\label{eq:n}
\calA_\ast^\cl =  \pi_\ast^e(H\bbF_2^{C_2}\otimes H\bbF_2^{C_2})  \rightarrow \pi_{\ast(1+\sigma)}^{C_2}(H\bbF_2^{C_2}\otimes H\bbF_2^{C_2}) =  \calA_{\ast(1+\sigma)}^{C_2}.
\end{equation}
The main point of the proof is to understand something about these.

We must first recall some of the structure of $\calA^{C_2}$; we mostly follow the treatment in \cite[Section 2]{guillouhillisaksenravenel2020cohomology}. Let $\bbM^{C_2} = \pi_\star H\bbF_2^{C_2}$ and $\bbM^{\bbR} = \pi_\star H \bbF_2^\bbR$ be the bigraded coefficient rings of $C_2$-equivariant and $\bbR$-motivic mod $2$ homology respectively. Then
\[
\bbM^\bbR = \bbF_2[\tau,\rho],\qquad \bbM^{C_2} = \bbM^{\bbR}\oplus NC,\qquad NC = \bbF_2\{\frac{\gamma}{\rho^j\tau^k}:j\geq 0,k\geq 1\},
\]
where these symbols have homological degrees
\[
|\tau| = 1-\sigma,\qquad |\rho| = -\sigma,\qquad |\gamma| = \sigma-1.
\]
Moreover,
\[
\calA^\bbR = \bbM^\bbR[\xi_1,\xi_2,\ldots,\tau_0,\tau_1,\ldots]/(\tau_i^2+\tau\xi_{i+1}+\rho(\tau_0\xi_{i+1}+\tau_{i+1})),\qquad \calA^{C_2} = \bbM^{C_2}\otimes_{\bbM^\bbR}\calA^\bbR.
\]

The right unit for $\calA^{C_2}$ restricts to define a $\calA^\bbR$-comodule structure on the summand $NC\subset\bbM^{C_2}$. Conversely, the $\calA^\bbR$-comodule structure on $NC$ makes $\bbM^{C_2}$ into a comodule algebra over $\calA^\bbR$, and this enables one to endow $\calA^{C_2} = \bbM^{C_2}\otimes_{\bbM^\bbR}\calA^\bbR$ with the structure of a Hopf algebroid. In particular, this construction extends to show that if $I\subset \bbM^{C_2}$ is an $\calA^\bbR$-comodule ideal, then the quotient $(\bbM^{C_2}/I) \otimes_{\bbM^\bbR}\calA^\bbR$ still carries the structure of a Hopf algebroid.

The norms of \cref{eq:n} are not additive, but they are additive modulo transfers. Using the $C_2$-equivariant cofiber sequence $C_{2+}\rightarrow S^0\xrightarrow{\rho}S^\sigma$, one finds that the transfer ideal equals the annihilator of the Euler class $\rho$. Explicitly, the transfers on $\bbM^{C_2}$ are given by
\[
\tr\colon \bbF_2\rightarrow \bbM^{C_2}_{n(1-\sigma)},\qquad \tr(1) = \begin{cases}0&n\geq -1,\\\frac{\gamma}{\tau^{-n-1}}&n\leq -2,\end{cases}
\]
as these are the only classes in their respective degrees killed by $\rho$. If we write $I_{\tr}$ for the transfer ideal in $\bbM^{C_2}$ or $\calA^{C_2}$, then 
\[
\calA^{C_2}/I_{\tr} \cong (\bbM^{C_2}/I_{\tr})\otimes_{\bbM^\bbR}\calA^\bbR,
\]
and this retains the structure of a Hopf algebroid.

\begin{lemma}\label{lem:tprims}
The group of primitives in $\calA^{C_2}/I_{\tr}$ is zero in degrees of the form $|\xi_n| = (2^n-1)(1+\sigma)$ for $n\geq 2$.
\end{lemma}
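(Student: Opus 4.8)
The plan is to exploit the splitting
$\calA^{C_2}/I_{\tr}\cong(\bbM^{C_2}/I_{\tr})\otimes_{\bbM^\bbR}\calA^\bbR=\calA^\bbR\oplus J$, where $J:=(NC/I_{\tr})\otimes_{\bbM^\bbR}\calA^\bbR$. Since $NC/I_{\tr}$ is an ideal of $\bbM^{C_2}/I_{\tr}$ which is also a sub-$\calA^\bbR$-comodule, quotienting it out realizes $(\bbM^\bbR,\calA^\bbR)$ as a quotient Hopf algebroid; write $\pi\colon\calA^{C_2}/I_{\tr}\twoheadrightarrow\calA^\bbR$ for the quotient map, with $\ker\pi=J$. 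A map of Hopf algebroids carries primitives to primitives, so for a primitive $x$ of degree $(2^n-1)(1+\sigma)$ the element $\pi(x)$ is a primitive of $\calA^\bbR$ in that degree, and it remains to show (i) $\pi(x)=0$, hence $x\in J$, and (ii) that any primitive of $\calA^{C_2}/I_{\tr}$ lying in $J$ and of that degree is zero.

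For (i) I would run a weight count. Put $w(a+b\sigma)=b-a$, an additive functional on $RO(C_2)$ with $w\bigl((2^n-1)(1+\sigma)\bigr)=0$. On the monomial basis $\{\prod_i\xi_i^{c_i}\prod_\ell\tau_\ell^{d_\ell}:d_\ell\in\{0,1\}\}$ of $\calA^\bbR$ over $\bbM^{C_2}/I_{\tr}$, a monomial with $E:=\sum_\ell d_\ell$ factors $\tau_\ell$ has $w=-E\le 0$, with equality exactly when it is a pure $\xi$-monomial; a coefficient $\tau^a\rho^b\in\bbM^\bbR$ has $w=-2a-b\le 0$; and a coefficient $\gamma/(\rho^j\tau^k)\in NC/I_{\tr}$ (so $j,k\ge 1$) has $w=2+j+2k\ge 5$. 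Hence any element of degree $(2^n-1)(1+\sigma)$ is an $\bbF_2$-combination of pure $\xi$-monomials $\prod_i\xi_i^{c_i}$ with $\sum_i c_i(2^i-1)=2^n-1$ together with $J$-terms $\bigl(\gamma/(\rho^j\tau^k)\bigr)\otimes\mu$ in which $\mu$ carries at least $2+j+2k\ge 5$ distinct factors $\tau_\ell$. On the span of the pure $\xi$-monomials the comultiplication of $\calA^\bbR$ is literally the Milnor comultiplication of $\calA^\cl=\bbF_2[\xi_1,\xi_2,\dots]$, so $\pi(x)$ is a primitive of $\calA^\cl$ in internal degree $2^n-1$; since the primitives of $\calA^\cl$ sit in degrees $2^j$ and $2^n-1$ is not a power of $2$ for $n\ge 2$, we get $\pi(x)=0$.

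For (ii) the structural input I would use is that in $\calA^{C_2}/I_{\tr}$ the coefficient ring enters the comultiplication only through the left unit, so $\psi(m\otimes s)=\sum(m\otimes s_{(1)})\otimes(1\otimes s_{(2)})$; in particular a negative-cone coefficient lands entirely in the left tensor factor and there is no twisting by the coaction on $NC$. Writing the primitive in $J$ as $x=\sum_\mu c_\mu\otimes\mu$ with $c_\mu\in NC/I_{\tr}$, I would let $L$ be the largest index of a $\tau_\ell$ appearing in some $\mu$ with $c_\mu\ne 0$ (it exists as each such $\mu$ has at least five $\tau$-factors) and extract the coefficient of $\otimes\tau_L$ from $\psi(x)=1\otimes x+x\otimes 1$. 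On the right it is $0$, since neither $x$ (all of whose monomials have $\tau$-length $\ge 5$) nor $1$ has a $\tau_L$-component. On the left, because no $\mu$ involves any $\tau_m$ with $m>L$, the factor $\tau_L$ can only be produced from the summand $1\otimes\tau_L$ of $\Delta(\tau_L)$, every other factor contributing its $(-)\otimes 1$ term; this yields $\sum_{\mu:\,\tau_L\mid\mu}c_\mu\otimes(\mu/\tau_L)$, and since the monomials $\mu/\tau_L$ are pairwise distinct each such $c_\mu$ must vanish, contradicting the choice of $L$ unless $x=0$.

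The hard part is step (ii): one has to pin down the Hopf algebroid comultiplication on the negative-cone ideal $J$ precisely enough to know that negative-cone coefficients are grouplike on the left (no coaction twist), and then carry out the bookkeeping of which $\calA^\bbR$-monomials can contribute a bare $\tau_L$ in the right tensor factor. The weight count in (i) is routine, but it is exactly what guarantees that the part of $\calA^{C_2}/I_{\tr}$ relevant to these degrees is genuinely classical — consistently with the fact that the hypothesis $n\ge 2$ is needed, since for $n=1$ the class $\xi_1$ is such a primitive.
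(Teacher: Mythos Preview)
Your step (i) is correct and pleasant: the weight functional $w(a+b\sigma)=b-a$ forces $\pi(x)$ into the $\bbF_2$-span of pure $\xi$-monomials, where the coproduct is literally the Milnor one, and there are no classical primitives in degree $2^n-1$ for $n\geq 2$.

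Step (ii) has a genuine gap. Your claim that ``the factor $\tau_L$ can only be produced from the summand $1\otimes\tau_L$ of $\Delta(\tau_L)$'' overlooks the relation $\tau_i^2=\tau\xi_{i+1}+\rho\tau_0\xi_{i+1}+\rho\tau_{i+1}$. If both $\tau_{L-1}$ and $\tau_L$ divide $\mu$, then letting $\Delta(\tau_{L-1})$ contribute $1\otimes\tau_{L-1}$ and $\Delta(\tau_L)$ contribute $\xi_1^{2^{L-1}}\otimes\tau_{L-1}$ gives $\tau_{L-1}^2$ in the right tensor factor, and its $\rho\tau_L$ summand yields an extra contribution to the coefficient of $\otimes\tau_L$. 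So $\alpha_{\tau_L}(\mu)$ is not simply $\mu/\tau_L$.

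The argument can be repaired. First, for $\mu$ with $\tau_L\nmid\mu$ one does have $\alpha_{\tau_L}(\mu)=0$: to land on the bare monomial $\tau_L$ in the right factor one may only use the ``carry'' reduction $\tau_i^2\to\rho\tau_{i+1}$ (the other two branches introduce a $\xi$), and this conserves $\sum 2^{i}$; but the sources $i_j\leq j\leq L-1$ give $\sum 2^{i_j}\leq 2^L-1<2^L$. Second, for $\mu$ with $\tau_L\mid\mu$, every non-main contribution uses at least one such carry, hence picks up at least one factor of $\rho$ (moved left via $\eta_R(\rho)=\rho$), so $\alpha_{\tau_L}(\mu)=\mu/\tau_L+\rho\cdot\beta(\mu)$. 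Now choose $\mu_0$ with $\tau_L\mid\mu_0$ and $w(c_{\mu_0})$ maximal among such $\mu$: since $NC/I_{\tr}$ is at most one-dimensional in each bidegree, the main term $c_{\mu_0}\otimes(\mu_0/\tau_L)$ is a basis vector of $J$, distinct from every other main term, and every error term has $NC$-coefficient $\rho b\cdot c_\mu$ with $w<w(c_\mu)\leq w(c_{\mu_0})$. Nothing can cancel it, so $c_{\mu_0}=0$, a contradiction.

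By contrast, the paper identifies the primitives with $\Ext^1_{\calA^\bbR}(\bbM^\bbR,\bbM^{C_2}/I_{\tr})$ and computes via the $\bbR$-motivic lambda algebra Koszul complex. The same weight functional (their $v=-w$) shows the relevant piece of $\Lambda^\bbR[1]\otimes_{\bbM^\bbR}(\bbM^{C_2}/I_{\tr})$ is spanned by $\lambda_r\cdot 1$ with $r$ odd, and one checks directly that none of these are $\delta_1$-cycles in degree $|\xi_n|$. This packages both the positive-cone and negative-cone contributions into a single one-line degree count, at the cost of importing the lambda algebra machinery; your route is more elementary but, as you see, the coproduct bookkeeping in the negative cone is more delicate than the proposal suggests.
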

\begin{proof}
We may identify the primitives $\operatorname{Prim}(\calA^{C_2}/I_{\tr})$ as an $\Ext$ group:
\[
\operatorname{Prim}(\calA^{C_2}/I_{\tr}) \cong \Ext^1_{\calA^{C_2}/I_{\tr}}(\bbM^{C_2}/I_{\tr},\bbM^{C_2}/I_{\tr}) \cong \Ext^1_{\calA^\bbR}(\bbM^\bbR,\bbM^{C_2}/I_{\tr}).
\]
Abbreviate $M = \bbM^{C_2}/I_{\tr}$. The groups $\Ext^1_{\calA^\bbR}(\bbM^\bbR,M)$ may be computed via a Koszul complex of the form
\begin{center}\begin{tikzcd}
\Lambda^\bbR[0]\otimes_{\bbM^\bbR}M\ar[r,"\delta_0"]&\Lambda^\bbR[1]\otimes_{\bbM^\bbR}M\ar[r,"\delta_1"]&\Lambda^\bbR[2]\otimes_{\bbM^\bbR}M\ar[r]&\cdots
\end{tikzcd},\end{center}
where $\Lambda^\bbR$ is the $\bbR$-motivic lambda algebra; see \cite[Remark 2.3.5]{balderramaculverquigley2021motivic}. It therefore suffices to show that $\ker(\delta_1) = 0$ in degrees of the form $|\xi_n|$ for $n\geq 2$.

Note that $\Lambda^\bbR[1]\otimes_{\bbM^\bbR}M$ is generated by elements of the form $\lambda_rx$ with $r\geq 0$ and $x\in M$, and that internal algebraic degrees we have
\[
|\lambda_r| = \left\lfloor\frac{r}{2}\right\rfloor + 1 + \left\lceil \frac{r}{2}\right\rceil\sigma.
\]
Given an object $x$ with $RO(C_2)$-degree $a+b\sigma$, write $v(x) = a-b$. Then
\[
v(\xi_n) = 0,\qquad v(\tau) = 2,\qquad v(\rho) = 1,\qquad v(\gamma) = -2,\qquad v(\lambda_r) = \begin{cases}1&r\text{ even},\\0&r\text{ odd}.\end{cases}
\]
Thus if $v(\lambda_rx) = v(\xi_n) = 0$ then $x=1$ and $r$ is odd. The only such elements in $\ker(\delta_1)$ are those of the form $\lambda_{2^a-1}$ for $a\geq 1$, and these are not in the degree of $\xi_n$ for $n\geq 1$.
\end{proof}

\begin{prop}\label{prop:nt}
The norm $N\colon \calA_\ast^\cl\rightarrow\calA_{\ast(1+\sigma)}^{C_2}/I_{\tr}$ is a map of Hopf algebroids, given on generators by $N(\xi_n) = \xi_n$. In other words, $N$ is compatible with the map $P$ of \cref{eq:pa}.
\end{prop}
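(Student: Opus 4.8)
The plan is to show that $N$ descends to a map of Hopf algebroids $\calA^\cl_\ast \to \calA^{C_2}_{\ast(1+\sigma)}/I_{\tr}$, and then to identify this map on the polynomial generators $\xi_n$ by an induction powered by \cref{lem:tprims}. Norms are always multiplicative, and, as recalled just before \cref{lem:tprims}, the norms of \cref{eq:n} are additive modulo transfers; since $I_{\tr}$ is an ideal, the composite $\calA^\cl_\ast \xrightarrow{N}\calA^{C_2}_\star \twoheadrightarrow \calA^{C_2}_\star/I_{\tr}$ is a ring homomorphism of degree $n\mapsto n(1+\sigma)$. Since $\calA^\cl = \bbF_2[\xi_1,\xi_2,\dots]$ is free as a commutative ring on the $\xi_n$, once this map is known to send $\xi_n$ to $\xi_n$ it must agree with the reduction of the map $P$ of \cref{eq:pa}, which is the last sentence of the statement.

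\emph{The Hopf algebroid structure.} Every piece of the Hopf algebroid structure on $\calA^{C_2}$ is induced by a map of $C_2\hyp\bbE_\infty$ rings between smash powers of $H\bbF_2^{C_2}$: the two unit inclusions $H\bbF_2^{C_2}\to H\bbF_2^{C_2}\otimes H\bbF_2^{C_2}$ (left and right units), the middle unit $H\bbF_2^{C_2}\otimes H\bbF_2^{C_2}\to (H\bbF_2^{C_2})^{\otimes 3}$ (comultiplication), the multiplication $H\bbF_2^{C_2}\otimes H\bbF_2^{C_2}\to H\bbF_2^{C_2}$ (counit), and the swap (antipode); the structure on $\calA^\cl$ is obtained by applying $\res^{C_2}_e$ throughout. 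Because $N_e^{C_2}$ is natural in $C_2\hyp\bbE_\infty$ rings and is multiplicative, it commutes with all of these maps after reducing modulo $I_{\tr}$, where, as noted in the text, the structure descends. The one point needing care is the comultiplication, where one must additionally invoke the Künneth identification $\pi^{C_2}_\star((H\bbF_2^{C_2})^{\otimes 3})\cong\calA^{C_2}\otimes_{\bbM^{C_2}}\calA^{C_2}$ and check that it too is compatible with the norm — which again follows from naturality together with multiplicativity, a pure tensor in the target being a product of a ``left'' and a ``right'' class. This bookkeeping is the most laborious part of the argument, though no individual compatibility is hard.

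\emph{Values on generators.} We induct on $n$. For $n=1$: as $\xi_1\in\calA^\cl$ is primitive and $N$ commutes with the comultiplication, $N(\xi_1)$ is a primitive of $\calA^{C_2}/I_{\tr}$ in degree $1+\sigma$; since the space of such primitives contains $\xi_1\neq 0$ and, by the lambda-algebra computation in the proof of \cref{lem:tprims}, is at most one-dimensional over $\bbF_2$, it equals $\bbF_2\{\xi_1\}$. To see $N(\xi_1)\neq 0$, apply the restriction--norm formula: $\res^{C_2}_e N(\xi_1) = \xi_1\cdot c(\xi_1) = \xi_1^2$, where the residual Weyl action $c$ on $\calA^\cl = \pi^e_\ast(H\bbF_2^{C_2}\otimes H\bbF_2^{C_2})$ is trivial because $\ul{\bbF}_2$ is the constant Mackey functor; this is nonzero, so $N(\xi_1)=\xi_1$. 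For the inductive step take $n\geq 2$ and assume $N(\xi_j)=\xi_j$ for $1\leq j<n$. Applying the comultiplication-compatibility of $N$ and of the reduction of $P$ to the Milnor coproduct $\psi(\xi_n)=\sum_{i+j=n}\xi_i^{2^j}\otimes\xi_j$ (with $\xi_0=1$), the inductive hypothesis shows that $N(\xi_n)-\xi_n$ is primitive in $\calA^{C_2}/I_{\tr}$, in degree $|\xi_n|=(2^n-1)(1+\sigma)$. By \cref{lem:tprims} this group vanishes, so $N(\xi_n)=\xi_n$.

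\emph{Conclusion and main obstacle.} Both $N$ and the reduction of $P$ are ring maps out of the free commutative ring $\calA^\cl=\bbF_2[\xi_1,\xi_2,\dots]$ carrying $\xi_n$ to $\xi_n$, hence they coincide; combined with the previous two paragraphs this proves the proposition. I expect the main obstacle to be the second paragraph — identifying the homotopy-theoretic source of each structure map of the Hopf algebroid and carrying the Künneth identification along — while the $n=1$ case is a secondary wrinkle, lying just outside the range of \cref{lem:tprims} and so requiring the separate restriction argument to pin down the scalar.
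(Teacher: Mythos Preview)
Your proof is correct and follows essentially the same strategy as the paper: establish that $N$ is a map of Hopf algebroids by naturality of norms with respect to the $C_2\hyp\bbE_\infty$ structure maps, then induct on $n$ using the coproduct formula and \cref{lem:tprims} to force $N(\xi_n)=\xi_n$. The only noteworthy difference is in the base case: the paper handles $n=1$ by observing directly that $\calA^{C_2}_{1+\sigma}$ contains a unique class restricting to $\xi_1^2$, whereas you first argue $N(\xi_1)$ is primitive (extending the lambda-algebra count to degree $1+\sigma$) and then use the restriction formula to rule out zero---both are valid and amount to the same underlying computation.
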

\begin{proof}
The structure maps in the Hopf algebroid $\calA^{C_2}$ are obtained from various $C_2\hyp\bbE_\infty$ maps between the spectra $(H\bbF_2^{C_2})^{\otimes k}$ for $k\geq 1$. It follows by naturality that norms commute with these structure maps. As $\calA^{C_2}/I_{\tr}$ is a quotient Hopf algebroid of $\calA^{C_2}$, the same is true for $N\colon \calA_\ast^\cl\rightarrow\calA_{\ast(1+\sigma)}^{C_2}/I_{\tr}$. As this map is moreover additive, it is a map of Hopf algebroids. We induct on $n$ to show that $N(\xi_n) = \xi_n$ for $n\geq 1$.

First consider $n=1$. Even before modding out by the transfer ideal, $N(\xi_1) \in \calA^{C_2}_{1+\sigma}$ must be some class lifting $\xi_1^2$ under the forgetful map $\calA_{1+\sigma}^{C_2}\rightarrow\calA_2^\cl$. The class $\xi_1\in \calA_{1+\sigma}^{C_2}$ is the only possibility.

Next let $n\geq 2$ and suppose we have verified $N(\xi_i) = \xi_i$ in $\calA^{C_2}/I_{\tr}$ for all $i < n$. As $N$ is a map of Hopf algebroids, we find
\begin{align*}
\Delta (N(\xi_n)) &=  N(\Delta(\xi_n)) = N(\sum_{0\leq i \leq n} \xi_{n-i}^{2^i} \otimes \xi_i) \\
 &= N(\xi_n)\otimes 1 + \sum_{0 < i < n} N(\xi_{n-i})^{2^i} \otimes N(\xi_i) + 1 \otimes N(\xi_n) \\
 &= N(\xi_n)\otimes 1 + \sum_{0 < i < n} \xi_{n-i}^{2^i} \otimes \xi_i + 1 \otimes N(\xi_n),
\end{align*}
where the last equality is an application of our inductive hypothesis. It follows that the difference $N(\xi_n) - \xi_n$ is primitive, and thus $N(\xi_n) = \xi_n$ by \cref{lem:tprims}.
\end{proof}

We have now all but given the following.

\begin{proof}[Proof of \cref{thm:c2}]
Consider the HLSS associated to the canonical resolution
\[
S_{C_2}\rightarrow\lim_{n\in\Delta} (H\bbF_2^{C_2})^{\otimes n+1}.
\]
This yields the $C_2$-equivariant Adams spectral sequence upon taking fixed points, and the classical Adams spectral sequence upon taking underlying spectra. This puts us squarely in the context of \cref{thm:main}, which tells us that $\Sq\colon \pi_\ast S\rightarrow \pi_{\ast(1+\sigma)}S_{C_2}$ is modeled in filtration $f$ by $\rho^f N$, with $N$ the norm for $(H\bbF_2^{C_2})^{\otimes f+1}$. When $f = 0$, the norm is simply given by $N(1) = 1$. When $f \geq 1$, as $\rho$ annihilates the transfer ideal, \cref{prop:nt} implies $\rho N = \rho P$, and the theorem follows.
\end{proof}

\section{Norms on \texorpdfstring{$\pi_0$}{pi\_0} of the equivariant \texorpdfstring{$KU$}{KU}-local sphere}\label{sec:kug}

We now consider \cref{thm:ku} and related matters.

\subsection{Preliminaries}\label{ssec:kuprelim}

We begin by recalling some background on equivariant $K$-theory. Fix for now a finite group $G$, and write $KU_G$ for the $G$-equivariant spectrum of $G$-equivariant complex $K$-theory. Equivariant Bott periodicity takes the following form: If $V\in RU(G)$ is a virtual complex $G$-representation, then there is an invertible \textit{Bott class}
\[
\beta^V \in KU_G^0(S^V).
\]
As usual there, are two natural choices of Bott classes, related by complex conjugation. With notation from \cite{atiyah1968bott}, we shall take our Bott classes to be defined by $\beta^V = \lambda_V$ when $V$ is a $G$-representation. In particular, $\beta = \beta^\bbC = 1 - \calL \in KU^0(S^2) = \pi_2 KU$, where $\calL\rightarrow S^2$ is the canonical line bundle. It is this choice that is well behaved with respect to power operations in $K$-theory (see \cref{lem:powbott}).

If $V$ is a complex $G$-representation, then the \textit{Euler class}   $e(V) \in RU(G)$ of $V$, in the sense of \cite[Chapter 7]{dieck1979transformation}, is defined as the image of $\beta^V$ under the map
\[
KU_G^0(S^V)\rightarrow KU_G(S^0) \cong RU(G)
\]
given by restriction along the inclusion of poles $S^0\rightarrow S^V$. We will discuss these further in \cref{ssec:euler}.

Now let $\Glob$ denote the homotopy theory of global equivariant spectra with respect to the family of finite groups, formalized as in \cite{schwede2018global}, and let $\textbf{KU}$ be the global spectrum of equivariant complex $K$-theory constructed in \cite[Section 6.4]{schwede2018global}. This is a refinement of the $G$-equivariant spectra $KU_G$, in the sense that there are symmetric monoidal functors $U_G\colon \Glob\rightarrow\Sp^G$ and $KU_G \simeq U_G\textbf{KU}$. For our purposes, we may take this as the \textit{definition} of the $G$-spectra $KU_G$, to be assured that the $G\hyp\bbE_\infty$ structure on $KU_G$ is compatible with the ultracommutative ring structure of $\textbf{KU}$; it is not obvious whether the $G\hyp\bbE_\infty$ structure on $KU_G$ is unique, see for instance \cite{bohmannhazelishakkedziorekmay2021genuine}.

The ultracommutative ring structure on $\textbf{KU}$ gives rise to maps $\bbP^m KU_G\rightarrow KU_{\Sigma_m\wr G}$, and this in turn induces power operations of the following form: if $X$ is a $G$-space, then $X^{\times m}$ is naturally a $\Sigma_m\wr G$-space, and there are power operations
\[
P^m\colon KU_G^0(X_+) \rightarrow KU_{\Sigma_m\wr G}^0(X^{\times m}_+).
\]
If $X$ is a based $G$-space, then one may instead consider 
\[
P^m\colon KU_G^0(X)\rightarrow KU_{\Sigma_m\wr G}^0(X^{\wedge m}).
\]
We round out this discussion by noting the following.

\begin{lemma}\label{lem:powbott}
If $V$ is a virtual complex $G$-representation, then
\[
P^m(\beta^V) = \beta^{\rho_m\otimes V} \in KU_{\Sigma_m\wr G}^0(S^{\rho_m\otimes V}).
\]
\end{lemma}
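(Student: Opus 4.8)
The plan is to reduce the statement to the case $V = \mathbf{C}$ (the trivial one-dimensional representation) and then to a direct identification of $P^m(\beta)$, using multiplicativity of power operations together with the defining property of the Bott classes.

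First I would recall that the Bott classes are multiplicative in $V$ in the sense that $\beta^{V \oplus W} = \beta^V \boxtimes \beta^W$ under the identification $S^{V \oplus W} \simeq S^V \wedge S^W$, and that $\beta^V$ for a genuine representation $V$ is characterized, by $K_G$-theory Thom isomorphisms, as the image of $1$ under the Thom class construction $\lambda_V$; the case of virtual $V$ follows formally since $\beta^{-V}$ is the inverse of $\beta^V$. Since $P^m$ is a multiplicative natural transformation arising from the ultracommutative ring structure on $\mathbf{KU}$, it is compatible with external products, so $P^m(\beta^{V \oplus W})$ is the external product $P^m(\beta^V) \boxtimes P^m(\beta^W)$ in the appropriate wreath-product-equivariant $K$-theory, and $P^m$ commutes with passing to inverses of invertible classes. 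Therefore it suffices to check the formula when $V = \mathbf{C}$, where the assertion becomes $P^m(\beta) = \beta^{\rho_m \otimes \mathbf{C}} = \beta^{\rho_m}$ in $KU_{\Sigma_m \wr G}^0(S^{\rho_m})$, with $\rho_m$ the complexified permutation representation of $\Sigma_m$.

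For the case $V = \mathbf{C}$, I would argue as follows. The class $\beta = \beta^{\mathbf{C}} = 1 - \mathcal{L} \in KU^0(S^2) = \widetilde{KU}{}^0(S^2)$ is represented by the reduced canonical line bundle on $S^2 = \mathbf{CP}^1$. Applying $\bbP^m$ to the representing map and using that $\bbP^m(S^{\mathbf{C}}) \simeq S^{\rho_m \otimes \mathbf{C}}$ (the $\Sigma_m \wr G$-equivariant identification already recorded in the paper, here with trivial $G$-action on the source), the power operation $P^m(\beta)$ is by construction the image of $\beta$ under the map $KU^0(S^{\mathbf{C}}) \to KU_{\Sigma_m \wr G}^0(S^{\rho_m \otimes \mathbf{C}})$ induced by the $m$-fold smash power functor followed by the ultracommutative multiplication. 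On the other hand, equivariant Bott periodicity identifies $KU_{\Sigma_m \wr G}^0(S^{\rho_m \otimes \mathbf{C}})$ as a free rank-one module over $RU(\Sigma_m \wr G)$ generated by $\beta^{\rho_m \otimes \mathbf{C}}$, so it remains only to see that $P^m(\beta)$ is the generator and not some $RU(\Sigma_m \wr G)$-multiple of it. This is forced by naturality: restricting along the inclusion of the trivial group $e \hookrightarrow \Sigma_m \wr G$ (equivalently, taking underlying non-equivariant $K$-theory) sends $\beta^{\rho_m \otimes \mathbf{C}}$ to $\beta^{\otimes m} = \beta^{\mathbf{C}^m}$, and it sends $P^m(\beta)$ to $\beta^{\otimes m}$ as well, since nonequivariantly the $m$-fold power operation followed by the multiplication on the underlying $\bbE_\infty$ ring $KU$ is just $x \mapsto x^{\otimes m}$ composed with $\mu$, which on $\beta \in \widetilde{KU}{}^0(S^2)$ gives the external $m$th power. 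Since the restriction map $KU_{\Sigma_m \wr G}^0(S^{\rho_m \otimes \mathbf{C}}) \to KU^0(S^{2m})$ is injective on the free rank-one part in a way detecting the generator, this pins down $P^m(\beta) = \beta^{\rho_m \otimes \mathbf{C}}$.

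The main obstacle I anticipate is the last point: verifying that there is no correction by a nontrivial element of the representation ring $RU(\Sigma_m \wr G)$. The cleanest route is the naturality/restriction argument sketched above, but one should be careful that restriction to the trivial subgroup is actually injective on the relevant summand; if it is not, one can instead restrict to a maximal torus or to the various subgroups $\Sigma_{i} \times \Sigma_{m-i}$ and use the splitting principle together with the addition formula $P^m(\beta^{V \oplus W}) = \sum_{i+j=m} \tr(P^i(\beta^V) \cdot P^j(\beta^W))$ from Section~\ref{ssec:addition} to reduce to line bundles, where $P^m$ of a line is computed by the standard formula for power operations in $K$-theory. Either way, the conceptual content is simply that $P^m$ is a ring map commuting with external products, and the Bott classes are the universal invertible Thom classes, so they must go to Bott classes; the work is bookkeeping to confirm the normalization.
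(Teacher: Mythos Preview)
Your approach has two genuine gaps.

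First, the reduction to $V = \bbC$ does not follow from multiplicativity alone. Multiplicativity of $P^m$ and of the Bott classes lets you pass from virtual representations to honest ones and from direct sums to summands, but a general complex $G$-representation is not a direct sum of copies of the trivial representation. At best, the splitting principle (which you mention only as a fallback) reduces to one-dimensional $G$-representations, and these still carry nontrivial $G$-action; so you would still need to handle $\beta^L$ for an arbitrary character $L$ of $G$.

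Second, even in the case $V = \bbC$ with $G = e$, your restriction argument does not pin down the class. The Thom isomorphism identifies $KU_{\Sigma_m}^0(S^{\rho_m\otimes\bbC}) \cong RU(\Sigma_m)$, and restriction to the trivial subgroup is the augmentation $RU(\Sigma_m) \to \bbZ$, which is not injective for $m \geq 2$. Knowing that $P^m(\beta)$ restricts to $\beta^{\otimes m}$ only tells you that the unit $u$ in $P^m(\beta) = u\cdot\beta^{\rho_m\otimes\bbC}$ has virtual dimension $1$, not that $u = 1$. Your proposed alternatives do not obviously close this gap either: $\Sigma_m$ has no useful torus, and the addition formula concerns $P^m(x+y)$ rather than the restriction of a fixed $P^m(\beta)$. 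That there is real content here is underscored by the paper's remark that with the other Bott convention one gets $P^2(\lambda_\bbC^\ast) = -\lambda_{\rho_2\otimes\bbC}^\ast$, so the normalization is not automatic.

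The paper avoids both issues by invoking the geometric description of the power operation: citing Schwede, it identifies $P^m$ on the class of a vector bundle $E$ with the external tensor power $E^{\boxtimes m}$, i.e.\ the ultracommutative power operations on $\textbf{KU}$ agree with Atiyah's classical ones. One then computes directly $P^m(\lambda_V) = \lambda_V^{\otimes m} = \lambda_{\rho_m\otimes V}$ for any honest representation $V$, and the virtual case follows by multiplicativity. The ingredient missing from your argument is precisely this identification of $P^m$ with the external tensor power on bundle classes.
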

\begin{proof}
This is essentially classical, so let us just sketch how the pieces fit together. By multiplicativity, we may reduce to the case where $V$ is a complex $G$-representation. The proof of \cite[Theorems 6.3.32(iii)]{schwede2018global} extends to show that if $X$ is a $G$-space and $E\in KU_G^0(X_+)$ is the class of a vector bundle, then $P^m E \in KU_{\Sigma_m\wr G}^0 (X^{\times n}_+)$ is the class of the external tensor power $E^{\boxtimes m}$. Put another way, the power operations arising from the ultracommutative ring structure on $\textbf{KU}$ agree with the classic power operations constructed by Atiyah in \cite{atiyah1966power}. At this point, with notation from \cite{atiyah1968bott}, one computes that $P^m(\beta^V) = P^m(\lambda_V) = \lambda_V^{\otimes m} = \lambda_{\rho_m\otimes V} = \beta^{\rho_m\otimes V}$. This is where we have used our choice of Bott classes, as for example $P^2(\lambda_\bbC^\ast) = - \lambda_{\rho_2\otimes\bbC}^\ast$, still with notation from \cite{atiyah1968bott}.
\end{proof}

\subsection{The main proposition}

In the appendix (\cref{prop:ku1proof}) we verify \cref{prop:kul}. This implies, among other things, that the global ultracommutative ring spectrum $S_{\textbf{KU}}^\wedge =  \lim_{n\in\Delta} \textbf{KU}^{\otimes n+1}$ refines the $G$-spectra $L_{KU_G}S_G$, at least for $G$ a finite nilpotent group. Now define
\[
L = \pi_0 S_{\textbf{KU}}^\wedge
\]
Then $L$ is a global power functor for the family of finite groups in the sense of \cite[Chapter 5]{schwede2018global}. This means that for each finite group $G$ we are given an abelian group $L(G)$, together with restrictions along arbitrary homomorphisms, transfers along injective homomorphisms, external pairings $L(G)\otimes L(K)\rightarrow L(G\times K)$, and power operations 
\[
P^m_G\colon L(G)\rightarrow L(\Sigma_m \wr G),
\]
all subject to a number of compatibilities. When the group $G$ is clear from context, we shall write $P^m = P^m_G$. The assertion that if $G$ is nilpotent then $S_{\textbf{KU}}^\wedge$ refines the $G$-spectrum $L_{KU_G}S_G$ says, among other things, that
\[
L(G) = \pi_0^G L_{KU_G}S_G.
\]
Each $\ul{\pi}_0 L_{KU_G}S_G$ is a $G$-Tambara functor, and this is contained in the global power structure of $L$. In short, if $K\subset G$ is a subgroup of index $m$, then the norm $N_K^G\colon L(K)\rightarrow L(G)$ is recovered by postcomposing $P^m_K\colon L(K)\rightarrow L(\Sigma_m\wr K)$ with restriction along a suitable embedding $G\rightarrow\Sigma_m\wr K$. See \cite[Remark 5.17]{schwede2018global} for a more detailed discussion.

We do not know the value of $L(G)$ in general, even as a mere abelian group. When $G$ is an odd $p$-group, $L(G) \cong R\bbQ(G)[\epsilon]/(\epsilon^2,2\epsilon)$ \cite[Theorem 1.1, Proposition 6.7]{bonventreguilloustapleton2022kug}. It is also not hard to show directly that the same is true for $G = C_2$. It seems plausible that $L(G)$ might be approachable via an analysis of the $KU_G$-based Adams spectral sequence. We shall not attempt to carry out any such analysis here, but for the interested reader point out that the descent from $KU_G$ to $KO_G$ is fully described in \cite[Example 9.19]{mathewnaumannnoel2017nilpotence}, and it may be fruitful to start with $KO_G$ rather than $KU_G$.

As $L$ is equipped with restrictions along arbitrary homomorphisms, the sequence $e\rightarrow G \rightarrow e$ shows that, for every group $G$, the ring $L(G)$ is an augmented $L(e)$-algebra. We may identify $L(e)$ explicitly as
\[
L(e) = \pi_0 L_{KU}S = \bbZ[\epsilon]/(2\epsilon,\epsilon^2).
\]
In particular, the class $\epsilon$ resides in $L(G)$ for any group $G$, and we would like to understand how power operations behave on $\epsilon$. Observe that
\[
P^m_G(\epsilon) = P^m(\res^e_{G}(\epsilon)) = \res^{\Sigma_m}_{\Sigma_m\wr G}(P^m_e(\epsilon)).
\]
Thus, to determine $P^m_G(\epsilon)$, it suffices to consider the case where $G = e$, at least once the underlying global Mackey functor of $L$ is known. Although we have not computed $L(\Sigma_m)$, we can say the following.

Write $\ol{\rho}_m^\bbC$ for the reduced complex permutation representation of $\Sigma_m$.

\begin{prop}\label{prop:powepsilon}
The class $P^m(\epsilon) \in L(\Sigma_m)$ is detected in the $KU_{\Sigma_m}$-based Adams spectral sequence by $e(\ol{\rho}_m^\bbC) \cdot \epsilon$.
\end{prop}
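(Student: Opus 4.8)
The plan is to present the $KU_{\Sigma_m}$-based Adams spectral sequence as the HLSS obtained by evaluating the cosimplicial object $\textbf{KU}^{\otimes\bullet+1}$ at $\Sigma_m$, so that \cref{thm:powhfpss}, in the ultracommutative-ring form afforded by the ultracommutative structure of $\textbf{KU}$, governs how the $m$th power operation $P^m$ descends through it; its abutment is $L(\Sigma_m) = \pi_0^{\Sigma_m}S_{\textbf{KU}}^\wedge$. The element $\epsilon\in L(e) = \pi_0 L_{KU}S$ is detected in filtration $2$ by a class $x\in E_2^{0,2}$; as $KU_*KU$ is concentrated in even degrees, multiplication by the Bott class identifies $E_2^{0,2}$ with $\beta\cdot H^2(\Delta;\pi_0\textbf{KU}^{\otimes\bullet+1})$, so I would fix a $2$-cocycle $\bar x$ in the cobar complex of $\pi_0\textbf{KU}^{\otimes\bullet+1}$ with $x = \beta\bar x$. (The filtration-$2$ statement can be read off from the $\psi^q-1$ fibre sequence computing $\pi_\ast L_{K(1)}S$ at $p=2$, where $\epsilon$ originates from $\eta\in\pi_1 KO_2$.) By \cref{thm:powhfpss}(1) and (5), the class $Q^m(x) = a_m^2 P^m_2(x)$ is a well-defined permanent cycle in $E_2^{0,2}$ of the $\Sigma_m$-spectral sequence detecting $P^m(\epsilon)$, and everything reduces to identifying it.

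To that end I would compute $a_m^2 P^m_2(x)$ directly, levelwise on the cobar complex. Multiplicativity of $P^m$ gives $P^m_2(x) = P^m(\beta)\cdot P^m(\bar x)$, and \cref{lem:powbott} evaluates $P^m(\beta) = \beta^{\rho_m^\bbC}$, the Bott class of the complex permutation representation $\rho_m^\bbC$ (whose underlying real representation is $2\rho_m$, consistent with the target of $P^m_2$). Writing $\rho_m^\bbC = \ol\rho_m^\bbC\oplus\bbC$, hence $\beta^{\rho_m^\bbC} = \beta^{\ol\rho_m^\bbC}\cdot\beta$, and recalling that $a_m^2$ is represented by the inclusion of poles $S^0\to S^{\ol\rho_m\oplus\ol\rho_m} = S^{\ol\rho_m^\bbC}$, the product $a_m^2\cdot\beta^{\ol\rho_m^\bbC}$ is by the very definition of the Euler class equal to $e(\ol\rho_m^\bbC)\in RU(\Sigma_m) = \pi_0^{\Sigma_m}KU_{\Sigma_m}$. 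Combining,
\[
Q^m(x) \;=\; a_m^2 P^m_2(x) \;=\; e(\ol\rho_m^\bbC)\cdot\beta\cdot P^m(\bar x).
\]

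It remains to compare this with $e(\ol\rho_m^\bbC)\cdot\beta\cdot\res(\bar x)$, where $\res$ denotes inflation along $\Sigma_m\to e$; by naturality of the HLSS, $\res(x) = \beta\res(\bar x)$ is a permanent cycle detecting the image of $\epsilon$ in $L(\Sigma_m)$, so $e(\ol\rho_m^\bbC)\cdot\res(x)$ is precisely the element $e(\ol\rho_m^\bbC)\cdot\epsilon$ of the statement. Two facts combine here. First, $e(\ol\rho_m^\bbC)$ annihilates transfers: since $\ol\rho_m^\bbC$ restricts to $\Sigma_{i,m-i}$ as $\ol\rho_i^\bbC\oplus\ol\rho_{m-i}^\bbC\oplus\bbC$, which contains a trivial summand, $e(\ol\rho_m^\bbC)|_{\Sigma_{i,m-i}}$ acquires the factor $e(\bbC)=0$, and the projection formula gives $e(\ol\rho_m^\bbC)\cdot\tr_{\Sigma_{i,m-i}}^{\Sigma_m}(\bs) = 0$ for $0<i<m$. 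Second, modulo transfers the power operation $P^m$ on $\pi_0\textbf{KU}^{\otimes\bullet+1}$ agrees with inflation: both descend to homomorphisms of cosimplicial abelian groups agreeing on the unit, and the standard splitting of an $m$th power $E^{\boxtimes m}$ as its diagonal summand plus contributions induced from proper subgroups of $\Sigma_m$ (Atiyah's analysis of $K$-theory power operations, cf.\ the proof of \cref{lem:powbott}) identifies them. Hence $P^m(\bar x) - \res(\bar x)$ lies in the transfer ideal levelwise, and multiplying by $e(\ol\rho_m^\bbC)$ kills it, so $Q^m(x) = e(\ol\rho_m^\bbC)\cdot\res(x)$ in $E_2^{0,2}$; combined with \cref{thm:powhfpss}(5) this is the proposition.

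The main obstacle is the last comparison: showing that $P^m$ coincides with inflation modulo transfers on all of $\pi_0\textbf{KU}^{\otimes\bullet+1}$, not merely on the image of $\bbZ$. I would attack it either by checking the assertion on algebra generators of the cooperation ring $KU_0KU$ and its iterates, using the explicit description of $K$-theory power operations, or --- more cheaply --- by choosing the cocycle $\bar x$ detecting $\epsilon$ from the $\psi^q-1$ resolution, so that the needed input reduces to power operations on the image-of-$J$ classes $\eta$ and $\zeta$, which are classical. A secondary point requiring care is the bookkeeping at the $s=0$ fringe of the spectral sequence; but since we only ever apply $Q^m$ to a genuine permanent cycle sitting in positive filtration, the correction terms of \cref{thm:powhfpss}(4) do not intervene.
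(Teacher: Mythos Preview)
Your strategy matches the paper's: both apply \cref{thm:powhfpss} to the cosimplicial global spectrum $\textbf{KU}^{\otimes\bullet+1}$, note that $\epsilon$ is detected in filtration $2$, use \cref{lem:powbott} to compute $a_m^2 P^m(\beta) = e(\ol\rho_m^\bbC)\beta$, and conclude that $Q(\tilde\epsilon)$ is the class $e(\ol\rho_m^\bbC)\cdot\epsilon$. The paper's version is more terse: it displays the map $a_m^2 P^m_2$ only on $\pi_2^e KU$ (the cosimplicial level $j=0$), identifies it there as $\beta\mapsto e(\ol\rho_m^\bbC)\beta$, and summarizes this by saying ``the map $Q$ is given by multiplication with $e(\ol\rho_m^\bbC)$''.

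You have been more careful than the paper on one point. You correctly observe that $Q$ is induced by $a_m^2 P^m_2$ on \emph{all} of $\pi_2 KU^{\otimes\bullet+1}$, and that after peeling off the Bott class this becomes $e(\ol\rho_m^\bbC)\cdot P^m_0$ on $\pi_0 KU^{\otimes\bullet+1}$; to reach the stated conclusion one must compare this with $e(\ol\rho_m^\bbC)\cdot\res$. Your mechanism for doing so---that $e(\ol\rho_m^\bbC)$ restricts to zero on each $\Sigma_{i,m-i}$ (since $\ol\rho_m^\bbC|_{\Sigma_{i,m-i}}$ acquires a trivial summand) and therefore kills the cross-terms in the addition formula---is exactly right, and reduces the question to checking $P^m\equiv\res$ modulo such transfers on ring generators of $\pi_0 KU^{\otimes\bullet+1}$. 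This is the residual point you flag as the ``main obstacle''. It is a genuine step the paper glosses over, but a mild one in the $K$-theory context: it follows from Atiyah's description of $P^m$ as the external tensor power on vector-bundle classes (the same input behind \cref{lem:powbott}), and either of your two suggested routes (generators of $KU_0KU$, or the $\psi^k-1$ resolution) would close it cleanly. Your remark about the $s=0$ fringe is also correct: since $\tilde\epsilon$ lies in filtration $t=2>0$, the correction terms of \cref{thm:powhfpss}(4) play no role.
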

\begin{proof}
Consider the $\textbf{KU}$-based Adams spectral sequence. This is the HLSS associated to the canonical resolution
\[
S_{\textbf{KU}}^\wedge \simeq \lim_{n\in\Delta} \textbf{KU}^{\otimes n+1},
\]
and gives, for every finite group $G$ and $\alpha\in RO(G)$, the $KU_G$-based Adams spectral sequence of signature
\[
{}^G E_1^{s+\alpha,t+\alpha} = \pi_{t+\alpha}^G(KU_G^{\otimes t-s+1}) \Rightarrow \pi_{s+\alpha}^G L_{KU_G}S_G,
\]
compatible with all restrictions and transfers. 

When $G = e$, this is the nonequivariant $KU$-based Adams spectral sequence. The class $\epsilon\in \pi_0 L_{KU}S$ is detected by some class $\epsilontilde\in{}^e E_2^{0,2}$ in filtration $2$. It follows from \cref{thm:powhfpss} that $P^m(\epsilon)$ is detected by $Q(\epsilontilde)\in {}^{\Sigma_m}E_2^{0,2}$, where $Q\colon {}^eE_2^{0,2}\rightarrow {}^{\Sigma_m}E_2^{0,2}$ is induced by
\[
a_m^2 P^m_2\colon  \pi_2^e KU \rightarrow \pi_{\rho_m^\bbC}^{\Sigma_m}KU_{\Sigma_m}\rightarrow \pi_2^{\Sigma_m}KU_{\Sigma_m}.
\]
By \cref{lem:powbott}, we may identify this as
\[
\bbZ\{\beta\}\rightarrow RU(\Sigma_m)\{\beta^{\rho_m^\bbC}\}\rightarrow RU(\Sigma_m)\{\beta\},
\]
where the first map acts by $\beta\mapsto \beta^{\rho_m^\bbC}$ and the second map acts by $\beta^{\rho_m^\bbC}\mapsto e(\ol{\rho}_m^\bbC)\cdot\beta$. More succinctly, the map $Q$ is given by multiplication with $e(\ol{\rho}_m^\bbC)$, and the proposition follows.
\end{proof}

\subsection{Euler classes}\label{ssec:euler}

To translate from \cref{prop:powepsilon} to \cref{thm:ku}, we must recall some information about Euler classes. Let $G$ be a finite group. Given a complex $G$-representation $V$, the Euler class $e(V)$ may be identified explicitly as
\[
e(V) = \sum_n (-1)^n \Lambda^n(V) \in RU(G).
\]
This follows from the definition of $e(V)$ and the construction of the Bott class $\beta^V$, see for instance \cite[IV \S 1]{atiyahtall1969group}.
In particular, write $\Cl(G;\bbC)$ for the ring of class functions on $G$, and for $V\in RU(G)$ write $\chi(V,\bs)\in \Cl(G;\bbC)$ for its character. For a complex $G$-representation $V$ and $g\in G$, write $f(V,g)(t)\in \bbC[t]$ for the characteristic polynomial of the linear map $g\colon V\rightarrow V$. Then we obtain the following identification of the character of an Euler class.

\begin{lemma}\label{lem:charactereuler}
Let $V$ be a complex $G$-representation. Then $\chi(e(V),g) = f(V,g)(1)$.
\end{lemma}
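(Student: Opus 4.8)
The plan is to compute both sides directly by diagonalizing the action of $g$ on $V$. Since $g \in G$ has finite order, the linear map $g\colon V \to V$ is diagonalizable over $\bbC$; let $\lambda_1, \ldots, \lambda_d$ be its eigenvalues, where $d = \dim_\bbC V$. Then the characteristic polynomial is $f(V,g)(t) = \prod_{i=1}^d (t - \lambda_i)$, so evaluating at $t = 1$ gives $f(V,g)(1) = \prod_{i=1}^d (1 - \lambda_i)$.

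Next I would compute the left-hand side using the explicit formula $e(V) = \sum_n (-1)^n \Lambda^n(V)$ recalled just before the lemma. Taking characters is additive and multiplicative for the operations involved, and the character of the exterior power $\Lambda^n(V)$ at $g$ is the $n$th elementary symmetric polynomial $\sigma_n(\lambda_1, \ldots, \lambda_d)$ in the eigenvalues (this is the standard fact that $\chi(\Lambda^n V, g) = e_n(\lambda_1,\ldots,\lambda_d)$, provable by diagonalizing and noting $\Lambda^n$ of a diagonal matrix is diagonal with entries the $n$-fold products of distinct eigenvalues). Therefore
\[
\chi(e(V), g) = \sum_{n=0}^d (-1)^n \sigma_n(\lambda_1, \ldots, \lambda_d).
\]
The final step is the elementary algebraic identity $\sum_{n=0}^d (-1)^n \sigma_n(\lambda_1, \ldots, \lambda_d) = \prod_{i=1}^d (1 - \lambda_i)$, which is just the generating-function relation $\prod_{i}(1 + x\lambda_i) = \sum_n \sigma_n x^n$ evaluated at $x = -1$. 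Comparing with the expression for $f(V,g)(1)$ above completes the proof.

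There is essentially no serious obstacle here; the statement is a character-theoretic bookkeeping computation. The only point requiring a small amount of care is making sure the formula $\chi(\Lambda^n V, g) = e_n(\lambda_i)$ is applied correctly and that the signs in $e(V) = \sum (-1)^n \Lambda^n(V)$ line up with the signs in the factored characteristic polynomial — both work out because the convention for the Bott class $\beta^V = \lambda_V$ was chosen precisely so that $e(V)$ has this form (as noted in the discussion preceding the lemma, referencing \cite[IV \S 1]{atiyahtall1969group}). One could alternatively phrase the whole argument multiplicatively: since everything in sight is multiplicative under direct sums, reduce to the case where $V$ is one-dimensional with $g$ acting by a scalar $\lambda$, where the identity reads $\chi(e(V),g) = 1 - \lambda = f(V,g)(1)$, and then extend to general $V$ by writing it as a sum of $g$-eigenlines. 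This reduction is clean and I would likely present it this way.
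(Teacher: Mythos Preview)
Your proof is correct and takes essentially the same approach as the paper. The paper simply states that the claim is the standard linear-algebra fact that $f(V,g)(1)$ equals the alternating sum of traces of $g$ on $\Lambda^n V$, and cites Bourbaki; you have written out exactly this computation via diagonalization and the elementary symmetric polynomial identity.
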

\begin{proof}
The claim is that the characteristic polynomial of $g\colon V\rightarrow V$ evaluated at $1$ agrees with the alternating sum of the traces of $g\colon \Lambda^n V\rightarrow\Lambda^n V$. This is a standard fact from linear algebra, see for instance \cite[\S 8, no.\ 11]{bourbaki1948algebrai}.
\end{proof}

Given a finite $G$-set $X$, let
\[
\widetilde{\bbC}[X] = \coker(\bbC\rightarrow \bbC[X])
\]
be the associated reduced permutation representation, and set
\[
e(X) = e(\widetilde{\bbC}[X]) \in RU(G).
\]
We then have the following.

\begin{prop}\label{prop:charactereuler}
${}$
\begin{enumerate}
\item Given a finite $G$-set $X$, we have $e(X)\neq 0$ if and only if there exists some $g\in G$ such that the cyclic group $\langle g \rangle$ acts transitively on $X$.
\item Let $p$ be a prime, and suppose that $K\subset G$ is a normal subgroup such that $G/K$ is cyclic of order $p^n$. Let $N \subset G$ be the unique subgroup of index $p$ containing $K$. Then $e(G/K) = p^{n-1} (p\bbC- \bbC[G/N])$.
\item In particular, in the situation of (2), if $p$ is odd then $e(G/K)\equiv \widetilde{\bbC}[G/N]\pmod{2}$.
\end{enumerate}
\end{prop}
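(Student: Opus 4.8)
The plan is to prove all three parts using \cref{lem:charactereuler}, which reduces everything to the character $\chi(e(X),g) = f(\widetilde{\bbC}[X],g)(1)$, where $f(V,g)$ is the characteristic polynomial. The first step is to compute this character explicitly: if $g \in G$ acts on the finite set $X$ with cycle decomposition into orbits of sizes $d_1,\ldots,d_k$ (so $\langle g\rangle$ has $k$ orbits on $X$), then $g$ acts on $\bbC[X]$ with characteristic polynomial $\prod_i (t^{d_i}-1)$, and on $\widetilde{\bbC}[X] = \bbC[X]/\bbC$ with characteristic polynomial $\prod_i(t^{d_i}-1)/(t-1)$. Evaluating at $t=1$: each factor $(t^{d_i}-1)/(t-1) = 1 + t + \cdots + t^{d_i-1}$ becomes $d_i$, so $\chi(e(X),g) = \prod_i d_i$. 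Hence $\chi(e(X),g) \neq 0$ if and only if every $\langle g\rangle$-orbit on $X$ is a single point, i.e.\ $g$ acts trivially — wait, that gives the wrong count. I should instead observe: $\chi(e(X),g) = \prod_{i=1}^{k} d_i$ where the $d_i$ are the $\langle g\rangle$-orbit sizes, and this is nonzero always unless $X = \emptyset$; the correct invariant to extract is different. The right statement is that $e(X) \neq 0$ as an element of $RU(G)$ iff \emph{some} character value is nonzero, but all values $\prod d_i$ are positive integers, so this would always be nonzero — so the subtlety must be that $e(X)=0$ is impossible unless... Let me reconsider: the issue is whether the alternating sum $\sum_n (-1)^n \Lambda^n(\widetilde{\bbC}[X])$ vanishes in $RU(G)$. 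We have $\Lambda^{top}$ considerations: if $\dim \widetilde\bbC[X] = |X|-1 =: d$, then $e(X)$ has "top term" $(-1)^d \Lambda^d$. The character computation $\chi(e(X),g) = f(\widetilde\bbC[X],g)(1) = \det(I - g|_{\widetilde\bbC[X]})$ is what I want, and $\det(I - g|_{\widetilde\bbC[X]}) = 0$ precisely when $g$ has eigenvalue $1$ on $\widetilde\bbC[X]$, i.e.\ when $g$ has fixed points on $X$ beyond the diagonal, i.e.\ when $\langle g\rangle$ does \emph{not} act transitively on $X$ (since $\widetilde\bbC[X]^{\langle g\rangle} = 0$ iff $\langle g\rangle$ is transitive). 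So: $\chi(e(X),g) \neq 0$ iff $\langle g\rangle$ acts transitively on $X$. Part (1) follows, since $e(X) \neq 0$ in $RU(G)$ iff its character is nonzero at some $g$.

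For part (2), with $K \trianglelefteq G$ and $G/K$ cyclic of order $p^n$, I would first reduce to the quotient: the $G$-set $G/K$ is pulled back from the $(G/K)$-set $(G/K)/e$, and both $e(G/K)$ as computed in $RU(G)$ and the asserted answer $p^{n-1}(p\bbC - \bbC[G/N])$ are inflated from $RU(G/K)$ (note $N/K$ is the index-$p$ subgroup of $G/K$, and $\bbC[G/N]$ is inflated from $\bbC[(G/K)/(N/K)]$). So I may assume $G = \bbZ/p^n$ acting on itself by translation, $N = p\bbZ/p^n$. Then I compute $\widetilde\bbC[\bbZ/p^n]$ as a representation of the cyclic group: it is $\bigoplus_{\zeta} L_\zeta$ over all nontrivial $p^n$-th roots of unity $\zeta$, where $L_\zeta$ is the corresponding one-dimensional character. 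Then $e(\widetilde\bbC[X]) = \prod_{\zeta \neq 1}(1 - L_\zeta)$ as an element of $RU(\bbZ/p^n)$ — here I use that $e$ is multiplicative (which follows from $e(V) = \sum(-1)^n\Lambda^n V = $ "$\lambda_{-1}(V)$" being exponential, or directly from $\beta^{V\oplus W} = \beta^V\beta^W$). I then verify the identity $\prod_{\zeta^{p^n}=1,\ \zeta\neq 1}(1-L_\zeta) = p^{n-1}(p\bbC - \bbC[\bbZ/p^{n-1}\text{-quotient}])$ by comparing characters at each element $g$ of $\bbZ/p^n$: if $g$ generates a subgroup of order $p^j$, the left side is $\prod_{\zeta\neq 1}(1-\zeta^{\text{something}})$ which by the cyclotomic identity $\prod_{\zeta^m=1,\zeta\neq 1}(1-\zeta) = m$ (applied to $m = p^j$, with multiplicity bookkeeping for the roots of higher order where $1 - \zeta^{g} $ can vanish) evaluates to $0$ unless $g$ is a generator (order $p^n$), in which case it is $p^n = p^{n-1}\cdot p$; and this matches the character of $p^{n-1}(p\bbC - \bbC[G/N])$, which is $p^{n-1}(p - 0) = p^n$ at a generator and $p^{n-1}(p - p) = 0$ otherwise (since $g$ acts on $\bbC[G/N]$, a permutation representation on $p^{n-1}$ points, with character equal to the number of fixed points, which is $0$ when $g\notin N$, i.e.\ when $g$ is a generator, and is $p^{n-1}$ when $g \in N$). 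Matching characters on all of $\bbZ/p^n$ gives the identity in $RU(\bbZ/p^n)$, hence in $RU(G)$ after inflation.

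For part (3), I simply reduce part (2) modulo $2$: since $p$ is odd, $p^{n-1} \equiv 1 \pmod 2$ and $p \equiv 1 \pmod 2$, so $e(G/K) = p^{n-1}(p\bbC - \bbC[G/N]) \equiv \bbC - \bbC[G/N] = -\widetilde\bbC[G/N] \equiv \widetilde\bbC[G/N] \pmod 2$, using $\widetilde\bbC[G/N] = \coker(\bbC \to \bbC[G/N])$ so that $\bbC[G/N] = \bbC \oplus \widetilde\bbC[G/N]$ in $RU(G)$. The main obstacle is part (2): getting the cyclotomic product identity right, in particular handling the roots of unity of intermediate order correctly in the character computation (the factors $1 - L_\zeta$ where $\zeta$ has order $p^j$ with $1 \le j \le n$ and keeping track of how many such $\zeta$ there are). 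This is the step that requires genuine care, though it is ultimately a finite computation with cyclotomic polynomials; everything else is formal manipulation with characters and the exponential property of $e$.
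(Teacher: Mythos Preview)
Your approach is essentially the same as the paper's: both parts (1) and (2) are handled by reducing to a character computation via \cref{lem:charactereuler}, with the reduction in (2) to the cyclic group $G = C_{p^n}$ being identical, and part (3) is literally the same mod-$2$ reduction. Two small slips to fix: in (1), your first pass computing $\prod_i d_i$ goes astray because only \emph{one} factor of $(t-1)$ is removed from $\prod_i(t^{d_i}-1)$, not one per orbit---your self-correction via $\det(I - g|_{\widetilde{\bbC}[X]}) = 0 \Leftrightarrow \widetilde{\bbC}[X]^{\langle g\rangle}\neq 0 \Leftrightarrow \langle g\rangle$ not transitive is the right argument and matches the paper; in (2), your parenthetical says $\bbC[G/N]$ is a permutation representation on $p^{n-1}$ points with character $p^{n-1}$ at $g\in N$, but $[G:N]=p$, so it is $p$ points with character $p$---fortunately your displayed computation $p^{n-1}(p-p)=0$ already uses the correct value.
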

\begin{proof}
(1)~~Given $g\in G$, we may identify
\[
f(\widetilde{\bbC}[X],g)(t) = \frac{f(\bbC[X],g)(t)}{1-t}.
\]
It follows from \cref{lem:charactereuler} that $\chi(\widetilde{\bbC}[X],g) \neq 0$ if and only if $1$ is not a repeated root of $f(\bbC[X],g)(t)$. The element $g$ acts on $\bbC[X]$ by a permutation matrix, and an elementary computation shows that this holds if and only if $g$ acts transitively on $X$. The claim follows as $e(X) \neq 0$ if and only if $\chi(\widetilde{\bbC}[X],g) \neq 0$ for some $g\in G$.

(2)~~Write $q\colon G\rightarrow G/K\cong C_{p^n}$. Then $e(G/K) = q^\ast e(C_{p^n})$, so we may reduce to the case where $K = e$ and $G = C_{p^n}$. An elementary computation, following the ideas in (1), shows that
\[
\chi(e(C_{p^n}),g) = \begin{cases}p^n&\text{if }g\text{ generates }C_{p^n};\\
0&\text{otherwise}.
\end{cases}
\]
A second elementary computation shows that $p^{n-1}(p\bbC - \bbC[C_{p^n}/C_{p^{n-1}}])$ has the same character, implying that $e(C_{p^n}) = p^{n-1}(p \bbC - \bbC[C_{p^n}/C_{p^{n-1}}])$ as claimed.

(3)~~If $p$ is odd, then $p^{n-1}(p\bbC - \bbC[G/N])\equiv \bbC - \bbC[G/N]\equiv \widetilde{\bbC}[G/N]\pmod{2}$ in $RU(G)$.
\end{proof}

\subsection{The proof of \cref{thm:ku}}\label{ssec:ku}

Note the following immediate corollary of \cref{prop:powepsilon} and the interaction between the power operations $P^m$ and the norms $N_K^G$.

\begin{cor}[of \cref{prop:powepsilon}]\label{cor:kugk}
Let $K\subset G$ be a subgroup. Then $N_K^G(\epsilon) \in L(G)$ is detected in the $KU_G$-based Adams spectral sequence  by $e(G/K)\cdot\epsilon$.
\qed
\end{cor}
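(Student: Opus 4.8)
The plan is to deduce this directly from \cref{prop:powepsilon} by tracking how $N_K^G$ is assembled from $P^m_e$ and then invoking naturality of the $\textbf{KU}$-based Adams spectral sequence under restriction. Write $m = [G:K]$. As recalled in the discussion of the global power functor $L$, a choice of coset representatives for $K$ in $G$ determines an embedding $G\hookrightarrow\Sigma_m\wr K$ through which $N_K^G = \res^{\Sigma_m\wr K}_G\circ P^m_K$, and the composite of this embedding with the projection $\Sigma_m\wr K\to\Sigma_m$ is, up to an inconsequential conjugacy, the homomorphism $\phi\colon G\to\Sigma_m$ describing the action of $G$ on the set $G/K$. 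Since $\epsilon = \res^e_K(\epsilon)$ is restricted from the trivial group, compatibility of power operations with restriction gives $P^m_K(\epsilon) = \res^{\Sigma_m}_{\Sigma_m\wr K}(P^m_e(\epsilon))$, exactly as in the identity $P^m_G(\epsilon) = \res^{\Sigma_m}_{\Sigma_m\wr G}(P^m_e(\epsilon))$ used above. Composing the two restrictions yields
\[
N_K^G(\epsilon) = \phi^\ast\bigl(P^m_e(\epsilon)\bigr)\in L(G),
\]
the restriction of $P^m_e(\epsilon)$ along $\phi$.

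Next I would feed in \cref{prop:powepsilon}, which says that $P^m_e(\epsilon)\in L(\Sigma_m)$ is detected in the $KU_{\Sigma_m}$-based Adams spectral sequence by $e(\ol{\rho}_m^\bbC)\cdot\epsilon$. This spectral sequence is the HLSS of the canonical resolution $S_{\textbf{KU}}^\wedge\simeq\lim_{n\in\Delta}\textbf{KU}^{\otimes n+1}$ of a global spectrum, hence is compatible with restriction along arbitrary group homomorphisms (inflation followed by subgroup restriction). Applying $\phi^\ast$ therefore shows that $N_K^G(\epsilon) = \phi^\ast(P^m_e(\epsilon))$ is detected in the $KU_G$-based Adams spectral sequence by $\phi^\ast\bigl(e(\ol{\rho}_m^\bbC)\cdot\epsilon\bigr)$, where ``detected'' is read modulo higher filtration as in \cref{thm:powhfpss}, so that the statement also covers the case where this class happens to vanish.

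It then remains to identify $\phi^\ast\bigl(e(\ol{\rho}_m^\bbC)\cdot\epsilon\bigr)$. Restriction along $\phi$ carries the complex permutation representation $\rho_m^\bbC$ of $\Sigma_m$ to the permutation representation $\bbC[G/K]$, hence $\ol{\rho}_m^\bbC$ to $\widetilde{\bbC}[G/K]$; since the Euler class is computed by the manifestly restriction-natural formula $e(V) = \sum_n(-1)^n\Lambda^n(V)$ recalled in \cref{ssec:euler}, we get $\phi^\ast e(\ol{\rho}_m^\bbC) = e(\widetilde{\bbC}[G/K]) = e(G/K)$. As $\epsilon$ lives over the trivial group, $\phi^\ast\epsilon = \epsilon$, and so $N_K^G(\epsilon)$ is detected by $e(G/K)\cdot\epsilon$, as claimed.

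I do not anticipate a genuine obstacle: the argument is essentially bookkeeping, and the only points requiring care are the identification of the embedding $G\hookrightarrow\Sigma_m\wr K$ with its projection to $\Sigma_m$, and the fact that restriction of the $\textbf{KU}$-based Adams spectral sequence along the possibly non-injective homomorphism $\phi$ is defined and natural --- both immediate from the global framework recalled above. (In particular, when $G/K$ admits no cyclic generator one has $e(G/K) = 0$ by \cref{prop:charactereuler}, and the corollary then merely records that $N_K^G(\epsilon)$ has Adams filtration greater than $2$, consistent with the convention above.)
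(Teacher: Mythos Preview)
Your proof is correct and is exactly the approach the paper intends: the paper states this as an immediate corollary of \cref{prop:powepsilon} together with the identity $N_K^G = \res^{\Sigma_m\wr K}_G\circ P^m_K$ and the displayed formula $P^m_K(\epsilon) = \res^{\Sigma_m}_{\Sigma_m\wr K}(P^m_e(\epsilon))$, leaving the remaining bookkeeping (that restriction along $G\to\Sigma_m$ carries $e(\ol{\rho}_m^\bbC)$ to $e(G/K)$) implicit. You have simply spelled out these details.
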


We are now in a position to prove \cref{thm:ku}. Let us again recall the main players. Fix an odd prime $p$. In \cite[Theorem 1.1, Proposition 6.7]{bonventreguilloustapleton2022kug}, Bonventre--Guillou--Stapleton prove that if $G$ is a $p$-group, then there is an isomorphism
\[
\ul{\pi}_0 L_{KU_G}S_G \cong \ul{R}\bbQ[\epsilon]/(2\epsilon,\epsilon^2)
\]
of Green functors, where $\ul{R}\bbQ$ is the Green functor whose value at a subgroup $K\subset G$ is the rational representation ring $R\bbQ(K)$. In our context, this says that if $G$ is any $p$-group, then
\begin{equation}\label{eq:jjj}
L(G) \cong R\bbQ(G)[\epsilon]/(2\epsilon,\epsilon^2).
\end{equation}
This easily extends to an identification of the restriction of the global Green functor $L$ to the family of $p$-groups. We now give the following.

\begin{proof}[Proof of \cref{thm:ku}]
Consider the norm
\[
N_K^G\colon R\bbQ(K)[\epsilon]/(2\epsilon,\epsilon^2)\rightarrow R\bbQ(G)[\epsilon]/(2\epsilon,\epsilon^2).
\]
By \cref{cor:kugk}, $N_K^G(\epsilon)$ is detected in the $KU_G$-based Adams spectral sequence by $e(G/K)\cdot \epsilon$. As all elements of $R\bbQ(G)[\epsilon]/(2\epsilon,\epsilon^2)$ are detected either on the $0$-line, as an element of $R\bbQ(G)$, or on the $2$-line, as the product of an element of $R\bbQ(G)$ with $\epsilon$, we may deduce that $N_K^G(\epsilon) = e(G/K)\cdot\epsilon$ on the nose. The final claims regarding the case where $K\subset G$ is normal follow from \cref{prop:charactereuler}.
\end{proof}

\section{Power operations for the \texorpdfstring{$K(1)$}{K(1)}-local sphere}\label{sec:k1}

This section carries out the computation promised in \cref{thm:sk1}.

\subsection{Generalities on power operations}\label{ssec:powprelim}

We begin by recalling some basic properties of power operations, cf.\ \cite[Chapter VIII]{brunermaymccluresteinberger1986hinfinity}. Fix a prime $p$, and for a spectrum $R$ define
\[
\pi_{s,w}b(R) = [\Sigma^{s-pw}(S^w)^{\otimes p}_{\h \Sigma_p},R] = R^{(p-1)w-s}\Th(w \ol{\rho}_p\downarrow B\Sigma_p) = \pi_{s-pw+w\rho_p}F(E\Sigma_p,i_\ast R).
\]
These are all different names for the same object; the third term is the $R$-cohomology of the Thom spectrum of a multiple of the reduced permutation representation $\ol{\rho}_p$ of $\Sigma_p$, and the fourth term is a piece of the $\Sigma_p$-equivariant spectrum obtained as the Borel construction on $R$ with trivial action. There are maps
\begin{gather*}
a\colon \pi_{s,w}b(R)\rightarrow\pi_{s-(p-1)w,w-1}b(R), \qquad i\colon \pi_s R\rightarrow \pi_{s,0}b(R),\\
 \res_w\colon \pi_{s,w}b(R)\rightarrow\pi_s R,\qquad \tr_w\colon \pi_s R\rightarrow\pi_{s,w}b(R),
\end{gather*}
given by multiplication with the Euler class of $\ol{\rho}_p$, inflation, restriction, and transfer. We shall write $\tr = \tr_w$ and $\res = \res_w$ when $w$ is clear from context, and shall use $i$ to regard $\pi_\ast R$ as a subobject of $\pi_{\ast,0}b(R)$.

Now suppose that $R$ is a $p$-local $\bbE_\infty$ ring. Then the pair $(\pi_\ast R,\pi_{\ast,\ast}b(R))$ is a good device for understanding power operations on $R$. The $p$th total power operation for $R$ takes the form
\[
P\colon \pi_n R\rightarrow \pi_{pn,n}b(R),
\]
and the behavior of $P$ may be encoded in structure present on $\pi_{\ast,\ast}b(R)$, as we now recall.

First, $\pi_{\ast,\ast}b(R)$ is a bigraded ring, and $P$ is multiplicative, i.e.\ $P(xy) = P(x)P(y)$ for $x\in \pi_n R$ and $y\in \pi_m R$. Second, define
\[
h[w] = \frac{\tr_w(1)}{(p-1)!} \in \pi_{0,w}R,
\]
and abbreviate $h = h[0]$. These elements satisfy
\[
a\cdot h[w] = 0.
\]
Let $C(x,y) = p^{-1}((x+y)^p-x^p-y^p)$. Then for $x,y \in \pi_n R$, we have
\[
P(x+y) = P(x) + C(x,y)\cdot h[n] + P(y).
\]
In particular, for $k\in\bbZ$ we have
\begin{equation}\label{eq:pk}
P(k) = k - \frac{k-k^p}{p}h.
\end{equation}
Third, we note that $a P$ is additive, and if $R$ arises as a limit of $\bbE_\infty$ rings then $P$ is modeled in filtration $f$ of the associated HLSS by $a^f P$, as described in \cref{thm:powhfpss}.

\subsection{Morava \texorpdfstring{$E$}{E}-theory}
Let $E$ be a Morava $E$-theory with formal group $\bbG\rightarrow \Spf E_0$. See \cite{peterson2018formal} for a textbook reference. We wish to describe the general shape of $\pi_{\ast,\ast}b(E)$. Let
\[
\omega = \pi_2 E,\qquad R = \pi_{0,0}b(E) = E^0 B\Sigma_p,\qquad L = \pi_{2(p-1),2}b(E) = E^0 \Th(\bbC\otimes \ol{\rho}_p).
\]
Then $R$ is a commutative $E_0$-algebra and $L$ is an invertible $R$-module. Writing $L^n = L^{\otimes_R n}$, we have the following picture:
\begin{center}\begin{tikzcd}
R\ar[r,"a^2"]\ar[d]\ar[dr,"a"]&L^{-1}\\
R/(h)\ar[r,dashed,"\cong"]&\pi_{-(p-1),-1}E\ar[u,"a",tail]
\end{tikzcd}.\end{center}
That the Euler class $a$ annihilates $h$ is standard, and that the resulting map $R/(h)\rightarrow \pi_{-(p-1),-1} b(E)$ is an isomorphism may be found in dual form in \cite[Proposition 7.2, Remark 7.4]{rezk2009congruence}. Note in particular that postcomposing the product on $R/(h)$ with $a$ gives a map
\begin{equation}\label{eq:modh}
R/(h)\otimes R/(h)\rightarrow R/(h) \rightarrow L^{-1}.
\end{equation}

\begin{lemma}\label{lem:lubintatepowerring}
There are isomorphisms
\begin{align*}
\pi_{2a(p-1)+2b,2a} b(E) &= \omega^b \otimes_{E_0} L^{ a},\\
\pi_{(2a+1)(p-1)+2b,2a+1}b(E) &= \omega^b \otimes_{E_0} L^{a} \otimes_R R/(h),
\end{align*}
all other degrees being zero. The ring structure is induced by the canonical isomorphisms $(\omega^b\otimes_{E_0} L^{a})\otimes_R (\omega^{b'}\otimes_{E_0} L^{a'}) \cong \omega^{b+b'}\otimes_{E_0} L^{ a+a'}$, applying the Euler class as in \cref{eq:modh} when needed.
\end{lemma}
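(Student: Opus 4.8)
The plan is to read everything off the identification $\pi_{s,w}b(E) = E^{(p-1)w-s}\Th(w\ol\rho_p\downarrow B\Sigma_p)$ recorded in \cref{ssec:powprelim}, propagating a few base facts through Thom isomorphisms and the evident periodicities of $\pi_{\ast,\ast}b(E)$. The base facts are: (i) $E^\ast B\Sigma_p$ is a finitely generated free $E_0$-module concentrated in even degrees --- this is standard, via the transfer, which exhibits $\Sigma^\infty_+ B\Sigma_p$ as an $E$-local retract of $\Sigma^\infty_+ BC_p$ since $[\Sigma_p:C_p]=(p-1)!$ is prime to $p$, together with $E^0 BC_p = E_0[[x]]/([p](x))$ (the reduction is vacuous when $p=2$, as $\Sigma_2=C_2$); (ii) for every $a$ the bundle $2a\ol\rho_p$ is the realification $a(\bbC\otimes_\bbR\ol\rho_p)$ of a complex bundle over $B\Sigma_p$, hence $E$-orientable, and its Thom class identifies $E^\ast\Th(2a\ol\rho_p)$ with a shift of $E^\ast B\Sigma_p$, the relevant invertible $R$-module being $L^a$; and (iii) Rezk's identification $E^\ast\Th(\ol\rho_p\downarrow B\Sigma_p)\cong R/(h)$, available in dual form from \cite[Proposition 7.2, Remark 7.4]{rezk2009congruence} and already recorded in the picture preceding the statement.

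For $w=2a$ even, the Thom isomorphism of (ii) gives $\pi_{s,2a}b(E)\cong L^a\otimes_R E^{s'}B\Sigma_p$ for the appropriate internal degree $s'$. Combining this with (i) and the $2$-periodicity of $E$ --- so that $E^{-2b}B\Sigma_p\cong\omega^b\otimes_{E_0}E^0 B\Sigma_p$ and $E^\ast B\Sigma_p$ vanishes in odd degrees --- yields $\pi_{2a(p-1)+2b,2a}b(E)\cong\omega^b\otimes_{E_0}L^a$ and the vanishing of $\pi_{s,2a}b(E)$ for all other $s$. The normalizations can be checked against $\pi_{0,0}b(E)=R$ and $\pi_{2(p-1),2}b(E)=L$.

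For $w=2a+1$ odd, write $(2a+1)\ol\rho_p=2a\ol\rho_p\oplus\ol\rho_p$, orient the even summand via (ii), and reduce to $E^\ast\Th(\ol\rho_p\downarrow B\Sigma_p)$, which by (iii) is $R/(h)$. Carrying along the $L$-twist and the $2$-periodicity of $E$ exactly as in the even case produces $\pi_{(2a+1)(p-1)+2b,2a+1}b(E)\cong\omega^b\otimes_{E_0}L^a\otimes_R R/(h)$, with all remaining degrees zero. The case $a=-1$, $b=0$ reproduces the known $\pi_{-(p-1),-1}b(E)\cong R/(h)$, and matching the two descriptions both requires and pins down the canonical trivialization of $L/hL$ implicit in the picture.

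It remains to identify the ring structure, and this is where the real bookkeeping lies. The multiplication on $\pi_{\ast,\ast}b(E)$ is induced on Thom spectra by the diagonal of $B\Sigma_p$ together with $\ol\rho_p\oplus\ol\rho_p\cong\bbC\otimes\ol\rho_p$, hence is compatible with the Thom isomorphisms of (ii) under Whitney sum and with the graded ring $\bigoplus_b\omega^b$. Consequently every product with at least one factor in even $w$ is the evident map on $\omega$-, $L$-, and $R/(h)$-factors. The only subtle case is the product of two classes in odd $w$: there the two extra copies of $\ol\rho_p$ assemble into the $E$-orientable $\bbC\otimes\ol\rho_p$, so the relevant pairing is a map out of $R/(h)\otimes_R R/(h)$, and identifying it with the multiplication on $R/(h)$ followed by the Euler-class map of \cref{eq:modh} --- while keeping the $L$-twists and the trivialization of $L/hL$ mutually consistent --- is the main obstacle. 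Everything else is formal, given the evenness of $E^\ast B\Sigma_p$ and the standard behavior of Thom isomorphisms under Whitney sum.
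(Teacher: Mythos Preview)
Your argument is correct and follows the same route as the paper: cite the evenness of $E^\ast B\Sigma_p$, then propagate the base cases $R$, $L$, and $R/(h)$ through Thom isomorphisms coming from the complex-orientability of $2a\ol\rho_p$, with the ring structure handled by Whitney sum compatibility and the Euler-class map of \cref{eq:modh}. The paper's proof is just a two-sentence sketch of exactly this, citing Hopkins--Kuhn--Ravenel for evenness rather than your transfer-to-$BC_p$ argument, and packaging the Thom isomorphisms as $\pi_{(a+2a')(p-1)+2(b+b'),a+2a'}b(E)\cong \pi_{a(p-1)+2b,a}b(E)\otimes_R \pi_{2a'(p-1)+2b',2a'}b(E)$; your version simply spells out the details the paper leaves implicit.
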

\begin{proof}
The Morava $E$-theory of $B\Sigma_p$ is concentrated in even degrees \cite[Theorem E]{hopkinskuhnravenel2000generalized}. The lemma then combines the above discussion with the Thom isomorphisms $\pi_{(a+2a')(p-1)+2(b+b'),(a+2a')}b(E)\cong \pi_{a(p-1)+2b,a}b(E)\otimes_R \pi_{2a'(p-1)+2b,2a'}b(E)$.
\end{proof}

\begin{prop}\label{prop:adams}
The Adams operation $\psi^k$ for $k\in\bbZ_p^\times$ acts on $L^{ a}$ by multiplication with $(1-\frac{1}{p}(1-k^{a(p-1)})h)$.
\end{prop}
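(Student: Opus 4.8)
The plan is to reduce everything to the behavior of $\psi^k$ on the Bott class, combined with the power/addition formulas recorded in \cref{ssec:powprelim}. The essential observation is that $L^a$ carries a generator expressible through the power operation. If $\beta\in\omega=\pi_2E$ is the orientation class (so $\pi_\ast E=E_0[\beta^{\pm1}]$), then multiplicativity of $P$ shows $P(\beta^a)=P(\beta)^a$ is a unit of $\pi_{\ast,\ast}b(E)$; by \cref{lem:lubintatepowerring} it lies in $\pi_{2ap,2a}b(E)=\omega^a\otimes_{E_0}L^a$, and being a unit it generates this invertible module. Hence $\ell_a:=\beta^{-a}P(\beta^a)\in L^a$ is a generator, with $\ell_a=\ell_1^a$. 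Concretely $P(\beta)$ is the orientation class times the Thom class of $\bbC\otimes\ol{\rho}_p$ furnished by the complex orientation, so $\ell_a$ is exactly the Thom-isomorphism generator implicit in \cref{lem:lubintatepowerring}; in any case the argument below computes $\psi^k(\ell_a)$ directly, and the stated scalar is independent of a choice of generator that $\psi^k$ fixes.

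Next I would record two facts about $\psi^k$ for $k\in\bbZ_p^\times$. First, $\psi^k$ is induced by the central automorphism $[k]$ of the formal group $\bbG$; this automorphism is defined over $E_0$ and covers $\mathrm{id}_{E_0}$, and its derivative at the identity is multiplication by $k$, so $\psi^k$ acts on $\omega=\mathrm{Lie}(\bbG)^\vee=\pi_2 E$ by multiplication by $k$, hence $\psi^k(\beta^a)=k^a\beta^a$ for all $a\in\bbZ$. Second, $\psi^k$ refines to a map of $\bbE_\infty$-rings $E\to E$ (it lies in the image of the Morava stabilizer group, which acts on $E$ by $\bbE_\infty$-ring automorphisms), and therefore, by naturality of the power-operation formalism, $\psi^k$ commutes with the total power operation $P$ — meaning the square with horizontal maps $P$ and vertical maps the induced self-maps $\psi^k$ of $\pi_\ast E$ and of the power device $\pi_{\ast,\ast}b(E)$ commutes.

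The computation is then immediate: for $k\in\bbZ_p^\times$,
\[
\psi^k(\ell_a)=\psi^k(\beta^{-a})\,\psi^k(P(\beta^a))=k^{-a}\beta^{-a}\,P(\psi^k(\beta^a))=k^{-a}\beta^{-a}\,P(k^a)\,P(\beta^a)=\frac{P(k^a)}{k^a}\,\ell_a,
\]
where the third equality uses $\psi^k(\beta^a)=k^a\beta^a$ and multiplicativity of $P$, and the last regroups $\beta^{-a}P(\beta^a)=\ell_a$. Since $k^a\in\bbZ_p$, \cref{eq:pk} gives $P(k^a)=k^a-\tfrac1p(k^a-k^{ap})h$, whence $P(k^a)/k^a=1-\tfrac1p\bigl(1-k^{a(p-1)}\bigr)h$, which is the asserted scalar.

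The two points that deserve a word of justification are that \cref{eq:pk} remains valid for $p$-adic arguments $k^a\in\bbZ_p$ — which follows either by a continuity argument or, equivalently, from the relation $h^2=ph$ (a consequence of Frobenius reciprocity and $\res h=p$) — and the compatibility of $\psi^k$ with $P$, which is where the $\bbE_\infty$-structure on $E$ is used. I expect the latter to be the one input a careful reader would want stated explicitly; once it is granted, the rest of the proof is bookkeeping.
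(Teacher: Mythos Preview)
Your computation of $\psi^k(\ell_a)=\tfrac{P(k^a)}{k^a}\,\ell_a$ is correct and matches the paper's argument for general $a$ essentially verbatim (the paper writes $u$ for your $\beta$). However, there is a genuine gap: you have only shown that $\psi^k$ sends the chosen generator $\ell_a$ to the stated multiple of itself. The proposition asserts that $\psi^k$ acts on \emph{all} of the $R$-module $L^a$ by multiplication with this element of $R$, and for that you need $\psi^k$ to act trivially on $R=E^0B\Sigma_p$ itself (the case $a=0$). Your remark that ``the stated scalar is independent of a choice of generator that $\psi^k$ fixes'' does not address this: the question is precisely whether $\psi^k$ is $R$-linear on $L^a$, and for height $n>1$ the ring $R$ is \emph{not} generated over $E_0$ by $h$, so knowing $\psi^k(h)=h$ from naturality of the transfer is insufficient.

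The paper handles this by first proving the case $a=0$ directly: Strickland's theorem identifies $R/(h)$ as the $E_0$-algebra classifying rank-$p$ subgroups of $\bbG$, and since $[k]\colon\bbG\to\bbG$ fixes every subgroup scheme, $\psi^k$ acts trivially on $R/(h)$; the $K(n)$-local splitting of the transfer then lifts this to triviality on $R$. Only after establishing this does the paper run your computation on the generator $P(u^a)$. You should supply this missing step (or an equivalent argument) before the generator computation.
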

\begin{proof}
First we consider the case $a = 0$, where $L^{0} = R = E^0 B\Sigma_p$. Here we are claiming that $\psi^k$ acts trivially on $E^0 B\Sigma_p$. By Strickland's theorem \cite{strickland1998morava}, $R/(h)$ is the $E_0$-algebra classifying rank $p$ subgroups of $\bbG$. The Adams operation $\psi^k$ corresponds to the automorphism $[k]\colon \bbG\rightarrow\bbG$ defined over $\Spf E_0$. This fixes all subgroups of $\bbG$, and so $\psi^k$ acts trivially on $R/(h)$. As the transfer is split $K(n)$-locally \cite{clausenmathew2017short}, it follows that $\psi^k$ acts trivially on $R$.

Now consider general $a$. As $\psi^k$ acts on $\omega^a$ by multiplication with $k^a$, it suffices to show that $\psi^k$ acts on $\omega^a \otimes L^{a}$ by multiplication with $k^a(1-\tfrac{1}{p}(1-k^{a(p-1)})h)$. Observe that this is exactly the element $P(k^a)$ seen in \cref{eq:pk}. Choose a generator $u\in \omega$, so that we have $P(u^a) \in \omega^a \otimes_{E_0} L^a$. As $P(u^a)\cdot P(u^{-a}) = P(1) = 1$, we find that $P(u^a)$ gives a trivialization of the invertible $R$-module $\omega^a\otimes_{E_0}L^a$. As $\psi^k$ acts trivially on $R$, it thus suffices to show that $\psi^k(P(u^a)) = P(k^a) P(u^a)$. Indeed, as $\psi^k$ acts on $E$ by $\bbE_\infty$ automorphisms, we have
\[
\psi^k(P(u^a)) = P(\psi^k(u^a)) = P(k^a u^a) = P(k^a)P(u^a)
\]
as needed.
\end{proof}

Now write $KU_p$ for the spectrum of $p$-adic complex $K$-theory. We consider complex $K$-theory to be oriented as described in \cref{ssec:kuprelim}.

\begin{prop}\label{thm:kpow}
Let $\tau^{-2} \in \pi_{0,2}b(KU_p)$ be the Thom class of $\ol{\rho}_p^\bbC$, i.e.\ the Bott class of $\rho_p^\bbC - p\bbC$, and abbreviate $d = a^2 \beta \tau^{-2} \in \pi_{0,0}b(KU_p)$. Then $h=p-d$ and
\[
\pi_{\ast,\ast}b(KU_p) \cong \bbZ_p[\beta^{\pm 1},\tau^{\pm 2},a]/(ah).
\]
The Adams operation $\psi^k$ for $k\in\bbZ_p^\times$ acts by ring automorphisms, and is determined by
\[
\psi^k(\beta) = k \beta,\qquad \psi^k(\tau^2) = \tau^2(1+\tfrac{1}{p}(k^{p-1}-1) d)
\]
Power operations are determined by general properties and
\[
P(\beta) = \beta^{p}\tau^{-2}.
\]
\end{prop}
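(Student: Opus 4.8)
The plan is to exploit the fact that $KU_p$ is the Morava $E$-theory of height $1$ at the prime $p$: its formal group is the multiplicative group $\widehat{\bbG}_m$ over $E_0 = \bbZ_p$ (with the orientation fixed in \cref{ssec:kuprelim}), which is already its own universal deformation, so \cref{lem:lubintatepowerring}, \cref{prop:adams} and \cref{lem:powbott} apply verbatim. In the notation of \cref{lem:lubintatepowerring} we have $\omega = \pi_2 KU_p = \bbZ_p\{\beta\}$, and the whole structure is pinned down once we know $R = \pi_{0,0}b(KU_p) = KU_p^0 B\Sigma_p$, the invertible $R$-module $L$, and how $a$ and $h$ sit inside the ring.

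First I would compute $R$. Strickland's theorem \cite{strickland1998morava} identifies $R/(h)$ with the ring classifying order-$p$ subgroup schemes of $\widehat{\bbG}_m$, and $\widehat{\bbG}_m$ has a unique such subgroup, so $R/(h)\cong\bbZ_p$. Frobenius reciprocity gives $\tr(1)^2 = \tr(\res\tr(1)) = p!\,\tr(1)$, hence $h^2 = ph$, while $\res(h) = p!/(p-1)! = p$, so in particular $h\neq 0$. Since $R$ is a finitely generated torsion-free $\bbZ_p$-module, a short Nakayama argument then forces $R\cong\bbZ_p[h]/(h^2-ph)$. As for $L$: the Thom class $\tau^{-2}$ is a Bott class, hence a \emph{unit} of $\pi_{\ast,\ast}b(KU_p)$, so multiplication by $\tau^{\pm2}$ trivializes all the powers $L^{a}$. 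Substituting these facts into \cref{lem:lubintatepowerring} shows that $\pi_{\ast,\ast}b(KU_p)$ is free of rank $3$ over $\bbZ_p[\beta^{\pm1},\tau^{\pm2}]$, the even-weight part ($\cong R$) being spanned by $\{1,h\}$ and the odd-weight part ($\cong R/(h)$) by the Euler class $a$.

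Next I would prove $h = p - d$ by a character computation in the spirit of \cref{sec:kug}. Under the dictionary of \cref{ssec:kuprelim} relating Bott classes, restriction to the poles, and Euler classes, the element $d = a^2\beta\tau^{-2}$ is --- after the Bott normalization --- the Euler class $e(\ol{\rho}_p^{\bbC}) = \sum_n(-1)^n\Lambda^n\ol{\rho}_p^{\bbC}\in RU(\Sigma_p)$ of the reduced complex permutation representation. By \cref{lem:charactereuler} its character is $p$ on a $p$-cycle and $0$ on all other conjugacy classes, whereas $\chi(h)$ is $p$ at the identity and $0$ elsewhere; thus $\chi(h)+\chi(d) = \chi(p)$, and since $R$ embeds into its character ring we get $h = p - d$ (which re-derives $h^2 = ph$, $d^2 = pd$ and $dh = 0$). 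Combined with the standard vanishing $a\cdot h = 0$ --- the Euler class of $\ol{\rho}_p$ kills the transfer ideal, since $\ol{\rho}_p$ restricts trivially to the trivial subgroup --- this gives a surjection $\bbZ_p[\beta^{\pm1},\tau^{\pm2},a]/(ah)\twoheadrightarrow\pi_{\ast,\ast}b(KU_p)$. It is an isomorphism because $ah = 0$ rewrites as $a^3 = p\beta^{-1}\tau^2 a$, which is monic of degree $3$ in $a$, so the source is also free of rank $3$ over $\bbZ_p[\beta^{\pm1},\tau^{\pm2}]$ on $\{1,a,a^2\}$; the change of basis $a^2 = (\mathrm{unit})(p-h)$ matches the two descriptions, with compatible multiplications.

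The remaining assertions are then formal. Adams operations act by ring automorphisms for $k\in\bbZ_p^\times$, and $\psi^k(\beta) = k\beta$ by the usual computation on $\pi_\ast KU_p$; for $\tau^2\in\pi_{0,-2}b(KU_p) = \omega^{p-1}\otimes_{E_0}L^{-1}$, \cref{prop:adams} gives $\psi^k(\tau^2) = k^{p-1}\bigl(1-\tfrac1p(1-k^{-(p-1)})h\bigr)\tau^2 = \bigl(k^{p-1}-\tfrac1p(k^{p-1}-1)h\bigr)\tau^2$, which becomes $\bigl(1+\tfrac1p(k^{p-1}-1)d\bigr)\tau^2$ on substituting $h = p-d$. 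For power operations, $P(\beta) = P^p(\beta^{\bbC}) = \beta^{\rho_p\otimes\bbC} = \beta^{p\bbC}\cdot\beta^{\rho_p^{\bbC}-p\bbC} = \beta^p\tau^{-2}$ by \cref{lem:powbott} and multiplicativity of Bott classes, and every other value of $P$ on $\pi_\ast KU_p = \bbZ_p[\beta^{\pm1}]$ is then forced by the general properties recalled in \cref{ssec:powprelim} (multiplicativity, additivity of $aP$, and the formula \cref{eq:pk} for $P$ on $p$-adic integers). The step I expect to be the main obstacle is the identification in the third paragraph: making precise that the formal class $d = a^2\beta\tau^{-2}$ in the $\pi_{\ast,\ast}b$-formalism is honestly the representation-theoretic Euler class $e(\ol{\rho}_p^{\bbC})$, with all Bott-periodicity and degree normalizations correct; once that is in hand, the character computation and the rank bookkeeping are routine.
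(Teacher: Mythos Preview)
Your proof is correct and follows essentially the same route as the paper: both identify $d$ as the Euler class $e(\ol{\rho}_p^{\bbC})$, use a character computation to obtain $h=p-d$, and then read off the ring structure, Adams operations, and power operations from \cref{lem:lubintatepowerring}, \cref{prop:adams}, and \cref{lem:powbott}. The only real difference is in computing $R=KU_p^0B\Sigma_p$: the paper does this more elementarily by restricting along $C_p\subset\Sigma_p$ and identifying $R$ as the $\Aut(C_p)$-fixed subring of $KU_p^0BC_p$ (which also makes the injectivity needed for the character comparison immediate), whereas you invoke Strickland's theorem for $R/(h)$ together with Frobenius reciprocity and a Nakayama argument---heavier machinery, but perfectly valid and consistent with the paper's own use of Strickland in \cref{prop:adams}.
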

\begin{proof}
Restriction along $C_p\subset \Sigma_p$ identifies $KU_p^0 B\Sigma_p$ as the subring of $KU_p^0 BC_p$ fixed under the action of $\Aut(C_p)$. It follows quickly that $h$ is the image of the permutation representation $\rho_p^\bbC$ under the completion map $R(\Sigma_p)\rightarrow KU_p^0 B\Sigma_p$, and that
\[
KU_p^0 B\Sigma_p \cong \bbZ_p[h]/(h^2-ph).
\]
On the other hand, $d = a^2 \beta \tau^{-2}$ is the Euler class of $\ol{\rho}_p^\bbC$. By \cref{prop:charactereuler}, both $d$ and $p-h$ have the same image in $KU_p^0 BC_p$, and thus $d = p-h$ in $KU_p^0 B\Sigma_p$. It follows that
\[
KU_p^0 B\Sigma_p\cong \bbZ_p[d]/(dh).
\]
The full identification of $\pi_{\ast,\ast}KU_p$ then follows from the recipe of \cref{lem:lubintatepowerring}, where now we have fixed trivializations  $\beta^{a(p-1)+b}\tau^{-2a}\in \omega^b\otimes_{E_0} L^{a}$. The action of the Adams operations was given in \cref{prop:adams}. The identity $P(\beta) = \beta^p \tau^{-2}$ was given in \cref{lem:powbott}. In this Borel context, it may also be regarded as a consequence of the fact that the map $MUP \rightarrow KU$ classifying our choice of periodic complex orientation is $\bbH_\infty$.
\end{proof}

\subsection{Odd primes}\label{ssec:odd}

Let $p$ be an odd prime and fix a topological generator $k\in\bbZ_p^\times$. Then there is a fiber sequence
\begin{equation}\label{eq:j}
\begin{tikzcd}
S_{K(1)}\ar[r]&KU_p\ar[r,"\psi^k-1"]&KU_p
\end{tikzcd}.\end{equation}
We can use this to easily compute power operations for $S_{K(1)}$. For a spectrum $X$ and class $x\in KU_p^n X$, write $[x] \in S_{K(1)}^{n+1}X$ for the image of $x$ under the boundary map associated to \cref{eq:j}. We then have
\[
\pi_0 S_{K(1)} = \bbZ_p,\qquad \pi_{2n-1}S_{K(1)} = \bbZ_p/(k^n-1)\{[\beta^n]\},
\]
all other groups being zero. So it suffices to compute $P([\beta^n]) \in \pi_{(2n-1)p,2n-1} b(S_{K(1)})$.

\begin{lemma}
We have
\[
\pi_{(2n-1)p,2n-1} b(S_{K(1)}) = \bbZ_p/(k^n-1)\{[a \beta^{pn}\tau^{-2n}]\}.
\]
\end{lemma}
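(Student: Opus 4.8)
The plan is to apply the functor $b(-)$, in the fixed weight $w = 2n-1$, to the fiber sequence \eqref{eq:j}, and then to run the resulting long exact sequence against the description of $\pi_{\ast,\ast}b(KU_p)$ in \cref{thm:kpow}.

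For a fixed weight $w$, the assignment $R \mapsto \pi_{\ast,w}b(R)$ is the homotopy of a mapping spectrum out of the fixed object $(S^w)^{\otimes p}_{\h\Sigma_p}$ — equivalently, the $RO(\Sigma_p)$-graded homotopy of $F(E\Sigma_{p+},i_\ast R)$ in the degrees $\ast - pw + w\rho_p$ — hence is exact in $R$. By functoriality of $b$, the self-map induced on $\pi_{\ast,\ast}b(KU_p)$ by $\psi^k-1\colon KU_p\to KU_p$ is exactly $\psi^k-1$ for the Adams operation $\psi^k$ of \cref{thm:kpow}. So \eqref{eq:j} yields a long exact sequence
\[
\cdots\to\pi_{(2n-1)p+1,2n-1}b(KU_p)\xrightarrow{\psi^k-1}\pi_{(2n-1)p+1,2n-1}b(KU_p)\xrightarrow{\partial}\pi_{(2n-1)p,2n-1}b(S_{K(1)})\to\pi_{(2n-1)p,2n-1}b(KU_p)\to\cdots
\]
in which $\partial$ is precisely the operation $x\mapsto[x]$ introduced above.

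I would then read off the two bordering $KU_p$-groups from \cref{thm:kpow} (equivalently \cref{lem:lubintatepowerring}). In weight $2n-1$ the group $\pi_{s,2n-1}b(KU_p)$ is nonzero only for $s$ of the form $(2n-1)(p-1)+2b$; since $(2n-1)p - (2n-1)(p-1) = 2n-1$ is odd, this forces $\pi_{(2n-1)p,2n-1}b(KU_p) = 0$. On the other hand $(2n-1)p+1 = (2n-1)(p-1)+2n$, so $\pi_{(2n-1)p+1,2n-1}b(KU_p) \cong \omega^n\otimes_{E_0}L^{n-1}\otimes_R R/(h)$, which is free of rank one over $\bbZ_p$ since $R/(h)\cong\bbZ_p$ for $KU_p$; a direct degree check with $\beta,\tau^{\pm 2},a$ shows the monomial $a\beta^{pn}\tau^{-2n}$ lies in this bidegree and is a generator (it is the product of the distinguished generators $\beta^n$ of $\omega^n$, $\beta^{(n-1)(p-1)}\tau^{-2(n-1)}$ of $L^{n-1}$, and $a\beta^{p-1}\tau^{-2}$ of $R/(h)$). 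The vanishing $\pi_{(2n-1)p,2n-1}b(KU_p)=0$ collapses the exact sequence to an isomorphism $\coker(\psi^k-1)\cong\pi_{(2n-1)p,2n-1}b(S_{K(1)})$ realized by $\partial$.

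The remaining point — and the only real obstacle — is to compute $\psi^k-1$ on $\bbZ_p\{a\beta^{pn}\tau^{-2n}\}$. By \cref{prop:adams}, $\psi^k$ scales $\omega^n$ by $k^n$ and $L^{n-1}$ by $1-\tfrac1p(1-k^{(n-1)(p-1)})h$, and acts trivially on the factor $R/(h)$; since $h$ annihilates everything tensored over $R$ with $R/(h)$ (equivalently $ah=0$ in $\pi_{\ast,\ast}b(KU_p)$), the $h$-twist drops out and $\psi^k$ acts on $a\beta^{pn}\tau^{-2n}$ by multiplication by $k^n$. One can also see this directly from $\psi^k(\beta)=k\beta$, $\psi^k(\tau^2)=\tau^2(1+\tfrac1p(k^{p-1}-1)d)$, triviality of $\psi^k$ on $R/(h)$, and the relation $d=p$ in $R/(h)$. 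Hence $\psi^k-1$ is multiplication by $k^n-1$, its cokernel is $\bbZ_p/(k^n-1)$ generated by $a\beta^{pn}\tau^{-2n}$, and applying $\partial$ gives $\pi_{(2n-1)p,2n-1}b(S_{K(1)}) = \bbZ_p/(k^n-1)\{[a\beta^{pn}\tau^{-2n}]\}$. Everything upstream of this last computation is just the exact sequence together with degree counting in \cref{thm:kpow}.
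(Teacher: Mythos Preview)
Your argument is correct and is essentially an expanded version of the paper's proof, which simply cites \cref{thm:kpow} to read off $\pi_{\ast,2n-1}b(KU_p)\cong\bbZ_p[\beta^{\pm 1}]\{a\beta^{pn}\tau^{-2n}\}$ with $\psi^k(a\beta^{pn}\tau^{-2n})=k^n a\beta^{pn}\tau^{-2n}$, and then appeals to the fiber sequence \eqref{eq:j}. Your parity argument for the vanishing of $\pi_{(2n-1)p,2n-1}b(KU_p)$, your identification of the generator, and your computation of the $\psi^k$-action via \cref{prop:adams} together with $h=0$ (equivalently $d=p$) in $R/(h)$ are exactly the details the paper is suppressing.
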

\begin{proof}
By \cref{thm:kpow}, we have
\[
\pi_{\ast,2n-1}b(KU_p) \cong \bbZ_p[\beta^{\pm 1}]\{a\beta^{pn}\tau^{-2n}\},\qquad \psi^k(a \beta^{pn}\tau^{-2n}) = k^n a \beta^{pn}\tau^{-2n}.
\]
The lemma then follows from \cref{eq:j}.
\end{proof}

\begin{theorem}\label{thm:ppow}
The $p$th total power operation
\[
P\colon \pi_{2n-1}S_{K(1)}\rightarrow \pi_{(2n-1)p,2n-1}b(S_{K(1)})
\]
is additive, and satisfies
\[
P([\beta^n]) = [a\beta^{pn}u^{-n}].
\]
\end{theorem}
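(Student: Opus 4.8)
The plan is to run \cref{thm:powhfpss} — in the form appropriate to the $p$th total power operation of an ordinary $\bbE_\infty$ ring, described after its statement — through the (very short) homotopy limit spectral sequence attached to \cref{eq:j}. Since $\psi^k$ acts on $KU_p$ by $\bbE_\infty$ automorphisms, \cref{eq:j} presents $S_{K(1)}$ as the homotopy fixed points $KU_p^{\h\bbZ}$ for the $\bbZ$-action generated by $\psi^k$; as $\bbZ$ has cohomological dimension $1$, this HLSS — and equally the one computing $\pi_{(2n-1)p,2n-1}b(S_{K(1)})$, obtained by mapping an extended power of $S^{2n-1}$ into \cref{eq:j} — is concentrated in filtrations $0$ and $1$, the filtration-$1$ line being the cokernel of $\psi^k-1$. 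Moreover $\pi_\ast KU_p$ and $\pi_{\ast,\ast}b(KU_p)$ are concentrated in even internal degrees (for the latter, combine \cref{lem:lubintatepowerring} with the evenness of $KU_p^\ast B\Sigma_p$), which forces both $\pi_{2n-1}S_{K(1)}$ and $\pi_{(2n-1)p,2n-1}b(S_{K(1)})$ to sit entirely on the filtration-$1$ line. In particular $[\beta^n]$ is the class detected in filtration $1$ by $\beta^n\in\coker(\psi^k-1\colon\pi_{2n}KU_p\to\pi_{2n}KU_p)$.

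Additivity is then essentially free. Since $\pi_{2n-1}S_{K(1)}$ lies wholly in filtration $\geq 1$, \cref{thm:powhfpss} models $P$ there by $aP$, which is additive, with no error term because the target carries nothing in filtration $\geq 2$. Alternatively, and more hands-on: the correction term in the addition formula of \cref{ssec:powprelim} is $C(x,y)\cdot h[2n-1]$, and $C(x,y)$ is a sum of monomials each divisible by $x^2$ or $y^2$; these lie in $\pi_{2(2n-1)}S_{K(1)}$, which for $p$ odd is the kernel of multiplication by $k^{2n-1}-1$ on $\bbZ_p$, hence $0$ (as $k$ is a topological generator). So $C(x,y)=0$ and $P$ is literally additive on $\pi_{2n-1}S_{K(1)}$.

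For the value: every class in filtrations $0$ and $1$ here is automatically a permanent cycle, so $[\beta^n]$ is detected by the permanent cycle $\beta^n$ in filtration $1$, and \cref{thm:powhfpss}(5) shows that $P([\beta^n])$ is detected, modulo higher filtration, by $a\cdot P(\beta^n)$, where $P\colon\pi_{2n}KU_p\to\pi_{2np,2n}b(KU_p)$ now denotes the total power operation for $KU_p$ and $a$ is the Euler class of \cref{ssec:powprelim}. By multiplicativity and \cref{thm:kpow}, $P(\beta^n)=P(\beta)^n=(\beta^p\tau^{-2})^n=\beta^{pn}\tau^{-2n}$, so $a\cdot P(\beta^n)=a\beta^{pn}\tau^{-2n}$; since there is no filtration above $1$, ``modulo higher filtration'' is vacuous, and $P([\beta^n])=[a\beta^{pn}\tau^{-2n}]$ on the nose.

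I expect the one genuinely load-bearing point to be conceptual rather than computational: \cref{thm:powhfpss} controls the \emph{non}-additive operation $P$ only via its additive Euler-twisted avatar $aP$, so a priori the value of $P([\beta^n])$ is pinned down only up to higher-filtration indeterminacy. What makes it pinned down exactly here is that both the source and target spectral sequences are two-line — i.e.\ exactly the evenness invoked in the first paragraph. Granting that, the remainder is bookkeeping with the structures recorded in \cref{ssec:powprelim}, \cref{lem:lubintatepowerring}, and \cref{thm:kpow}.
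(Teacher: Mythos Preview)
Your proof is correct and follows essentially the same approach as the paper: interpret \cref{eq:j} as a two-line HFPSS, apply \cref{thm:powhfpss} to detect $P([\beta^n])$ by $a\,P(\beta^n)=a\beta^{pn}\tau^{-2n}$ in filtration $1$, and use the absence of higher filtration to eliminate indeterminacy. The paper packages additivity and the value into a single stroke by showing $P(k[\beta^n])$ is detected by $k[a\beta^{pn}\tau^{-2n}]$ for all $k\in\bbZ$; your separate additivity argument via vanishing of $C(x,y)$ is a pleasant bonus but not needed.
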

\begin{proof}
The long exact sequence associated to the fibering of \cref{eq:j} can be interpreted as the HFPSS 
\[
H^\ast(\bbZ\{\psi^k\};KU_p) \Rightarrow \pi_\ast KU_p^{\h \bbZ\{\psi^k\}} \cong \pi_\ast S_{K(1)}.
\]
As $P(\beta^n) = \beta^{pn} \tau^{-2n}$, it follows from \cref{thm:powhfpss} that $P(k[\beta^n])$ is detected in the HFPSS by $k[a \beta^{pn}\tau^{-2n}]$ for $k\in\bbZ$. As there is nothing in higher filtration, we must have $P(k[\beta^n]) = k[a\beta^{pn}\tau^{-2n}]$ on the nose.
\end{proof}

\subsection{Even primes}

Now consider $p=2$. There is a fiber sequence
\begin{equation}\label{eq:jo}
\begin{tikzcd}
S_{K(1)}\ar[r]&KO_2\ar[r,"\psi^3-1"]&KO_2
\end{tikzcd},\end{equation}
and we may compute power operations for $S_{K(1)}$ following the same approach as for odd primes, only by descent from $KO_2$ rather than $KU_p$. We begin by recalling the structure of the former. Write $\eta_{C_2}\in \pi_{1,1}b(S)$ for the $C_2$-equivariant Hopf map. This is characterized by 
\[
h = 2 + a \eta_{C_2}.
\]
Also write $\eta_{\cl}\in \pi_1 S\subset \pi_{1,0}b(S)$ for the nonequivariant Hopf map.

\begin{lemma}\label{lem:ko}
Write
\[
\pi_\ast KO_2 = \bbZ_2[\beta^{\pm 4},2\beta^2,\eta_{\cl}]/(2\cdot\eta_{\cl},2\beta^4\cdot\eta_{\cl},\eta_{\cl}^3,(2\beta^2)^2-4\beta^2).
\]
Then
\[
\pi_{\ast,\ast}b(KO_2) = \bbZ_2[\beta^{\pm 4},\tau^{\pm 4},a,\eta_{C_2},\tau^2h,2\beta^2,\beta^2\tau^2h,\eta_{\cl}]/I,
\]
where $I$ is generated by a number of relations, including $\rho\cdot\tau^{-2}\beta^2 h = \eta_{C_2}\eta_{\cl}^2$.
The Adams operation $\psi^3$ fixes all torsion classes, and otherwise is determined by the map $\pi_{\ast,\ast}b(KO_2)\rightarrow \pi_{\ast,\ast}b(KU_2)$, which sends to classes to classes of the same name, only where moreover $\eta_{\cl}\mapsto 0$ and $\eta_{C_2}\mapsto -a\beta\tau^{-2}$. The norms $P\colon \pi_n KO_2\rightarrow \pi_{2n,n}b(KO_2)$ are determined by general properties and
\[
P(\beta^4) = \beta^8\tau^{-8},\qquad P(2\beta^2) = (4+a\eta_{C_2})\beta^4\tau^{-4},\qquad P(\eta_{\cl}) = \eta_{\cl}\eta_{C_2}.
\]
\end{lemma}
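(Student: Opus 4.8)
The plan is to compute $\pi_{\ast,\ast}b(KO_2)$ by descent from $\pi_{\ast,\ast}b(KU_2)$, whose ring structure, Adams operations, and power operations are already in hand from \cref{thm:kpow}. Writing $c = \psi^{-1}$ for complex conjugation, one has $KO_2 \simeq KU_2^{hC_2}$ with $C_2 = \langle c\rangle$; since mapping out of a fixed spectrum commutes with the homotopy fixed point construction, $b(KO_2)\simeq b(KU_2)^{hC_2}$, so there is a descent spectral sequence $H^\ast(C_2;\pi_{\ast,\ast}b(KU_2))\Rightarrow \pi_{\ast,\ast}b(KO_2)$, which I would run concretely via the long exact sequence obtained by applying $F(T,-)$, for $T$ a Thom spectrum over $B\Sigma_2$, to Wood's cofiber sequence $\Sigma KO_2 \xrightarrow{\eta_{\cl}} KO_2 \xrightarrow{c} KU_2$ (which is short because $\eta_{\cl}^3 = 0$). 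From \cref{thm:kpow} (with $p=2$, $k=-1$, $d = 2-h$) one reads off $c(\beta) = -\beta$ and $c(\tau^2) = \tau^2(h-1)$, so, using $ah = 0$ and $(h-1)^2 = 1$, the conjugation-fixed classes include $a$, $h$, $\beta^4$, $\tau^{\pm 4}$, $2\beta^2$, $\tau^2 h$, and $\beta^2\tau^2 h$, while the descent also produces $\eta_{\cl}$, generating $\ker c$ and dying in $b(KU_2)$, and $\eta_{C_2}$, the class lifting $a\beta\tau^{-2}$ and normalized by $h = 2 + a\eta_{C_2}$. I would then read off the stated presentation, and the relations defining $I$ — including $\rho\cdot\tau^{-2}\beta^2 h = \eta_{C_2}\eta_{\cl}^2$, where $\tau^{-2}\beta^2 h = \tau^{-4}(\beta^2\tau^2 h)$ — by tracking products (in particular $a$-multiplications) through the descent.

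Next I would pin down $\psi^3$. Its non-torsion part is detected by the embedding $c\colon \pi_{\ast,\ast}b(KO_2)\hookrightarrow\pi_{\ast,\ast}b(KU_2)$, valid since $\ker c$ is $\eta_{\cl}$-torsion, where $\psi^3$ is computed by \cref{thm:kpow}; in particular $\psi^3(\beta) = 3\beta$, and the stated formulas follow. The $\eta_{\cl}$-torsion classes are all fixed: $\psi^3$ fixes $\eta_{\cl}$ because it is an $\bbE_\infty$ self-map of $KO_2$, hence acts as the identity on the image of $\pi_\ast S$, and the congruences $\psi^3$ satisfies on the remaining generators are annihilated upon multiplication by $\eta_{\cl}$. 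The asserted effect of $c$ on $\eta_{\cl}$ and $\eta_{C_2}$ is how those classes were defined.

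Finally, for the power operations $P\colon \pi_n KO_2\to\pi_{2n,n}b(KO_2)$: multiplicativity, the addition formula, and $P(k) = k - \tfrac{k-k^2}{2}h$ reduce everything to the values on ring generators. For $\beta^4$ and $2\beta^2$ I would compute in $b(KU_2)$ via naturality of $P$ along $c$ and \cref{thm:kpow}'s $P(\beta) = \beta^2\tau^{-2}$: one gets $c(P(\beta^4)) = P(\beta)^4 = \beta^8\tau^{-8}$ and $c(P(2\beta^2)) = P(2)P(\beta)^2 = (2+h)\beta^4\tau^{-4} = (4 + a\eta_{C_2})\beta^4\tau^{-4}$, and since each right-hand side lifts canonically to $\pi_{\ast,\ast}b(KO_2)$ and $c$ is injective in these degrees, these are the values of the $KO_2$ power operation. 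The hard part will be $P(\eta_{\cl})$: because $\eta_{\cl}$ complexifies to $0$, $P(\eta_{\cl})$ lies in $\ker c$ in degree $(2,1)$, which by the structure established above is spanned by the single class $\eta_{\cl}\eta_{C_2}$, so the only remaining issue is to rule out $P(\eta_{\cl}) = 0$; this follows from the restriction formula $\res_1 P(\eta_{\cl}) = \eta_{\cl}^2 \neq 0$ in $\pi_2 KO_2$. As a sanity check, $P(\eta_{\cl})$ should be the image in this Borel theory of the genuine $C_2$-norm $\Sq(\eta_{\cl})$ from \cref{ex:sqeta}, whose $\rho\nu_{C_2}$-summand dies in $b(KO_2)$. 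With the generators handled, the general formulas propagate the description of $P$ to all of $\pi_{\ast,\ast}b(KO_2)$.
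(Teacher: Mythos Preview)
Your sketch is sound and considerably more detailed than the paper's own proof, which simply cites \cite{balderrama2021borel} and \cite{balderrama2022ktheory} for the computation and only records the dictionary between gradings and the sign conventions on $\eta_{C_2}$. What you outline---descent along $KO_2 \simeq KU_2^{\h C_2}$, equivalently the long exact sequence from Wood's cofibering, together with the explicit conjugation action read off from \cref{thm:kpow}---is exactly the kind of computation those references carry out, so in substance you are reproducing the cited argument rather than giving a genuinely new route.

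A few small points to clean up. You use $c$ both for complex conjugation $\psi^{-1}$ on $KU_2$ and for the complexification map $KO_2\to KU_2$; keep these separate. Your normalization $h = 2 + a\eta_{C_2}$ forces $\eta_{C_2}\mapsto -a\beta\tau^{-2}$ under complexification, not $+a\beta\tau^{-2}$ as you wrote (and as the lemma states). Your argument for $P(\eta_{\cl})$ via $\res_1 P(\eta_{\cl}) = \eta_{\cl}^2$ is correct and is the clean way to settle it; the ``sanity check'' appealing to $\Sq(\eta_{\cl}) = \eta_{\cl}\eta_{C_2} + \rho\nu_{C_2}$ from \cref{ex:sqeta} is fine heuristically but be careful: the paper later uses the relation $\rho\cdot\tau^{-2}\beta^2 h = \eta_{C_2}\eta_{\cl}^2$ to show $a\nu_{C_2}$ is \emph{nonzero} in $\pi_{2,1}b(S_{K(1)})$, so ``$\rho\nu_{C_2}$ dies in $b(KO_2)$'' is not quite the right slogan---rather, its image already equals $\eta_{\cl}^2\eta_{C_2}$ up to the ambient relations.
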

\begin{proof}
See for instance \cite{balderrama2021borel} or \cite{balderrama2022ktheory}; for the former note that our $\pi_{s,w}b(KO_2)$ is its $\pi_{s,s-w}b(KO)$ and $\eta_{C_2} = -\eta_0$, and for the latter note $\pi_{s,w}b(KO_2) = \pi_{(s-w)+w\sigma} KO_{C_2}\otimes\bbZ_2$ and $\eta_{C_2} = -\eta_\sigma$.
\end{proof}

One may compute the groups $\pi_{\ast,\ast}b(S_{K(1)})$ from this, using \cref{eq:jo}. This computation was also carried out in \cite{balderrama2021borel}, but our situation is much simpler: the hard work there was to pin down the ring structure on $\pi_{\ast,\ast}b(S_{K(1)})$, which we don't need, and even for the additive structure we need only the particular groups $\pi_{2n,n}S_{K(1)}$. Because we need details of the computation, it is easier to just proceed directly.

Write $[x]$ for classes in $S_{K(1)}$-cohomology detected in the boundary of \cref{eq:jo}. Define
\begin{gather*}
\rho_n = [\beta^{4n}] \in \pi_{8n-1}S_{K(1)},\qquad \xi_n = [2\beta^{4n+2}]\in \pi_{8n+3}S_{K(1)},\qquad \mu_n = \beta^{4n}\eta_{\cl}\in \pi_{8n+1}S_{K(1)},\\
\rho_{n,n} = [\tau^{-4n}\beta^{4n}]\in \pi_{8n-1,4n}b(S_{K(1)}),\qquad \mu_{n,n} = \tau^{-4n}\mu_n \in \pi_{8n+1,4n} b(S_{K(1)}),
\end{gather*}
These names are chosen to be compatible with \cite{balderrama2021borel}; note, however, that $\pi_{s,w}$ here is $\pi_{s,s-w}$ there, and that we write $a$ instead of $\omega_0$ below. A choice must be made here: in writing $\mu_{n} = \beta^{4n}\eta_{\cl}$, we mean that $\mu_{n}$ is some class detected by $\beta^{4n}\eta_{\cl}\in \pi_{8n+1}KO_2$, and there are two such classes, and likewise for $\mu_{n,n}$. This choice is relevant to the indeterminacy in \cref{thm:2pow} and carefully handled in \cite{balderrama2021borel}, but for our purposes it does not matter what choice is made.

\begin{lemma}
The nonzero homotopy groups of $S_{K(1)}$ are $\pi_0 S_{K(1)} = \bbZ_2\{1\}\oplus\bbZ/(2)\{\eta_{\cl} \rho_0\}$, and otherwise
\[
\pi_i S_{K(1)} = \begin{cases}
\bbZ_2/(3^{4a}-1)\{\rho_n\}&i=8n-7;\\
\bbZ/(2)\{\eta_{\cl} \rho_n\}&i = 8n; \\
\bbZ/(2)\{\eta_{\cl}^2 \rho_n,\mu_n\}&i=8n+1;\\
\bbZ/(2)\{\eta_{\cl} \mu_n\}&i=8n+2;\\
\bbZ/(8)\{\xi_n\}&i=8n+3.\\
\end{cases}
\]
Moreover, we have
\[
\pi_{2i,i}b(S_{K(1)}) = \begin{cases}
\bbZ_2/(3^{2n}-1)\{a\rho_{n,n}\}&i=4n-1;\\
\bbZ/(2)\{\eta_{\cl} \rho_{n,n}, a \eta_{\cl} \eta_{C_2} \rho_{n,n} \} & i = 4n;\\
\bbZ/(2)\{\eta_{\cl}^2\eta_{C_2} \rho_{n,n},\mu_{n,n}\eta_{C_2}\}&i=4n+1;\\
0&i=4n+2;
\end{cases}
\]
only with an additional summand of the form $\bbZ_2\{1,a\eta_{C_2}\}$ in $\pi_{0,0}b(S_{K(1)})$.
\end{lemma}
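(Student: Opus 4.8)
Both computations will be read off long exact sequences attached to \cref{eq:jo}. The fiber sequence \cref{eq:jo} itself gives, for each $s$, a short exact sequence
\[
0\to\coker\big(\psi^3-1\mid\pi_{s+1}KO_2\big)\to\pi_s S_{K(1)}\to\ker\big(\psi^3-1\mid\pi_s KO_2\big)\to 0,
\]
the left-hand summand being the image of the boundary map, i.e.\ the classes written $[x]$. Since the functor $b(R)=F(E\Sigma_2,i_\ast R)$ is a composite of exact functors, applying it to \cref{eq:jo} yields a fiber sequence $b(S_{K(1)})\to b(KO_2)\xrightarrow{\psi^3-1}b(KO_2)$ and hence, in every bidegree, a short exact sequence
\[
0\to\coker\big(\psi^3-1\mid\pi_{s+1,w}b(KO_2)\big)\to\pi_{s,w}b(S_{K(1)})\to\ker\big(\psi^3-1\mid\pi_{s,w}b(KO_2)\big)\to 0,
\]
again with the left summand consisting of boundary classes. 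The boundary maps and multiplication by $\eta_\cl$ are $\pi_\ast S$-linear, which is how products such as $\eta_\cl\rho_n$, $\eta_\cl\mu_n$, $a\rho_{n,n}$, and $\mu_{n,n}\eta_{C_2}$ are recognized in the answer as images of the corresponding generators of the source.

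First I would compute $\pi_\ast S_{K(1)}$. The inputs are the ring $\pi_\ast KO_2$ recalled in \cref{lem:ko} and the action of $\psi^3$ on it: $\psi^3-1$ annihilates every torsion class, acts on $\pi_{8n}KO_2=\bbZ_2\{\beta^{4n}\}$ by multiplication with $3^{4n}-1$, and acts on $\pi_{8n+4}KO_2=\bbZ_2\{2\beta^{4n+2}\}$ by multiplication with $3^{4n+2}-1$. Thus $\psi^3-1$ is injective on the free part outside $\pi_0$, and the only arithmetic needed is the lifting-the-exponent identity $v_2(3^{2m}-1)=3+v_2(m)$, which in particular identifies $\coker(\psi^3-1\mid\pi_{8n+4}KO_2)\cong\bbZ/8$. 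Feeding these kernels and cokernels into the short exact sequences produces all the listed groups once three extension problems are dispatched: the sequences computing $\pi_0 S_{K(1)}$ and $\pi_{8n+1}S_{K(1)}$ split, the first because the sub $\bbZ_2$ is free and the second because $2\mu_n=0$; while for $\pi_{8n+3}S_{K(1)}$ the kernel term vanishes, so no extension arises and one gets $\bbZ/8\{\xi_n\}$ on the nose.

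The computation of $\pi_{2i,i}b(S_{K(1)})$ runs identically, bidegree by bidegree, now extracting the diagonal line $s=2w$ and the adjacent line $s=2w+1$ of $\pi_{\ast,\ast}b(KO_2)$ from the presentation in \cref{lem:ko} and computing $\psi^3-1$ there via that lemma (equivalently \cref{prop:adams}). The crucial identity is that, using $ad=2a$ (so that $a(1-\tfrac13 d)=\tfrac13 a$) together with $\psi^3(\tau^{-2})=\tau^{-2}(1-\tfrac13 d)$ and $\psi^3(\beta)=3\beta$, the operation $\psi^3$ multiplies $a\tau^{-4n}\beta^{4n}$ by $3^{2n}$, so that the boundary classes $a\rho_{n,n}$ span a cokernel $\bbZ_2/(3^{2n}-1)$; the remaining nonzero groups are accounted for analogously by the $\mu_{n,n}$-, $\eta_\cl$-, and $\eta_{C_2}$-multiples, and the free summand $\bbZ_2\{1,a\eta_{C_2}\}\subset\pi_{0,0}b(S_{K(1)})$ comes from $\pi_{0,0}b(KO_2)$, on which $\psi^3-1$ vanishes. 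The main obstacle here is bookkeeping rather than a single hard idea: one must carefully isolate the correct diagonal and near-diagonal pieces of the many-generator ring $\pi_{\ast,\ast}b(KO_2)$ (whose relations involving $d$, $h$, $\eta_{C_2}$ mix the summands), keep track of which $[x]$ generates each cokernel, and settle the small extension problems — all of which split — the only one with real content being $2\mu_n=0$ in $\pi_{8n+1}S_{K(1)}$, which (together with the finer point about the choice of $\mu_n$) is handled in \cite{balderrama2021borel} and which I would otherwise establish by a Toda bracket computation.
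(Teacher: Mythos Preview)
Your approach is the same as the paper's: run the long exact sequence attached to the fiber sequence \cref{eq:jo}, read off kernels and cokernels of $\psi^3-1$ from \cref{lem:ko}, and resolve the leftover extension problems by citation. The paper's proof is a single sentence to this effect.

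There is one small but genuine oversight. You claim that the only extension problem ``with real content'' is the one computing $\pi_{8n+1}S_{K(1)}$, and that the extensions arising in the $\pi_{2i,i}b(S_{K(1)})$ computation all split for formal reasons. The paper disagrees: at $i=4n+1$ one has a short exact sequence with $\bbZ/2$ sub and $\bbZ/2$ quotient, and ruling out $\pi_{8n+2,4n+1}b(S_{K(1)})\cong\bbZ/4$ requires a separate argument, for which the paper cites \cite[Lemma 3.3.3]{balderrama2021borel}. (For $\pi_{8n+1}S_{K(1)}$ the paper cites \cite[Theorem 8.15]{ravenel1984localization} rather than \cite{balderrama2021borel}.) Your ``$2\mu_n=0$'' formulation is also a restatement of what is to be proved rather than a reason for it; you acknowledge this by deferring to a Toda bracket argument or a citation, which is fine, but the same honesty should apply to the $i=4n+1$ case in $b(S_{K(1)})$.
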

\begin{proof}
These follow by a direct computation from \cref{eq:jo}, only one must verify that $\pi_{8n+1}S_{K(1)}\neq \bbZ/(4)$ and $\pi_{8n+2,4n+1}S_{K(1)} \neq \bbZ/(4)$, for which we cite \cite[Theorem 8.15]{ravenel1984localization} and \cite[Lemma 3.3.3]{balderrama2021borel}.
\end{proof}

\begin{theorem}\label{thm:2pow}
The symmetric squares
\[
P\colon \pi_n S_{K(1)}\rightarrow\pi_{2n,n}S_{K(1)}
\]
are additive for $n\neq 0$, and satisfy
\begin{align*}
P(\rho_n) &= a \rho_{2n,2n} \\
P(\eta_\cl \rho_n) &= a \eta_\cl \eta_{C_2} \rho_{2n,2n} \\
P(\eta_\cl^2 \rho_n) &= 0 \\
P(\mu_n) &\in \mu_{2n,2n}\eta_{C_2} + \bbZ/(2)\{\eta_\cl^2 \eta_{C_2} \rho_{2n,2n}\} \\ 
P(\eta_\cl \mu_n) &= 0 \\
P(\xi_n) &= 2 a \rho_{2n+1,2n+1}.
\end{align*}
Moreover, 
\[
P(\eta_\cl) = \eta_{\cl}\eta_{C_2}(1+\eta_\cl \rho_0).
\]
\end{theorem}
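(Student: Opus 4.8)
The plan is to run the homotopy fixed point spectral sequence attached to the fiber sequence \cref{eq:jo}, $S_{K(1)}\to KO_2\xrightarrow{\psi^3-1}KO_2$, together with its image under $b(\bs)$. Since $b(R)=\pi_\ast F(E\Sigma_2,i_\ast R)$ is exact in $R$, applying it to \cref{eq:jo} gives a fiber sequence $b(S_{K(1)})\to b(KO_2)\xrightarrow{\psi^3-1}b(KO_2)$ of $b$-spectra, on which $\psi^3$ acts via the $\bbE_\infty$-ring automorphism $\psi^3$ of $KO_2$ (so its action is recorded in \cref{lem:ko}), hence a ``$b$-HFPSS'' $H^\ast(\bbZ\{\psi^3\};\pi_{\ast,\ast}b(KO_2))\Rightarrow\pi_{\ast,\ast}b(S_{K(1)})$. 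Both this and the ordinary HFPSS are concentrated in cohomological degrees $0$ and $1$, so they collapse at $E_2$ with no differentials. By \cref{thm:powhfpss} (specialized to ordinary $\bbE_\infty$ rings as in \cref{ssec:powhlss}), $P$ is modeled in filtration $f$ by $a^fP$: if $x\in\pi_m KO_2$ is a permanent cycle detecting $g\in\pi_m S_{K(1)}$ in filtration $f\in\{0,1\}$, then $a^f\cdot P(x)\in\pi_{\ast,\ast}b(KO_2)$ — with $P$ the power operation on $KO_2$ of \cref{lem:ko} — detects $P(g)$ modulo higher filtration. From the two preceding lemmas, $\rho_n,\xi_n,\eta_\cl\rho_n,\eta_\cl^2\rho_n$ are detected in filtration $1$ (by $\beta^{4n},2\beta^{4n+2},\eta_\cl\beta^{4n},\eta_\cl^2\beta^{4n}$), while $\eta_\cl,\mu_n,\eta_\cl\mu_n$ are detected in filtration $0$.

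For the filtration-$1$ classes, filtration $1$ is the top of the spectral sequence, so the values of $P$ are determined on the nose. One computes $P$ on the relevant elements of $\pi_\ast KO_2$ from the generator formulas $P(\beta^4)=\beta^8\tau^{-8}$, $P(2\beta^2)=(4+a\eta_{C_2})\beta^4\tau^{-4}$, $P(\eta_\cl)=\eta_\cl\eta_{C_2}$ of \cref{lem:ko} together with multiplicativity, multiplies by $a$, and rewrites the answer in $\pi_{\ast,\ast}b(S_{K(1)})$ as a boundary class; the only work is bookkeeping with the relations $ah=0$, $h=2+a\eta_{C_2}$, $h^2=2h$ of \cref{lem:ko}. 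For instance, for $\xi_n=[2\beta^{4n+2}]$ the identity $a^2\eta_{C_2}=-2a$ collapses $a(4+a\eta_{C_2})\beta^{8n+4}\tau^{-8n-4}$ to $2a\rho_{2n+1,2n+1}$, and for $\eta_\cl^2\rho_n$ the relations governing $\eta_\cl^2\eta_{C_2}^2$ in $\pi_{\ast,\ast}b(KO_2)$ force $a\cdot P(\eta_\cl^2\beta^{4n})=0$, whence $P(\eta_\cl^2\rho_n)=0$. This yields $P(\rho_n),P(\xi_n),P(\eta_\cl\rho_n),P(\eta_\cl^2\rho_n)$. Additivity of $P$ for $n\neq 0$ then follows from the addition formula $P(x+y)=P(x)+C(x,y)h[n]+P(y)$ once one checks that, in the (small, torsion) groups at hand, the terms $C(x,y)h[n]$ vanish — e.g.\ because the squares $C(x,y)=xy$ of the relevant generators are zero.

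For the filtration-$0$ classes $P$ is pinned down only modulo filtration $1$. Naturality of $P$ along $S_{K(1)}\to KO_2$ identifies the image of $P(\mu_n)$ in $\pi_{\ast,\ast}b(KO_2)$ with $P(\beta^{4n}\eta_\cl)=\beta^{8n}\tau^{-8n}\eta_\cl\eta_{C_2}$, i.e.\ with the filtration-$0$ class $\mu_{2n,2n}\eta_{C_2}$; since the filtration-$1$ part of the relevant target group is $\bbZ/(2)\{\eta_\cl^2\eta_{C_2}\rho_{2n,2n}\}$, this gives exactly the asserted coset, and for $n\geq 1$ the ambiguity is genuine — it is the indeterminacy stated in \cref{thm:sk1}. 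Likewise $P(\eta_\cl\mu_n)$ maps to $P(\beta^{4n}\eta_\cl^2)=\beta^{8n}\tau^{-8n}(\eta_\cl\eta_{C_2})^2$, which vanishes in $\pi_{\ast,\ast}b(KO_2)$ by the relations of \cref{lem:ko}; hence $P(\eta_\cl\mu_n)$ lies in filtration $\geq 1$, where the relevant group of $\pi_{\ast,\ast}b(S_{K(1)})$ is zero, so $P(\eta_\cl\mu_n)=0$.

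The remaining point — $P(\eta_\cl)$ — is where I expect the real difficulty, since we must decide whether the filtration-$1$ correction $\eta_\cl^2\eta_{C_2}\rho_{0,0}$ is present, and the multiplicative identities available ($P(\eta_\cl\rho_0)=P(\eta_\cl)P(\rho_0)$, $P(\eta_\cl^2)=P(\eta_\cl)^2$) all lose this term because products of two filtration-$1$ classes vanish. The plan is to resolve it by naturality from the integral sphere: $\eta_\cl\in\pi_1 S_{K(1)}$ is the image of the nonequivariant Hopf map $\eta_\cl\in\pi_1 S$, so $P(\eta_\cl)$ is the $K(1)$-localized Borel image of the symmetric square $\Sq(\eta_\cl)=\eta_\cl\eta_{C_2}+\rho\nu_{C_2}$ of \cref{ex:sqeta}, and it remains to check that $\rho\nu_{C_2}$ is carried to $\eta_\cl^2\eta_{C_2}\rho_{0,0}$ — for which one may use that $\nu_{C_2}$ restricts to $\nu_\cl$, whose image in $\pi_3 S_{K(1)}$ is a generator, together with the relations of \cref{lem:ko}. (Alternatively, one can feed in Carmeli--Yuan's $\theta(\epsilon)=\epsilon$, i.e.\ $P(\epsilon)=\epsilon h$, and combine it with the failure of additivity of $P$ on $\pi_0 S_{K(1)}=\bbZ_2\{1\}\oplus\bbZ/(2)\{\eta_\cl\rho_0\}$.) The behavior of $P$ on all of $\pi_0 S_{K(1)}$ then follows, using $P(1)=1$, $P(\rho_0)=a\rho_{0,0}$, and $P(\eta_\cl\rho_0)=P(\eta_\cl)P(\rho_0)$.
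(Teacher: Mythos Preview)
Your approach is essentially the same as the paper's: run the $\psi^3$-HFPSS on $KO_2$ and $b(KO_2)$, use \cref{thm:powhfpss} to compute $P$ on filtration-$1$ classes via $aP$ on $KO_2$, handle filtration-$0$ classes by comparison with $KO_2$ modulo filtration~$1$, and resolve $P(\eta_\cl)$ by importing $\Sq(\eta_\cl)=\eta_\cl\eta_{C_2}+\rho\nu_{C_2}$ from \cref{ex:sqeta} and tracking $\nu_{C_2}$ into $\pi_{3,2}b(S_{K(1)})$.

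There is one organizational difference worth flagging. You treat $\eta_\cl\rho_n$, $\eta_\cl^2\rho_n$ as filtration-$1$ classes and compute their $P$-values directly via $a\cdot P$ on $KO_2$; the paper instead computes only $P(\rho_n)$, $P(\xi_n)$, $P(\mu_n)$ directly and then deduces the remaining values by multiplicativity, using that products of two filtration-$1$ classes vanish (so the indeterminacy in $P(\mu_0)=P(\eta_\cl)$ is killed upon multiplying by $P(\rho_n)=a\rho_{2n,2n}$). Your route requires verifying identities like $a\eta_\cl^2\eta_{C_2}^2=0$ in $\pi_{\ast,\ast}b(KO_2)$; this does follow from the relation $\eta_\cl^2\eta_{C_2}=a\tau^{-2}\beta^2 h$ in \cref{lem:ko} together with $a\cdot h[w]=0$, but the paper's multiplicativity argument sidesteps it. For $P(\eta_\cl\mu_n)$ you can simplify further: the target $\pi_{2(8n+2),8n+2}b(S_{K(1)})$ is zero by the preceding lemma, so no computation is needed.

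One caution: your parenthetical alternative of feeding in Carmeli--Yuan's $\theta(\epsilon)=\epsilon$ would be circular here, since the paper derives that identity as a \emph{consequence} of this computation (\cref{rmk:theta}).
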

\begin{proof}
Both $P(\rho_n)$ and $P(\xi_n)$ may be computed along the same lines as the odd-primary case. Combining \cref{thm:powhfpss} and \cref{lem:ko}, we find
\[
P(\rho_n) = P([\beta^{4n}]) = [a\beta^{8n}\tau^{-8n}] = a \rho_{2n,2n}
\]
and
\[
P(\xi_n) = P([2\beta^{4n+2}]) = [a(2+h)\beta^{8n+4}\tau^{-8n-4}] = 2a [\beta^{8n+4}\tau^{-8n-4}] = 2a \rho_{2n+1,2n+1}.
\]
That $P(\mu_n) \in \mu_{2n,2n}\eta_{C_2} + \bbZ/(2)\{\eta_\cl^2\eta_{C_2}\rho_{2n,2n}\}$ follows by comparison with $KO_2$. Despite the indeterminacy, this is sufficient to deduce the remaining the remaining values of $P$ by multiplicativity. We have been unable to resolve this indeterminacy in general, but can describe what happens in the case $n = 0$.

There is a Hurewicz map $\pi_{\ast,\ast}S_{C_2}\rightarrow \pi_{\ast,\ast}S_{K(1)}$ from the $C_2$-equivariant stable stems, compatible with all power operations, sending a $C_2$-equivariant map $f\colon S^{a+b\sigma}\rightarrow S^0$ to the induced map $(S^{a+b\sigma})_{\h C_2} \simeq \Sigma^{a-b} (S^b)^{\otimes 2}_{\h \Sigma_2} \rightarrow S \rightarrow S_{K(1)}$. In $\pi_{\ast,\ast}S_{C_2}$, we have
\[
P(\eta_{\cl}) = \eta_{\cl}\eta_{C_2} + a \nu_{C_2},
\]
where $\nu_{C_2}\in\pi_{3,2}S_{C_2}$ is the $C_2$-equivariant quaternionic Hopf fibration (\cref{ex:sqeta}; note $a = \rho$). As $S_{K(1)}$ detects the nonequivariant quaternionic Hopf fibration, $b(S_{K(1)})$ must detect $\nu_{C_2}$. We may compute from \cref{eq:jo} and \cref{lem:ko} that
\[
\pi_{3,2}b(S_{K(1)}) = \bbZ/(8)\{[\tau^{-2}\beta^2h]\},
\]
and so the only possibility is that $\nu_{C_2}$ is detected by some odd multiple of $[\tau^{-2}\beta^2 h]$. As
\[
a\cdot [\tau^{-2}\beta^2h] = [a\cdot\tau^{-2}\beta^2h] = [\eta_{\cl}^2\eta_{C_2}] = \eta_\cl^2 \eta_{C_2} \rho_{0},
\]
the identity $P(\eta_{\cl}) = \eta_{\cl}\eta_{C_2}(1+\eta_\cl \rho_{0})$ follows.
\end{proof}

\begin{rmk}\label{rmk:theta}
If $R$ is any $K(1)$-local $\bbE_\infty$ ring, then there is a natural isomorphism
\[
\pi_{0,0}b(R) = \pi_0 R\{1,h\}
\]
Following \cite{hopkins2014k1}, if we define $\theta\colon \pi_0 R\rightarrow\pi_0 R$ by declaring $-\theta(x)$ to be the coefficient of $h$ in $P(x)$, then $\theta$ makes $\pi_0 R$ into a $\theta$-ring. This applies at any prime $p$, but let us continue focusing on $p=2$. Write $\epsilon = \eta_\cl \rho_0$, so that $\pi_0 S_{K(1)} = \bbZ_2[\epsilon]/(2\epsilon,\epsilon^2)$. As $a\eta_{C_2}\equiv -h \pmod{2}$, it follows from \cref{thm:2pow} that the action of $\theta$ on $\pi_0 S_{K(1)}$ satisfies
\[
\theta(\epsilon) = \epsilon.
\]
In fact this already follows from \cref{prop:powepsilon}. This yields an alternate proof of \cite[Theorem 5.4.8]{carmeliyuan2021higher}, using completely different methods.
\tqed
\end{rmk}

\begin{appendix}

\section{Equivariant Bousfield localizations}\label{app:bous}

This appendix, which may be read independently of the rest of the paper, gives some general material on Bousfield localizatons in equivariant stable homotopy theory. See especially \cite{hill2019equivariant,carrick2020smashing} for prior work on the topic; our approach differs in that we focus primarily on the role of nilpotent completion. Insofar as the body of the paper is concerned, this appendix contains the proof of \cref{prop:kul} (in \cref{prop:ku1proof}).

\subsection{Bousfield localizations}

We begin by reviewing some of the general theory of Bousfield localizations. Nothing in this subsection is new, we just collect everything we need in one place and in the form most convenient for us. In particular, most of this material is either routine or may be found in some form in \cite{bousfield1979localization, hoveypalmieristrickland1997axiomatic, mathew2015thick, mathew2018examples}. Fix for this subsection a presentable symmetric monoidal stable $\infty$-category $\calM$ with unit denoted $S$, together with an object $R\in\calM$.

\begin{defn}
Fix $X\in\calM$.
\begin{enumerate}
\item $X$ is \textit{$R$-acyclic} if $R\otimes X\simeq 0$;
\item $X$ is \textit{$R$-local} if $\calM(C,X)\simeq 0$ for any $R$-acyclic $C$;
\item $X\rightarrow Y$ is an \textit{$R$-equivalence} of $R\otimes X\rightarrow R\otimes Y$ is an equivalence;
\item The \textit{$R$-localization of $X$} is an $R$-local object $L_R X\in\calM$ equipped with an $R$-equivalence $X\rightarrow L_R X$.
\item The \textit{Bousfield class} of $R$ shall be the class $\langle R \rangle = \{X\in\calM:R\otimes X\}$ of $R$-acyclics.
\tqed
\end{enumerate}
\end{defn}

Observe that $R$-localization depends only on the Bousfield class of $R$, and that $\langle R \rangle \subset \langle T \rangle$ when there is a natural transformation $L_R\rightarrow L_T$. The functor of $R$-localization is lax symmetric monoidal, and in particular there is a natural map $X\otimes L_R S\rightarrow L_R X$ for each $X\in\calM$.

\begin{defn}
$R$-localization is \textit{smashing} if $X \otimes L_R S\simeq L_R X$ for all $X\in\calM$.
\tqed
\end{defn}

Suppose from now on that $R$ carries a unital product; we shall just say that $R$ is a ring. Let $\ol{R} = \Fib(S\rightarrow R)$, let $A(R) = \{\ol{R}{}^{\otimes s}\}$ be the $R$-Adams tower \cite[\S 5]{bousfield1979localization}, and let $C(R) = \{\Cof(\ol{R}{}^{\otimes s}\rightarrow S)\}$ be the associated tower under $S$.

\begin{defn}
\hphantom{blank}
\begin{enumerate}
\item The \textit{$R$-nilpotent completion} of $X\in\calM$ is $X_R^\wedge = \lim(X\otimes C(R))$.
\item Say that $X$ is \textit{$R$-convergent} if the natural map $L_R X\rightarrow X_R^\wedge$ is an equivalence, or equivalently if the natural map $X\rightarrow X_R^\wedge$ is an $R$-equivalence.
\tqed
\end{enumerate}
\end{defn}

\begin{lemma}\label{lem:nilreflect}
Let $\calN$ be another presentably symmetric monoidal stable $\infty$-category, and let $F\colon\calM\rightarrow\calN$ be a symmetric monoidal, conservative, and limit-preserving functor. Then
\begin{enumerate}
\item $F(X_R^\wedge)\simeq F(X)_{F(R)}^\wedge$ for $X\in\calM$.
\item If $F(X)$ is $F(R)$-convergent, then $X$ is $R$-convergent, and $F(L_RX)\simeq L_{F(R)}F(X)$.
\item If $F(R)$-localization is smashing and $F(X)$ is $F(R)$-convergent for all $X\in\calM$, then $R$-localization is smashing.
\end{enumerate}
\end{lemma}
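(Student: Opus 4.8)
The plan is to prove~(1) by directly unwinding the definitions, and then to deduce~(2) and~(3) formally, using only that $F$ reflects equivalences.

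For~(1): being symmetric monoidal and limit-preserving, $F$ is exact between stable $\infty$-categories, hence also preserves cofibers; moreover $F(S)$ is the unit of $\calN$. Thus $F$ carries $\ol{R} = \Fib(S\rightarrow R)$ to $\Fib(F(S)\rightarrow F(R)) = \ol{F(R)}$, carries each $\ol{R}{}^{\otimes s}$ to $\ol{F(R)}{}^{\otimes s}$, and, applying $F$ to the cofiber sequences $\ol{R}{}^{\otimes s}\rightarrow S\rightarrow \Cof(\ol{R}{}^{\otimes s}\rightarrow S)$ defining the tower $C(R)$, identifies $F(C(R))$ with $C(F(R))$ compatibly with the maps out of $F(S)$. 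Since $F$ preserves limits and tensor products,
\[
F(X_R^\wedge) = F\bigl(\lim(X\otimes C(R))\bigr) \simeq \lim\bigl(F(X)\otimes C(F(R))\bigr) = F(X)_{F(R)}^\wedge,
\]
and, because the whole construction is natural in $\calM$, $F$ carries the canonical map $X\rightarrow X_R^\wedge$ to the canonical map $F(X)\rightarrow F(X)_{F(R)}^\wedge$.

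For~(2): if $F(X)$ is $F(R)$-convergent, then by definition $F(X)\rightarrow F(X)_{F(R)}^\wedge$ is an $F(R)$-equivalence, which under the identification of~(1) says exactly that $F$ carries $R\otimes X\rightarrow R\otimes X_R^\wedge$ to an equivalence in $\calN$. Since $F$ is conservative, $R\otimes X\rightarrow R\otimes X_R^\wedge$ is already an equivalence; that is, $X\rightarrow X_R^\wedge$ is an $R$-equivalence, so $X$ is $R$-convergent and $L_R X\simeq X_R^\wedge$. Feeding this back into~(1) gives $F(L_R X)\simeq F(X_R^\wedge)\simeq F(X)_{F(R)}^\wedge\simeq L_{F(R)}F(X)$. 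For~(3): $R$-localization is smashing exactly when the natural map $X\otimes L_R S\rightarrow L_R X$ is an equivalence for all $X\in\calM$, and by conservativity of $F$ it suffices to check this after applying $F$. The hypothesis lets us apply~(2) to both $X$ and $S$, giving $F(L_R X)\simeq L_{F(R)}F(X)$ and $F(L_R S)\simeq L_{F(R)}F(S)$; since $F$ is symmetric monoidal, the map $F(X\otimes L_R S)\rightarrow F(L_R X)$ is identified with the smashing comparison map $F(X)\otimes L_{F(R)}F(S)\rightarrow L_{F(R)}F(X)$, an equivalence by the assumption that $F(R)$-localization is smashing; hence $X\otimes L_R S\rightarrow L_R X$ is an equivalence.

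I expect the only delicate points to be bookkeeping ones: verifying that $F$ sends the various canonical comparison maps (into nilpotent completions, and the smashing maps $X\otimes L_R S\rightarrow L_R X$) to the corresponding canonical maps in $\calN$, rather than merely matching sources and targets; and, in~(2), the passage from ``$X\rightarrow X_R^\wedge$ is an $R$-equivalence'' to ``$L_R X\simeq X_R^\wedge$'', which relies on $X_R^\wedge$ being $R$-local — part of the standing setup, since each $\Cof(\ol{R}{}^{\otimes s}\rightarrow S)$ lies in the thick $\otimes$-ideal generated by $R$, so $X_R^\wedge$ is an inverse limit of $R$-local objects.
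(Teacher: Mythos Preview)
Your proof is correct and follows essentially the same approach as the paper: both prove (1) by noting that $F$ is exact and symmetric monoidal so carries $X\otimes C(R)$ to $F(X)\otimes C(F(R))$ and then commutes with the limit, and both deduce (2) and (3) by reducing the relevant equivalence to one in $\calN$ via conservativity and identifying it there using (1) (resp.\ (2)) together with the hypothesis. Your version is simply more explicit about the bookkeeping you flag at the end; the paper leaves those naturality checks implicit.
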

\begin{proof}
(1)~~As $F$ is limit-preserving, it is exact. As $F$ is symmetric monoidal and exact, $F(X\otimes C(R))\simeq F(X)\otimes C(F(R))$. Thus $F(X_R^\wedge) = F(\lim X\otimes C(R)) \simeq \lim \left(F(X)\otimes C(F(R))\right) = F(X)_{F(R)}^\wedge$.

(2)~~Suppose that $F(X)$ is $F(R)$-convergent. We must show that $R\otimes X \rightarrow R\otimes X_R^\wedge$ is an equivalence. As $F$ is conservative, it suffices to show that $F(R\otimes X)\rightarrow F(R\otimes X_R^\wedge)$ is an equivalence. As $F$ is symmetric monoidal, and by (1), this map is $F(R)\otimes F(X)\rightarrow F(R)\otimes F(X)_{F(R)}^\wedge$, which is an equivalence as $F(X)$ is $F(R)$-convergent. Thus $X$ is $R$-convergent, and $F(L_R X)\simeq F(X_R^\wedge)\simeq F(X)_{F(R)}^\wedge\simeq L_{F(R)}F(X)$.

(3)~~Suppose that $F(R)$-localization is smashing and that $F(X)$ is $F(R)$-convergent for all $X\in\calM$. We must show that $X\otimes L_R S\rightarrow L_R X$ is an equivalence. As $F$ is conservative, it suffices to show that $F(X\otimes L_R S)\rightarrow F(L_R X)$ is an equivalence. As $F$ is symmetric monoidal, and by (2), this is $F(X)\otimes L_{F(R)}F(S)\rightarrow L_{F(R)}F(X)$, which is an equivalence as $F(R)$-localization is smashing.
\end{proof}

Write $\Thick^{\otimes}(R)$ for the thick $\otimes$-ideal of $\calM$ generated by $R$. Following \cite[Section 3]{mathew2015thick}, let $\Tow(\calM)$ denote the category of towers $\{X_n\} = \{\cdots\rightarrow X_1\rightarrow X_0\}$ in $\calM$, let $\Tow^\nil(\calM)$ be the category of towers $\{X_n\}$ for which there exists some $r > 0$ such that all $X_n\rightarrow X_{n-r}$ are null, and let $\Tow^\fast(\calM)$ be the category of towers $\{X_n\}$ for which the associated tower $\Fib\left(\{\lim X_n\}\rightarrow \{X_n\}\right)$ is in $\Tow^\nil(\calM)$, where $\{\lim X_n\}$ is the constant tower on $\lim \{X_n\}$.

\begin{defn}
Say that $R$ is \textit{locally descendable} if $C(R)\in\Tow^\fast(\calM)$.
\tqed
\end{defn}

Given towers $X = \{X_n\}$ and $Y = \{Y_n\}$, write $X\sim Y$ if $\{X_n\}_{n\geq s}\simeq \{Y_n\}_{n\geq s}$ for some $s$. Observe that $X \in \Tow^\fast(\calM)$ if and only if $X\sim C\oplus N$ with $C$ a constant tower and $N\in\Tow^\nil(\calM)$, and in this case $C\simeq \lim X$.

\begin{prop}\label{prop:locdescequiv}
Consider the following conditions:
\begin{enumerate}
\item $R$ is locally descendable, i.e.\ $C(R)\in\Tow^\fast(\calM)$;
\item $L_RA(R) \in \Tow^{\nil}(\calM)$;
\item The map $L_R S \rightarrow C_s(R)$ admits a retraction for some $s$;
\item $R$-localization is smashing and agrees with $R$-nilpotent completion;
\item $L_R S\in \Thick^\otimes(R)$;
\item For all $X\in\calM$, the spectral sequence associated to the tower $\calM(S,X\otimes C(R))$ of spectra collapses at a finite page with a horizontal vanishing line independent of $X$;
\item For all $F\in\calM$ compact, the spectral sequence associated to the tower $\calM(F,C(R))$ of spectra collapses at a finite page with a horizontal vanishing line independent of $F$.
\end{enumerate}
Always (1)$\Leftrightarrow$(2)$\Leftrightarrow$(3)$\Rightarrow$(4,5,6,7). If $R$-localization is smashing, then (5)$\Leftrightarrow$(1,2,3). If $\calM$ is a Brown category \cite[Definition 4.1.4]{hoveypalmieristrickland1997axiomatic}, then (7)$\Leftrightarrow$(1,2,3). If all compact objects in $\calM$ are dualizable, then (6)$\Rightarrow$(7).
\end{prop}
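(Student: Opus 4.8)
The plan is to pin down the three‑way equivalence (1)$\Leftrightarrow$(2)$\Leftrightarrow$(3) first, deduce (4)--(7) from it, and then treat the three conditional converses. Two standing observations are used throughout. First, the cofiber sequences $\ol R^{\otimes s}\otimes R\to C_{s+1}(R)\to C_s(R)$ exhibit each $C_s(R)$ as a finite iterated extension of the $R$‑modules $\ol R^{\otimes j}\otimes R$, so $C_s(R)\in\Thick^\otimes(R)$ and in particular is $R$‑local; hence $S_R^\wedge=\lim C(R)$ is $R$‑local (a limit of $R$‑local objects), $L_R C_s(R)=C_s(R)$, and applying the exact functor $L_R$ to $A_s(R)\to S\to C_s(R)$ gives $L_R A_s(R)\simeq\Fib(L_R S\to C_s(R))$. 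Second, I use the identification of $\ol R^{\otimes n+s}\to\ol R^{\otimes n}$ as $\mathrm{id}_{\ol R^{\otimes n}}\otimes(\ol R^{\otimes s}\to S)$ with cofiber $\ol R^{\otimes n}\otimes C_s(R)$, the characterization $X\in\Tow^\fast(\calM)\iff X\sim\{\lim X\}\oplus N$ with $N\in\Tow^\nil(\calM)$, and the facts that nil towers have vanishing $\lim$ and $\lim^1$ and that $\Tow^\nil$ and $\Tow^\fast$ (with a fixed nilpotence bound) are preserved by $Y\otimes(\bs)$ and by $\calM(F,\bs)$.

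For (3)$\Rightarrow$(2): a retraction of $L_R S\to C_s(R)$, tensored with $\ol R^{\otimes n}$ and localized, shows the map $L_R A_n(R)\to\ol R^{\otimes n}\otimes C_s(R)$ in the localized cofiber sequence above is a split monomorphism, so its connecting map---which is the Adams‑tower structure map $L_R A_{n+s}(R)\to L_R A_n(R)$---is null; thus $\{L_R A_n(R)\}\in\Tow^\nil$. For (2)$\Rightarrow$(3): if this tower has nilpotence bound $s$, then $L_R A_s(R)\to L_R A_0(R)=L_R S$ is null, forcing $L_R S\to C_s(R)$ to be a split monomorphism in the fiber sequence $L_R A_s(R)\to L_R S\to C_s(R)$. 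For (2)$\Rightarrow$(1): nilpotence gives $\Fib(L_R S\to S_R^\wedge)=\lim_n\Fib(L_R S\to C_n(R))=\lim_n L_R A_n(R)=0$, so $S$ is $R$‑convergent and $\{L_R A_n(R)\}=\Fib(\{\lim C(R)\}\to C(R))$, whose nilpotence is exactly (1). For (1)$\Rightarrow$(2): since $R\otimes\ol R^{\otimes n}$ is an $R$‑module, the unit $R\otimes\ol R^{\otimes n}\to R\otimes\ol R^{\otimes n}\otimes R$ is split (by the $R$‑action), so $\{R\otimes A_n(R)\}\in\Tow^\nil$ unconditionally and $\lim_n(R\otimes C_n(R))\simeq R$; as $C(R)\in\Tow^\fast$ lets $R\otimes(\bs)$ commute with $\lim C(R)$, this gives $R\otimes S_R^\wedge\simeq R$, i.e.\ $S$ is $R$‑convergent, and then $\{L_R A_n(R)\}=\Fib(\{\lim C(R)\}\to C(R))\in\Tow^\nil$.

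Granting (1)$\Leftrightarrow$(2)$\Leftrightarrow$(3): (5) holds because (3) exhibits $L_R S$ as a retract of $C_s(R)\in\Thick^\otimes(R)$; (4) holds because then $X\otimes L_R S\in\Thick^\otimes(R)$ is $R$‑local while the comparison $X\otimes L_R S\to L_R X$ is always an $R$‑equivalence, hence an equivalence (so $R$‑localization is smashing), and then $L_R X=X\otimes L_R S=X\otimes S_R^\wedge=\lim(X\otimes C(R))=X_R^\wedge$ using $R$‑convergence of $S$; (6) and (7) hold because $C(R)\in\Tow^\fast$ with bound $r$ makes $\calM(S,X\otimes C(R))$, resp.\ $\calM(F,C(R))$, a constant tower plus a tower with nilpotence bound $r$, whose homotopy‑limit spectral sequence collapses with a horizontal vanishing line at height $r$ independent of $X$, resp.\ $F$. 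And if all compacts are dualizable, then (6)$\Rightarrow$(7) since $\calM(F,C(R))\simeq\calM(S,DF\otimes C(R))$ naturally, so one applies (6) with $X=DF$.

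For the remaining converses: assuming $R$‑localization smashing and (5), I would pass to the smashing‑localized category $\calM_R=L_R\calM$, a presentably symmetric monoidal stable $\infty$‑category with unit $L_R S$ whose inclusion into $\calM$ is exact, colimit‑preserving and symmetric monoidal; since $L_R$ is symmetric monoidal one gets $\Thick^\otimes_{\calM_R}(R)=\Thick^\otimes_\calM(R)\cap\calM_R$, so (5) says the unit of $\calM_R$ lies in $\Thick^\otimes_{\calM_R}(R)$, i.e.\ $R$ is descendable in $\calM_R$, whence by the standard equivalence of descendability with pro‑triviality of the Adams tower \cite{mathew2015thick} the $R$‑Adams tower of $\calM_R$ lies in $\Tow^\nil(\calM_R)$; writing $W=\Fib(S\to L_R S)$, which is $R$‑acyclic, the objects $\ol R^{\otimes n}$ and $\ol R_{\calM_R}^{\otimes n}$ differ only by $W$‑tensored (hence $R$‑acyclic) terms, so $L_R$ identifies $\{L_R A_n(R)\}$ with that $\calM_R$‑Adams tower and (2) follows. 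Assuming instead $\calM$ is a Brown category and (7), set $B_n=\Fib(S_R^\wedge\to C_n(R))$, so $\lim B_n=0$ and (7) says precisely that for every compact $F$ the tower $\{\calM(F,B_n)\}$ of spectra is uniformly pro‑zero ($r$‑fold composites null, $r$ independent of $F$); Brown representability then promotes this to $\{B_n\}=\Fib(\{S_R^\wedge\}\to C(R))\in\Tow^\nil(\calM)$, i.e.\ (1), cf.\ \cite[\S 4]{hoveypalmieristrickland1997axiomatic}. The main obstacle I expect is this last step, (7)$\Rightarrow$(1): converting the cohomological, compact‑tested collapse statement into on‑the‑nose nilpotence of the fiber tower is where the Brown‑category hypothesis must do real work, handling the phantom‑map subtlety via the uniform bound; the passage to $\calM_R$ in (5)$\Rightarrow$(2) and the limit‑versus‑tensor identity in (1)$\Rightarrow$(2) are the next most delicate points, while the core chain (1)$\Leftrightarrow$(2)$\Leftrightarrow$(3) and the implications to (4)--(6) are routine.
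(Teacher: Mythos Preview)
Your proof is essentially correct and follows the same approach as the paper. Two points deserve comment.

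For (5)$\Rightarrow$(2) assuming smashing, your route through the localized category $\calM_R$ works but is more elaborate than necessary. The paper simply invokes the standard fact (filtering $\Thick^\otimes(R)$) that if $X$ is $R$-nilpotent then $X\otimes A(R)\in\Tow^\nil(\calM)$; applying this with $X=L_R S$ and using smashing gives $L_R A(R)\simeq L_R S\otimes A(R)\in\Tow^\nil$ directly, avoiding any comparison of Adams towers in two categories.

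For (7)$\Rightarrow$(1), you correctly flag this as the delicate step and mention the phantom-map subtlety, but you stop short of the actual mechanism. The paper's argument is concrete: the hypothesis says every $r$-fold composite in the fiber tower $K=\Fib(\{S_R^\wedge\}\to C(R))$ is killed by $[F,\bs]$ for all compact $F$, i.e.\ is a \emph{phantom map}; then one invokes the specific theorem that in a Brown category every composite of two phantom maps is null \cite[Theorem 4.2.5]{hoveypalmieristrickland1997axiomatic}, so every $2r$-fold composite in $K$ is null and $K\in\Tow^\nil$. Your sketch ``Brown representability then promotes this'' does not quite name this two-step (phantom, then phantom$\circ$phantom$=0$), and without it the passage from compact-tested nullity to genuine nullity has a gap.
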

\begin{proof}
Abbreviate $A = A(R)$ and $C = C(R)$ for this proof.

(2)$\Leftrightarrow$(3). As $R$-localization is exact, there is a fiber sequence of towers $L_R A\rightarrow L_R S \rightarrow L_R C$, these localizations taken levelwise. As $C_s\in\Thick^\otimes(R)$ for each $s$, we have $L_R C\simeq C$. Thus there is a fiber sequence $L_R A\rightarrow L_R S \rightarrow C$. Now, if $L_R A\in\Tow^\nil(\calM)$, then $L_R\ol{R}{}^{\otimes s}\rightarrow L_R S$ is null for some $s$, and thus $L_R S \rightarrow C_s$ admits a retraction. Conversely, if $L_R S\rightarrow C_s$ admits a retraction, then $L_R\ol{R}{}^{\otimes s}\rightarrow L_R S$ is null. Any $s$-fold composite $L_R \ol{R}{}^{\otimes n+s}\rightarrow L_R \ol{R}{}^{\otimes n}$ in $L_R A$ is obtained by applying $L_R$ to $\ol{R}{}^{\otimes n}\otimes L_R \ol{R}{}^{\otimes s}\rightarrow \ol{R}{}^{\otimes n}\otimes L_R S$, and must therefore be null, proving $L_R A \in \Tow^\nil(\calM)$.

(2,3)$\Rightarrow$(1). If $L_R S\rightarrow C_s$ admits a retraction, then $C\sim L_R S \oplus L_RA$. As $L_R A\in\Tow^\nil(\calM)$, it follows that $C \in \Tow^\fast(\calM)$.

(1)$\Rightarrow$(4). Suppose that $C\in\Tow^{\fast}(\calM)$. Then $C\sim S_R^\wedge\oplus K$ with $K\in\Tow^\nil(\calM)$. It follows that if $X\in\calM$, then $X_R^\wedge\simeq \lim(X\otimes C)\simeq X\otimes S_R^\wedge \oplus \lim(X\otimes K)\simeq X\otimes S_R^\wedge$, the last equivalence being as $X\otimes K \in \Tow^\nil(\calM)$. Applied to $X = R$, as $R_R^\wedge\simeq R$, we find that $S\rightarrow S_R^\wedge$ is an $R$-equivalence, so that $L_R S \simeq S_R^\wedge$. Combining these gives $X_R^\wedge\simeq X\otimes L_R S$. In particular, $X\otimes L_R S$ is $R$-local, and thus $X\otimes L_R S \simeq L_R X$. Altogether, this proves (4).

(1,4)$\Rightarrow$(3). Suppose $C\in\Tow^{\fast}(\calM)$. Then $C\sim S_R^\wedge\oplus F$ with $F\in\Tow^\nil(\calM)$. By (4), we know $S_R^\wedge \simeq L_R S$, and this implies (3).

(3)$\Rightarrow$(5). This holds as $C_s\in\Thick^\otimes(R)$ for each $s$.

(5)$\Rightarrow$(3) assuming $R$-localization is smashing. One may prove by filtering $\Thick^\otimes(R)$ (cf.\ \cite[Lemma 3.8]{bousfield1979localization} or \cite[Construction 2.5]{mathew2018examples}) that if $X$ is $R$-nilpotent then $X\otimes A \in \Tow^\nil(R)$. In particular, if $L_R S \in \Thick^\otimes(R)$ and $R$-localization is smashing, then $L_R A\simeq L_RS\otimes A\in\Tow^\nil(R)$.

(1)$\Rightarrow$(6,7). These follow from the construction of the spectral sequence of a tower, cf.\ \cite[Proposition 3.12]{mathew2015thick}.

(6)$\Rightarrow$(7) assuming that all compact objects in $\calM$ are dualizable. This holds as $\calM(F,C(R))\simeq\calM(S,DF\otimes C(R))$ for $F$ dualizable, where $DF$ is the dual of $F$.

(7)$\Rightarrow$(1) assuming $\calM$ is a Brown category. Let $K = \Fib(S_R^\wedge\rightarrow C)$, so that $R$ is locally descendable if and only if $K\in\Tow^\nil(\calM)$. By \cite[Proposition 3.12]{mathew2015thick}, condition (7) ensures that there exists some $r > 0$ such that for all $F\in\calM$ compact, all $r$-fold composites in the tower $[F,K]$ of abelian groups vanish. In other words, there exists some $r > 0$ such that all $r$-fold composites in $K$ are phantom maps. \cite[Theorem 4.2.5]{hoveypalmieristrickland1997axiomatic} proves that all composites of phantom maps are nullhomotopic. Thus all $2r$-fold composites in $K$ are nullhomotopic, proving that $K\in\Tow^\nil(\calM)$ and so $C\in\Tow^\fast(\calM)$.
\end{proof}

\begin{cor}\label{cor:locdtrans}
Let $T\in\calM$ be another ring. Suppose that $R\in\Thick^\otimes(T)$ and $\langle R \rangle \subset \langle T \rangle$. If $R$ is locally descendable then $T$ is locally descendable.
\end{cor}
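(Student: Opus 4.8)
The plan is to show that the two hypotheses together force $R$ and $T$ to have the same Bousfield class, and then to transport the characterization of local descendability from \cref{prop:locdescequiv} across the resulting identification $L_R = L_T$.

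First I would observe that $R \in \Thick^\otimes(T)$ already gives $\langle T \rangle \subset \langle R \rangle$. Indeed, for any $X \in \calM$ the full subcategory $\{Z \in \calM : Z \otimes X \simeq 0\}$ is a thick $\otimes$-ideal, since tensoring with $X$ is exact and the vanishing condition is visibly closed under the tensor action; so if this subcategory contains $T$ then it contains all of $\Thick^\otimes(T)$, hence $R$. Thus $T \otimes X \simeq 0$ implies $R \otimes X \simeq 0$. Combined with the hypothesis $\langle R \rangle \subset \langle T \rangle$, this yields $\langle R \rangle = \langle T \rangle$. Since $R$-localization depends only on the Bousfield class of $R$, the functors $L_R$ and $L_T$ coincide; in particular $L_R S \simeq L_T S$, and $R$-localization is smashing if and only if $T$-localization is.

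Next I would feed this into \cref{prop:locdescequiv}. Because $R$ is locally descendable, the implication $(1)\Rightarrow(4,5)$ of that proposition gives that $R$-localization is smashing and that $L_R S \in \Thick^\otimes(R)$. By the previous paragraph, $T$-localization is then also smashing, and $L_T S \simeq L_R S$ lies in $\Thick^\otimes(R)$; since $R \in \Thick^\otimes(T)$ and $\Thick^\otimes(T)$ is a thick $\otimes$-ideal, we have $\Thick^\otimes(R) \subset \Thick^\otimes(T)$, so $L_T S \in \Thick^\otimes(T)$. This is condition $(5)$ of \cref{prop:locdescequiv} for $T$. Finally, since $T$-localization is smashing, the implication $(5)\Rightarrow(1)$ of \cref{prop:locdescequiv} applies and shows $T$ is locally descendable.

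The content here is entirely the first observation that the Bousfield classes coincide; once that is in hand the statement is bookkeeping among the conditions of \cref{prop:locdescequiv}. The only point requiring care is that the implication $(5)\Rightarrow(1)$ is asserted only in the presence of smashing, which is exactly why we first check $T$-localization is smashing before invoking it.
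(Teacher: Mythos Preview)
Your proof is correct and follows essentially the same route as the paper's own argument: establish $\langle R\rangle = \langle T\rangle$ from $R\in\Thick^\otimes(T)$ and the hypothesis $\langle R\rangle\subset\langle T\rangle$, deduce that $T$-localization is smashing from local descendability of $R$, and then invoke the equivalence (5)$\Leftrightarrow$(1) of \cref{prop:locdescequiv} via $L_TS = L_RS \in \Thick^\otimes(R)\subset\Thick^\otimes(T)$. You have added a bit more justification for the first step than the paper does, but the overall structure is identical.
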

\begin{proof}
As $R \in \Thick^\otimes(T)$, we have $\langle T \rangle\subset\langle R \rangle$. Thus $R$ and $T$ have the same Bousfield class. As $R$ is locally descendable, $R$-localization is smashing. As $R$ and $T$ have the same Bousfield class, it follows that $T$-localization is smashing. As $T$-localization is smashing and $L_T S = L_R S \in \Thick^\otimes(R) \subset\Thick^\otimes(T)$, it follows that $T$ is locally descendable.
\end{proof}

\begin{prop}\label{prop:descpreserve}
Let $\calN$ be another presentably symmetric monoidal stable $\infty$-category, and let $F\colon\calM\rightarrow\calN$ be an exact and symmetric monoidal functor. If $R$ is locally descendable, then $F(R)$ is locally descendable, and $F(L_RX)\simeq L_{F(R)}F(X)$ for any $X\in\calM$.
\end{prop}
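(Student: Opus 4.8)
The plan is to deduce both assertions from the equivalent characterizations of local descendability collected in \cref{prop:locdescequiv}, using that an exact symmetric monoidal functor transports all the tower-theoretic structure in sight.

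First I would note that $F$ carries the tower $C(R)$ to the tower $C(F(R))$. Since $F$ is exact it preserves the fiber sequence $\ol{R}\rightarrow S\rightarrow R$, so $F(\ol R)\simeq \ol{F(R)}$; since $F$ is symmetric monoidal it commutes with smash powers, so $F(\ol R{}^{\otimes s})\simeq \ol{F(R)}{}^{\otimes s}$; and since $F$ is exact it commutes with the cofibers defining the terms of $C(R)$. Hence $F(C(R))\simeq C(F(R))$ as towers in $\calN$ (and likewise $F(A(R))\simeq A(F(R))$).

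Now suppose $R$ is locally descendable, so $C(R)\in\Tow^\fast(\calM)$. By the remark preceding \cref{prop:locdescequiv} we may write $C(R)\sim \underline{W}\oplus N$ with $\underline W$ a constant tower, $N\in\Tow^\nil(\calM)$, and $W\simeq \lim C(R)=S_R^\wedge$; by \cref{prop:locdescequiv}(4), $S_R^\wedge\simeq L_R S$, so $W\simeq L_R S$. Applying $F$: exactness makes $F$ preserve the levelwise biproduct and, after truncating to sufficiently large index, the relation $\sim$, while functoriality sends $r$-fold null composites to $r$-fold null composites, so $F(N)\in\Tow^\nil(\calN)$. Therefore
\[
C(F(R))\simeq F(C(R))\sim \underline{F(L_R S)}\oplus F(N)\in\Tow^\fast(\calN),
\]
which is precisely the assertion that $F(R)$ is locally descendable. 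Reading off the limit gives $S_{F(R)}^\wedge\simeq \lim C(F(R))\simeq F(L_R S)$, and since $F(R)$ is now known to be locally descendable, \cref{prop:locdescequiv}(4) applied to $F(R)$ yields $L_{F(R)}F(S)\simeq S_{F(R)}^\wedge\simeq F(L_R S)$, where $F(S)$ is the unit of $\calN$.

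For the remaining claim, \cref{prop:locdescequiv}(4) tells us that both $R$-localization and $F(R)$-localization are smashing, so for any $X\in\calM$ we have $L_R X\simeq X\otimes L_R S$ and $L_{F(R)}F(X)\simeq F(X)\otimes L_{F(R)}F(S)$. Combining this with the previous paragraph and the symmetric monoidality of $F$,
\[
F(L_R X)\simeq F(X\otimes L_R S)\simeq F(X)\otimes F(L_R S)\simeq F(X)\otimes L_{F(R)}F(S)\simeq L_{F(R)}F(X),
\]
as desired. The one point requiring care is that $F$ is only assumed exact, not to preserve infinite limits, so one cannot directly assert $F(S_R^\wedge)\simeq S_{F(R)}^\wedge$; this is circumvented by the observation that for a tower in $\Tow^\fast$ the limit occurs, up to $\sim$, as a split biproduct summand, which any exact functor does preserve.
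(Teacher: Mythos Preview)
Your proof is correct and follows essentially the same approach as the paper: both arguments write $C(R)\sim L_R S\oplus N$ with $N$ nilpotent, push this through $F$ using exactness and symmetric monoidality to get $C(F(R))\in\Tow^\fast(\calN)$ with limit $F(L_R S)$, and then conclude via smashingness. Your version is slightly more explicit about why $F(C(R))\simeq C(F(R))$ and about the subtlety that $F$ need not preserve infinite limits, but the argument is the same.
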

\begin{proof}
Suppose that $R$ is locally descendable, and write $C(R)\sim L_R S\oplus K$ with $K\in\Tow^\nil(\calM)$. As $F$ is exact and symmetric monoidal, we have $C(F(R))\simeq F(C(R))\sim F(L_RS)\oplus F(K)$. As $K\in\Tow^\nil(\calM)$ and $F$ is exact, we have $F(K)\in\Tow^\nil(\calN)$. Thus $C(F(R))\in\Tow^\fast(\calN)$, implying that $F(R)$ is locally descendable. Moreover, $L_{F(R)}S\simeq\lim C(F(R))\simeq \lim\left(F(L_RS)\oplus F(K)\right)\simeq  F(L_R S)$. As both $R$-localization and $F(R)$-localization are smashing and $F$ is symmetric monoidal, it follows that $F(L_RX)\simeq F(X\otimes L_RS)\simeq F(X)\otimes L_{F(R)}S\simeq L_{F(R)}F(X)$ for any $X\in\calM$.
\end{proof}

\subsection{Isotropy separation}\label{ssec:isotropy}

Fix a finite group $G$. This section records some techniques that allow one to relate a $G$-spectrum $R$ to its geometric fixed points $\Phi^K R$. We expect that this material is well known to the experts; the reader may observe that the basic approach appears in the proof of the tom Dieck splitting \cite{tomdieck1975orbittypen}, and similar statements appear in \cite[Chapter 2]{lewismaysteinberger1986equivariant} and \cite[Part IV]{greenleesmay1995generalized}. Recently, more sophisticated theorems have appeared which give complete reconstructions of $G$-spectra from their geometric fixed points and appropriate gluing data \cite{glasman2017stratified, ayalamazelgeerozenblyum2021stratified}, though our purposes turn out to be better served by a more elementary approach.

We begin by fixing some notation. We continue to write $\Sp^G$ for the homotopy theory of genuine $G$-spectra. Given a category $\calC$, write $\Fun(BG,\calC)$ for the category of objects in $\calC$ with $G$-action.
Given a subgroup $K\subset G$ and $G$-spectrum $X$, write $\res_K^G X$ for the underlying $K$-spectrum of $X$, and $X^K$ and $\Phi^K X$ for the genuine and geometric $K$-fixed points of $X$. Both $X^K$ and $\Phi^K X$ carry residual actions by the Weyl group $W_G K = N_G(K)/K$, the former via the formula $X^K = \Sp^G(G/K_+,X)$ and the latter as $\Phi^K X$ is a localization of $X^K$. In particular, we may regard $\Phi^K$ as a functor
\[
\Phi^K\colon\Sp^G\rightarrow\Fun(BW_GK,\Sp).
\]

Recall that a \textit{family of subgroups} of $G$ is a collection $\calF$ of subgroups of $G$ closed under subconjugacy. Given such a family, write $\calO_\calF(G)$ for the associated full subcategory of the orbit category of $G$, consisting of those $G$-sets $G/H$ with $H\in\calF$. Associated to any family $\calF$ are two $G$-spaces $E\calF$ and $\widetilde{E}\calF$, which fit into a cofiber sequence
\[
E\calF_+\rightarrow S^0\rightarrow \widetilde{E}\calF,
\]
and are characterized by the fixed points
\[
E\calF_+^K = \begin{cases}
\ast&K\notin\calF,\\
S^0&K\in\calF;
\end{cases}
\qquad
\widetilde{E}\calF^K = \begin{cases}
S^0 & K\notin\calF,\\
\ast&K\in\calF.
\end{cases}
\]
The suspension spectra of these spaces play a central role in equivariant stable homotopy theory; see especially \cite{mathewnaumannnoel2017nilpotence,mathewnaumannnoel2019derived} for a modern account, and \cite[Chapter 7]{dieck1979transformation} for a classical account. We will make use of the following formula for $E\calF$, see \cite[Appendix A]{mathewnaumannnoel2019derived}.

\begin{lemma}
There is an equivalence $
E\calF\simeq\colim_{G/H\in \calO_\calF(G)}G/H.
$
\qed
\end{lemma}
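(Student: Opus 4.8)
The plan is to work in the $\infty$-category of $G$-spaces, which by Elmendorf's theorem we identify with the presheaf category $\Fun(\calO_G^{op},\calS)$, where $\calO_G$ is the orbit category of $G$ and $\calO_\calF(G)$ its full subcategory as above. Under this identification the orbit $G/H$ becomes the representable presheaf $\calO_G(\bs,G/H)$, the colimit in the statement is computed pointwise, and the fixed-point functor $(\bs)^K$ is evaluation $\ev_{G/K}$; in particular $(\bs)^K$ preserves all colimits. Recall that $E\calF$ is characterized by $(E\calF)^H\simeq\ast$ for $H\in\calF$ and $(E\calF)^H=\emptyset$ for $H\notin\calF$; equivalently, as a presheaf it is terminal on $\calO_\calF(G)$ and takes the value $\emptyset$ elsewhere. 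It therefore suffices to compute the fixed points of the $G$-space $X:=\colim_{G/H\in\calO_\calF(G)}G/H$.

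Since fixed points are computed pointwise, $X^K\simeq\colim_{G/H\in\calO_\calF(G)}(G/H)^K$, a homotopy colimit over $\calO_\calF(G)$ of the discrete sets $(G/H)^K=\calO_G(G/K,G/H)$. If $K\notin\calF$ then $(G/H)^K=\emptyset$ for every $H\in\calF$, since a $K$-fixed coset $gH$ would exhibit $g^{-1}Kg$ as a subgroup of $H\in\calF$, forcing $K\in\calF$; so the diagram is constant at $\emptyset$ and $X^K=\emptyset$. If $K\in\calF$ then $G/K$ is an object of $\calO_\calF(G)$ and the diagram $G/H\mapsto(G/H)^K=\Map_{\calO_\calF(G)}(G/K,G/H)$ is corepresented by $G/K$, so its colimit is the classifying space of the coslice category $(G/K)\downarrow\calO_\calF(G)$, which has the initial object $\mathrm{id}_{G/K}$ and is hence contractible. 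This is exactly the fixed-point profile of $E\calF$, so $X\simeq E\calF$.

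A slicker route avoids the case analysis: the density (co-Yoneda) theorem presents any $G$-space $Y$ as the colimit
\[
Y\simeq\colim_{(G/H,\,y)\in\calE(Y)}(G/H)
\]
of representables indexed by the category of elements $\calE(Y)$, the total space of the right fibration $\calE(Y)\to\calO_G$ with fiber $Y(G/H)$ over $G/H$. For $Y=E\calF$ these fibers are empty for $H\notin\calF$ and contractible for $H\in\calF$, so $\calE(E\calF)\to\calO_G$ factors through $\calO_\calF(G)$ via a right fibration with contractible fibers, hence an equivalence $\calE(E\calF)\xrightarrow{\sim}\calO_\calF(G)$; transporting the colimit along this equivalence recovers the tautological diagram $G/H\mapsto G/H$ and yields $E\calF\simeq\colim_{G/H\in\calO_\calF(G)}G/H$.

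The main obstacle is bookkeeping rather than substance: in the second approach one must check that the composite $\calE(E\calF)\xrightarrow{\sim}\calO_\calF(G)\hookrightarrow\calO_G$ agrees with the structure functor of the category of elements, so that the transported colimit is literally the stated one; in the first approach one must recall that the $\infty$-categorical colimit of a set-valued diagram over $\calO_\calF(G)$ is computed by the nerve of the relevant Grothendieck construction. Both are standard, and the statement is in any case classical; compare \cite[Appendix A]{mathewnaumannnoel2019derived}.
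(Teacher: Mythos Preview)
The paper does not actually prove this lemma: it is stated with an immediate \qed and attributed to \cite[Appendix A]{mathewnaumannnoel2019derived} in the surrounding text. Your argument is correct and supplies the details the paper omits; both the direct fixed-point computation via Elmendorf and the co-Yoneda argument are standard and valid, and you even cite the same reference.
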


A $G$-spectrum $X$ is said to be \textit{$\calF$-nilpotent} if the map $E\calF_+ \otimes X\rightarrow X$ is an equivalence, and \textit{$\calF^{-1}$-local} if the map $X\rightarrow \widetilde{E}\calF\otimes X$ is an equivalence. An important special case of $\calF^{-1}$-localization is the following, see for instance \cite[Section 6.2]{mathewnaumannnoel2017nilpotence}.

\begin{lemma}\label{lem:gfp}
Let $\calP$ be the family of proper subgroups of $G$. Then
\[
(\widetilde{E}\calP\otimes X)^G\simeq \Phi^G X,
\]
and $\Phi^G$ gives an equivalence from the category of $\calP^{-1}$-local $G$-spectra to the category of ordinary spectra.
\qed
\end{lemma}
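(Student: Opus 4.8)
The first formula is one of the standard presentations of geometric fixed points; depending on which construction of $\Phi^G$ one adopts it is either the definition or a short unwinding, and in either case one may appeal to \cite[Section 6.2]{mathewnaumannnoel2017nilpotence} or \cite[Chapter 2]{lewismaysteinberger1986equivariant}. So the content of the lemma is the second assertion, and I would proceed as follows.

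First I would record that $\calP^{-1}$-localization is smashing with idempotent $\widetilde{E}\calP$: from the cofiber sequence $E\calP_+\rightarrow S^0\rightarrow\widetilde{E}\calP$ one checks on fixed points that $E\calP_+\otimes\widetilde{E}\calP\simeq 0$, whence $\widetilde{E}\calP\otimes\widetilde{E}\calP\simeq\widetilde{E}\calP$, and a $G$-spectrum is $\calP^{-1}$-local if and only if it is a module over $\widetilde{E}\calP$. Thus the category of $\calP^{-1}$-local $G$-spectra is $\Mod_{\widetilde{E}\calP}(\Sp^G)$, a presentably symmetric monoidal stable $\infty$-category generated under colimits by its unit $\widetilde{E}\calP$, which is dualizable.

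Next I would use that $\Phi^G$ is symmetric monoidal and colimit-preserving, with $\Phi^G\Sigma^\infty_+(G/H)\simeq\Sigma^\infty_+(G/H)^G$, which is $\bbS$ for $H=G$ and $0$ for $H$ proper. Hence $\Phi^G$ annihilates every $\calP$-nilpotent $G$-spectrum, so the natural map $\Phi^G X\rightarrow\Phi^G(\widetilde{E}\calP\otimes X)$ is an equivalence, and, being symmetric monoidal, $\Phi^G$ carries $\widetilde{E}\calP$-modules to modules over $\Phi^G(\widetilde{E}\calP)\simeq\bbS$, i.e.\ to ordinary spectra. Write $F\colon\Mod_{\widetilde{E}\calP}(\Sp^G)\rightarrow\Sp$ for the resulting colimit-preserving symmetric monoidal functor; it sends the generator $\widetilde{E}\calP$ to the generator $\bbS$. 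To conclude that $F$ is an equivalence I would invoke the standard criterion that a colimit-preserving functor of module categories carrying a compact generator to a compact generator and inducing an equivalence on its endomorphism ring spectrum is an equivalence. Here the endomorphism spectrum of $\widetilde{E}\calP$ in $\Mod_{\widetilde{E}\calP}(\Sp^G)$ is $\Map_{\Sp^G}(S^0,\widetilde{E}\calP)\simeq(\widetilde{E}\calP)^G$, and by the first formula with $X=S^0$ this is $\Phi^G S^0\simeq\bbS=\mathrm{End}_\Sp(\bbS)$.

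The main obstacle is precisely checking that this last chain of equivalences is the map induced by $F$ — equivalently, pinning down the construction of $\Phi^G$ so that the first formula holds compatibly with its monoidal and colimit-preserving structure; with a point-set model this is where one leans on the cited references, whereas with the $\infty$-categorical definition of $\Phi^G$ as the symmetric monoidal localization away from the proper orbits it is essentially tautological. An alternative that sidesteps module categories is to exhibit the inverse directly as $Y\mapsto\widetilde{E}\calP\otimes\operatorname{infl}_e^G Y$, using $\Phi^G\operatorname{infl}_e^G\simeq\mathrm{id}$ to get $\Phi^G(\widetilde{E}\calP\otimes\operatorname{infl}_e^G Y)\simeq Y$, and checking $\widetilde{E}\calP\otimes\operatorname{infl}_e^G\Phi^G X\simeq\widetilde{E}\calP\otimes X$ naturally in $X$; both sides being colimit-preserving, this reduces to the orbits $G/H_+$, where it is the computation above.
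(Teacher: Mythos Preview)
Your argument is correct. The paper, however, does not prove this lemma at all: it is stated with a \qed and the surrounding text simply points to \cite[Section 6.2]{mathewnaumannnoel2017nilpotence} as a reference for the well-known fact. So you have supplied substantially more than the paper does. Your Morita-style argument (identify $\calP^{-1}$-local $G$-spectra with $\widetilde{E}\calP$-modules, observe this is generated by the unit, and compare endomorphism spectra via the first formula) is a clean and standard way to establish the equivalence, and your alternative via inflation is also fine. There is nothing to correct; just be aware that in the paper's logic this lemma functions as a black-box citation rather than something proved on the page.
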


An inclusion of families $\calF_1\subset\calF_2$ induces a map $E\calF_{1+}\rightarrow E\calF_{2+}$, and so any $G$-spectrum $X$ may be filtered by the $G$-spectra $E\calF_+\otimes X$. Our main observations in this subsection concern the layers of this filtration. Given families $\calF_1\subset\calF_2$, define
\[
E[\calF_1,\calF_2] = \Cof\left(E\calF_{1+}\rightarrow E\calF_{2+}\right)\simeq \widetilde{E}\calF_1\otimes E\calF_{2+}.
\]
Note that for any $G$-spectrum $X$, there is a natural square
\begin{equation}\label{eq:pair}
\begin{tikzcd}
X\ar[d]&\ar[l]E\calF_{2+}\otimes X\ar[d]\\
\widetilde{E}\calF_1\otimes X&\ar[l]E[\calF_1,\calF_2] \otimes X.
\end{tikzcd}\end{equation}

\begin{lemma}
The square \cref{eq:pair} consists of equivalences if and only if $\Phi^H X \simeq 0$ for all $H\notin \calF_2\setminus\calF_1$. In particular, the homotopy type of $E[\calF_1,\calF_2]$ depends only on $\calF_2\setminus\calF_1$.
\end{lemma}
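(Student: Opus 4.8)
The plan is to reduce everything to how geometric fixed points interact with the spaces $E\calF_+$ and $\widetilde{E}\calF$. First I would record the standard facts: $\Phi^H$ is symmetric monoidal, the collection $\{\Phi^H : H\subseteq G\}$ is jointly conservative on $\Sp^G$, and $\Phi^H(E\calF_+)\simeq S^0$ for $H\in\calF$ while $\Phi^H(E\calF_+)\simeq 0$ for $H\notin\calF$, with the complementary statement $\Phi^H(\widetilde{E}\calF)\simeq 0$ for $H\in\calF$ and $\Phi^H(\widetilde{E}\calF)\simeq S^0$ for $H\notin\calF$; these follow from $\Phi^H\Sigma^\infty_G A\simeq\Sigma^\infty A^H$ together with the defining fixed points of $E\calF$ and $\widetilde{E}\calF$. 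Tensoring with an arbitrary $G$-spectrum $X$ and invoking joint conservativity then yields the two acyclicity criteria $E\calF_+\otimes X\simeq 0$ if and only if $\Phi^H X\simeq 0$ for all $H\in\calF$, and $\widetilde{E}\calF\otimes X\simeq 0$ if and only if $\Phi^H X\simeq 0$ for all $H\notin\calF$.

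Next I would observe that \cref{eq:pair} is bicartesian for every $X$: it is $X$ tensored with the square of $G$-spectra having horizontal maps $E\calF_{2+}\rightarrow S^0$ and $E[\calF_1,\calF_2]\rightarrow\widetilde{E}\calF_1$ and vertical maps $E\calF_{2+}\rightarrow E[\calF_1,\calF_2]$ and $S^0\rightarrow\widetilde{E}\calF_1$, and this square is bicartesian because the octahedron on $E\calF_{1+}\rightarrow E\calF_{2+}\rightarrow S^0$ identifies both of its horizontal cofibers with $\widetilde{E}\calF_2$. In a bicartesian square the two horizontal maps have a common cofiber and the two vertical maps have a common cofiber, so \cref{eq:pair} consists of equivalences precisely when its top map $E\calF_{2+}\otimes X\rightarrow X$ and its right-hand map $X\rightarrow\widetilde{E}\calF_1\otimes X$ are equivalences, equivalently when $\widetilde{E}\calF_2\otimes X\simeq 0$ and $E\calF_{1+}\otimes X\simeq 0$. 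By the criteria of the first paragraph this says exactly that $\Phi^H X\simeq 0$ for all $H\notin\calF_2$ and for all $H\in\calF_1$, i.e.\ for all $H\notin\calF_2\setminus\calF_1$, which is the first assertion.

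For the last sentence, given another pair $\calF_1'\subseteq\calF_2'$ with $\calF_2'\setminus\calF_1'=\calF_2\setminus\calF_1$, I would apply what was just proved to $X=E[\calF_1',\calF_2']\simeq\widetilde{E}\calF_1'\otimes E\calF_{2'+}$: its geometric fixed points are $S^0$ for $H\in\calF_2'\setminus\calF_1'$ and $0$ otherwise, so \cref{eq:pair} consists of equivalences for this $X$. The resulting equivalences $E\calF_{2+}\otimes X\simeq X$ and $X\simeq\widetilde{E}\calF_1\otimes X$ combine to give $E[\calF_1,\calF_2]\otimes E[\calF_1',\calF_2']\simeq\widetilde{E}\calF_1\otimes E\calF_{2+}\otimes X\simeq X=E[\calF_1',\calF_2']$; symmetrising the roles of the two pairs yields $E[\calF_1,\calF_2]\simeq E[\calF_1',\calF_2']$. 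I do not expect a genuine obstacle here: the only point needing care is the bookkeeping of which subgroups $H$ feed into which acyclicity condition, and once the bicartesian structure of \cref{eq:pair} is set up the rest is formal.
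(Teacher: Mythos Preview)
Your proof is correct and follows essentially the same line as the paper's. Both arguments reduce the question to the two conditions ``$E\calF_{2+}\otimes X\rightarrow X$ is an equivalence'' and ``$X\rightarrow\widetilde{E}\calF_1\otimes X$ is an equivalence'', and then translate these via geometric fixed points into the vanishing conditions $\Phi^H X\simeq 0$ for $H\notin\calF_2$ and for $H\in\calF_1$. The only packaging difference is that you invoke the bicartesian structure of the square to pass from these two maps to all four at once, whereas the paper instead reapplies the same criterion to the objects $\widetilde{E}\calF_1\otimes X$ and $E\calF_{2+}\otimes X$ to handle the remaining two maps; these are equivalent maneuvers. Your explicit argument for the final sentence (tensoring $E[\calF_1,\calF_2]$ against $E[\calF_1',\calF_2']$ and using symmetry) is a nice addition, as the paper leaves that claim implicit. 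One trivial slip: in the square as drawn in the paper the map $X\rightarrow\widetilde{E}\calF_1\otimes X$ is the \emph{left} vertical arrow, not the right.
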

\begin{proof}
Note that $\Phi^H X\simeq 0$ for all $H\notin\calF_2\setminus\calF_1$ if and only if $\Phi^H X\simeq 0$ for all $H\notin\calF_2$ and all $H\in\calF_1$. The condition that $\Phi^H X\simeq 0$ for all $H\in\calF_1$ is equivalent to $X\rightarrow \widetilde{E}\calF_1\otimes X$ being an equivalence, and the condition that $\Phi^H X \simeq 0$ for all $H\notin\calF_2$ is equivalent to $E\calF_{2+}\otimes X \rightarrow X$ being an equivalence. This shows that if \cref{eq:pair} consists of equivalences, then $\Phi^H X\simeq 0$ for all $H\notin\calF_2\setminus\calF_1$, as well as half of the converse. The other half follows by applying the same argument to $\widetilde{E}\calF_{1}\otimes X$ and $E\calF_{2+}\otimes X$.
\end{proof}

Given a subgroup $K\subset G$, one says that a pair $\calF_1\subset\calF_2$ is \textit{adjacent at $K$} if $\calF_2\setminus\calF_1 = (K)$. In this case, we write $E[K] = E[\calF_1,\calF_2]$; the previous lemma ensures that the homotopy type of $E[K]$ depends only on the conjugacy class of $K$. Say that a $G$-spectrum $X$ is \textit{concentrated at $K$} if $X\simeq E[K]\otimes X$. 

\begin{prop}\label{prop:concentrated}
If $X$ is concentrated at $K$, then
\[
X^G\simeq (\Phi^K X)_{\h W_GK}.
\]
Moreover, $\Phi^K$ defines an equivalence from the full subcategory of $G$-spectra concentrated at $K$ to $\Fun(BW_GK,\Sp)$.
\end{prop}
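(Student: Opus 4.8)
The plan is to reduce to the case where $K$ is normal in $G$, and then assemble the statement from two classical ``one-step'' isotropy separation equivalences together with the Adams isomorphism. Throughout write $\Sp^G_{(K)}$ for the full subcategory of $G$-spectra concentrated at $K$ (and similarly $\Sp^N_{(K)}$ for $N$-spectra), and set $N = N_G(K)$, so that $K \triangleleft N$ and $W := W_GK = W_NK$.

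\textbf{Reduction to $K$ normal.} First I would identify $E[K]$: applying $\Sigma^\infty_+$ to the formula $E\calF \simeq \colim_{G/H\in\calO_\calF(G)}G/H$ and using that $\calO_{\calF_1}(G)$ is a sieve in $\calO_{\calF_2}(G)$ whose complementary cosieve is the full subcategory on the conjugates of $G/K$ --- a connected groupoid with automorphism group $W$ --- a left Kan extension (recollement) argument produces a cofiber sequence $E\calF_{1+}\to E\calF_{2+}\to(\Sigma^\infty_+ G/K)_{\h W}$, whence $E[K]\simeq(\Sigma^\infty_+ G/K)_{\h W}$. Since $\Sigma^\infty_+ G/K\simeq\Ind_N^G\Sigma^\infty_+ N/K$ compatibly with these actions, $E[K]\simeq\Ind_N^G E_N[K]$ with $E_N[K]$ the analogous object of $\Sp^N$, and the projection formula gives $E[K]\otimes X\simeq\Ind_N^G(E_N[K]\otimes\res^G_N X)$. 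I would then check that $\Ind_N^G$ and $X\mapsto E_N[K]\otimes\res^G_N X$ restrict to inverse equivalences between $\Sp^N_{(K)}$ and $\Sp^G_{(K)}$ (only the identity double coset contributes to the geometric fixed points of the induction, using $K\triangleleft N$), under which $\Phi^K$ and $(-)^G$ are carried to $\Phi^K$ and $(-)^N$ --- the latter via the Wirthm\"uller isomorphism $(\Ind_N^G Y)^G\simeq Y^N$. So it suffices to prove the statement for $K$ normal.

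\textbf{The normal case.} Assume $K\triangleleft G$; take $\calF_1=\calP_K=\{H:K\not\subseteq H\}$ and $\calF_2=\calP_K\cup\{K\}$. Then $X\in\Sp^G_{(K)}$ precisely when $X\simeq\widetilde{E}\calP_K\otimes X$ and, via $\Phi^H X\simeq\Phi^{H/K}(\Phi^K X)$ for $K\subseteq H$, when the genuine $W$-spectrum $\Phi^K X=(\widetilde{E}\calP_K\otimes X)^K$ is moreover free, i.e.\ $\Phi^K X\simeq EW_+\otimes\Phi^K X$. I would then invoke two standard equivalences: the one-step geometric fixed point equivalence $\Phi^K\colon\{\widetilde{E}\calP_K\text{-local }G\text{-spectra}\}\xrightarrow{\sim}\Sp^W$ (in the spirit of \cref{lem:gfp}; see \cite[Chapter 2]{lewismaysteinberger1986equivariant}, \cite[Part IV]{greenleesmay1995generalized}), and the classical identification of the free $W$-spectra with $\Fun(BW,\Sp)$ via the underlying spectrum with its residual action. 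Their composite is an equivalence $\Sp^G_{(K)}\xrightarrow{\sim}\Fun(BW,\Sp)$, and unwinding the two steps shows it equals $\Phi^K$. For the fixed-point formula, $X\simeq\widetilde{E}\calP_K\otimes X$ gives $X^G\simeq(\widetilde{E}\calP_K\otimes X)^G\simeq(\Phi^K X)^W$, and since $\Phi^K X$ is free the Adams isomorphism yields $(\Phi^K X)^W\simeq(\Phi^K X)_{\h W}$; combining this with the reduction above (together with $\Phi^K\Ind_N^G Y\simeq\Phi^K Y$ and $(\Ind_N^G Y)^G\simeq Y^N$) gives $X^G\simeq(\Phi^K X)_{\h W}$ in general.

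\textbf{Expected obstacle.} The delicate part is the reduction step, and specifically the bookkeeping of group actions there: one must keep separate the automorphism action of $W=\Aut_{\calO(G)}(G/K)$ used to form the homotopy orbits and the residual Weyl action on geometric fixed points, and verify that in $\Phi^K E_N[K]$ these two actions conspire (the relevant homotopy orbit collapses to a point) so that the whole chain of equivalences is genuinely $W$-equivariant and the composite really is $\Phi^K$ on the nose. A secondary point is to pin down clean $\infty$-categorical statements and references for the two ``standard'' equivalences in the normal case, or to supply short proofs modeled on \cref{lem:gfp}.
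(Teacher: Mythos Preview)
Your reduction step contains a genuine error: the claimed identification $E[K]\simeq(\Sigma^\infty_+ G/K)_{\h W}$ is false. Take $G=C_2$ and $K=C_2$, so $G/K=\ast$ and $W=e$; then $(\Sigma^\infty_+ G/K)_{\h W}=S^0$, whereas $E[K]=\widetilde{E}C_2$, and these differ already on underlying spectra. The recollement heuristic breaks down because for a sieve $\calJ_0\subset\calJ$ with complementary cosieve $\calJ_1$, the cofiber of $\colim_{\calJ_0}F\to\colim_{\calJ}F$ is \emph{not} $\colim_{\calJ_1}F$: one must instead take the colimit over $\calJ_1$ of the cofiber of $j^*i_!i^*F\to j^*F$, and the correction term $j^*i_!i^*F$ is nonzero here (it remembers how $G/K$ receives maps from smaller orbits). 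Your downstream conclusion $E[K]\simeq\Ind_N^G E_N[K]$ is nevertheless true --- one can check it directly on geometric fixed points using the double coset formula for $\Phi^H\Ind_N^G$ --- so the reduction can be salvaged, but not by the argument you gave.

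Once the reduction is fixed, your normal-case argument via $\Phi^K\colon\Sp^G_{\widetilde{E}\calP_K}\simeq\Sp^W$, the identification of free $W$-spectra with $\Fun(BW,\Sp)$, and the Adams isomorphism is correct and is a genuinely different route from the paper's. The paper avoids the reduction entirely: it uses the colimit formula $X^G\simeq\colim_{G/H\in\calO_{\calF_{\leq K}}(G)}X^H$ coming from $\calF_{\leq K}$-nilpotence, observes that all terms with $H\not\sim K$ vanish so the colimit collapses to $(X^K)_{\h W_GK}$, and handles essential surjectivity by an explicit Elmendorf-type construction. The paper's argument is shorter and more self-contained; your approach has the virtue of factoring the statement into three classical ingredients, at the cost of needing a correct proof of the induction-from-normalizer step.
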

\begin{proof}
First, note that if $X$ is concentrated at $K$, then $\res^G_K X$ is $\calP_K^{-1}$-local, where $\calP_K$ is the family with respect to $K$ of proper subgroups of $K$. In particular, \cref{lem:gfp} implies that $X^K\simeq\Phi^K X$.

Now, let $\calF_{\leq K}$ be the family of subgroups of $G$ subconjugate to $K$. As $X$ is concentrated at $K$, it is $\calF_{\leq K}$-nilpotent, and thus
\[
X^G\simeq (E\calF_{\leq K} \otimes X)^G \simeq\colim_{G/H\in \calO_{\calF_{\leq K}}(G)} (G/H \otimes X)^G\simeq \colim_{G/H\in\calF_{\leq K}} X^H.
\]
If $H\in\calF_{\leq K}$ is not conjugate to $K$, then the condition that $X$ is concentrated at $K$ implies that $X^H\simeq 0$. This ensures that, though the inclusion $BW_GK\simeq B\Aut(G/K)\subset \calO_{\calF_{\leq K}}(G)$ need not be cofinal as $K\subset G$ need not be normal, this inclusion still induces an equivalence
\[
\colim_{G/H\in\calF_{\leq K}(G)}X^H\simeq\colim_{BW_GK}X^K\simeq (X^K)_{\h W_GK} \simeq (\Phi^K X)_{\h W_GK}.
\]

It remains to verify that $\Phi^K\colon\Sp^G\rightarrow\Fun(BW_GK,\Sp)$ is an equivalence when restricted to the full subcategory of $G$-spectra concentrated at $K$. First we claim that it is fully faithful. Indeed, let $X$ and $Y$ be $G$-spectra concentrated at $K$. Then the same argument as above shows
\[
\Sp^G(X,Y)\simeq\lim_{G/H\in\calO_{\calF_{\leq K}}(G)}\Sp^H(\res^G_HX,\res^G_HY)\simeq\Sp^K(\res^G_KX,\res^G_KY)^{\h W_GK}.
\]
\cref{lem:gfp} implies that $\Sp^K(\res^G_KX,\res^G_KY)\simeq\Sp(\Phi^KX,\Phi^KY)$, and so we have
\[
\Sp^G(X,Y)\simeq\Sp(\Phi^KX,\Phi^KY)^{\h W_GK}.
\]
This is the mapping spectrum in $\Fun(BW_GK,\Sp)$, so that $\Phi^K$ is fully faithful on $G$-spectra concentrated at $K$ as claimed.

Next we claim that it is essentially surjective. As $\Phi^K$ preserves colimits, it suffices to show that if $T$ is a $W_GK$-set then $\Sigma^\infty_+ T \in \Fun(BW_GK,\Sp)$ is in its essential image. To that end, it suffices to produce a pointed $G$-space $X$ satisfying $X^K = T_+$ and $X^H = \ast$ for $H$ not conjugate to $K$, for then $\Sigma^\infty X \in \Sp^G$ is concentrated at $K$ and satisfies $\Phi^K \Sigma^\infty X = \Sigma^\infty_+ T$. Indeed, one easily constructs a presheaf on the orbit category $\calO(G)$ of $G$ satisfying
\[
G/H \mapsto \begin{cases}T_+&H\text{ conjugate to }K,\\\ast&\text{ otherwise},\end{cases}
\]
and with $\Aut_{\calO(G)}(G/K) \cong W_GK$ acting on $T_+$ in the prescribed manner. This then gives rise to the necessary $G$-space by Elmendorf's theorem.
\end{proof}

\begin{lemma}\label{lem:filt}
Any $G$-spectrum $X$ admits a natural finite filtration with
\[
\gr X \simeq \bigoplus_{(K)}E[K]\otimes X,
\]
this sum being over the conjugacy classes of subgroups of $G$.
\end{lemma}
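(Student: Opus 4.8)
The plan is to realize this filtration as the isotropy separation filtration, refined so that it changes by one conjugacy class of subgroups at a time. First I would choose an enumeration $(K_1),\dots,(K_n)$ of the conjugacy classes of subgroups of $G$ refining the subconjugacy partial order, so that $K_i$ subconjugate to $K_j$ forces $i\le j$; then the families $\calF_i$ generated by $K_1,\dots,K_i$ form an increasing chain
\[
\emptyset = \calF_0 \subset \calF_1 \subset \cdots \subset \calF_n = \{\text{all subgroups of }G\}
\]
with $\calF_i\setminus\calF_{i-1} = (K_i)$ for each $i$. The only thing to check at this stage is that $\calF_{i-1}\cup(K_i)$ is again closed under subconjugacy, which holds because all proper subconjugates of $K_i$ were enumerated earlier and hence already lie in $\calF_{i-1}$.

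Next I would filter $X$ by the $G$-spectra $E\calF_{i+}\otimes X$, with transition maps induced by the inclusions $\calF_{i-1}\subset\calF_i$. Inspecting fixed points, $E\calF_{0+} = E\emptyset_+$ has all fixed points contractible, so $E\calF_{0+}\otimes X\simeq 0$, while $E\calF_{n+}$, with $\calF_n$ the family of all subgroups, has all fixed points $S^0$, so $E\calF_{n+}\simeq S^0$ and $E\calF_{n+}\otimes X\simeq X$. Thus this is genuinely a finite filtration of $X$ itself, and it is natural in $X$ because each structure map is the smash of a fixed map of $G$-spectra with $X$.

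Finally I would identify the subquotients. By definition of the relative term, the $i$th cofiber is $\Cof\left(E\calF_{i-1,+}\to E\calF_{i+}\right)\otimes X = E[\calF_{i-1},\calF_i]\otimes X$, and since the pair $\calF_{i-1}\subset\calF_i$ is adjacent at $K_i$ this is $E[K_i]\otimes X$; summing over $i$ yields $\gr X\simeq\bigoplus_{(K)}E[K]\otimes X$, as claimed. I do not expect any real obstacle here: the construction is entirely formal once one has the chain of families, and the most delicate point — essentially a triviality — is the combinatorial existence of that chain together with the computation of fixed points identifying the two ends of the filtration.
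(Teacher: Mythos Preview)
Your proposal is correct and is essentially identical to the paper's proof: the paper also takes a maximal chain of families $\calF_0\subset\cdots\subset\calF_n$ (equivalently, your enumeration of conjugacy classes refining subconjugacy) and uses the filtration $E\calF_{i+}\otimes X$, observing that consecutive pairs are adjacent at exactly one conjugacy class. You have spelled out a few details more explicitly than the paper does, but the argument is the same.
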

\begin{proof}
Any maximal chain $\calF_0\subset\calF_1\subset\cdots\subset\calF_n$ of families of subgroups of $G$ has the property that each $\calF_i\subset\calF_{i+1}$ is adjacent at some subgroup, and that every conjugacy class appears as $\calF_{i+1}\setminus\calF_i$ for exactly one $i$, so the associated filtration $E\calF_{0+}\otimes X \rightarrow E\calF_{1+}\otimes X\rightarrow\cdots\rightarrow E\calF_{n+}\otimes X$ has the desired properties.
\end{proof}

\begin{prop}\label{cor:filtfix}
Any $G$-spectrum $X$ admits a natural finite filtration with
\[
\gr X^G\simeq\bigoplus_{(K)}(\Phi^K X)_{\h W_GK},
\]
this sum being over the conjugacy classes of subgroups of $G$.
\end{prop}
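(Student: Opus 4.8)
The plan is to combine \cref{lem:filt} with \cref{prop:concentrated}. The genuine fixed-point functor $(\bs)^G\colon\Sp^G\rightarrow\Sp$ is exact, so it carries the natural finite filtration of $X$ provided by \cref{lem:filt} to a natural finite filtration of $X^G$, commuting with passage to associated gradeds; thus $\gr(X^G)\simeq\bigoplus_{(K)}(E[K]\otimes X)^G$, the sum running over conjugacy classes of subgroups of $G$. It therefore suffices to produce a natural equivalence $(E[K]\otimes X)^G\simeq(\Phi^K X)_{\h W_GK}$ for each $K$.

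To that end I would first observe that $E[K]\otimes X$ is concentrated at $K$. Writing $E[K] = \widetilde{E}\calF_1\otimes E\calF_{2+}$ for a pair $\calF_1\subset\calF_2$ adjacent at $K$, the standard idempotencies $\widetilde{E}\calF_1\otimes\widetilde{E}\calF_1\simeq\widetilde{E}\calF_1$ and $E\calF_{2+}\otimes E\calF_{2+}\simeq E\calF_{2+}$ (the two factors commuting) show that $E[K]$ is idempotent, so that $E[K]\otimes(E[K]\otimes X)\simeq E[K]\otimes X$, which is exactly the assertion that $E[K]\otimes X$ is concentrated at $K$. Now \cref{prop:concentrated} applies, and using that $\Phi^K$ is symmetric monoidal it gives
\[
(E[K]\otimes X)^G\simeq\bigl(\Phi^K(E[K]\otimes X)\bigr)_{\h W_GK}\simeq\bigl(\Phi^K E[K]\otimes\Phi^K X\bigr)_{\h W_GK}.
\]
The fixed-point formulas for $\widetilde{E}\calF$ and $E\calF_+$, again with monoidality of $\Phi^K$, give $\Phi^K E[K]\simeq\Phi^K\widetilde{E}\calF_1\otimes\Phi^K E\calF_{2+}\simeq S^0$, since $K\notin\calF_1$ and $K\in\calF_2$; this identifies the right-hand side with $(\Phi^K X)_{\h W_GK}$. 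Assembling these identifications over all conjugacy classes recovers the claimed description of $\gr X^G$.

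I do not expect a serious obstacle: the argument reduces entirely to exactness of $(\bs)^G$, the idempotency of $\widetilde{E}\calF$ and $E\calF_+$, symmetric monoidality of geometric fixed points, and the already-established \cref{lem:filt} and \cref{prop:concentrated}. The one point meriting care is naturality in $X$ — one must note that the filtration of \cref{lem:filt}, the equivalence of \cref{prop:concentrated}, and the equivalence $\Phi^K E[K]\simeq S^0$ are all natural, so that the induced filtration of $X^G$ and the description of its graded pieces are functorial in $X$; this is immediate from the constructions, but is precisely what the word ``natural'' in the statement asks for.
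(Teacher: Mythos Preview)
Your proposal is correct and takes essentially the same approach as the paper, which simply reads ``Combine \cref{lem:filt} and \cref{prop:concentrated}.'' You have just spelled out the details of that combination --- exactness of $(\bs)^G$, idempotency of $E[K]$ to see that $E[K]\otimes X$ is concentrated at $K$, and the identification $\Phi^K E[K]\simeq S^0$ --- all of which are implicit in the paper's one-line proof.
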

\begin{proof}
Combine \cref{lem:filt} and \cref{prop:concentrated}.
\end{proof}

\begin{cor}\label{cor:limitdetect}
Let $F\colon \calJ\rightarrow\Sp^G$ be a diagram of $G$-spectra, and $f\colon X\rightarrow \lim_{j\in\calJ}F(j)$ be a map of $G$-spectra. For $f$ to be an equivalence, it suffices that $f$ induces an equivalence
\[
(\Phi^K X)_{\h W_HK}\simeq \lim_{j\in\calJ} \left((\Phi^K F(j))_{\h W_H K}\right)
\]
of ordinary spectra for all subgroups $K\subset H \subset G$.
\end{cor}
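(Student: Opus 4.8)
The plan is to reduce from $\Sp^G$ to ordinary spectra via genuine fixed points, and then to compare filtrations. Recall that the orbits $\{G/H_+\}_{H\subset G}$ generate $\Sp^G$, so the genuine fixed point functors $(-)^H = \Sp^G(G/H_+,-)\colon\Sp^G\to\Sp$ are jointly conservative and exact; being right adjoints, they also preserve limits. Hence $f$ is an equivalence if and only if for every $H\subset G$ the map $f^H\colon X^H\to (\lim_{j\in\calJ}F(j))^H\simeq\lim_{j\in\calJ}F(j)^H$ is an equivalence, and it is enough to fix $H$ and treat it separately.

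Next I would set up the relevant filtrations. Applying \cref{lem:filt} inside $\Sp^H$ to $\res^G_H W$ for a varying $H$-spectrum $W$ and then \cref{prop:concentrated} produces a natural, finite, exhaustive filtration of $W^H$ whose associated graded is $\bigoplus_{(K)}(\Phi^K\res^G_H W)_{\h W_HK}$, the sum running over $H$-conjugacy classes of subgroups $K\subset H$; this is precisely \cref{cor:filtfix} for the group $H$. Using the standard compatibility $\Phi^K\res^G_H\simeq\res^{W_GK}_{W_HK}\Phi^K$, the summand indexed by $(K)$ is the object written $(\Phi^K X)_{\h W_HK}$ in the statement. The point I want to exploit is that this filtration is finite and natural in $W$: feeding each $\res^G_H F(j)$ into it gives a finite filtration of $F(j)^H$ natural in $j$, and passing to $\lim_{j\in\calJ}$ produces a finite filtration of $\lim_{j}F(j)^H\simeq(\lim_{j}F(j))^H$. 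Since the associated graded pieces of a finite filtration are given by fibers, and fibers commute with limits, and since the direct sum here is finite and thus commutes with $\lim_{j\in\calJ}$, the associated graded of this filtration of $(\lim_j F(j))^H$ is $\bigoplus_{(K)}\lim_{j\in\calJ}(\Phi^K F(j))_{\h W_HK}$.

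Then I would conclude. Because $f\colon X\to\lim_j F(j)$ is a natural transformation out of the constant diagram and every construction above was natural, $f^H$ is a map of finitely filtered spectra from $X^H$, filtered as in \cref{cor:filtfix}, to $(\lim_j F(j))^H$, filtered as just described; on the associated graded it is, summand by summand, the canonical comparison map
\[
(\Phi^K X)_{\h W_HK}\longrightarrow\lim_{j\in\calJ}(\Phi^K F(j))_{\h W_HK}
\]
induced by $f$, which is an equivalence by hypothesis. A map of finitely filtered objects that is an equivalence on the associated graded is an equivalence, so $f^H$ is an equivalence; since $H$ was arbitrary, $f$ is an equivalence.

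The main obstacle is precisely the temptation to apply \cref{cor:filtfix} directly to $\lim_j F(j)$: neither $\Phi^K$ nor the homotopy orbit functor $(-)_{\h W_HK}$ commutes with the limit over $\calJ$, so the graded pieces obtained that way would not be the spectra $\lim_{j}(\Phi^K F(j))_{\h W_HK}$ appearing in the hypothesis. Getting around this is the reason for filtering $(\lim_j F(j))^H$ as the limit of the natural finite filtrations of the $F(j)^H$ and invoking finiteness to move the filtration past the limit; everything else (naturality in \cref{lem:filt} and \cref{cor:filtfix}, exactness of restriction, and limit-preservation of $(-)^H$) is routine.
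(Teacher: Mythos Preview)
Your proof is correct and follows essentially the same route as the paper's: reduce to genuine fixed points $f^H$, filter both sides via \cref{cor:filtfix} applied to the group $H$, and conclude from the hypothesis on associated graded. You are more explicit than the paper about the one genuine subtlety---that the filtration on the target must be obtained by filtering each $F(j)^H$ naturally and then taking the limit, rather than by applying \cref{cor:filtfix} directly to $\lim_j F(j)$---and your explanation of why finiteness of the filtration lets you commute $\lim_{j\in\calJ}$ past the associated graded is exactly the point the paper leaves implicit in the phrase ``natural finite filtration.''
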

\begin{proof}
The map $f$ is an equivalence if and only if it induces an equivalence $f^H\colon X^H\rightarrow \lim_{j\in\calJ} F(j)^H$ for all subgroups $H\subset G$. By \cref{cor:filtfix}, both source and target admit a natural finite filtration, with
\[
\gr f^H \colon \bigoplus_{(K)} (\Phi^K X)_{\h W_H K}\rightarrow \bigoplus_{(K)} \lim_{j\in\calJ} \left((\Phi^K F(j))_{\h W_H K}\right),
\]
these sums being over the conjugacy classes of subgroups $K\subset H$. The corollary follows as $f^H$ is an equivalence provided $\gr f^H$ is an equivalence.
\end{proof}

\subsection{Equivariant Bousfield localizations}

We are now in a good position to discuss equivariant Bousfield localization. Fix a ring $G$-spectrum $R$. Our main observation is the following.

\begin{theorem}\label{thm:desc}
$R$ is locally descendable if and only if each $\Phi^K R$ is locally descendable as an object of $\Fun(BW_GK,\Sp)$. If $W_GK$ acts trivially on $\Phi^K R$, then this holds if and only if $\Phi^K R$ is locally descendable as an ordinary spectrum.
\end{theorem}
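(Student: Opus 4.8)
The plan is to deduce the statement from two functorial inputs already established: that exact symmetric monoidal functors preserve local descendability (\cref{prop:descpreserve}), and that $\Phi^K$ is exact and symmetric monoidal and restricts to an equivalence from the $\infty$-category of $G$-spectra concentrated at $K$ onto $\Fun(BW_GK,\Sp)$ (\cref{prop:concentrated}, together with the family filtration of \cref{lem:filt}). Throughout I will use the observation recorded before \cref{prop:locdescequiv} that a tower lies in $\Tow^\fast$ exactly when it is eventually equivalent to the direct sum of a constant tower with a tower in $\Tow^\nil$; from this it follows that $\Tow^\fast$ is closed under extensions and finite direct sums, since $\{X_s\}\mapsto\Fib(\{\lim_s X_s\}\to\{X_s\})$ is exact and $\Tow^\nil$ is closed under extensions.

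The ``only if'' direction will be immediate: each $\Phi^K\colon\Sp^G\to\Fun(BW_GK,\Sp)$ is exact and symmetric monoidal, so \cref{prop:descpreserve} applies. When $W_GK$ acts trivially on $\Phi^K R$, one implication comes from \cref{prop:descpreserve} applied to the (exact symmetric monoidal) trivial-action functor $\Sp\to\Fun(BW_GK,\Sp)$, and the converse from the forgetful functor $U\colon\Fun(BW_GK,\Sp)\to\Sp$: this $U$ is exact and preserves limits, hence sends $\Tow^\fast$ to $\Tow^\fast$, and $U(C(\Phi^K R))\simeq C(\Phi^K R)$ as towers of spectra.

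For the ``if'' direction, assume each $\Phi^K R$ is locally descendable. Applying \cref{lem:filt} to the tower $C(R)$ exhibits it with a natural finite filtration whose associated graded is $\bigoplus_{(K)}\{E[K]\otimes C_s(R)\}_s$, so by closure of $\Tow^\fast$ under extensions and finite direct sums it suffices to show each tower $\{E[K]\otimes C_s(R)\}_s$ lies in $\Tow^\fast(\Sp^G)$. Each $E[K]\otimes C_s(R)$ is concentrated at $K$, and since $\Phi^K E[K]\simeq S$ and $\Phi^K$ is exact symmetric monoidal, applying $\Phi^K$ identifies $\{E[K]\otimes C_s(R)\}_s$ with the tower $C(\Phi^K R)$, which lies in $\Tow^\fast(\Fun(BW_GK,\Sp))$ by hypothesis. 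Via the equivalence of \cref{prop:concentrated}, transport a ``constant $\oplus$ nilpotent'' decomposition of $C(\Phi^K R)$ back to one of $\{E[K]\otimes C_s(R)\}_s$ inside the subcategory of $G$-spectra concentrated at $K$. Since that subcategory embeds fully faithfully in $\Sp^G$ and is closed under finite direct sums, a constant tower there is constant in $\Sp^G$ and a tower in $\Tow^\nil$ there lies in $\Tow^\nil(\Sp^G)$, giving $\{E[K]\otimes C_s(R)\}_s\in\Tow^\fast(\Sp^G)$ as required.

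The step needing the most care — and the reason for routing through the ``constant $\oplus$ nilpotent'' characterization of $\Tow^\fast$ rather than its defining condition — is that the inclusion of $G$-spectra concentrated at $K$ into $\Sp^G$ need not preserve infinite limits (just as $\Phi^K$ itself does not), so one cannot directly compare the $\lim$ appearing in the definition of $\Tow^\fast$ computed in the subcategory with the one computed in $\Sp^G$. Everything else is formal manipulation of exact functors and towers.
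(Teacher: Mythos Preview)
Your proof is correct and follows essentially the same route as the paper: use \cref{prop:descpreserve} for the forward direction and the trivial-action case, filter $C(R)$ via \cref{lem:filt}, and transport across the equivalence of \cref{prop:concentrated} for the converse. The paper simply asserts that $\Tow^\fast(\Sp^G)$ is a thick subcategory and that the embedding of \cref{prop:concentrated} carries $\Tow^\fast$ to $\Tow^\fast$; your final paragraph makes explicit why this last step is not entirely automatic (limits in the subcategory of spectra concentrated at $K$ need not agree with ambient limits) and supplies the workaround via the ``constant $\oplus$ nilpotent'' characterization, which is a nice bit of extra care that the paper leaves implicit.
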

\begin{proof}
\cref{prop:descpreserve} implies that if the $G$-spectrum $R$ is locally descendable, then each $\Phi^K R\in\Fun(BW_GK,\Sp)$ is locally descendable; and that if $W_GK$ acts trivially on $\Phi^K R$, then $\Phi^K R$ is locally descendable in $\Fun(BW_GK,\Sp)$ if and only if it is locally descendable in $\Sp$.

Now suppose that each $\Phi^K R\in\Fun(BW_GK,\Sp)$ is locally descendable. By \cref{lem:filt}, $C(R)$ admits a finite filtration with filtration quotients of the form $E[K]\otimes C(R)$. As $\Tow^\fast(\Sp^G)\subset\Tow(\Sp^G)$ is a thick subcategory, it suffices to show that $E[K]\otimes C(R) \in \Tow^\fast(\Sp^G)$ for all $K\subset G$. Under the embedding of \cref{prop:concentrated}, $E[K]\otimes C(R)$ corresponds to the tower $C(\Phi^K R)\in \Tow(\Fun(BW_GK,\Sp))$, so this follows from the assumption that $\Phi^K R$ is locally descendable in $\Fun(BW_GK,\Sp)$.
\end{proof}

We can extend this to the global equivariant context. First, some notation. For our purposes, a \textit{global family} shall be a collection $\calF$ of finite groups closed under products, subgroups, and quotients. Schwede \cite{schwede2018global} has shown that to each global family $\calF$, there is a good symmetric monoidal and stable category $\Glob_\calF$ of global spectra with respect to $\calF$.

Associated to any $G\in\calF$ is a symmetric monoidal functor
\[
U_G\colon\Glob_\calF\rightarrow\Sp^G,
\]
which preserves limits and colimits \cite[Theorem 4.5.25]{schwede2018global}. Moreover, these functors are jointly conservative as $G$ is taken to range through $\calF$, and are compatible with each other in the sense that $\res^G_K U_G = U_K$ for $K\subset G$.

Associated to any $X\in\Glob_\calF$ and $G\in\calF$ are the genuine and geometric fixed points $X^G$ and $\Phi^G X$. The genuine fixed points $X^G$ are represented by the global suspension spectrum of the global classifying space $B_{\gl}G$, in the sense that $X^G\simeq\Glob_\calF(B_{\gl}G_+,X)$ \cite[Theorem 4.4.3]{schwede2018global}. In particular, $X^G$ carries a natural action by the space $\Aut(B_{\gl}G)$ of automorphisms of the global classifying space $B_{\gl}G$. This in turn is equivalent to the space of automorphisms of the ordinary classifying space $BG$, as can be easily seen from the orbispace model for global spaces \cite{korschgen2018comparison}.

Genuine and geometric fixed points are compatible with the functors $U_G$, in the sense that $(U_G)^K \simeq X^K$ and $\Phi^K U_G X \simeq \Phi^K X$. Following the discussion after \cite[Theorem 4.5.25]{schwede2018global}, if we write $L$ for the left adjoint to $U_G$, then the natural equivalences $\Glob_\calF(B_{\gl}K,X)\simeq X^K\simeq\Sp^G(G/K_+,U_GX)\simeq\Glob_\calF(L(G/K)_+,X)$ show that $L(G/K_+) \simeq B_{\gl}K$ for $K\subset G$. It follows that $W_GK$ acts on $X^K$ through its action on $BK \simeq EG\times_G (G/K)$.

\begin{theorem}\label{thm:globdesc}
Let $\calF$ be a global family, and suppose that for all $G\in\calF$ and $K\subset G$, the spectrum $\Phi^K R$ is locally descendable as an object of $\Fun(BW_GK,\Sp)$. Then $R$-localization is smashing and agrees with $R$-nilpotent completion, $U_G R\in\Sp^G$ is locally descendable for all $G\in\calF$, and $U_G L_R X\simeq L_{U_G R}U_G X$ for all $X\in\Glob_\calF$.
\end{theorem}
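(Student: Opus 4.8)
The plan is to reduce everything to results already in hand: first obtain the $\Sp^G$-level statements from \cref{thm:desc} and \cref{prop:locdescequiv}, and then transport the conclusions back to $\Glob_\calF$ using \cref{lem:nilreflect} applied to the jointly conservative family $\{U_G\}_{G\in\calF}$, packaged as a single functor into a product category.

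First I would fix $G\in\calF$. For each subgroup $K\subset G$ one has $\Phi^K U_G R\simeq\Phi^K R$ as objects of $\Fun(BW_GK,\Sp)$, which is locally descendable by hypothesis; so \cref{thm:desc} shows that $U_G R\in\Sp^G$ is locally descendable, giving the second assertion. Feeding this into \cref{prop:locdescequiv} shows that $U_G R$-localization in $\Sp^G$ is smashing and agrees with $U_G R$-nilpotent completion; in particular, every object of $\Sp^G$ is $U_G R$-convergent.

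Next, after choosing a set of representatives for the isomorphism classes in $\calF$, I would assemble the forgetful functors into $F = (U_G)_{G\in\calF}\colon\Glob_\calF\to\prod_{G\in\calF}\Sp^G$. The target is presentably symmetric monoidal stable under the componentwise structure, and $F$ is symmetric monoidal and limit-preserving (as each $U_G$ is) and conservative (as the $U_G$ are jointly conservative), so \cref{lem:nilreflect} applies to $F$ and $R$. The key point is that in a product of stable symmetric monoidal $\infty$-categories all the relevant notions — acyclics, local objects, Bousfield localization, the towers $C(\bs)$, limits, and nilpotent completion — are computed componentwise; hence $F(R) = (U_G R)_G$ has $F(R)$-localization smashing, and $F(X) = (U_G X)_G$ is $F(R)$-convergent for every $X\in\Glob_\calF$, both by the previous paragraph. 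Then \cref{lem:nilreflect}(3) gives that $R$-localization on $\Glob_\calF$ is smashing; \cref{lem:nilreflect}(2) gives that every $X$ is $R$-convergent, so $L_R X\simeq X_R^\wedge$ and $R$-localization agrees with $R$-nilpotent completion; and the remaining conclusion of \cref{lem:nilreflect}(2), namely $F(L_R X)\simeq L_{F(R)}F(X)$, is exactly $U_G L_R X\simeq L_{U_G R}U_G X$ read off in the $G$-th component.

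The main step requiring care — really the only one — is the componentwise analysis of $\prod_{G\in\calF}\Sp^G$: one must verify that an object of the product is $F(R)$-acyclic, respectively $F(R)$-local, if and only if each component is $U_G R$-acyclic, respectively $U_G R$-local, and deduce from this that the smashing property and $F(R)$-convergence in the product amount to the conjunction over $G$ of the corresponding statements for $U_G R$. Everything else is a formal invocation of the cited lemmas; note in particular that we do not claim, and need not prove, that $R$ itself is locally descendable in $\Glob_\calF$, since in the absence of a uniform vanishing line the required nilpotence of $C(R)$ need not hold.
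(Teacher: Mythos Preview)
Your proposal is correct and follows exactly the same approach as the paper: apply \cref{thm:desc} to obtain local descendability of each $U_G R$, then bundle the $U_G$ into a single functor $\Glob_\calF\to\prod_{G\in\calF}\Sp^G$ and invoke \cref{lem:nilreflect}. Your write-up is in fact more detailed than the paper's two-sentence proof, and your closing remark that one need not (and cannot in general) claim $R$ itself is locally descendable in $\Glob_\calF$ is a worthwhile observation the paper leaves implicit.
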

\begin{proof}
The hypotheses ensure that we may apply \cref{thm:desc} to deduce that $U_G R \in \Sp^G$ is locally descendable for all $G\in\calF$. The remaining assertions follow by applying \cref{lem:nilreflect} to $(U_G)_{G\in\calF}\colon \Glob_\calF\rightarrow\prod_{G\in\calF} \Sp^G$.
\end{proof}

In general, it seems difficult to determine when a ring $R\in\Fun(BG,\Sp)$ is locally descendable when $G$ acts nontrivially on $R$. We will make use of the following simple case.

\begin{lemma}\label{lem:borelaway}
Let $G$ be a finite group and $R\in\Fun(BG,\Sp)$ be a ring. If $|G|$ acts invertibly on $R$ and the ordinary spectrum $R^{\h G}$ is locally descendable, then $R$ is locally descendable.
\end{lemma}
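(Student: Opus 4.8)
The plan is to compare $R$ with the ring $i(R^{\h G})$, where $i\colon\Sp\to\Fun(BG,\Sp)$ is the functor equipping a spectrum with the trivial $G$-action. Since $i$ is strong symmetric monoidal with lax symmetric monoidal right adjoint $(-)^{\h G}$, the counit is a map of ring objects $i(R^{\h G})\to R$, so $R$ is naturally an $i(R^{\h G})$-module; consequently $i(R^{\h G})\otimes X\simeq 0$ forces $R\otimes X\simeq R\otimes_{i(R^{\h G})}(i(R^{\h G})\otimes X)\simeq 0$, i.e.\ $\langle i(R^{\h G})\rangle\subseteq\langle R\rangle$. Moreover $i$ is exact and symmetric monoidal, and since $|G|$ acts invertibly on $R$ it acts invertibly on the retract $R^{\h G}$ of $\res^G_e R$ as well; hence \cref{prop:descpreserve} shows that $i(R^{\h G})$ is locally descendable in $\Fun(BG,\Sp)$ as soon as $R^{\h G}$ is locally descendable in $\Sp$. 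Granting that $i(R^{\h G})\in\Thick^\otimes(R)$, \cref{cor:locdtrans}, applied with its ``$R$'' and ``$T$'' taken to be our $i(R^{\h G})$ and $R$, then yields that $R$ is locally descendable.

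It remains to prove $i(R^{\h G})\in\Thick^\otimes(R)$, and this is where the hypothesis on $|G|$ enters. Consider the averaging idempotent $e=\tfrac{1}{|G|}\sum_{g\in G}g$. Because $|G|$ acts invertibly, $e$ defines an idempotent self-map of the underlying spectrum $\res^G_e R$, and its splitting is $R^{\h G}$: the homotopy fixed point spectral sequence collapses once $|G|$ is inverted, so the canonical map $R^{\h G}\to\res^G_e R$ identifies $R^{\h G}$ with the summand cut out by $e$, with trivial residual action. Tensoring the retraction $R^{\h G}\to\res^G_e R\to R^{\h G}$ with the regular representation $\mathbb{S}[G]$ in $\Fun(BG,\Sp)$ and applying the shearing equivalence $\mathbb{S}[G]\otimes i(\res^G_e R)\simeq\mathbb{S}[G]\otimes R$, we get that $\mathbb{S}[G]\otimes i(R^{\h G})$ is a retract of $\mathbb{S}[G]\otimes R$. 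On the other hand, the $G$-fixed map $S\to\mathbb{S}[G]$, $1\mapsto\sum_g g$, and the augmentation $\mathbb{S}[G]\to S$ have composite multiplication by $|G|$; tensoring with $i(R^{\h G})$, on which $|G|$ is invertible, exhibits $i(R^{\h G})$ as a retract of $\mathbb{S}[G]\otimes i(R^{\h G})$. Composing the two retractions, $i(R^{\h G})$ is a retract of $\mathbb{S}[G]\otimes R\in\Thick^\otimes(R)$, and since thick $\otimes$-ideals are closed under retracts this gives $i(R^{\h G})\in\Thick^\otimes(R)$.

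The only genuinely equivariant ingredient is the identification of the splitting of the averaging idempotent with $R^{\h G}$; everything else is formal manipulation of thick $\otimes$-ideals and Bousfield classes already packaged in \cref{prop:descpreserve} and \cref{cor:locdtrans}. I expect the one point needing care in a full write-up is making this averaging argument precise — checking that $e$ is a well-defined, coherent (hence splittable) endomorphism in $\Fun(BG,\Sp)$ and that the resulting summand is $i(R^{\h G})$ — although this is standard; if one prefers to sidestep it, the comparison can be run entirely through $\res^G_e$ together with the functor $\mathbb{S}[G]\otimes i(-)$, never naming the idempotent.
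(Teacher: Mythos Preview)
Your proof is correct and follows the same overall strategy as the paper: verify $\langle i(R^{\h G})\rangle\subset\langle R\rangle$ and $i(R^{\h G})\in\Thick^\otimes(R)$, then invoke \cref{cor:locdtrans}. The paper leaves the appeal to \cref{prop:descpreserve} implicit, so your making it explicit is a plus.

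The one place you diverge is in checking $i(R^{\h G})\in\Thick^\otimes(R)$. The paper does this in a single stroke: the composite
\[
i(R^{\h G})\longrightarrow R \longrightarrow i(R_{\h G})\xrightarrow{\ \text{transfer}\ } i(R^{\h G})
\]
is multiplication by $|G|$, hence an equivalence, so $i(R^{\h G})$ is already a retract of $R$ itself. Your route through the averaging idempotent, the shearing equivalence, and the retraction off $\mathbb{S}[G]\otimes R$ reaches the same conclusion but with more moving parts; it is a correct argument, just less direct. Two minor presentational points: the invertibility of $|G|$ on $R^{\h G}$ is not needed for \cref{prop:descpreserve} (only later, for the $\mathbb{S}[G]$ retraction), and it follows immediately from $R^{\h G}=\lim_{BG}R$ without first knowing the retract statement, so you can decouple those remarks.
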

\begin{proof}
As there is a $G$-equivariant map $i\colon R^{\h G}\rightarrow R$ of rings, we have $\langle R^{\h G}\rangle \subset \langle R \rangle$. As $G$ acts invertibly on $R$, the composite $R^{\h G}\rightarrow R\rightarrow R_{\h G}\rightarrow R^{\h G}$, with last map the transfer, is an equivalence. Thus $R^{\h G}\in\Thick^{\otimes}(R)$, and the lemma then follows from \cref{cor:locdtrans}.
\end{proof}

We also need the following.

\begin{prop}\label{prop:locpresg}
Suppose that $R$ is a $G\hyp\bbE_\infty$ ring. Then $R$-nilpotent completion preserves $G\hyp\bbE_\infty$ rings. In particular, if all $G$-spectra are $R$-convergent, such as if $R$ is locally descendable, then $R$-localization preserves $G\hyp\bbE_\infty$ rings. The same statements hold with $G\hyp\bbE_\infty$ ring spectra replaced by global ultracommutative ring spectra.
\end{prop}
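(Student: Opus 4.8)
The plan is to realize $R$-nilpotent completion as a limit of a diagram that already lives in the category $\Comm_G$ of $G\hyp\bbE_\infty$ rings, and then invoke the fact that the forgetful functor $\Comm_G\to\Sp^G$ creates limits. The key input is the classical identification, due to Bousfield \cite[\S 5]{bousfield1979localization}, of the nilpotent completion $X_R^\wedge = \lim(X\otimes C(R))$ with the totalization $\operatorname{Tot}(X\otimes R^{\otimes\bullet+1})$ of the Amitsur (cobar) cosimplicial object of the commutative algebra $R$: this is the cosimplicial spectrum whose object of $n$-cosimplices is $X\otimes R^{\otimes n+1}$, with cofaces inserting the unit $S\to R$ and codegeneracies multiplying adjacent $R$-factors. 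So the first step is to recall this identification, naturally in $X$ (at the level of $\Sp^G$).

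Second, I would observe that when $X$ is a $G\hyp\bbE_\infty$ ring the cosimplicial spectrum $X\otimes R^{\otimes\bullet+1}$ refines canonically to a cosimplicial object of $\Comm_G$. Here one uses that the symmetric monoidal structure of $\Comm_G$ inherited from $\Sp^G$ makes smash products of $G\hyp\bbE_\infty$ rings their coproducts (the computation is the same one underlying \cref{ssec:addition}), so each term $X\otimes R^{\otimes n+1}$ is an object of $\Comm_G$; and each coface and codegeneracy is built solely from the unit $S\to R$ and the multiplication $R\otimes R\to R$ together with identities, hence is a morphism of $G\hyp\bbE_\infty$ rings. Computing the totalization of this cosimplicial object in $\Comm_G$ and applying the (limit-creating) forgetful functor $\Comm_G\to\Sp^G$ then endows $X_R^\wedge$ with a functorial $G\hyp\bbE_\infty$ structure for which the completion map $X\to X_R^\wedge$ is a morphism of $G\hyp\bbE_\infty$ rings. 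This establishes the first assertion.

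Third, the remaining assertions are soft consequences. If every $G$-spectrum is $R$-convergent then $L_R X\simeq X_R^\wedge$ naturally in $X$, so the statement for $R$-localization follows from that for $R$-nilpotent completion; and if $R$ is locally descendable then the implication $(1)\Rightarrow(4)$ of \cref{prop:locdescequiv} — whose proof produces $X_R^\wedge\simeq X\otimes L_RS\simeq L_RX$ for all $X$ — forces every $G$-spectrum to be $R$-convergent. For the global ultracommutative case one repeats the argument verbatim with $\Sp^G$ replaced by $\Glob_\calF$ and $\Comm_G$ by the category of ultracommutative ring spectra; the same two structural facts (tensor $=$ coproduct on commutative algebras, forgetful functor creates limits) are available, and the Amitsur complex of $R$ in this category does the job.

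There is no essential obstacle here; once Bousfield's identification of $X_R^\wedge$ with the cobar totalization (at the level of $\Sp^G$), the fact that coproducts of $G\hyp\bbE_\infty$ rings are smash products, and the fact that $\Comm_G\to\Sp^G$ creates limits are all in hand, the proof is purely formal. The step that deserves the most attention is simply verifying that the cobar cosimplicial object $X\otimes R^{\otimes\bullet+1}$ genuinely lifts to $\Comm_G$ — i.e.\ pinning down the precise category of $G\hyp\bbE_\infty$ rings (or ultracommutative rings in the global case) and confirming that its coface and codegeneracy maps, the only nontrivial data, really are $G\hyp\bbE_\infty$ maps — and this is immediate from their description in terms of the unit and multiplication of $R$.
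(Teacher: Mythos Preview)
Your proposal is correct and is essentially the same argument as the paper's: identify $X_R^\wedge$ with the totalization of the cosimplicial object $[n]\mapsto X\otimes R^{\otimes n+1}$, observe that this is a cosimplicial diagram of $G\hyp\bbE_\infty$ rings when $X$ and $R$ are, and conclude since limits in $\Comm_G$ are computed in $\Sp^G$. The paper's proof is more terse but follows the identical route, including the remark that the same reasoning handles the global ultracommutative case.
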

\begin{proof}
If $R$ is an $\bbA_\infty$ ring, then $C(R)$ may be identified as the tower of partial totalizations of the cosimplicial object $[n]\mapsto R^{\otimes n+1}$ \cite[Proposition 2.14]{mathewnaumannnoel2017nilpotence}, and thus $R$-nilpotent completion is given by $X_R^\wedge = \lim_{n\in\Delta} (X\otimes R^{n+1})$. When moreover $R$ and $X$ are $G\hyp\bbE_\infty$ rings, this is the totalization of a cosimplicial diagram of $G\hyp\bbE_\infty$ rings, and is therefore itself a $G\hyp\bbE_\infty$ ring. The same proof applies in the global ultracommutative case.
\end{proof}

So far we have focused on localizations with particularly good finiteness properties. We also note an orthogonal case. First, a bit more notation. The forgetful functors $U\colon \Sp^G\rightarrow\Fun(BG,\Sp)$ and $U\colon \Glob_\calF\rightarrow\Sp$ admit right adjoints, which we shall denote $b_G$ and $b_\calF$ respectively. In particular, $b_GU(X)\simeq F(EG_+,X)$, where $EG_+$ is the classifying space for the family $\{e\}$.

\begin{prop}\label{prop:borelloc}
Let $T$ be an ordinary ring spectrum.
\begin{enumerate}
\item If $T^{tG} = 0$, then $L_{b_G(T)}X\simeq b_G(L_TUX)$ for all $X\in\Sp^G$.
\item If $T^{tG}=0$ for all $G\in\calF$, then $L_{b_\calF(T)}X\simeq b_\calF(L_TUX)$ for all $X\in\Glob_\calF$.
\end{enumerate}
\end{prop}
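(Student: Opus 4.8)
The plan is to exhibit, for a given $X$, the object $b_G(L_T UX)$ together with the natural map $X\to b_G UX\to b_G(L_T UX)$ (the unit of $U\dashv b_G$ followed by $b_G$ applied to $UX\to L_T UX$) as the $b_G(T)$-localization of $X$; part~(2) will then follow by an entirely parallel argument in $\Glob_\calF$. Two things must be checked: that $b_G(L_T UX)$ is $b_G(T)$-local, and that $X\to b_G(L_T UX)$ is a $b_G(T)$-equivalence. Throughout I would use only the formal properties of $U\dashv b_G$: that $U$ is symmetric monoidal, that $Ub_G\simeq\mathrm{id}$ (so $b_G$ is fully faithful and $Ub_G(T)\simeq T$), and hence $U(b_G(T)\otimes Z)\simeq T\otimes UZ$ for all $Z\in\Sp^G$.

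Locality is the easy half and does not use the Tate hypothesis: if $C\in\Sp^G$ is $b_G(T)$-acyclic then $T\otimes UC\simeq U(b_G(T)\otimes C)\simeq 0$, so $UC$ is $T$-acyclic, and then $\Map_{\Sp^G}(C,b_G(L_T UX))\simeq\Map_{\Fun(BG,\Sp)}(UC,L_T UX)\simeq 0$ by adjunction and $T$-locality of $L_T UX$.

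For the $b_G(T)$-equivalence I would first identify the Bousfield class $\langle b_G(T)\rangle$, which is the only place the hypothesis enters. The geometric fixed points of the cofree ring are $\Phi^e b_G(T)\simeq T$ and, for a nontrivial subgroup $K\subseteq G$, $\Phi^K b_G(T)\simeq T^{tK}$ with its residual $W_GK$-action, since the geometric fixed points of a cofree spectrum form a Tate construction. Invoking $T^{tK}\simeq 0$ for every subgroup $K\subseteq G$ — which is the force of the hypothesis $T^{tG}=0$ in the situations of interest (and is automatic in part~(2), where $\calF$ is closed under subgroups) — we get $\Phi^K b_G(T)\simeq 0$ for all $K\neq e$. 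As geometric fixed points are monoidal and jointly conservative, $b_G(T)\otimes Z$ is then concentrated at the trivial subgroup, hence free, hence (via the identification of free $G$-spectra with $\Fun(BG,\Sp)$) determined by $T\otimes UZ$; thus $b_G(T)\otimes Z\simeq 0$ if and only if $UZ$ is $T$-acyclic. Now the cofiber $C'$ of $X\to b_G(L_T UX)$ satisfies $UC'\simeq\mathrm{cof}(UX\to L_T UX)$, which is $T$-acyclic because $UX\to L_T UX$ is a $T$-equivalence; hence $C'$ is $b_G(T)$-acyclic, so $b_G(T)\otimes X\to b_G(T)\otimes b_G(L_T UX)$ is an equivalence. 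This proves~(1).

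For~(2) I would rerun these three steps verbatim in $\Glob_\calF$, using that $U\colon\Glob_\calF\to\Sp$ is symmetric monoidal with $Ub_\calF\simeq\mathrm{id}$, that global geometric fixed points are monoidal and jointly detect equivalences, and that $\Phi^G b_\calF(T)\simeq T^{tG}$, which vanishes for every $G\in\calF$; closure of $\calF$ under subgroups means nothing further is needed. Alternatively, one can deduce~(2) from~(1) along the jointly conservative, limit-preserving, symmetric monoidal functors $U_G$, using $U_G b_\calF(T)\simeq b_G(T)$ and checking that the map in~(2) becomes, under each $U_G$, an instance of the $b_G(T)$-equivalence from~(1). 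The main obstacle is the Tate bookkeeping: confirming that $\Phi^K$ of the cofree (co)spectrum on $T$ is the Tate construction $T^{tK}$, and ensuring it vanishes for all subgroups that actually appear — once that is in place, the remainder is formal manipulation with $U\dashv b_G$.
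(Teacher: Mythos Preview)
Your proposal is correct and follows essentially the same route as the paper: verify that $b_G(L_T UX)$ is $b_G(T)$-local via the adjunction $U\dashv b_G$, and verify the $b_G(T)$-equivalence by using that $\Phi^K b_G(T)=0$ for every nontrivial $K\subset G$. Your variant of the second step (arguing via the cofiber rather than directly checking $\Phi^K$ of the map) is equivalent to the paper's argument. The one place you are slightly soft is the implication ``$T^{tG}=0$ and $T$ a ring $\Rightarrow T^{tK}=0$ for all $K\subset G$'': you correctly flag this as the main obstacle, and the paper discharges it by citing \cite[Proposition~2.13]{mathewnaumannnoel2019derived}; you should do the same rather than gesturing at ``the situations of interest''.
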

\begin{proof}
The proof is essentially the same in both cases, so we shall just prove the first. The assumption  that $T$ is a ring and $T^{tG} = 0$ implies that $\Phi^K b_G(T) = 0$ for all nontrivial subgroups $K\subset G$ \cite[Proposition 2.13]{mathewnaumannnoel2019derived}. At this point, we could deduce (1) by observing that $b_G(T)$ is Bousfield equivalent to $G_+\otimes T$ and applying \cite[Proposition 3.21]{carrick2020smashing}; however, we shall give the direct proof that also applies in case (2).

First we show that $b_G(L_T UX)$ is $b_G(T)$-local. Fix $C\in\Sp^G$ which is $b_G(T)$-acyclic. As $U(b_G(T)\otimes C)\simeq T\otimes UC$, it follows that $UC$ is $T$-acylic. Thus
\[
\Sp_G(C,b_G(L_T UX))\simeq \Sp(UC,L_T UX)^{\h G}\simeq 0,
\]
and this implies that $b_G(L_T UX)$ is $b_G(T)$-local as claimed.

Next we show that $X\rightarrow b_G(L_TUX)$ is a $b_G(T)$-equivalence. To that end, we must show that the map 
\begin{equation}\label{eq:compr}
b_G(T)\otimes X\rightarrow b_G(T)\otimes b_G(L_T UX)
\end{equation}
is an equivalence. It suffices to verify this after applying $\Phi^K$ for all $K\subset G$. If $K = e$, then $\Phi^e = U$ and \cref{eq:compr} is the equivalence $T\otimes UX\rightarrow T\otimes L_T UX$. If $K\neq e$, then both sides of \cref{eq:compr} vanish as $\Phi^K$ is symmetric monoidal and $\Phi^K b_G(T) = 0$ for $K\neq e$.

Together these prove that $X\rightarrow b_G(L_T UX)$ realizes $b_G(L_T UX)$ as the $b_G(T)$-localization of $X$.
\end{proof}

\subsection{Examples}\label{ssec:locexamples}

We now give examples, beginning with the proof of \cref{prop:kul}. Recall that $\textbf{KU}$ denotes the global spectrum of equivariant $K$-theory \cite[Section 6.4]{schwede2018global}, satisfying $U_G \textbf{KU} \simeq KU_G$ for all $G$. We need the following.

\begin{lemma}[{\cite[Section 7.7]{dieck1979transformation}}]\label{lem:geofixed}
For any group $G$, we have
\[
\Phi^G \textbf{KU}\simeq \begin{cases}KU[\tfrac{1}{n}](\zeta_n)&G\cong C_n;\\0&\text{otherwise}.
\end{cases}
\]
\qed
\end{lemma}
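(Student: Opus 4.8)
The plan is to reduce to computing the homotopy groups of $\Phi^G KU_G\simeq\Phi^G\textbf{KU}$ (using $\Phi^G U_G X\simeq\Phi^G X$ for $X\in\Glob$), and then to recognize the resulting ring spectrum. For the first step, \cref{lem:gfp} gives $\Phi^G KU_G\simeq(\widetilde{E}\calP\otimes KU_G)^G$. Modeling $\widetilde{E}\calP\simeq S^{\infty\widetilde{\bbR}[G]}$ with $\widetilde{\bbR}[G]=\coker(\bbR\to\bbR[G])$ the reduced real regular representation (so that $\widetilde{\bbR}[G]^H\neq 0$ exactly for $H$ proper, which is what pins down $\widetilde{E}\calP$), and passing to the cofinal subsystem of even multiples $2k\widetilde{\bbR}[G]\cong k\widetilde{\bbC}[G]$ of the reduced complex regular representation $\widetilde{\bbC}[G]$, equivariant Bott periodicity identifies each $(\Sigma^{k\widetilde{\bbC}[G]}KU_G)^G$ with $KU_G$ via a Thom class, the transition maps becoming multiplication by the Euler class $e(\widetilde{\bbC}[G])=e(G)\in R(G)$ of \cref{prop:charactereuler} (this is the restriction of the Thom/Bott class along the inclusion of poles). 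Since $\pi_\ast^G KU_G\cong R(G)[\beta^{\pm 1}]$ is concentrated in even degrees, this yields $\pi_\ast\Phi^G KU_G\cong R(G)[e(G)^{-1}][\beta^{\pm 1}]$, also even, so the whole problem comes down to the localization $R(G)[e(G)^{-1}]$.

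I would then analyze this localization with character theory. The ring $R(G)$ is reduced, as it embeds into the ring of $\bbC$-valued class functions. By \cref{lem:charactereuler}, $\chi(e(G),g)=f(\widetilde{\bbC}[G],g)(1)$; computing the characteristic polynomial of $g$ permuting the basis of $\bbC[G]$, namely $(t^{|g|}-1)^{[G:\langle g\rangle]}$, and dividing out $t-1$, one finds $\chi(e(G),g)=|G|$ if $\langle g\rangle=G$ and $\chi(e(G),g)=0$ otherwise --- this is exactly the $X=G$ instance of \cref{prop:charactereuler}(1). Consequently, if $G$ is not cyclic then $e(G)$ has vanishing character, hence $e(G)=0$ by reducedness, hence $R(G)[e(G)^{-1}]=0$ and $\Phi^G KU_G\simeq 0$. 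If $G\cong C_n$, writing $R(C_n)=\bbZ[t]/(t^n-1)$ for a faithful character $t$ gives $e(G)=\prod_{k=1}^{n-1}(1-t^k)$, which maps to $n$ under $t\mapsto\zeta_n$ and to $0$ under $t\mapsto\zeta_d$ for every proper divisor $d\mid n$; inverting it therefore leaves only the top cyclotomic factor with $n$ inverted, i.e.\ $R(C_n)[e(G)^{-1}]\cong\bbZ[\zeta_n][1/n]$.

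Finally, I would promote this computation of $\pi_\ast$ to an equivalence of ring spectra. Since $\Phi^G$ is symmetric monoidal, $\Phi^{C_n}KU_{C_n}$ is an $\bbE_\infty$-ring; it is $2$-periodic (apply $\Phi^{C_n}$ to the Bott equivalence $\Sigma^2 KU_{C_n}\simeq KU_{C_n}$), with $\pi_0=\bbZ[\zeta_n][1/n]$, and its complex orientation coming from the trivial-action orientation of $KU_{C_n}$ carries the multiplicative formal group law; Landweber exactness over the flat $\bbZ$-algebra $\bbZ[\zeta_n][1/n]$ then identifies it with $KU\otimes_\bbZ\bbZ[\zeta_n][1/n]=KU[\tfrac1n](\zeta_n)$ --- or one simply cites tom Dieck. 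I expect this last identification to be the main obstacle: the homotopy-group computation is essentially formal once \cref{lem:charactereuler} is in hand, but recognizing the multiplicative structure at the level of spectra requires either the formal-group/Landweber argument (with some care that the colimit presenting $\Phi^{C_n}KU_{C_n}$ is compatible with the orientation) or a careful reading of the classical literature.
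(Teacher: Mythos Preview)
The paper gives no proof of this lemma at all --- it simply cites tom Dieck and moves on. Your proposal correctly reconstructs the classical argument: express $\Phi^G KU_G$ as the localization of $KU_G$ at the Euler class of the reduced regular representation, compute $\pi_\ast$ as $R(G)[e(G)^{-1}][\beta^{\pm 1}]$, and then use character theory (exactly as in the paper's \cref{lem:charactereuler} and \cref{prop:charactereuler}) to identify this localization as zero when $G$ is noncyclic and as $\bbZ[\zeta_n][\tfrac{1}{n}]$ when $G\cong C_n$. This is essentially the argument one finds in tom Dieck, so your approach and the cited reference agree; you have simply filled in what the paper leaves as a black box. Your final step of upgrading the $\pi_\ast$-isomorphism to an equivalence of ring spectra via the complex orientation and Landweber exactness is a clean way to finish, and your caveat about it is appropriate --- but since the paper is content to cite the reference, you have already gone well beyond what is required.
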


Given a subgroup $K\subset G$, define $V_GK = \im(N_GK\rightarrow\Aut(K))$; we comment that $|V_GK| = [N_GK:C_GK]$ where $C_GK$ is the centralizer of $K$ in $G$. Say that $G$ is \textit{$KU$-allowable} if for all cyclic subgroups $C\subset G$, the order of $V_GC$ is invertible in $\bbZ[\tfrac{1}{|C|}]$.

\begin{theorem}\label{prop:kuall}
$KU_G\in\Sp^G$ is locally descendable if and only if $G$ is $KU$-allowable.
\end{theorem}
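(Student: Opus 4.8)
The plan is to feed the statement through \cref{thm:desc}, which reduces it to understanding the geometric fixed points $\Phi^K KU_G$ together with their residual Weyl group actions. First I would note $\Phi^K KU_G \simeq \Phi^K\textbf{KU}$, so by \cref{lem:geofixed} this vanishes---and is hence trivially locally descendable in $\Fun(BW_GK,\Sp)$---unless $K$ is cyclic, say $K\cong C_n$, in which case $\Phi^K KU_G\simeq KU[\tfrac1n](\zeta_n)$. The residual $W_GK$-action factors through $V_GK = \im(N_GK\rightarrow\Aut K)\subseteq(\bbZ/n)^\times$ (the subgroup $C_GK/K$ acting trivially), and $(\bbZ/n)^\times$ acts on $KU[\tfrac1n](\zeta_n)$ through the Galois action on $\bbZ[\tfrac1n,\zeta_n]$, equivalently through Adams operations. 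Thus the theorem reduces to the claim that $KU[\tfrac1n](\zeta_n)$ with this action is locally descendable in $\Fun(BW_GK,\Sp)$ if and only if $|V_GK|$ is invertible in $\bbZ[\tfrac1n]$; note that the latter is equivalent to $|V_GK|$ being a unit in $\pi_0 = \bbZ[\tfrac1n,\zeta_n]$, i.e.\ to $|V_GK|$ acting invertibly on $KU[\tfrac1n](\zeta_n)$, since a positive integer is a unit in $\bbZ[\tfrac1n,\zeta_n]$ exactly when all its prime factors divide $n$.

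For sufficiency, suppose $|V_GK|$ is invertible in $\bbZ[\tfrac1n]$. Since the action factors through the surjection $W_GK\twoheadrightarrow V_GK$, and pullback along a group homomorphism is exact and symmetric monoidal, \cref{prop:descpreserve} lets me replace $W_GK$ by $V_GK$. Then I would apply \cref{lem:borelaway}: $|V_GK|$ acts invertibly by hypothesis, and the homotopy fixed points $(KU[\tfrac1n](\zeta_n))^{hV_GK}\simeq KU[\tfrac1n]\otimes_{\bbZ[\tfrac1n]}\bbZ[\tfrac1n,\zeta_n]^{V_GK}$ form a finite étale extension of $KU[\tfrac1n]$, which is locally descendable as an ordinary spectrum. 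This last point rests on the fact that the ordinary spectrum $KU$ is locally descendable---its Adams spectral sequence has a horizontal vanishing line---which then propagates to $KU[\tfrac1n]$ and its finite étale extensions via \cref{prop:descpreserve} and \cref{cor:locdtrans}.

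For necessity, suppose $KU_G$ is locally descendable; by the reduction above, $KU[\tfrac1n](\zeta_n)$ with its $W_GK$-action is locally descendable in $\Fun(BW_GK,\Sp)$ for every cyclic $K\cong C_n$. Suppose toward a contradiction that some prime $\ell$ divides $|V_GK|$ with $\ell\nmid n$. Localizing at $\ell$ and restricting along a Sylow $\ell$-subgroup $P\subseteq W_GK$---both operations exact and symmetric monoidal, so permissible by \cref{prop:descpreserve}---produces a nonzero spectrum $R = KU_{(\ell)}[\tfrac1n](\zeta_n)$ (nonzero as $\ell\nmid n$) equipped with a nontrivial action of the $\ell$-group $P$ (nontrivial because $P$ surjects onto a nontrivial $\ell$-Sylow of $V_GK$, which acts faithfully), and this would be locally descendable in $\Fun(BP,\Sp_{(\ell)})$. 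I would then derive a contradiction from the fact that local descendability forces a horizontal vanishing line in the $R$-Adams spectral sequence of the unit (\cref{prop:locdescequiv}), whereas a nontrivial Borel $\ell$-group action on height-one $K$-theory is too large for this: concretely, the Tate construction $R^{tP}$ is nonzero---its homotopy is computed by a spectral sequence with $E_2 = \hat H^\ast(P;\pi_\ast R)$, which is nonzero since $R\neq 0$ is $\ell$-local and $P$ is an $\ell$-group---reflecting the chromatic blueshift of $K$-theory along $\ell$-groups, and this nonvanishing is incompatible with the vanishing line.

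The main obstacle is precisely this last step: making rigorous that a nontrivial Borel action of a nontrivial finite $\ell$-group on an $\ell$-local $K$-theory module is never locally descendable. Equivalently, one must argue that the $\ell$-complete $K$-theory of $B(\bbZ/\ell)$ has unbounded Adams filtration---so that no finite stage of the $R$-Adams tower resolves the unit of $\Fun(BP,\Sp_{(\ell)})$---which is the technical heart of the theorem; the rest is bookkeeping around \cref{thm:desc}, \cref{prop:descpreserve}, and \cref{lem:borelaway}.
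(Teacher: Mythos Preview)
Your sufficiency argument is essentially the paper's: reduce via \cref{thm:desc} to cyclic $K\cong C_n$, pass from $W_GK$ to $V_GK$, invoke \cref{lem:borelaway}, and finish by noting that the homotopy fixed points are a free $KU[\tfrac{1}{n}]$-module so \cref{cor:locdtrans} reduces to local descendability of $KU[\tfrac{1}{n}]$. One small gap: you assert that the $W_GK$-action factors through $V_GK$, but a priori the action is through $\Aut(BK)$, not $\Aut(K)$; the paper observes that since $\pi_1\Aut(BC_n)\cong C_n$ has order invertible in $KU[\tfrac{1}{n}](\zeta_n)$, the action factors through $\pi_0\Aut(BC_n)\cong\Aut(C_n)$.

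For necessity you correctly isolate the crux, but the argument you sketch does not close. The suggestion that $R^{tP}\neq 0$ contradicts a horizontal vanishing line is not right as stated: even for the \emph{trivial} action of a $p$-group $P$ on $KU_{(p)}$ one has nonzero Tate construction, yet that object \emph{is} locally descendable in $\Fun(BP,\Sp)$ by \cref{thm:desc}. So nonvanishing Tate of $R$ itself cannot be the obstruction. You correctly flag that the real issue is the $R$-Adams filtration of the \emph{unit} $i(S)$, but you do not supply an argument.

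The paper's route is different and clean. Choose a cyclic $p$-subgroup $D\subset N_GC$ mapping nontrivially to $\Aut(C)$, and work in $\Fun(BD,\Sp)$. The key observation is the \emph{additive} splitting $KU[\tfrac{1}{n}](\zeta_n)\simeq \Aut(C)_+\otimes KU[\tfrac{1}{n}]$ as $D$-spectra; since the image of $D$ in $\Aut(C)$ is nontrivial, $\Aut(C)$ is a free $\bar D$-set, hence induced from a proper subgroup. It follows that the composite $\Phi^D b_D\colon\Fun(BD,\Sp)\to\Sp$ annihilates everything in $\Thick^\otimes(KU[\tfrac{1}{n}](\zeta_n))$. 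On the other hand, $KU[\tfrac{1}{n}](\zeta_n)$ and $i(KU[\tfrac{1}{n}])$ have the same Bousfield class, so $L_Ri(S)\simeq i(L_{KU[\tfrac{1}{n}]}S)$; and for a cyclic $p$-group $D$ with $p\nmid n$, one checks directly that $\Phi^Db_D\bigl(i(L_{KU[\tfrac{1}{n}]}S)\bigr)\neq 0$. Thus $L_Ri(S)\notin\Thick^\otimes(R)$, contradicting \cref{prop:locdescequiv}. This replaces your Tate heuristic with a sharp test functor $\Phi^Db_D$ that separates $\Thick^\otimes(R)$ from the local sphere.
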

\begin{proof}
Suppose that $G$ is $KU$-allowable. By \cref{thm:desc}, we must show that $\Phi^K KU_G \in \Fun(BW_GK,\Sp)$ is locally descendable for all subgroups $K\subset G$. By \cref{lem:geofixed}, we need only consider the case where $K = C$ is a cyclic subgroup of order $n$. Here $\Phi^C KU_G = KU[\tfrac{1}{n}](\zeta_n)$ is an $\Aut(C)$-Galois extension of $KU[\tfrac{1}{n}]$, and the Weyl group $W_GC$ acts on $KU[\tfrac{1}{n}](\zeta_n)$ through a natural $\Aut(BC)$-action extending its $\Aut(C)$-action. As $\Aut(BC)$ is $1$-truncated and the order of $\pi_1 \Aut(BC) = C$ is invertible in $KU[\tfrac{1}{n}](\zeta_n)$, the $\Aut(BC)$-action on $KU[\tfrac{1}{n}](\zeta_n)$ factors through the truncation $\Aut(BC)\rightarrow\pi_0 \Aut(BC)\cong\Aut(C)$. Thus $W_GC$ acts on $KU[\tfrac{1}{n}](\zeta_n)$ through the natural map $W_GC\rightarrow V_GC$, and it suffices to show that $KU[\tfrac{1}{n}](\zeta_n)\in\Fun(BV_GC,\Sp)$ is locally descendable.

By assumption, the order of $V_GC$ is invertible in $KU[\tfrac{1}{n}](\zeta_n)$, so by \cref{lem:borelaway} it suffices to show that the ordinary spectrum $KU[\tfrac{1}{n}](\zeta_n)^{\h V_GC}$ is locally descendable. This assumption moreover implies that $\pi_\ast (KU[\tfrac{1}{n}](\zeta_n)^{\h V_GC})\cong (\pi_\ast KU[\tfrac{1}{n}](\zeta_n))^{V_GC}$; this is in particular a free $\pi_\ast KU[\tfrac{1}{n}]$-module, and thus $KU[\tfrac{1}{n}](\zeta_n)^{\h V_GC}$ is a free $KU[\tfrac{1}{n}]$-module. Hence by \cref{cor:locdtrans} it suffices to verify that $KU[\tfrac{1}{n}]$ is locally descendable. This is the classical example of a locally descendable spectrum: \cite[Corollary 4.7]{bousfield1979localization} shows that $KU$-localization is smashing and $L_{KU}S\in \Thick^\otimes(KU)$, so the same is true for $KU[\tfrac{1}{n}]$, and local descendability then follows from \cref{prop:locdescequiv}.

Now suppose that $G$ is not $KU$-allowable. We may thus find a cyclic subgroup $C\subset G$ of order $n$, prime $p$ not dividing $n$, and cyclic $p$-subgroup $D\subset N_GC$ for which the composite $D\rightarrow N_GC\rightarrow\Aut(C)$ is nonzero. Write $\Phi^C KU_G = KU[\tfrac{1}{n}](\zeta_n)$. By \cref{prop:descpreserve}, to show that $KU_G\in\Sp^G$ is not locally descendable it suffices to show that $KU[\tfrac{1}{n}](\zeta_n)\in\Fun(BD,\Sp)$ is not locally descendable. In the following, abbreviate $L = L_{KU[\frac{1}{n}](\zeta_n)}$.

For a spectrum $X$ write $i(X)\in\Fun(BD,\Sp)$ for the corresponding object with trivial action. Then $i(S)$ is the unit of $\Fun(BD,\Sp)$, so by \cref{prop:locdescequiv} it suffices to show that $Li(S)\notin\Thick^\otimes(KU[\tfrac{1}{n}](\zeta_n))$. Observe that we may additively identify $KU[\tfrac{1}{n}](\zeta_n)\simeq \Aut(C)_+\otimes KU[\tfrac{1}{n}]$. As the image of $D$ in $\Aut(C)$ is nontrivial, it follows that if $X\in\Thick^\otimes(KU[\tfrac{1}{n}](\zeta_n))$ then $\Phi^D b_D(X) = 0$, so it suffices to verify that $\Phi^D b_D(Li(S)) \neq 0$.

Observe that $KU[\tfrac{1}{n}](\zeta_n)\in\Fun(BD,\Sp)$ has the same Bousfield class as $i(KU[\tfrac{1}{n}])$. By  \cref{prop:descpreserve}, as $KU[\tfrac{1}{n}]$ is locally descendable, we find $Li(S)\simeq L_{i(KU[\frac{1}{n}])}i(S)\simeq i(L_{KU[\frac{1}{n}]}S)$. As $D$ is a cyclic $p$-group and $p\nmid n$, it is easily verified that $\Phi^D b_D(i(L_{KU[\frac{1}{n}]}S))\neq 0$, see for instance \cite[Proposition 5.36]{mathewnaumannnoel2019derived}, and this finishes the proof.
\end{proof}

A good supply of $KU$-allowable groups is given by the following.

\begin{lemma}\label{lem:nilallow}
Suppose that $G$ is nilpotent. Then $G$ is $KU$-allowable.
\end{lemma}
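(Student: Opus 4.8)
The plan is to unwind the definition of $KU$-allowable and reduce to an elementary fact about $p$-groups. Recall that $G$ is $KU$-allowable precisely when, for every cyclic subgroup $C\subseteq G$ of order $n$, the order of $V_GC = \im(N_GC\to\Aut(C))$ is invertible in $\bbZ[\tfrac1n]$; equivalently, every prime dividing $|V_GC|$ also divides $n$. Since $G$ is nilpotent it is the internal direct product $G=\prod_p G_p$ of its Sylow subgroups, and every subgroup inherits such a splitting: in particular the cyclic $C$ decomposes as $C=\prod_p C_p$ with $C_p = C\cap G_p$ cyclic of $p$-power order. First I would record that $N_G(\bs)$ and $\Aut(\bs)$ are compatible with these decompositions — an element of $\prod_p G_p$ normalizes $\prod_p C_p$ iff each component normalizes the corresponding $C_p$, so $N_GC=\prod_p N_{G_p}C_p$, and $\Aut(C)=\prod_p\Aut(C_p)$ because the $C_p$ have pairwise coprime orders — and that the conjugation map $N_GC\to\Aut(C)$ respects these splittings. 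Hence $V_GC\cong\prod_p V_{G_p}C_p$.

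Next I would fix a prime $p$ and bound $|V_{G_p}C_p|$. If $C_p$ is trivial, $V_{G_p}C_p$ is trivial and contributes nothing. Otherwise $p\mid n$, and it suffices to note that $V_{G_p}C_p$ is a $p$-group: it is a quotient of the subgroup $N_{G_p}C_p$ of the $p$-group $G_p$. So $|V_{G_p}C_p|$ is a power of $p$, and $p$ divides $n$. Multiplying over all $p$ gives that $|V_GC|=\prod_p|V_{G_p}C_p|$ has only prime factors dividing $n$, which is exactly the required invertibility in $\bbZ[\tfrac1n]$.

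There is no serious obstacle; the content is entirely in the bookkeeping that the Sylow decomposition of a nilpotent $G$ passes to subgroups, normalizers, and automorphism groups, and in the single observation — which is where nilpotence is genuinely used, in contrast to the non-example $S_3$ implicit in the proof of \cref{prop:kuall} — that inside a $p$-group the image of $N_{G_p}C_p$ in $\Aut(C_p)$ is again a $p$-group. The one point worth stating carefully is the identification $\Aut(C)\cong\prod_p\Aut(C_p)$ and its compatibility with the conjugation action, since this is what lets the global statement decouple prime by prime.
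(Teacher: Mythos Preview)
Your proof is correct and follows essentially the same approach as the paper: decompose $G$, $C$, and $V_GC$ along Sylow subgroups using nilpotence, then observe that each $V_{G_p}C_p$ is a $p$-group which is nontrivial only when $p\mid |C|$. Your write-up is slightly more explicit about why $N_GC$ and $\Aut(C)$ split as products, but the argument is otherwise identical.
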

\begin{proof}
As $G$ is a finite nilpotent group, we may write $G = \prod_p G_{(p)}$ with $G_{(p)}\subset G$ the Sylow $p$-subgroup. It follows that if $C\subset G$ is any subgroup, then $C = \prod_{p}C_{(p)}$ with $C_{(p)} = C \cap G_{(p)}$, and that $V_GC = \prod_p V_{G_{(p)}}C_{(p)}$. Thus if a prime $p$ divides the order of $V_GC$, then $V_{G_{(p)}}C_{(p)}\neq e$, implying that $C_{(p)} \neq e$ and thus that $p$ divides the order of $C$. As every prime dividing the order of $V_GC$ divides the order of $C$, we find that the order of $V_GC$ is invertible in $\bbZ[\tfrac{1}{|C|}]$, and so $G$ is $KU$-allowable as claimed.
\end{proof}

The following now suffices to prove \cref{prop:kul}.

\begin{prop}\label{prop:ku1proof}
Let $\calF$ be a family of groups, all of which are $KU$-allowable.
\begin{enumerate}
\item Bousfield localization in $\Glob_\calF$ with respect to $\textbf{KU}$ is smashing, agrees with nilpotent completion, and preserves ultracommutative ring spectra;
\item If $G$ is $KU$-allowable, then $KU_G\in\Sp^G$ is locally descendable and $KU_G$-localization preserves $G\hyp\bbE_\infty$ ring spectra;
\item $U_G L_{\textbf{KU}}X\simeq L_{KU_G}U_G X$ for all $G\in\calF$ and $X\in\Glob_\calF$.
\end{enumerate}
\end{prop}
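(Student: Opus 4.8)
The plan is to assemble the three parts from \cref{prop:kuall}, \cref{thm:globdesc}, and \cref{prop:locpresg}; essentially all of the real content has already been extracted, so what remains is bookkeeping about quantifiers.

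I would start with part~(2). That $KU_G\in\Sp^G$ is locally descendable when $G$ is $KU$-allowable is exactly \cref{prop:kuall}. For preservation of $G\hyp\bbE_\infty$ ring spectra, note that by \cref{prop:locdescequiv} local descendability of $KU_G$ forces $KU_G$-localization to be smashing and to agree with $KU_G$-nilpotent completion, so that every object of $\Sp^G$ is $KU_G$-convergent; this is precisely the hypothesis under which \cref{prop:locpresg} gives the conclusion.

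For parts~(1) and~(3) I would apply \cref{thm:globdesc} with $R = \textbf{KU}$. Its hypothesis asks that $\Phi^K\textbf{KU}\in\Fun(BW_GK,\Sp)$ be locally descendable for all $G\in\calF$ and $K\subset G$; since $\Phi^K\textbf{KU}\simeq\Phi^K KU_G$, this is exactly what is shown in the course of the proof of \cref{prop:kuall} when $G$ is $KU$-allowable --- one does need $G$, not merely $K$, to be $KU$-allowable, since the argument there runs through the $W_GC$-action on $\Phi^C KU_G$ via $V_GC$ and uses invertibility of $|V_GC|$, but this is covered since every member of $\calF$ is assumed $KU$-allowable. \cref{thm:globdesc} then yields at once that $\textbf{KU}$-localization in $\Glob_\calF$ is smashing and agrees with $\textbf{KU}$-nilpotent completion, that each $U_G\textbf{KU}\simeq KU_G$ is locally descendable (re-proving part~(2)), and that $U_GL_{\textbf{KU}}X\simeq L_{U_G\textbf{KU}}U_GX = L_{KU_G}U_GX$ for all $X\in\Glob_\calF$, which is part~(3). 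Finally, since $\textbf{KU}$ is an ultracommutative ring and the fact that $\textbf{KU}$-localization agrees with $\textbf{KU}$-nilpotent completion means every global spectrum is $\textbf{KU}$-convergent, the global case of \cref{prop:locpresg} gives preservation of ultracommutative ring spectra, completing~(1).

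There is no serious obstacle here, since \cref{prop:kuall} and \cref{thm:globdesc} already do the work; the only care needed is in matching quantifiers --- checking that the hypothesis ``$G\in\calF$ and $K\subset G$'' of \cref{thm:globdesc} is supplied by the $KU$-allowability of every member of $\calF$ (using also that a global family is closed under passage to subgroups), and that the two derivations of ``$KU_G$ is locally descendable'', directly from \cref{prop:kuall} and as a byproduct of \cref{thm:globdesc}, are consistent.
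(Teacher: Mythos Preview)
Your proposal is correct and follows essentially the same route as the paper, which simply writes: ``Given \cref{prop:kuall}, these follow from \cref{thm:desc}, \cref{thm:globdesc}, and \cref{prop:locpresg}.'' You have unpacked this one-liner carefully, in particular noting that the hypothesis of \cref{thm:globdesc} is verified in the course of the proof of \cref{prop:kuall} (which is where \cref{thm:desc} enters), and your attention to the quantifiers is well placed.
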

\begin{proof}
Given \cref{prop:kuall}, these follow from \cref{thm:desc}, \cref{thm:globdesc}, and \cref{prop:locpresg}.
\end{proof}

At this point, we have provided everything needed in the body of the paper. The remainder of the appendix is dedicated to giving some additional examples of the theory developed above. We start by noting that the techniques of \cref{ssec:isotropy} may be used to give more quantitative information about equivariant $K$-theory localizations.

\begin{prop}\label{prop:kulocid}
Let $G$ be a $KU$-allowable group and let $X$ be a $G$-spectrum. Then for $K\subset G$, we may identify
\[
\Phi^K L_{KU_G}X\simeq\begin{cases} L_{KU[\frac{1}{n}]}\left(\Phi^K X\right)&K\cong C_n,\\0&\text{otherwise},
\end{cases}
\]
and $L_{KU_G}X$ admits a finite filtration with
\[
\gr (L_{KU_G}X)^G \simeq \bigoplus_{(C) \text{ cyclic}} L_{KU[\tfrac{1}{|C|}]}(\Phi^C X)_{\h W_GC},
\]
this sum being over conjugacy classes of cyclic subgroups of $G$.
\end{prop}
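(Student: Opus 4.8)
The plan is to push the established local descendability of $KU_G$ through geometric fixed points and then read off the fixed-point spectrum via the isotropy-separation filtration. Since every group in sight is $KU$-allowable, \cref{prop:kuall} gives that $KU_G\in\Sp^G$ is locally descendable, so in particular its localization is smashing. The functor $\Phi^K\colon\Sp^G\to\Fun(BW_GK,\Sp)$ is exact and symmetric monoidal, so \cref{prop:descpreserve} applies directly: it yields local descendability of $\Phi^K KU_G$ in $\Fun(BW_GK,\Sp)$ together with the identification $\Phi^K L_{KU_G}X\simeq L_{\Phi^K KU_G}\Phi^K X$ in that category. This already contains the first displayed formula, provided we identify the localizing object.

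Next I would identify $\Phi^K KU_G$ using \cref{lem:geofixed}: it vanishes unless $K\cong C_n$ is cyclic, in which case it is $KU[\tfrac1n](\zeta_n)$ with its residual $W_GK$-action. The vanishing case immediately gives $\Phi^K L_{KU_G}X\simeq 0$. In the cyclic case I would reuse the observation from the proof of \cref{prop:kuall} that $KU[\tfrac1n](\zeta_n)$ splits additively as $\Aut(C_n)_+\otimes KU[\tfrac1n]$, hence has the same Bousfield class in $\Fun(BW_GK,\Sp)$ as $KU[\tfrac1n]$ equipped with the trivial action; so $L_{\Phi^K KU_G}$ agrees with localization at trivial-action $KU[\tfrac1n]$. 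Since $KU[\tfrac1n]$ is locally descendable (the classical fact recalled in the proof of \cref{prop:kuall}) and the trivial-action functor $\Sp\to\Fun(BW_GK,\Sp)$ is exact, symmetric monoidal, and limit-preserving, this localization is smashing and compatible with forgetting the $W_GK$-action; applied to $\Phi^K X$ and then forgotten, it recovers $L_{KU[\frac1n]}\Phi^K X$, which is the stated identification (understood equivariantly via the smashing description, or as spectra after forgetting).

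Finally, for the graded-pieces statement I would invoke \cref{cor:filtfix}, which equips $(L_{KU_G}X)^G$ with a natural finite filtration whose associated graded is $\bigoplus_{(K)}(\Phi^K L_{KU_G}X)_{\h W_GK}$, summed over conjugacy classes of subgroups of $G$. Substituting the first claim kills every non-cyclic summand, and for $K\cong C_n$ cyclic the summand is $(L_{KU[\frac1n]}\Phi^K X)_{\h W_GK}$; since $KU[\tfrac1n]$-localization is smashing it commutes with the homotopy-orbit colimit, so this equals $L_{KU[\frac1{|C|}]}(\Phi^C X)_{\h W_GC}$, matching the assertion. The step I expect to require the most care is bookkeeping of the $W_GK$-actions: ensuring \cref{prop:descpreserve} is applied to $\Phi^K$ as a functor valued in $\Fun(BW_GK,\Sp)$ rather than in $\Sp$, and that the Bousfield-class comparison of $\Phi^K KU_G$ with trivial-action $KU[\tfrac1n]$ takes place in $\Fun(BW_GK,\Sp)$; but both points are essentially already handled inside the proof of \cref{prop:kuall}, so the argument is largely assembly.
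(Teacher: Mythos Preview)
Your proposal is correct and follows the same approach as the paper: apply \cref{prop:descpreserve} to the symmetric monoidal functor $\Phi^K$ (using \cref{prop:kuall} to ensure $KU_G$ is locally descendable), identify $\Phi^K KU_G$ via \cref{lem:geofixed}, and read off the filtration from \cref{cor:filtfix}. The paper's proof is a one-sentence citation of exactly these three results; you have simply filled in the details the paper leaves implicit, notably the Bousfield-class comparison between $KU[\tfrac1n](\zeta_n)$ and trivial-action $KU[\tfrac1n]$ in $\Fun(BW_GK,\Sp)$, and the fact that smashing localization commutes with homotopy orbits.
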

\begin{proof}
The identification of $\Phi^K L_{KU_G}X$ follows from \cref{prop:descpreserve} and \cref{lem:geofixed}, and the filtration from \cref{cor:filtfix}.
\end{proof}

If $G$ is a $p$-group, then $(KU_G)_{(p)}$ is Bousfield equivalent to $b_G(KU_p)$. This suggests looking at $b_G(E_n)$-localization, where $E_n$ is a height $n$ Morava $E$-theory, as a higher chromatic analogue of $(KU_G)_{(p)}$-localization. We start by recalling the smash product theorem \cite[Chapter 7]{ravenel1992nilpotence} in its strong form. Say that a Landweber exact spectrum $R$ is of finite height if there exists some $n \geq 0$ such that $R_\ast/(p,v_1,\ldots,v_n) = 0$ at all primes $p$.

\begin{lemma}\label{lem:lexdesc}
Let $R$ be a Landweber exact ring spectrum of finite height. Then $R$ is locally descendable.
\end{lemma}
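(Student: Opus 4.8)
The plan is to verify condition (7) of \cref{prop:locdescequiv}. Since the category $\Sp$ of spectra is a Brown category \cite[\S 4.1]{hoveypalmieristrickland1997axiomatic}, condition (7) is equivalent to local descendability, so this suffices. First I would unwind what (7) asks for here: for compact $F\in\Sp$ we have $\Sp(F,C(R))\simeq\Sp(S,DF\otimes C(R))$, and as $DF$ ranges over all finite spectra this is the $R$-based Adams spectral sequence of an arbitrary finite spectrum. Thus the goal reduces to showing that the $R$-based Adams spectral sequence of a finite spectrum collapses at a finite page with a horizontal vanishing line whose intercept does not depend on the input.

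Next I would record the algebraic setup. Because $R$ is Landweber exact, $R_*R=R_*\otimes_{MU_*}MU_*MU\otimes_{MU_*}R_*$ is flat over $R_*$ on either side, $(R_*,R_*R)$ is a Hopf algebroid, $R_*X$ is a comodule over it for every $X$, and the $E_2$-page of the $R$-based Adams spectral sequence of $X$ is $\Ext^{*,*}_{R_*R}(R_*,R_*X)$. The presenting groupoid scheme represents $\Spec R_*\times_{\mathcal{M}_{\mathrm{fg}}}\Spec R_*$, and the hypothesis $R_*/(p,v_1,\dots,v_n)=0$ at every prime says precisely that $\Spec R_*\to\mathcal{M}_{\mathrm{fg}}$ factors through the open substack $\mathcal{M}^{\leq n}_{\mathrm{fg}}$ of formal groups of height $\leq n$, the factorization being flat by Landweber exactness.

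The key input is that $\mathcal{M}^{\leq n}_{\mathrm{fg}}$ has finite cohomological dimension: there is an integer $d=d(n)$, depending only on $n$ and not on the prime or the comodule, so that $\Ext^{s,*}_{R_*R}(R_*,M)=0$ for all $s>d$ and all $(R_*,R_*R)$-comodules $M$. This is one of the standard consequences of the smash product theorem \cite[Chapter 7]{ravenel1992nilpotence}; the assumption that the height is finite at \emph{all} primes is exactly what makes the bound uniform in $p$, hence applicable to the single Hopf algebroid $(R_*,R_*R)$. Granting this, the $E_2$-page of the $R$-based Adams spectral sequence of any spectrum vanishes in all filtrations $s>d$, and a first-quadrant bidegree count forces every differential $d_r$ with $r>d$ to vanish, since its source is nonzero only for $s\leq d$ and its target then lies in filtration $s+r>d$. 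So the spectral sequence collapses at $E_{d+1}$ with horizontal vanishing line $s\leq d$, uniformly in the input; this is condition (7), and the lemma follows.

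The hard part will be the uniform finite cohomological dimension statement, which is where both hypotheses are essential: Landweber exactness is what identifies the $E_2$-term and supplies the flat cover of $\mathcal{M}^{\leq n}_{\mathrm{fg}}$, while the finiteness of height at every prime is what upgrades the prime-by-prime cohomological dimension bound to a uniform one, and hence to one valid for the non-$p$-local Hopf algebroid $(R_*,R_*R)$. Once that bound is in place, everything else is a formal appeal to \cref{prop:locdescequiv}.
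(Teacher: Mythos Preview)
Your reduction to a horizontal vanishing line via \cref{prop:locdescequiv} is fine, and is also what the paper does. The gap is in the ``key input'': the claim that $\mathcal{M}^{\leq n}_{\mathrm{fg}}$ has finite cohomological dimension, i.e.\ that $\Ext^{s,*}_{R_*R}(R_*,M)=0$ for all $s>d(n)$ and all comodules $M$, is false at small primes. Already at height $1$ and $p=2$ the Morava stabilizer group $\bbZ_2^\times\cong\bbZ/2\times\bbZ_2$ has $2$-torsion and hence infinite mod~$2$ cohomological dimension; correspondingly the $E(1)$-based (equivalently $KU_{(2)}$-based) Adams $E_2$-page carries nonzero classes in arbitrarily high Adams filtration. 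More generally, whenever $p-1$ divides $n$ the height-$n$ Morava stabilizer group contains $p$-torsion, so there is no $E_2$ vanishing line at such primes. Your appeal to the smash product theorem does not rescue this: that theorem yields a vanishing line only at some \emph{finite page} of the spectral sequence, not at $E_2$, and this is a genuinely topological fact (nilpotence machinery), not a statement about comodule Ext.

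The paper's proof proceeds exactly along the route you outline up to this point, but then splits into two cases. For large primes $p>n+1$ one does have an $E_2$ vanishing line with intercept at most $n^2+n$, by \cite[Theorem 5.1]{hoveysadofsky1999invertible}; this handles all but finitely many primes with a single uniform bound. For each of the remaining primes $p\leq n+1$ one invokes \cite[Proposition 6.5]{hoveystrickland1999morava}, which gives a vanishing line at a finite page (depending on $p$) of the $E(m)$-based Adams spectral sequence; a change-of-rings argument via \cite{hoveystrickland2003comodules} identifies this with the $R_{(p)}$-based spectral sequence from $E_2$ on. Since there are only finitely many such primes, one can take the maximum. The passage from prime-by-prime bounds to a single bound for $R$ itself uses that $p$-localization is exact, so the $R_{(p)}$-based spectral sequence is the $p$-localization of the $R$-based one.
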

\begin{proof}
This is clear if $R\simeq 0$, so we may suppose that $R$ is nonzero. By \cref{prop:locdescequiv}, it suffices to show that for all spectra $X$, the $R$-based Adams spectral sequence for $X$ collapses at a finite page with a horizontal vanishing line which is independent of $X$. As $p$-localization is exact, we may identify the $R_{(p)}$-based Adams spectral sequence for $X$ as the $p$-localization of the $R$-based Adams spectral sequence for $X$. It therefore suffices to show that the $R_{(p)}$-based Adams spectral sequence for $X$ collapses at a finite page with a horizontal vanishing line which is independent of $X$, as well as of $p$ for all sufficiently large primes $p$.

Fix a prime $p$, and let $m\leq n$ be maximal for which $R_\ast/(p,v_1,\ldots,v_{m-1})\neq 0$. Applying the theory of \cite{hoveystrickland2003comodules} to the zigzag $R_{(p)}\rightarrow R_{(p)} \otimes E(m)\leftarrow E(m)$, we find that the $R_{(p)}$-based Adams spectral sequence is isomorphic to the $E(m)$-based Adams spectral sequence from the $E_2$ page on.

If $p > m + 1$, then the $E(m)$-based Adams spectral sequence has a horizontal vanishing line on the $E_2$-page of $y$-intercept at most $m^2 + m$ \cite[Theorem 5.1]{hoveysadofsky1999invertible}. This gives a horizontal vanishing line in the $R_{(p)}$-based Adams spectral sequence for $p > n + 1$ which is independent of such $p$.

It now suffices to show that the $R_{(p)}$-based Adams spectral sequence has some horizontal vanishing line for each of the finitely many primes $p \leq n + 1$. As above, we may replace the $R_{(p)}$-based Adams spectral sequence with the $E(m)$-based Adams spectral sequence. The lemma then follows from \cite[Proposition 6.5]{hoveystrickland1999morava}.
\end{proof}

\begin{lemma}\label{lem:philex}
Let $G$ be a finite group. Let $R$ be a $G$-ring spectrum which admits Thom isomorphisms for complex representations, and suppose moreover that $R^G$ is a Landweber exact ring spectrum of finite height. Then the ordinary spectrum $\Phi^G R$ is Landweber exact and locally descendable.
\end{lemma}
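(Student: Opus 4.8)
The plan is to extract $\Phi^G R$ as a localization of $R^G$ and then transport finite-height Landweber exactness and local descendability across that localization. First I would observe that the Thom isomorphisms for complex representations give, for the family $\calP$ of proper subgroups of $G$, an identification making $\widetilde{E}\calP$ ``$R$-invertible'': concretely, if $V$ is a complex $G$-representation with $V^G = 0$, then $S^V$ becomes invertible after smashing with $R$ (its Euler class $e(V)$ acts invertibly, since the Thom class does), and as $\widetilde{E}\calP \simeq \colim_n S^{nV}$ for a suitable such $V$ we get $\widetilde{E}\calP \otimes R \simeq e(V)^{-1}R$ as $R$-modules with $G$-action. Taking $G$-fixed points and invoking \cref{lem:gfp}, this yields
\[
\Phi^G R \simeq (\widetilde{E}\calP \otimes R)^G \simeq (R^G)[e(V)^{-1}],
\]
the localization of $R^G$ at the Euler class $e(V) \in \pi_0 R^G$ (using that $(-)^G$ commutes with this filtered colimit, and multiplicativity to pull the localization outside fixed points). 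So $\Phi^G R$ is a localization of the $\bbE_\infty$ ring $R^G$ away from a single homotopy element.

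Next I would record that a localization $A \to A[x^{-1}]$ of a Landweber exact ring spectrum of finite height is again Landweber exact of finite height: localization at $x$ is exact, preserves the Landweber exactness condition on $\pi_\ast$ (inverting an element of a Landweber exact comodule algebra keeps it flat over $MU_\ast$ and keeps the regularity of the $(p, v_1, \dots)$-sequences, only possibly making more of the $R_\ast/(p,v_1,\dots,v_n)$ vanish), and certainly does not increase the height. This gives the first assertion, that $\Phi^G R$ is Landweber exact of finite height. For local descendability, I would simply apply \cref{lem:lexdesc} to $\Phi^G R$: any Landweber exact ring spectrum of finite height is locally descendable. That finishes the proof.

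The step I expect to be the main obstacle is making precise the identification $\widetilde{E}\calP \otimes R \simeq e(V)^{-1} R$ and hence $\Phi^G R \simeq R^G[e(V)^{-1}]$ — i.e.\ checking that the Thom-isomorphism hypothesis really does force $\widetilde{E}\calP$-localization to be inversion of an Euler class, and that this is compatible with taking genuine fixed points. One needs to pick $V$ so that $V^H \neq 0$ exactly for $H \in \calP$ (e.g.\ $V = \widetilde{\bbC}[G/H]$ summed over maximal subgroups, or the reduced regular complex representation), verify $\widetilde{E}\calP \simeq \colim(S^0 \xrightarrow{e(V)} S^V \xrightarrow{e(V)} S^{2V} \to \cdots)$, and then use that smashing with $R$ turns $e(V)$ into an honest unit because the Thom iso identifies $R \otimes S^V \simeq R$ carrying $e(V)$ to an invertible class. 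Everything after that identification is routine: exactness of localization, stability of the Landweber criterion under inverting an element, and a citation of \cref{lem:lexdesc}.
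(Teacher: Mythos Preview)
Your proposal is correct and follows essentially the same route as the paper: identify $\Phi^G R \simeq R^G[e^{-1}]$ for $e$ the Euler class of the reduced complex regular representation (the paper simply cites tom Dieck and Mathew--Naumann--Noel for this), observe that localization preserves finite-height Landweber exactness, and invoke \cref{lem:lexdesc}. One small wording issue: the parenthetical ``its Euler class $e(V)$ acts invertibly, since the Thom class does'' is not quite right---the Thom class is a unit in $R^0(S^V)$, but its restriction $e(V)\in R^0(S^0)$ need not be; the correct statement is that the Thom isomorphism identifies the transition maps in $\colim_n (R\otimes S^{nV})^G$ with multiplication by $e(V)$, whence the colimit is $R^G[e(V)^{-1}]$.
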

\begin{proof}
As $R$ admits Thom isomorphisms for complex representations, we may identify $\Phi^G R \simeq R^G[e^{-1}]$ where $e$ is the oriented Euler class of the reduced complex regular representation of $G$, see \cite[Section 7.4]{dieck1979transformation} or \cite[Section 5]{mathewnaumannnoel2019derived}. As localization is exact, it follows that $\Phi^G R$ is Landweber exact and finite height, so we may conclude by \cref{lem:lexdesc}.
\end{proof}

We now consider the spectra $b_G(E_n)$. We focus on the case where $G$ is an elementary abelian $p$-group, as here all Weyl groups act trivially. We expect that the following observations extend to all abelian $p$-groups, but proving this would require developing additional techniques for determining when an object of $\Fun(BG,\Sp)$ is locally descendable.

\begin{prop}\label{prop:boreldesc}
Let $B$ be an elementary abelian $p$-group. Then $b_B(E_n)$ is locally descendable and $b_B(E_n)$-localization preserves $B\hyp\bbE_\infty$ rings.
\end{prop}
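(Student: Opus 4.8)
The plan is to run the isotropy-separation criterion of \cref{thm:desc}. Since $b_B(E_n)$ is a ring $B$-spectrum, it is locally descendable as soon as $\Phi^K b_B(E_n)$ is locally descendable as an object of $\Fun(BW_BK,\Sp)$ for every subgroup $K\subseteq B$, and since $B$ is abelian we have $W_BK=B/K$. Once local descendability is known, the statement about $B\hyp\bbE_\infty$ rings is immediate from \cref{prop:locpresg}: local descendability forces all $B$-spectra to be $b_B(E_n)$-convergent. So the entire content is the local descendability of the geometric fixed points, and I would organize it around three points: (i) identifying $\Phi^K b_B(E_n)$ up to homotopy; (ii) checking $B/K$ acts trivially on it; (iii) checking the underlying plain spectrum is locally descendable.

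For (i) and (iii): restriction of a cofree spectrum is cofree, so $\res^B_K b_B(E_n)\simeq b_K(E_n)$, using that $\res^B_K EB$ is a free contractible $K$-space and hence a model for $EK$; as geometric $K$-fixed points depend only on the restriction to $K$, this gives $\Phi^K b_B(E_n)\simeq\Phi^K b_K(E_n)$. I would then invoke \cref{lem:philex} with $G=K$ and $R=b_K(E_n)$. Its hypotheses hold: $b_K(E_n)$ admits Thom isomorphisms for complex $K$-representations because $E_n$ is complex orientable, so for a complex representation $V$ the $K$-equivariant sphere $S^V$ is $E_n$-orientable in $\Fun(BK,\Sp)$, and applying the cofree functor $b_K$ promotes this to an equivalence $b_K(E_n)\otimes S^V\simeq\Sigma^{2\dim_{\bbC}V}b_K(E_n)$ in $\Sp^K$; moreover $b_K(E_n)^K\simeq F(BK_+,E_n)=E_n^{BK}$ has homotopy the finitely generated free $\pi_\ast E_n$-module $E_n^\ast BK$, so it splits as a finite wedge of copies of $E_n$ and is in particular a Landweber exact ring spectrum of finite height. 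Hence $\Phi^K b_B(E_n)$ is Landweber exact of finite height, and \cref{lem:lexdesc} shows it is locally descendable as a plain spectrum.

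The remaining point (ii) is where the hypothesis that $B$ is \emph{elementary} abelian enters, and it is the main obstacle. Choosing a complement $B\cong K\times L$ with $L\cong B/K$, the cofree spectrum $b_B(E_n)=F((EK\times EL)_+,iE_n)$ is built from the $K$-free space $EK$ and the $L$-free space $EL$, with $L$ acting only through $EL$; passing to $\Phi^K$ collapses the $K$-direction and leaves an $L$-equivariant structure assembled entirely out of $EL$, which is contractible as a plain space, hence trivial in $\Fun(BL,\text{Spaces})$. Making this precise — i.e.\ showing that $\Phi^K b_B(E_n)$, viewed in $\Fun(BW_BK,\Sp)$, carries the trivial $B/K$-action, equivalently lies in the image of the trivial-action inclusion $i\colon\Sp\to\Fun(BL,\Sp)$ (using that the composite of the forgetful functor with $b_L$ is the identity) — requires some care about how $\Phi^K$ interacts with the cofree construction and with function spectra, and is exactly the step that breaks for a general abelian $B$, where no splitting exists and $\Phi^K b_B(E_n)$ genuinely carries a twisted $B/K$-action.

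With (ii) in hand, $\Phi^K b_B(E_n)$ is the trivial-action inflation of a locally descendable plain spectrum, hence locally descendable in $\Fun(BW_BK,\Sp)$ — apply the symmetric monoidal exact functor $i$ to a decomposition $C\bigl(\Phi^K b_B(E_n)\bigr)\sim(\text{constant})\oplus(\text{nilpotent})$. Then \cref{thm:desc} gives that $b_B(E_n)$ is locally descendable, and \cref{prop:locpresg} gives that $b_B(E_n)$-localization preserves $B\hyp\bbE_\infty$ rings, completing the proof.
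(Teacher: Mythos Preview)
Your proposal is correct and follows essentially the same route as the paper: apply \cref{thm:desc}, verify the hypotheses of \cref{lem:philex} via the identification $b_B(E_n)^A = E_n^{BA_+}$ (a free $E_n$-module, hence Landweber exact of finite height), and conclude the $B\hyp\bbE_\infty$ statement from \cref{prop:locpresg}. The only point of difference is emphasis: you isolate the triviality of the $W_BA$-action on $\Phi^A b_B(E_n)$ as ``the main obstacle'' and sketch an argument via a splitting $B\cong K\times L$, whereas the paper simply asserts in the sentence preceding the proposition that ``here all Weyl groups act trivially'' and then passes directly to checking local descendability of the \emph{ordinary} spectrum $\Phi^A b_B(E_n)$. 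Your caution here is well placed --- this step is indeed where the elementary abelian hypothesis enters, and your observation that $EL$ is contractible and hence equivalent to a point in $\Fun(BL,\text{Spaces})$ is the right ingredient, even if the interaction of $\Phi^K$ with the cofree construction takes work to make fully precise.
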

\begin{proof}
By \cref{thm:desc}, to show that $b_B(E_n)$ is locally descendable it suffices to show that the ordinary spectrum $\Phi^A b_B(E_n)$ is locally descendable for all $A\subset B$. Indeed, $b_B(E_n)^A = E_n^{BA_+}$ is a free $E_n$-module, and therefore \cref{lem:philex} applies. That $b_B(E_n)$-localization preserves $B\hyp\bbE_\infty$ rings now follows from \cref{prop:locpresg}.
\end{proof}

As with equivariant $K$-theory, it is possible to be more explicit. In the following, we take the convention that $E_0 = H\bbQ$ and $E_n = 0$ for $n < 0$. Given an elementary abelian $p$-group $A$, write $\rk(A)$ for the rank of $A$, i.e.\ the dimension of $A$ viewed as a vector space over $\bbF_p$.

\begin{lemma}[\cite{torii2002geometric}]\label{lem:heightshift}
Let $A$ be an abelian $p$-group of rank $t$. Then $\langle \Phi^A b_A(E_n)\rangle = \langle E_{n-t}\rangle$.
\end{lemma}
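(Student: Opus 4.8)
The plan is to compute $\Phi^A b_A(E_n)$ explicitly as a Landweber exact ring spectrum, read off its height, and apply the standard identification of Bousfield classes of Landweber exact spectra. First I would note that $b_A(E_n) = F(EA_+, i^\ast E_n)$ is a Borel $A$-ring spectrum which, being built from the complex-orientable spectrum $E_n$, admits Thom isomorphisms for complex $A$-representations. As in the proof of \cref{lem:philex} (cf.\ \cite[\S 7.4]{dieck1979transformation}, \cite[\S 5]{mathewnaumannnoel2019derived}) this gives
\[
\Phi^A b_A(E_n) \simeq \bigl(b_A(E_n)^A\bigr)[e^{-1}],
\]
where $e$ is the Euler class of the reduced complex regular representation $\ol{\rho}_A^{\bbC}$. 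Here $b_A(E_n)^A \simeq E_n^{\h A}\simeq F(BA_+, E_n)$, so $b_A(E_n)^A$ has homotopy $E_n^{-\ast}(BA)$, which is concentrated in even degrees since $A$ is an abelian $p$-group \cite{hopkinskuhnravenel2000generalized}. Thus $\Phi^A b_A(E_n)$ is an even-periodic ring spectrum with $\pi_0 = E_n^0(BA)[e^{-1}]$; and since $A$ is abelian we have $\ol{\rho}_A^{\bbC}\cong\bigoplus_{1\neq\chi\in A^\vee}\chi$, so inverting $e$ is the same as inverting the first Chern classes $c_1(\chi)$ of all nontrivial characters of $A$.

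Next I would check Landweber exactness and isolate the height computation. Writing $A\cong\prod_{i=1}^t\bbZ/p^{k_i}$, the ring $E_n^0(BA)\cong (\pi_0 E_n)[[x_1,\dots,x_t]]/\bigl([p^{k_1}](x_1),\dots,[p^{k_t}](x_t)\bigr)$ is finite free over $\pi_0 E_n$; since $E_n$ is Landweber exact and Landweber exactness is preserved under flat base change and localization, $\Phi^A b_A(E_n)$ is Landweber exact. It therefore remains to show that its formal group has height exactly $n-t$ at every geometric point. Inverting the classes $c_1(\chi)$ amounts to inverting $x_1,\dots,x_t$ together with the remaining nontrivial torsion-point coordinates, and the point is to analyze the $p$-series under this localization: already in the rank-one case, inverting $x$ in $(\pi_0 E_n)[[x]]/[p^k](x)$ forces, modulo $I_m = (p,v_1,\dots,v_{m-1})$, the ring to be nonzero with $v_{n-1}$ invertible for $m = n-1$ and to vanish for $m = n$; iterating over the $t$ cyclic factors one finds that $\pi_0\Phi^A b_A(E_n)$ is nonzero with $v_{n-t}$ invertible modulo $I_{n-t}$ and vanishes modulo $I_{n-t+1}$. (When $t>n$ this already forces $\pi_0\Phi^A b_A(E_n)=0$, matching the convention $E_m = 0$ for $m<0$; when $t=n$ one obtains a $\bbQ$-algebra, matching $E_0 = H\bbQ$.) This is precisely the content of Torii's computation \cite{torii2002geometric}, which can alternatively be extracted from the character theory of \cite{hopkinskuhnravenel2000generalized}; I would cite it rather than redo it in full, the translation to the equivariant statement being exactly the identification $\Phi^A b_A(E_n)\simeq F(BA_+,E_n)[e^{-1}]$ above.

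Finally, a nonzero Landweber exact ring spectrum whose formal group has constant height $h$ has Bousfield class $\langle E(h)\rangle = \langle E_h\rangle$ — with the degenerate readings $\langle E_0\rangle = \langle H\bbQ\rangle$ and $\langle 0\rangle$ for $h<0$ — by the standard theory of Landweber exact spectra (e.g.\ \cite[Proposition 6.5]{hoveystrickland1999morava} or \cite[Chapter 7]{ravenel1992nilpotence}). Applying this with $h = n-t$ yields $\langle\Phi^A b_A(E_n)\rangle = \langle E_{n-t}\rangle$, as claimed. The one genuinely nonformal step is the middle one — pinning down the height after inverting the Euler classes — but this is exactly Torii's theorem, so the real work here is assembling the equivariant identifications that let us invoke it.
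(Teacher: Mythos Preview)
Your proof is correct. The paper's argument is shorter and takes a slightly different route to the height computation: rather than identifying $\Phi^A b_A(E_n)$ as the explicit Euler-class localization $F(BA_+,E_n)[e^{-1}]$ and then invoking Torii's computation of the height of that ring, the paper observes that $\Phi^A b_A(-)$ commutes with quotienting by the regular sequence $(v_0,\ldots,v_{d-1})$ (both $\Phi^A$ and $b_A$ being exact), so that $\Phi^A b_A(E_n)/(v_0,\ldots,v_{d-1}) \simeq \Phi^A b_A(E_n/(v_0,\ldots,v_{d-1}))$, and then appeals directly to \cite[Proposition~5.28]{mathewnaumannnoel2019derived}, which gives a blackbox criterion for when the geometric fixed points of the Borelification of a height-$(n-d)$ theory vanish in terms of the rank of $A$. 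Your approach has the virtue of making $\pi_0\Phi^A b_A(E_n)$ explicit, which is useful if one wants more than just the Bousfield class; the paper's approach avoids any coefficient-ring manipulation and reduces everything to a single equivariant nilpotence statement. One minor point: the reference \cite[Proposition~6.5]{hoveystrickland1999morava} you cite for the Bousfield class of a Landweber exact spectrum is not quite the right one (that proposition is about vanishing lines); the relevant fact is in \cite{hoveystrickland2003comodules} or can be extracted from the Landweber filtration argument in \cite[Chapter~7]{ravenel1992nilpotence}.
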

\begin{proof}
As $\Phi^A b_A(E_n)$ is $p$-local and Landweber exact, it is Bousfield equivalent to $E_d$ where $d$ is maximal for which $\Phi^A b_A(E_n)/(v_0,\ldots,v_{d-1})\neq 0$. As $\Phi^A b_A(E_n)/(v_0,\ldots,v_{d-1})\simeq\Phi^A b_A(E_n/(v_0,\ldots,v_{d-1}))$, \cite[Proposition 5.28]{mathewnaumannnoel2019derived} says that this is nonzero if and only if $t \leq n - d$, i.e.\ $d \leq n - t$, proving the lemma.
\end{proof}

Abbreviate $L_n = L_{E_n}$ and $L_n^b = L_{b_B(E_n)}$.

\begin{prop}\label{prop:bloc}
Let $B$ be an elementary abelian $p$-group and let $X$ be a $B$-spectrum. Then
\[
\Phi^A L_n^b X \simeq L_{n-\rk(A)}\Phi^A X,
\]
and $L_n^b X$ admits a finite filtration with
\[
\gr (L_n^b X)^B \simeq \bigoplus_{A\subset B} L_{n-\rk(A)}(\Phi^A X)_{\h B/A},
\]
this sum being over the subgroups of $B$.
\end{prop}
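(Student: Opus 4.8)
The plan is to mimic the proof of \cref{prop:kulocid}, replacing equivariant $K$-theory by $b_B(E_n)$ and \cref{lem:geofixed} by \cref{lem:heightshift}. First I would invoke \cref{prop:boreldesc}, which says $b_B(E_n)$ is locally descendable, so that \cref{prop:descpreserve} applies to the exact symmetric monoidal functor
\[
\Phi^A\colon\Sp^B\rightarrow\Fun(BW_BA,\Sp) = \Fun(B(B/A),\Sp)
\]
for each $A\subset B$, yielding a natural equivalence $\Phi^A L_n^b X \simeq L_{\Phi^A b_B(E_n)}\Phi^A X$ in $\Fun(B(B/A),\Sp)$, with $\Phi^A b_B(E_n)$ again locally descendable there.

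Second, I would identify the localizing object. Restricting a Borel $B$-spectrum along $A\subset B$ and using $\res^B_A EB_+ \simeq EA_+$ gives $\res^B_A b_B(E_n)\simeq b_A(E_n)$, hence $\Phi^A b_B(E_n)\simeq\Phi^A b_A(E_n)$ as ordinary spectra; as noted in the discussion preceding \cref{prop:boreldesc}, the residual $W_BA = B/A$-action here is trivial. By \cref{lem:heightshift}, $\langle\Phi^A b_A(E_n)\rangle = \langle E_{n-\rk(A)}\rangle$. Since the symmetric monoidal structure and its acyclic objects in $\Fun(BG,\Sp)$ are detected on underlying spectra, Bousfield classes in $\Fun(B(B/A),\Sp)$ are detected on underlying spectra as well, so $\langle\Phi^A b_B(E_n)\rangle$ agrees with the class of $E_{n-\rk(A)}$ equipped with the trivial action; thus $L_{\Phi^A b_B(E_n)}$ is computed levelwise by $L_{E_{n-\rk(A)}} = L_{n-\rk(A)}$. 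Substituting gives $\Phi^A L_n^b X\simeq L_{n-\rk(A)}\Phi^A X$, which is the first claim (and is consistent with the boundary conventions $E_0 = H\bbQ$ and $E_m = 0$ for $m<0$ used in \cref{lem:heightshift}).

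Finally, for the filtration I would apply \cref{cor:filtfix} to $L_n^b X$: it acquires a natural finite filtration whose associated graded on genuine $B$-fixed points is $\bigoplus_{(A)}(\Phi^A L_n^b X)_{\h W_B A}$, the sum over conjugacy classes of subgroups; since $B$ is abelian these are simply the subgroups $A\subset B$ with $W_B A = B/A$. Plugging in the first claim yields $\gr(L_n^b X)^B\simeq\bigoplus_{A\subset B}L_{n-\rk(A)}(\Phi^A X)_{\h B/A}$, as desired. The substantive inputs (local descendability of $b_B(E_n)$ and the height-shift computation) are already in place, so the only real work is this assembly; the one point demanding care — and the reason \cref{prop:boreldesc} and the present proposition are stated for elementary abelian $B$ — is the triviality of the $W_BA$-action on $\Phi^A b_B(E_n)$, which is exactly what lets us read $L_{\Phi^A b_B(E_n)}$ as $L_{n-\rk(A)}$ applied levelwise rather than as a twisted localization.
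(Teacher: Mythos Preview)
Your proposal is correct and follows essentially the same approach as the paper: invoke local descendability of $b_B(E_n)$ so that \cref{prop:descpreserve} applies, use \cref{lem:heightshift} to identify the Bousfield class of $\Phi^A b_B(E_n)$ (with the triviality of the $B/A$-action being the key point making this unambiguous), and then apply \cref{cor:filtfix} for the filtration. The paper's proof is a one-line citation of \cref{prop:descpreserve}, \cref{lem:heightshift}, and \cref{cor:filtfix}; you have simply spelled out the details it leaves implicit, including the identification $\res^B_A b_B(E_n)\simeq b_A(E_n)$ needed to feed into \cref{lem:heightshift}.
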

\begin{proof}
The identification of $\Phi^A L_n^bX$ follows from \cref{prop:descpreserve} and \cref{lem:heightshift}, and the filtration from \cref{cor:filtfix}.
\end{proof}

We end our discussion of the localizations $L_n^b$ with the following observation.

\begin{prop}
Let $B$ be an elementary abelian $p$-group. Then
\[
(S_B)_{(p)}\simeq \lim_{n\rightarrow\infty}L_n^b S_B.
\]
\end{prop}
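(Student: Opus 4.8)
The plan is to apply \cref{cor:limitdetect} to the tower $\{L_n^b S_B\}_{n\geq 0}$, whose structure maps $L_n^b S_B\to L_{n-1}^b S_B$ come from the inclusions $\langle b_B(E_n)\rangle\subseteq\langle b_B(E_{n-1})\rangle$. These hold because $\langle b_B(E_n)\rangle = \langle B_+\otimes E_n\rangle = \{X : UX\text{ is }E_n\text{-acyclic}\}$, as noted in the proof of \cref{prop:borelloc}, together with the standard chromatic fact that $E_n$-acyclicity implies $E_{n-1}$-acyclicity; an inclusion of Bousfield classes yields a natural transformation of localizations. The tower receives a natural map from $(S_B)_{(p)}$: the fiber of $S_B\to(S_B)_{(p)}$ is $b_B(E_n)$-acyclic for all $n$ (its underlying spectrum has $p$ invertible, hence is $E_n$-acyclic since each $E_n$ is $p$-local), so $L_n^b S_B\simeq L_n^b (S_B)_{(p)}$, whence $\lim_n L_n^b S_B$ is $p$-local and the localization maps assemble to $\phi\colon (S_B)_{(p)}\to\lim_n L_n^b S_B$.

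By \cref{cor:limitdetect} it suffices to show that for each pair $K\subseteq H\subseteq B$ the map
\[
(\Phi^K (S_B)_{(p)})_{hW_HK}\longrightarrow \lim_n\bigl((\Phi^K L_n^b S_B)_{hW_HK}\bigr)
\]
is an equivalence. Since $B$ is abelian, $W_HK=H/K$. As $\Phi^K$ is symmetric monoidal and preserves colimits, $\Phi^K(S_B)_{(p)}\simeq(\Phi^K S_B)_{(p)}\simeq S_{(p)}$ with trivial $H/K$-action, and by \cref{prop:bloc}, $\Phi^K L_n^b S_B\simeq L_{n-\rk K}\Phi^K S_B\simeq L_{n-\rk K}S$, again with trivial action. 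Because $L_m=L_{E_m}=L_{E(m)}$ is smashing (the smash product theorem \cite[Chapter 7]{ravenel1992nilpotence}), the homotopy orbit functor $(-)_{hH/K}\simeq(-)\wedge\Sigma^\infty_+B(H/K)$ commutes with $L_m$, so after reindexing $m=n-\rk K$ the displayed map becomes
\[
S_{(p)}\wedge\Sigma^\infty_+B(H/K)\longrightarrow \lim_m L_m\bigl(S\wedge\Sigma^\infty_+B(H/K)\bigr),
\]
and we are reduced to proving chromatic convergence for $\Sigma^\infty_+ B(H/K)$.

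To prove this last statement I would bootstrap from chromatic convergence for the sphere rather than invoke a convergence result for classifying spaces directly. Since $S$ is a finite spectrum, the chromatic convergence theorem \cite[Chapter 7]{ravenel1992nilpotence} gives $\lim_m L_m S\simeq S_{(p)}$, with $\mathrm{fib}(S_{(p)}\to L_m S)$ being $\kappa(m)$-connected for some $\kappa(m)\to\infty$. Smashing with the $(-1)$-connected spectrum $\Sigma^\infty_+ B(H/K)$ preserves connectivity, so $\mathrm{fib}\bigl(S_{(p)}\wedge\Sigma^\infty_+ B(H/K)\to L_m(S\wedge\Sigma^\infty_+ B(H/K))\bigr)$ is again $\kappa(m)$-connected; the fiber of the map into the homotopy limit is the homotopy limit of these, and the $\lim^1$-sequence shows it vanishes once $\kappa(m)\to\infty$. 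Hence $\phi$ is an equivalence.

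The only genuinely nonformal ingredient is the chromatic convergence of the finite sphere, so there is no real obstacle; the subtle point is structural. Naively one would want to check $\phi$ on geometric fixed points, but $\Phi^K$ does not commute with the homotopy limit over $n$. The role of \cref{cor:limitdetect} is precisely to replace $\Phi^K$ of a homotopy limit by homotopy orbits $(-)_{hW_HK}$ via the finite filtration of genuine fixed points, and these orbits do commute with the smashing localizations $L_m$. The one place to be careful is ensuring that the connectivities controlling the limit are those of the chromatic-convergence fibers $\mathrm{fib}(S_{(p)}\to L_m S)$, which tend to infinity, and not those of the monochromatic layers, which do not.
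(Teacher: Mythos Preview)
Your reduction via \cref{cor:limitdetect} and \cref{prop:bloc} is exactly the paper's approach, and everything up to and including the reduction to chromatic convergence for $\Sigma^\infty_+ B(H/K)$ is correct and matches the paper verbatim.

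The gap is in your final step. The assertion that $\Fib(S_{(p)}\to L_m S)$ is $\kappa(m)$-connected with $\kappa(m)\to\infty$ is false. If it held, your argument would prove chromatic convergence for every bounded-below $p$-local spectrum, since $L_m$ is smashing and smashing with a connective spectrum preserves connectivity. But take $X = H\bbF_p$: this is $E(m)$-acyclic for all $m\geq 0$, so $L_m H\bbF_p = 0$ and hence $C_m S \wedge H\bbF_p \simeq H\bbF_p$ for every $m$. In particular $\pi_0(C_m S\wedge H\bbF_p)=\bbF_p\neq 0$, so $C_m S$ is never $0$-connected. Chromatic convergence genuinely fails for $H\bbF_p$, and more generally it is \emph{not} a formal consequence of chromatic convergence for the sphere together with smashingness.

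The paper handles this step by citing two nontrivial inputs: Barthel's theorem \cite{barthel2016chromatic} that chromatic convergence holds for any spectrum of finite projective $BP$-dimension, and the Johnson--Wilson computation \cite{johnsonwilson1985brown} that $\Sigma^\infty BA$ has projective $BP$-dimension equal to $\rk(A)$ for $A$ elementary abelian. These are the ``genuinely nonformal ingredients'' you were looking for; chromatic convergence of $\Sigma^\infty_+ BA$ is not a corollary of that of $S$.
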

\begin{proof}
By \cref{cor:limitdetect} and \cref{prop:bloc}, the map $(S_B)_{(p)}\rightarrow\lim_{n\rightarrow\infty}L_n^b S_B$ is an equivalence provided the following condition holds. Let $D\subset C \subset B$ be subgroups, write $A = W_CD = C/D$, and suppose that $C$ is of rank $t$. Then the map
\[
(BA_+)_{(p)}\rightarrow\lim_{n\rightarrow\infty} L_{n-t}BA_+
\]
is an equivalence. This in turn holds if and only if $S_{(p)}\simeq\lim_{n\rightarrow\infty} L_n S$ and $\Sigma^\infty BA\simeq \lim_{n\rightarrow\infty}L_n \Sigma^\infty BA$. The first condition is exactly the classical chromatic convergence theorem \cite[Theorem 7.5.7]{ravenel1992nilpotence}. The second condition asks that chromatic convergence holds for $\Sigma^\infty BA$.

Barthel \cite{barthel2016chromatic} has shown that if $X$ is a spectrum, then $\lim_{n\rightarrow\infty}L_nX \simeq X_{(p)}$ provided that $X$ has finite projective $BP$-dimension. Work of Johnson--Wilson \cite{johnsonwilson1985brown} shows that if $A$ is an elementary abelian $p$-group, then $BA$ has projective $BP$-dimension equal to $\rk(A)$. Combining these proves the proposition.
\end{proof}

We end with an orthogonal class of examples. Combining \cref{prop:borelloc} with \cite[Theorem 1.1]{greenleessadofsky1996tate} shows that $L_{b_G(K(n))} X\simeq b_G(L_{K(n)}UX)$ for any finite group $G$ and $G$-spectrum $X$. Let us just focus on the simplest case, which may be interpreted more conceptually.

\begin{prop}\label{prop:pk1loc}
Let $G$ be a $p$-group. Then for any $X\in\Sp^G$, we have
\[
L_{KU_G/(p)}X\simeq b_G(L_{KU/(p)} UX).
\]
\end{prop}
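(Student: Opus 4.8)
The plan is to deduce this as a special case of the general machinery, essentially by quoting \Cref{prop:borelloc} once one knows that the Bousfield class of $KU_G/(p)$ agrees with that of $b_G(KU/(p))$. First I would recall that for $G$ a $p$-group, the $p$-completed equivariant $K$-theory $(KU_G)^\wedge_p$ has the property that its geometric fixed points $\Phi^K KU_G$ vanish $p$-adically for all nontrivial $K$: indeed, by \Cref{lem:geofixed} we have $\Phi^K \mathbf{KU} \simeq KU[\tfrac1n](\zeta_n)$ when $K \cong C_n$ and $0$ otherwise, and for $K$ a nontrivial $p$-group $C_n$ this is $KU[\tfrac1p\cdots]$-flavored, hence killed after smashing with $S/p$ (or after $p$-completion). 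Smashing with the Moore spectrum $S/p$ is symmetric monoidal and exact, so $\Phi^K(KU_G/(p)) \simeq (\Phi^K KU_G)/(p) \simeq 0$ for all nontrivial $K \subset G$. In other words, $KU_G/(p)$ is concentrated at the trivial subgroup in the sense of \Cref{ssec:isotropy}.

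Next I would identify the Bousfield class. A $G$-spectrum $R$ with $\Phi^K R \simeq 0$ for all nontrivial $K$ has the same Bousfield class as $b_G(\Phi^e R) = b_G(U R)$: this is a standard consequence of the isotropy filtration (e.g. one can see $\langle R\rangle = \langle E\mathcal{P}_+ \otimes R\rangle$ where $\mathcal{P}$ is the family of nontrivial... more precisely the family $\{e\}$, using that $\widetilde{E}\{e\}\otimes R \simeq 0$ since all higher geometric fixed points vanish, so $R \simeq E\{e\}_+ \otimes R \simeq EG_+ \otimes R$ — wait, $E\{e\}_+ = EG_+$ — hence $R$ is Borel-local and $\langle R\rangle = \langle EG_+ \otimes UR\rangle = \langle b_G(UR)\rangle$ since $EG_+\otimes b_G(UR)\simeq EG_+\otimes UR \simeq EG_+\otimes R$). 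Applying this with $R = KU_G/(p)$ and $U R = KU/(p)$ gives $\langle KU_G/(p)\rangle = \langle b_G(KU/(p))\rangle$. Since $KU_G$-localization (hence $KU_G/(p)$-localization) depends only on the Bousfield class, we get $L_{KU_G/(p)} X \simeq L_{b_G(KU/(p))} X$ for all $X \in \Sp^G$.

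Finally, I would apply \Cref{prop:borelloc}(1) with $T = KU/(p)$: its hypothesis is $T^{tG} = 0$, which holds because $(KU/(p))^{tG} = 0$ for $G$ a $p$-group — this is the Tate-vanishing input, a consequence of the fact that $KU/(p)$ is $K(1)$-locally trivial... rather, $KU/(p)$ is $E_1$-local and $E_1^{tG} = 0$ for $G$ a nontrivial finite $p$-group by \cite[Theorem 1.1]{greenleessadofsky1996tate} (the analogue of the statement cited at the very end of the excerpt), and $T^{tG}=0$ follows by dévissage through the Postnikov-style filtration by fixed subgroups, or directly. Hence \Cref{prop:borelloc}(1) gives $L_{b_G(KU/(p))} X \simeq b_G(L_{KU/(p)} U X)$, and combining with the previous paragraph yields $L_{KU_G/(p)} X \simeq b_G(L_{KU/(p)} UX)$ as desired.

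The main obstacle I anticipate is being careful about the Bousfield-class comparison $\langle KU_G/(p)\rangle = \langle b_G(KU/(p))\rangle$ and the Tate-vanishing $T^{tG} = 0$ for $G$ a $p$-group: both are "morally standard" but require invoking the right results — the isotropy-filtration argument for the former (which the paper has set up in \Cref{ssec:isotropy}) and the Greenlees–Sadofsky Tate vanishing theorem, or rather the input that $\Phi^K KU_G$ is $p$-locally trivial for nontrivial $p$-groups $K$, for the latter. Everything else is a direct application of \Cref{prop:borelloc}.
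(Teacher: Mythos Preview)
Your approach is correct in outline but differs from the paper's, and there is a small logical wrinkle in the ordering of your steps.

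The paper's proof is shorter: it cites the $p$-adic Atiyah completion theorem directly to obtain an actual equivalence $KU_G/(p)\simeq b_G(KU/(p))$ of $G$-spectra, cites \cite[Theorem 13.1]{greenleesmay1995generalized} for $(KU/(p))^{tG}=0$, and then applies \cref{prop:borelloc}. You instead argue via \cref{lem:geofixed} that $\Phi^K(KU_G/(p))=0$ for nontrivial $K\subset G$, so that $KU_G/(p)$ is concentrated at the trivial subgroup, and then attempt a Bousfield class comparison. This is a legitimate alternative and has the virtue of replacing the citation of Atiyah's theorem with the more elementary input of \cref{lem:geofixed}.

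However, your Step~2 is not quite self-contained. Knowing that $R=KU_G/(p)$ is free (i.e.\ $R\simeq EG_+\otimes R$) gives only one inclusion of Bousfield classes: $b_G(KU/(p))$-acyclic implies $R$-acyclic. For the reverse inclusion you need $\Phi^K b_G(KU/(p))=0$ for $K\neq e$, which is exactly the Tate vanishing hypothesis you defer to Step~3. (Your parenthetical claim that ``$R$ is Borel-local'' conflates free with cofree; cofreeness of a free ring $R$ is again equivalent to Tate vanishing for $UR$.) The fix is simply to invoke Tate vanishing first. In fact, once you have both $\Phi^K(KU_G/(p))=0$ for $K\neq e$ and $(KU/(p))^{tK}=0$ for all $K\subset G$, the map $KU_G/(p)\to b_G(KU/(p))$ is an equivalence on all geometric fixed points and hence an equivalence of $G$-spectra; you have thereby reproved the mod~$p$ Atiyah completion theorem for $p$-groups, and can then apply \cref{prop:borelloc} exactly as the paper does. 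Either Greenlees--Sadofsky or the paper's citation of Greenlees--May suffices for the Tate vanishing input.
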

\begin{proof}
By the $p$-adic version of Atiyah's completion theorem \cite[Theorem 7.2]{atiyah1961characters} \cite[III \S 1, Proposition 1.1]{atiyahtall1969group}, there is an equivalence $KU_G/(p)\simeq b_G(KU/(p))$. By \cite[Theorem 13.1]{greenleesmay1995generalized}, $(KU/(p))^{t G} = 0$ for any finite group $G$. The proposition then follows from \cref{prop:borelloc}.
\end{proof}

We deduce the following corollary, which was also independently obtained in \cite[Proposition 6.3]{bonventreguilloustapleton2022kug} for $p$ odd.

\begin{cor}
If $G$ is a $p$-group, $A$ is a finite $G$-spectrum, and $k\in\bbZ_p^\times$ projects to a topological generator of $\bbZ_p^\times/\{\pm 1\}$, then there is a fiber sequence
\begin{equation}\label{eq:kufiber}
\begin{tikzcd}
L_{KU_G/(p)}A\ar[r]&(KO_G\otimes A)_p^\wedge\ar[r,"\psi^k-1"]&(KO_G\otimes A)_p^\wedge
\end{tikzcd}.\end{equation}
If $p$ is odd, then we may replace $KO$ by $KU$ provided $k$ is a topological generator of $\bbZ_p^\times$.
\end{cor}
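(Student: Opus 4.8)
\subsection*{Proof proposal for the final corollary}

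The plan is to bootstrap from the nonequivariant fiber sequences \cref{eq:j} and \cref{eq:jo} via \cref{prop:pk1loc}. Since $G$ is a $p$-group, \cref{prop:pk1loc} gives $L_{KU_G/(p)}A\simeq b_G\bigl(L_{KU/(p)}UA\bigr)$, where $b_G$ denotes the composite $\Sp\to\Fun(BG,\Sp)\xrightarrow{b_G}\Sp^G$ as in \cref{prop:borelloc}. As $A$, hence $UA$, is finite (so dualizable), smashing with $UA$ commutes both with $KU/(p)$-localization and with $p$-completion; indeed $UA\otimes L_{KU/(p)}S$ is $KU/(p)$-local (since $-\otimes UA=F(DUA,-)$ preserves local objects) and $UA\to UA\otimes L_{KU/(p)}S$ is a $KU/(p)$-equivalence, so $L_{KU/(p)}UA\simeq UA\otimes S_{K(1)}$. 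Smashing \cref{eq:jo} (resp.\ \cref{eq:j}) with $UA$ then yields a fiber sequence of ordinary spectra
\[
UA\otimes S_{K(1)}\to (UA\otimes KO)_p^\wedge\xrightarrow{\psi^k-1}(UA\otimes KO)_p^\wedge
\]
at $p=2$ (resp.\ with $KO$ replaced by $KU$ at odd $p$); here I use that on $(KO)_2^\wedge$ the operation $\psi^{-1}$ is trivial, so the fiber of $\psi^k-1$ depends only on $k$ topologically generating $\bbZ_2^\times/\{\pm1\}$ and agrees with $\mathrm{fib}(\psi^3-1)\simeq S_{K(1)}$.

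Next I apply $b_G$. Being a right adjoint between stable $\infty$-categories it is exact, and precomposing with the (exact) trivial-action functor $\Sp\to\Fun(BG,\Sp)$ keeps it exact, so it carries the above fiber sequence to a fiber sequence
\[
b_G(UA\otimes S_{K(1)})\to b_G\bigl((UA\otimes KO)_p^\wedge\bigr)\xrightarrow{\psi^k-1}b_G\bigl((UA\otimes KO)_p^\wedge\bigr).
\]
By the previous paragraph and \cref{prop:pk1loc}, the left-hand term is $b_G(L_{KU/(p)}UA)\simeq L_{KU_G/(p)}A$. For the right-hand term, the projection formula for $b_G$ (valid since $UA$ is dualizable, because $U(DUA)=D(UX)$ and $U$ is symmetric monoidal) gives $b_G\bigl((UA\otimes KO)_p^\wedge\bigr)\simeq A\otimes b_G\bigl((KO)_p^\wedge\bigr)$, and then the $p$-adic Atiyah--Segal completion theorem identifies $b_G\bigl((KO)_p^\wedge\bigr)$ with $(KO_G)_p^\wedge$ — for $KU$ this is exactly the statement $KU_G/(p)\simeq b_G(KU/(p))$ cited in the proof of \cref{prop:pk1loc}, and the $KO$ case follows from it via $(KO_G)_p^\wedge\simeq\bigl((KU_G)_p^\wedge\bigr)^{hC_2}\simeq b_G\bigl((KU_p)^{hC_2}\bigr)\simeq b_G\bigl((KO)_p^\wedge\bigr)$, using that $b_G$ preserves the limit computing homotopy fixed points. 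Hence the right-hand term is $(KO_G\otimes A)_p^\wedge$ (resp.\ $(KU_G\otimes A)_p^\wedge$), and the differential is the equivariant Adams operation $\psi^k-1$, compatibility following from the naturality of $\psi^k$ with respect to $U$ and $b_G$. Combining the identifications gives the asserted fiber sequence \cref{eq:kufiber}.

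The main obstacle is this last identification: matching $b_G\bigl((KO)_p^\wedge\bigr)$ with $(KO_G)_p^\wedge$ compatibly with the Adams operation, which rests on the real ($p$-group) Atiyah--Segal completion theorem and on care with the $C_2$-homotopy-fixed-point description of $KO$ and its behavior under completion. Everything else is formal: exactness of $b_G$, the projection formula, and the commutation of $KU/(p)$-localization and $p$-completion with smashing by the finite spectrum $A$. For odd $p$ the required $KU$ completion theorem is precisely the one invoked in \cref{prop:pk1loc}, so the argument there goes through verbatim with \cref{eq:j} in place of \cref{eq:jo}.
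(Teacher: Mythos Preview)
Your proposal is correct and follows essentially the same route as the paper: take the standard nonequivariant fiber sequence for $L_{KU/(p)}UA$ and push it through $b_G$, using \cref{prop:pk1loc} to identify the left term and the $p$-adic Atiyah completion theorem (together with finiteness of $A$) to identify the right terms. The paper's proof records exactly this in a single sentence, asserting that \cref{eq:kufiber} is the image under the exact functor $b_G$ of the standard fiber sequence; you have simply spelled out the projection-formula and completion-theorem steps that the paper leaves implicit.
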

\begin{proof}
The assumptions that $G$ is a $p$-group and $A$ is finite ensure that \cref{eq:kufiber} is equivalent to
\begin{center}\begin{tikzcd}[column sep=large]
b_G(L_{KU/(p)} UA)\ar[r]& b_G( (KO\otimes A)_p^\wedge)\ar[r,"b_G(\psi^k-1)"]&b_G((KO\otimes A)_p^\wedge)
\end{tikzcd};\end{center}
in other words, that \cref{eq:kufiber} is the image of the standard fiber sequence for $L_{KU/(p)}UA$ under the functor $b_G$. The corollary follows as $b_G$ is exact.
\end{proof}

\begin{rmk}\label{rmk:k1norm}
Let us relate \cref{prop:pk1loc} to the body of the paper. If $R$ is an $\bbE_\infty$ ring and $G$ is a finite group, then $b_G(R)$ is a $G\hyp\bbE_\infty$ ring. If $K\subset G$ is a subgroup of index $m$ and $\alpha\in RO(K)$, then the norm
\[
P_\alpha\colon \pi_\alpha^K b_G(R)\rightarrow \pi_{\Ind_K^G\alpha}^G b_G(R),
\]
may be identified as the composite
\begin{align*}
[\Th(\alpha\downarrow BK),S_{K(1)}]&\rightarrow [\Th(\alpha\downarrow BK)^{\otimes m}_{\h \Sigma_m},S_{K(1)}] \\
&\cong [\Th((\rho_m\otimes\alpha)\downarrow B(\Sigma_m\wr K)),S_{K(1)}]\rightarrow[\Th((\Ind_K^G\alpha)\downarrow BG),S_{K(1)}],
\end{align*}
where the first map is an ordinary power operation and the last map is restriction along a suitable map $BG\rightarrow B(\Sigma_m\wr K)$.

In particular, take $G = C_p$, and suppose that $R$ is $p$-local. As the map
\[
(S^n)^{\otimes p}_{\h C_p}\rightarrow (S^n)^{\otimes p}_{\h \Sigma_p}
\]
is $p$-locally the projection onto a summand, norms for $b_{C_p}(R)$ are determined by the $p$th symmetric powers for $R$ discussed in \cref{ssec:powprelim}. In light of \cref{prop:pk1loc}, we may therefore regard our computation in \cref{sec:k1} as describing norms on $L_{KU_{C_p}/(p)}S_{C_p}$, although to make this completely explicit would require describing the effect of the projection $(S^n)^{\otimes p}_{\h C_p}\rightarrow (S^n)^{\otimes p}_{\h \Sigma_p}$ on $K(1)$-local cohomotopy.
\tqed
\end{rmk}

\end{appendix}

\begingroup
\raggedright

\bibliography{refs}
\bibliographystyle{alpha}

\endgroup

\end{document}